\pgfplotsset{compat=newest}
\newcommand{\EE}{\mathbb E}
\newcommand{\NN}{\mathbb N}
\newcommand{\RR}{\mathbb R}
\newcommand{\PP}{\mathbb P}
\newcommand{\vocab}[1]{\textbf{\boldmath #1}}
\newcommand{\eps}{\varepsilon}
\newcommand*{\circled}[2][]{\tikz[baseline=(C.base)]{
    \node[inner sep=0pt] (C) {\vphantom{1g}#2};
    \node[draw, circle, inner sep=0.1pt, yshift=0.3pt] 
        at (C.center) {\vphantom{1g}};}}
\newcommand{\encircle}[1]{\,\circled #1\,}
\newcommand*{\circledBig}[2][]{\tikz[baseline=(C.base)]{
    \node[inner sep=0pt] (C) {\vphantom{1g}#2};
    \node[draw, circle, inner sep=5pt, yshift=0.3pt] 
        at (C.center) {\vphantom{1g}};}}
\newcommand{\encircleBig}[1]{\,\circledBig #1\,}
\newcommand{\mc}[1]{\mathcal{#1}}
\DeclareMathOperator{\freq}{freq}
\DeclareMathOperator{\WC}{WC}
\DeclareMathOperator{\pWC}{pWC}
\DeclareMathOperator{\pat}{pat}
\DeclareMathOperator{\Tree}{Tree}
\newtheorem{theorem}{Theorem}[section]
\newtheorem{corollary}[theorem]{Corollary}
\newtheorem{proposition}[theorem]{Proposition}
\newtheorem{lemma}[theorem]{Lemma}
\newtheorem{example}[theorem]{Example}
\newtheorem{definition}[theorem]{Definition}
\newtheorem{remark}[theorem]{Remark}
\title{High-dimensional permutons: theory and applications}
\author{
\begin{tabular}{c} Jacopo Borga\thanks{jborga@mit.edu}\\[-3pt]\small MIT \end{tabular}
\begin{tabular}{c} Andrew Lin\thanks{lindrew@stanford.edu}\\[-3pt]\small Stanford University \end{tabular} 
}
\date{}
\begin{document} 

\maketitle

\begin{abstract}
Permutons, which are probability measures on the unit square $[0, 1]^2$ with uniform marginals, are the natural scaling limits for sequences of (random) permutations. 

We introduce a $d$-dimensional generalization of these measures for all $d \ge 2$, which we call \emph{$d$-dimensional permutons}, and extend -- from the two-dimensional setting -- the theory to prove convergence of sequences of (random) $d$-dimensional permutations to (random) $d$-dimensional permutons. 

Building on this new theory, we determine the random high-dimensional permuton limits for two natural families of high-dimensional permutations. First, we determine the $3$-dimensional permuton limit for Schnyder wood permutations, which bijectively encode planar triangulations decorated by triples of spanning trees known as Schnyder woods. Second, we identify the $d$-dimensional permuton limit for $d$-separable permutations, a pattern-avoiding class of $d$-dimensional permutations generalizing ordinary separable permutations. 

Both high-dimensional permuton limits are random and connected to previously studied universal 2-dimensional permutons, such as the Brownian separable permutons and the skew Brownian permutons, and share interesting connections with objects arising from random geometry, including the continuum random tree, Schramm--Loewner evolutions, and Liouville quantum gravity surfaces.
\end{abstract}

\tableofcontents

\begin{figure}[h!]
    \vspace{1 cm}
	\includegraphics[width=0.38\textwidth]{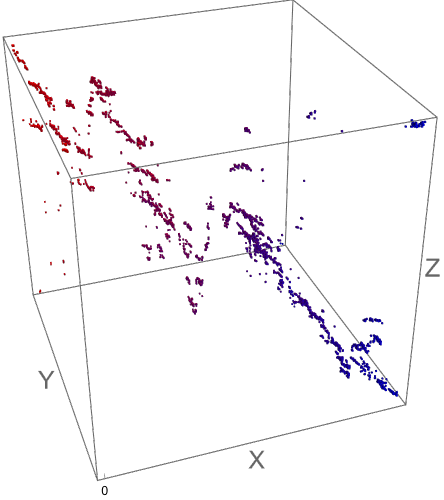}
	\hspace{5mm}
	\includegraphics[width=0.38\textwidth]{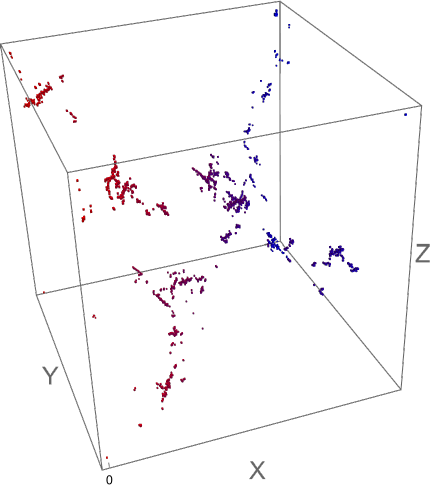}
	\includegraphics[width=0.22\textwidth]{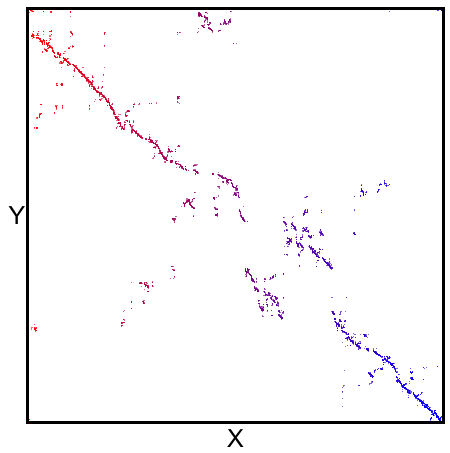}	
	\includegraphics[width=0.22\textwidth]{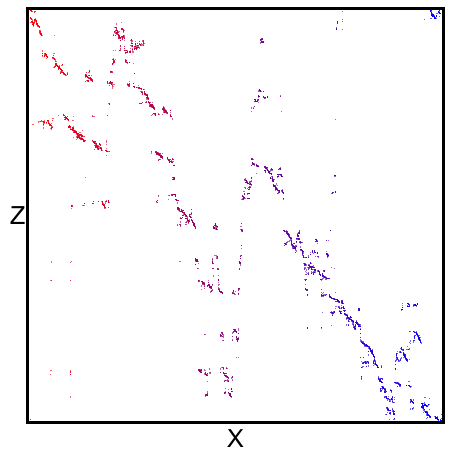}
	\hspace{0.1\textwidth}
	\includegraphics[width=0.22\textwidth]{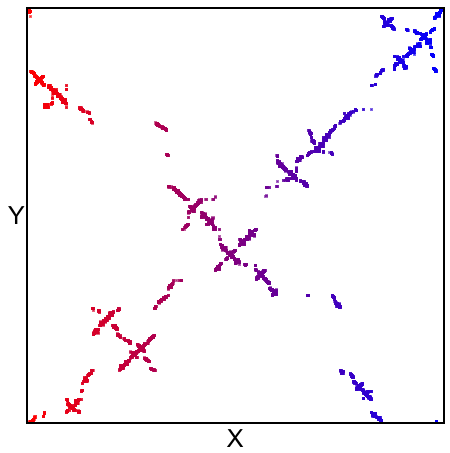}	
	\includegraphics[width=0.22\textwidth]{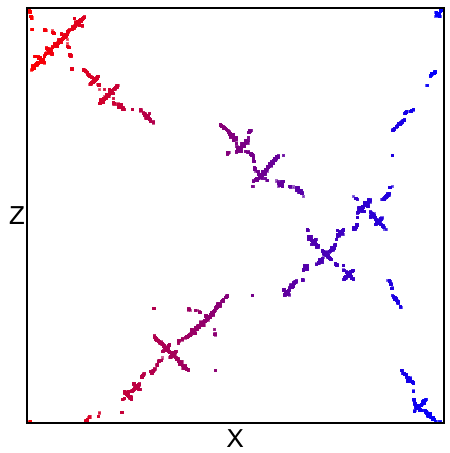}
	\caption{\label{fig:3dpermutons} Simulations for two 3-dimensional permutons with their respective 2-dimensional marginal permutons.  \textbf{Left:} The 3-dimensional permuton associated with a permutation of size 10000 sampled from the Schnyder wood permuton of \cref{mainresult3}.   \textbf{Right:} The 3-dimensional permuton associated with a permutation of size 10000 sampled from the Brownian separable $3$-permuton of \cref{mainresult4}. Animated simulations can be found at \href{http://www.jacopoborga.com/2024/12/27/high-dimensional-permutons-the-schnyder-wood-and-brownian-separable-d-permuton/}{this webpage}.}
\end{figure}

\section{Introduction} 

Over the last decade, there has been a growing interest in understanding scaling limits of random permutations uniformly sampled from permutation classes or under various non-uniform measures. Such limits, called \emph{permutons}, arise naturally in pattern avoidance, statistics, random geometry, and other areas of combinatorics and probability, and their description provides a way to study the global behavior of large permutations. The theory of permutons has been developed in previous work~\cite{hoppen2012limits,Bassino_2020}, and we now recall the main definitions. 

For any permutation $\sigma = (\sigma(1), \cdots, \sigma(n))$ of size $n$, we may associate to it a probability measure on the unit square $[0, 1]^2$ given by
\begin{equation}\label{eq:permut-perm}
    \mu_{\sigma}(d\vec{x}) = n \cdot \mathds{1}\left\{\sigma(\lceil nx_1 \rceil) = \lceil nx_2 \rceil \right\} d\vec{x},
\end{equation}
where $\vec{x} = (x_1, x_2)\in [0, 1]^2$ and $d\vec{x}$ denotes the Lebesgue measure on $[0, 1]^2$. Such a probability measure assigns a mass $\frac{1}{n}$ to a $\frac{1}{n} \times \frac{1}{n}$ square corresponding to the rescaled point $(i, \sigma(i))$, and any such measure has uniform marginals. An example is shown in the left-hand side of  \cref{fig:permuton}.

\begin{figure}[h]
\begin{minipage}{.32\textwidth}
\centerline{\includegraphics[width=4.8cm]{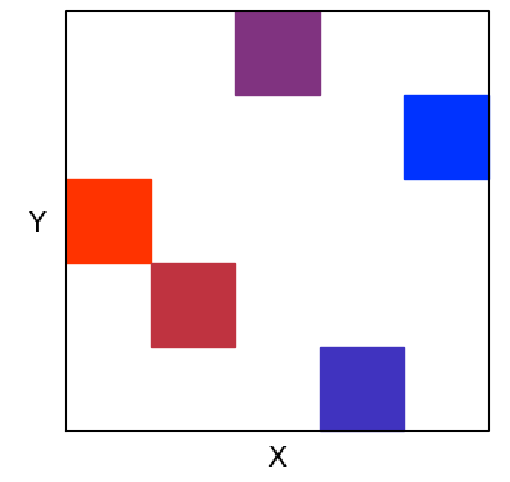}}
\end{minipage}%
\begin{minipage}{.32\textwidth}
\centerline{\includegraphics[width=0.92\textwidth]{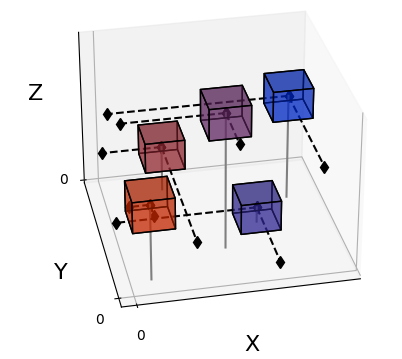}}
\end{minipage}%
\begin{minipage}{.32\textwidth}
\centerline{\includegraphics[width=\textwidth]{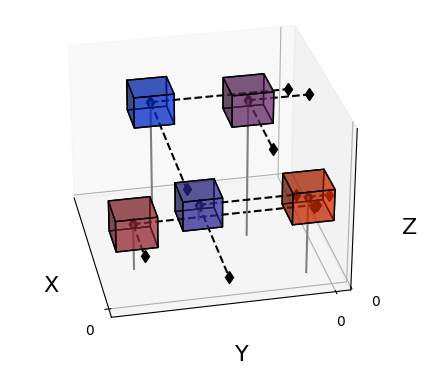}}
\end{minipage}
\caption{In all images, the colors are meant only for visual aid (with boxes colored from red to blue as the $x$-coordinate ranges from $0$ to $1$). \textbf{Left:} The (2-dimensional) permuton associated to the permutation $\sigma = (3, 2, 5, 1, 4)$ (as defined in \cref{eq:permut-perm}). Each shaded square (of side length $\frac{1}{5}$) is uniformly assigned a total probability mass of $\frac{1}{5}$. \textbf{Middle-right:} The 3-dimensional permuton associated to the $3$-dimensional permutation $\sigma$ of size $5$ (as introduced in \cref{eq:defn-d-perm}) defined by $\sigma(1) = (1, 3), \sigma(2) = (5, 2), \sigma(3) = (2, 5), \sigma(4) = (3, 1), \sigma(5) = (4, 4)$, shown from two different angles. In shorthand, following \cref{eq:perm-shorthand}, this permutation may also be written as $((1,5,2,3,4), (3,2,5,1,4))$. Notice that the left $2$-permuton is a marginal of the right $3$-permuton (specifically the projection onto the coordinates $(X, Z)$). \label{fig:permuton}}
\end{figure}

We say that a Borel probability measure $\mu$ on $[0, 1]^2$ is a \vocab{permuton} if it has uniform marginals, that is, 
\begin{equation*}
    \mu([0, 1] \times [x, y]) = \mu([x, y] \times [0, 1]) = y-x\quad\text{ for all }\quad 0 \le x \le y \le 1.
\end{equation*} 
Additionally, we say that $\mu$ is the \vocab{permuton limit} of a sequence of permutons $\mu_n$ if we have convergence
\[
    \int_{[0, 1]^2} f d\mu_n \to \int_{[0, 1]^2} f d\mu
\]
for all (bounded) continuous functions $f: [0, 1]^2\to \mathbb R$. Accordingly, we thus say that $\mu$ is the \vocab{permuton limit} of a sequence of permutations $\sigma_n$ if $\mu_{\sigma_n} \to \mu$.

Various works establishing convergence to explicit (deterministic and random) permutons exist in the literature. For instance, \cite{Starr_2009} describes the deterministic permuton limit for Mallows permutations, and \cite{squareperms,almostsquareperms} describe the random permuton limit for square and almost-square permutations (a random measure on $[0, 1]^2$ whose source of randomness is a single random parameter $z \in [0, 1]$). The first remarkable examples of fractal, canonical and universal random permuton limits, called \vocab{Brownian separable permutons}, were discovered and studied in the series of works \cite{separablelimit,Bassino_2020,maazoun2020brownian,Borga_2020,bassino2022scaling}. The Brownian separable permutons are very related to the continuum random tree (CRT) of Aldous \cite{aldouscrt} and describe the permuton limits for various pattern-avoiding permutation classes. More recently, \cite{borga2023skew} constructs the \vocab{skew Brownian permutons}, a two-parameter family of fractal, canonical and universal permutons which generalize the Brownian separable permutons. These permutons are closely linked with universal objects studied in random geometry, such as Liouville quantum gravity surfaces (LQGs) and Schramm--Loewner evolution curves (SLEs); see for instance \cite{borga2022permutons,borga2023baxter}. Skew Brownian permutons describe the permuton limits for various generalized pattern-avoiding permutation classes, and we elaborate more on their construction in \cref{backgroundsection}.

The general theory of permutons is by now well-developed, and the usual strategy for proving convergence is by studying pattern frequency~\cite{hoppen2012limits,Bassino_2020}, \emph{a.k.a.}\ pattern occurrences, as we now explain. For a permutation $\sigma$ of size $n$, a permutation $\tau$ of size $k \le n$, and a subset $I = \{i_1, \cdots, i_k\}$ of $[n]:=\{1, \cdots, n\}$ of size $k$, we say that $\tau$ is the \vocab{pattern} of $\sigma$ on $I$ if the values $\sigma(i_1), \cdots, \sigma(i_k)$ are in the same relative order as $\tau(1), \cdots, \tau(k)$, and we define the \vocab{pattern frequency}
\begin{equation}\label{2dpattern}
    \freq(\tau, \sigma) = \frac{1}{\binom{n}{k}} \#\left\{I \subset [n] \,:\, \tau = \pat_I(\sigma)\right\}
\end{equation}
to be the fraction of all $k$-subsets of $[n]$ which yield the pattern $\tau$. For a sequence of random permutations $\sigma_n$, it has been proven in \cite{Bassino_2020}, building on results of \cite{hoppen2012limits}, that convergence of $\mu_{\sigma_n} \to \mu$ is equivalent to the statement that for all patterns $\tau$, the numbers $\EE[\freq(\tau, \sigma_n)]$ converge to constants $c_\tau$. We stress the remarkable (and rather surprising) fact that only convergence in \emph{expectation} of pattern frequencies is needed to prove permuton convergence. Thus, the construction of a permuton limit for permutations often reduces to computing or estimating certain enumerative combinatorial quantities.

\medskip

The first goal of our paper is to extend this theory to higher dimensional permutations and permutons. Viewing a permutation as a map $\sigma: [n] \to [n]$, a \vocab{$d$-dimensional permutation} is analogously a map $[n] \to [n]^{d-1}$ which is a permutation restricted to each coordinate. Much like an ordinary permutation corresponds to a measure on $[0, 1]^2$ (recall \cref{eq:permut-perm}), a $d$-dimensional permutation corresponds to a measure on $[0, 1]^d$. We thus introduce $d$-dimensional permutons and establish an analogous condition for convergence of high-dimensional permutations to these limiting objects -- see \cref{sect:main 1} for further details.

Our second goal is to describe the permuton limit of certain natural high-dimensional permutations uniformly sampled from two different families. The first family consists of certain 3-dimensional permutations encoding \vocab{Schnyder wood triangulations} \cite{schnyderoriginal}, which are a well-studied family of planar maps decorated by three spanning trees; see for instance \cite{schnyderoriginal, MATHESON1996565, NAGAMOCHI2004223, felsner2008schnyder}. In particular, Schnyder woods are known to converge in the peanosphere sense to SLE-decorated LQG surfaces \cite{li2022schnyder}. The second family consists of $d$-dimensional generalizations of separable permutations, which are in bijection with guillotine partitions~\cite{asinowski2008separable}. In both cases, we show that the limiting permutons can be described in terms of natural objects arising in random geometry, such as the CRT, SLEs, and LQGs. This extends evidence of universality shown in the two-dimensional case by the Brownian separable permutons and skew Brownian permutons -- see \cref{sect:main2} for further details.

\medskip

In the remainder of this introduction, we describe our results more precisely and provide the necessary background to state them rigorously.

\subsection{Characterization of high-dimensional permuton convergence}\label{sect:main 1}

We begin by stating a precise definition for $d$-dimensional permutations.\footnote{Other notions of high-dimensional permutations have also been considered in the literature. See the end of \cref{sec:conc} for more details.}

\begin{definition}
Let $d \ge 2, n \ge 1$ be integers. A \vocab{$d$-dimensional permutation of size $n$} (or \vocab{$d$-permutation} for short) is a function $\sigma: [n] \to [n]^{d-1}$ such that the restriction of $\sigma$ to each coordinate is an ordinary permutation. We let $\mc{S}_{d,n}$ denote the set of $d$-dimensional permutations of size $n$.
\end{definition}

\noindent Following \cite{bonichon2022baxter}, for all $1 \le j \le d-1$, we denote the $j$--th coordinate of $\sigma(i)$ by $\sigma(i)^{(j)}$ -- in other words, we write 
\[
    \sigma(i) = \left(\sigma(i)^{(1)}, \sigma(i)^{(2)}, \cdots, \sigma(i)^{(d-1)}\right).
\]
Moreover, we use the shorthand 
\begin{equation}\label{eq:perm-shorthand}
    \sigma = \left(\sigma^{(1)}, \cdots, \sigma^{(d-1)}\right),
\end{equation}
where $\sigma^{(j)}$ is the $n$-tuple $(\sigma(1)^{(j)}, \cdots, \sigma(n)^{(j)})$, to write our $d$-permutations more concisely. We also denote the size of a permutation $\sigma$ by $|\sigma|$.

Much like ordinary $2$-dimensional permutations may be viewed as $n$ points $(i, \sigma(i))$ in an $n \times n$ grid, $d$-dimensional permutations may be viewed as $n$ points $(i,\sigma(i)^{(1)}, \sigma(i)^{(2)}, \cdots, \sigma(i)^{(d-1)})$ in the $d$-dimensional grid $[n]^d$. From this point of view, the next definition provides the natural candidate limiting objects for describing the permuton limits of high-dimensional permutations.

\begin{definition}
Let $d \ge 2$ be an integer. A \vocab{$d$-dimensional permuton} (or \vocab{$d$-permuton} for short) is a Borel probability measure on $[0, 1]^d$ whose $d$ $1$-dimensional marginals are each uniform on $[0, 1]$.
\end{definition}

In statistics, $d$-dimensional permutons are called \emph{copulae} and are typically studied from a rather different perspective compared to the probabilistic one.

For each $d$-dimensional permutation $\sigma_n$ of size $n$, we can associate to it the $d$-permuton 
\begin{equation}\label{eq:defn-d-perm}
    \mu_{\sigma_n}(d\vec{x}) = n^{d-1} \cdot \mathds{1}\left\{\sigma(\lceil nx_1 \rceil) = (\lceil nx_2 \rceil, \cdots, \lceil nx_d \rceil)\right\} d\vec{x},
\end{equation}
where $d\vec{x}$ is Lebesgue measure on $[0, 1]^d$. Graphically, we may describe $\mu_{\sigma_n}$ as shading in boxes of side length $\frac{1}{n}$ corresponding to the values of ${\sigma_n}$. The middle-right-hand side of \cref{fig:permuton} shows the permuton $\mu_{\sigma}$ corresponding to a particular $3$-permutation of size $5$ (where each box has total uniform mass $\frac{1}{5}$).

We wish to describe conditions under which a sequence of such permutons $\mu_{\sigma_n}$ converges in the weak topology to a limiting permuton $\mu$ as $n \to \infty$, meaning that for every (bounded) continuous function $f: [0, 1]^d \to \RR$, we have 
\[
    \int_{[0, 1]^d} f \, d\mu_{\sigma_n} \to \int_{[0, 1]^d} f \, d\mu.
\]
Earlier in the introduction, we described that in the 2-dimensional case this convergence is encoded by convergence of pattern frequencies, and we provided the definition in \cref{2dpattern}. The definitions for general $d$-dimensional permutations and permutons are as follows.

\begin{definition}\label{defnpattern}
Let $\tau \in \mc{S}_{d, k}$ and $\sigma \in \mc{S}_{d, n}$ for integers $1 \le k \le n$. For a subset $I \in \binom{[n]}{k}$ (meaning that $I$ is some subset of $\{1, \cdots, n\}$ of size $k$), we say that \vocab{$\tau$ is the pattern of $\sigma$ on $I$}, denoted $\tau = \pat_I(\sigma)$, if the values of $\sigma$ restricted to $I$ are in the same relative order as $\tau$. That is, if $I = \{i_1 < i_2 < \cdots < i_k\}$, then $\tau$ satisfies
\[
    \tau(a)^{(j)} < \tau(b)^{(j)} \iff \sigma(i_a)^{(j)} < \sigma(i_b)^{(j)}
\]
for all $a, b \in [k]$ and $1 \le j \le d-1$. Furthermore, we define
\[
    \text{occ}(\tau, \sigma) = \#\left\{I \in \binom{[n]}{k}: \tau = \pat_I(\sigma)\right\}\quad \text{ and } \quad \freq(\tau, \sigma) = \frac{\text{occ}(\tau, \sigma)}{\binom{n}{k}}.
\]
We also define $\text{freq}(\tau, \sigma) = 0$ if $|\tau| > |\sigma|$.
\end{definition}
Pattern frequency for permutons is similarly defined by reading off the relative order from $k$ iid sampled points from $\mu$.

\begin{definition}\label{defnsamplefrompermuton}
For a $d$-dimensional permuton $\mu$, let $\vec{x}_1, \cdots, \vec{x}_k$ be $k$ iid points sampled from $\mu$, and let 
\[
    P_\mu[k] = \text{Perm}(\vec{x}_1, \cdots, \vec{x}_k)
\] 
be the unique $d$-dimensional permutation of size $k$ in the same relative order as the points $\vec{x}_i$. (In this notation, we suppress the dependence on the points $\vec{x}_i$.) In other words, let $f:[k] \to [k]$ be such that for all $i$, $\vec{x}_i$ has the $f(i)$--th biggest first coordinate out of all points (so $f$ is a permutation almost surely). Then $P_\mu[k]$ is the unique (random) element of $\mc{S}_{d, k}$ such that
\[
    P_\mu[k](f(a))^{(j)} < P_\mu[k](f(b))^{(j)} \iff \vec{x}_a^{(j+1)} < \vec{x}_b^{(j+1)}
\]
for all $1 \le j \le d-1$ and all $a, b \in [k]$. For any $\tau \in \mc{S}_{d, k}$, we then define \vocab{the frequency of the pattern $\tau$ in the permuton $\mu$} as
\[
    \freq(\tau, \mu) = \int_{([0, 1]^d)^k} \mathds{1}\{P_\mu[k] = \tau\} \; \mu(d\vec{x}_1) \cdots \mu(d\vec{x}_k).
\]
\end{definition}

Our first main result is a generalization of \cite[Theorem 2.5]{Bassino_2020} to $d$-dimensional permutations and permutons.

\begin{theorem}\label{tfaemain}
Let $\sigma_n$ be a random $d$-dimensional permutation of size $n$ for all $n \in \NN$. Then the following are equivalent:
\begin{enumerate}[(1)]
    \item $\mu_{\sigma_n}$ converges weakly to a (possibly random) $d$-dimensional permuton $\mu$.
    \item The vector $(\freq(\tau, \sigma_n))_{\tau}$, indexed by $d$-dimensional permutations $\tau$ of all sizes, converges in distribution in the product topology to some random vector $(v_\tau)_\tau$.
    \item For all $d$-dimensional permutations $\tau$, $\EE[\freq(\tau, \sigma_n)]$ converges to some constant $c_\tau$.
    \item For each $k$, the random $d$-dimensional permutation $\pat_{I_{n,k}}(\sigma_n)$, where $I_{n, k}$ is a uniform subset of $[n]$ of size $k$ independent of $\sigma_n$, converges in distribution to a random $d$-dimensional permutation $\rho_k$. In other words, for all permutations $\tau$ of size $k$,
    \[
        \lim_{n \to \infty} \PP\left(\pat_{I_{n,k}}(\sigma_n) = \tau\right) = \PP(\rho_k =\tau).
    \]
\end{enumerate}
When these conditions hold, we have for all $d$-dimensional permutations $\tau$ of size $k$ that 
\[\EE[\text{freq}(\tau, \mu)] = \EE[v_\tau] = c_\tau = \PP(\rho_k = \tau),\] 
and furthermore the vector $(v_\tau)_\tau$ is identically distributed as $(\text{freq}(\tau, \mu))_\tau$ and $\rho_k=P_\mu[k]$ in distribution.
\end{theorem}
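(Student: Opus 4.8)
The plan is to follow the strategy of \cite{hoppen2012limits,Bassino_2020} in the two-dimensional case, adapting each step to the $d$-dimensional setting; the combinatorial bookkeeping changes (patterns now record $d-1$ relative orders instead of one), but the topological backbone is identical. First I would establish the purely deterministic building block: for a fixed (deterministic) $d$-permutation $\sigma$ of size $n$ and a fixed size $k$, the quantity $\freq(\tau,\sigma)$ equals $\PP(\pat_{I_{n,k}}(\sigma)=\tau)$ where $I_{n,k}$ is a uniform $k$-subset, essentially by definition. Next I would prove the ``subsequence/sampling consistency'' lemma: if $\vec y_1,\dots,\vec y_n$ are $n$ iid points from a permuton $\mu$ and we then select a uniform $k$-subset, the resulting sampled pattern has the same law as $P_\mu[k]$ — this is the de Finetti-style exchangeability input that lets us pass between ``sample $k$ directly'' and ``sample $n$ then subsample.'' Combined with the fact that $\mu_{\sigma_n}\to\mu$ weakly implies (after a standard argument about iid samples from weakly convergent measures, using that the ``ties'' event has $\mu^{\otimes k}$-measure zero since marginals are atomless) that $\freq(\tau,\mu_{\sigma_n})\to\freq(\tau,\mu)$, this gives the chain of implications.

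Concretely, the skeleton of implications I would run is: (1)$\Rightarrow$(2): weak convergence of $\mu_{\sigma_n}$ (a.s. along a coupling, or via Skorokhod) yields convergence of each $\freq(\tau,\sigma_n)=\freq(\tau,\mu_{\sigma_n})+o(1)$ to $\freq(\tau,\mu)$; since the space of vectors $(\freq(\tau,\cdot))_\tau$ lives in a compact metrizable product space $\prod_\tau[0,1]$, convergence in distribution follows, and the limit $v_\tau$ is identified with $\freq(\tau,\mu)$ in distribution. (2)$\Rightarrow$(3): bounded convergence, since $\freq(\tau,\sigma_n)\in[0,1]$. (3)$\Rightarrow$(4): for fixed $k$, $\EE[\freq(\tau,\sigma_n)]=\PP(\pat_{I_{n,k}}(\sigma_n)=\tau)$ by the deterministic identity above (averaging over the randomness of $\sigma_n$), so convergence of these expectations is exactly convergence in distribution of $\pat_{I_{n,k}}(\sigma_n)$ to some $\rho_k$; the $\rho_k$ are automatically consistent under subsampling because the $\freq$'s are. (4)$\Rightarrow$(1): this is the substantive direction. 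Here I would use the consistent family $(\rho_k)_k$ to build a random permuton: by Kolmogorov extension the $\rho_k$ determine a random infinite exchangeable array, and an Aldous–Hoover / permuton-reconstruction argument (the $d$-dimensional analogue of \cite[Section 2]{hoppen2012limits}) produces a random $\mu$ with $\rho_k=P_\mu[k]$ in law. Then one checks $\mu_{\sigma_n}\to\mu$ weakly by verifying convergence of all pattern frequencies and invoking the (separately proven, or cited and extended) fact that the pattern-frequency functionals determine the weak topology on the space of ($d$-)permutons, i.e.\ the analogue of \cite[Theorem 1.6]{hoppen2012limits}.

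The main obstacle is the (4)$\Rightarrow$(1) step, specifically the reconstruction of a genuine random $d$-permuton from a consistent family of finite pattern distributions and the proof that pattern frequencies metrize weak convergence on $d$-permutons. In two dimensions this rests on \cite{hoppen2012limits}, and I expect the argument to go through verbatim after replacing ``permutation'' by ``$d$-permutation'' everywhere: the key structural facts — that a permuton is determined by its finite samples (a separability/measure-determining-class argument, since continuous functions on $[0,1]^d$ are uniformly approximable by functions depending on finitely many ``grid boxes''), that the space of $d$-permutons is compact in the weak topology (it is a weakly closed subset of probability measures on the compact set $[0,1]^d$), and that the map $\mu\mapsto(\freq(\tau,\mu))_\tau$ is continuous and injective — all have dimension-agnostic proofs. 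I would state and prove these as a separate lemma (or cite the $d=2$ versions and note the extension is routine), then assemble the equivalences and read off the final identities $\EE[\freq(\tau,\mu)]=\EE[v_\tau]=c_\tau=\PP(\rho_k=\tau)$ and the distributional identifications $(v_\tau)_\tau\overset{d}{=}(\freq(\tau,\mu))_\tau$, $\rho_k\overset{d}{=}P_\mu[k]$ directly from the identifications made in each implication above.
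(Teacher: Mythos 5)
Your skeleton for (1)$\Rightarrow$(2)$\Rightarrow$(3)$\Rightarrow$(4) matches the paper's, relying on the same ingredients: the continuity of $\mu\mapsto\freq(\tau,\mu)$ (the paper's \cref{permconti}), the $O(1/n)$ comparison between $\freq(\tau,\sigma_n)$ and $\freq(\tau,\mu_{\sigma_n})$ (the paper's \cref{permapprox}), bounded convergence, and the identity $\EE[\freq(\tau,\sigma_n)]=\PP(\pat_{I_{n,k}}(\sigma_n)=\tau)$. The substantive divergence is in (4)$\Rightarrow$(1). You propose to first \emph{construct} the limiting random permuton $\mu$ from the consistent family $(\rho_k)_k$ via a Kolmogorov-extension and Aldous--Hoover--type reconstruction, and then verify $\mu_{\sigma_n}\to\mu$. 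The paper avoids explicit reconstruction entirely: it proves a quantitative sampling estimate (\cref{muvsmuk}, extended to random permutons in \cref{muvsmuk2}) showing $d_\square(\mu,\mu_{P_\mu[k]})\le d2^{d+2}k^{-1/4}$ with probability $1-\exp(-\sqrt k)$, then uses this plus Prohorov compactness to argue that every subsequential limit $\mu'$ of $\mu_{\sigma_n}$ is within $\omega(d2^{d+2}k^{-1/4})+2\sup|f|\exp(-\sqrt k)$ of $\mu_{\rho_k}$, forcing uniqueness of the subsequential limit without ever naming $\mu$ in advance.

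There are two issues worth flagging in your plan. First, the $d$-dimensional Aldous--Hoover / exchangeable-array reconstruction of a permuton from the $\rho_k$ laws is a nontrivial extension that you would actually have to prove (it does not appear in the cited $d=2$ sources in the form you invoke, and the paper deliberately avoids needing it); treating it as a black box leaves a real gap. Second, and more delicate, your final step --- ``verify convergence of all pattern frequencies and invoke the fact that pattern-frequency functionals determine the weak topology'' --- conflates the deterministic and random settings. The deterministic statement is the paper's \cref{permconti}, but convergence in distribution of random permutons requires $\EE[f(\mu_{\sigma_n})]\to\EE[f(\mu)]$ for \emph{all} bounded continuous $f$ on the space of permutons, not just the $\freq(\tau,\cdot)$ functionals, and bridging from the latter to the former is precisely where the quantitative approximation lemma does its work; without it (or without a Stone--Weierstrass-type density argument for polynomial pattern-frequency functionals), this step is incomplete. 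The paper's uniqueness result for random permutons (\cref{randunique}) is also driven by \cref{muvsmuk}, so your route would ultimately need that estimate too; once you import it, the explicit reconstruction becomes unnecessary and you recover the paper's shorter argument. In short, the structure of your proposal is sound and would lead to a correct proof if the reconstruction and metrization steps were carried out in full, but as written it leans on nontrivial extensions that the paper's more elementary compactness-plus-approximation argument sidesteps.
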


The proof of \cref{tfaemain} can be found in \cref{theorysection}. In that same section, we will also establish some other results to develop a complete theory for $d$-dimensional permutons.

\subsection{Random high-dimensional permuton limits for Schnyder wood permutations and $d$-separable permutations}\label{sect:main2}

Our next main results apply the general theory developed in \cref{theorysection} to prove convergence of two natural models of random high-dimensional permutations. 

\subsubsection{The 3-dimensional Schnyder wood permuton}

We first introduce Schnyder wood triangulations, following the conventions established in \cite{li2022schnyder}, and define the corresponding $3$-dimensional Schnyder wood permutations. We then motivate our interest in their study.

\begin{definition}
Let $M$ be a simple plane triangulation with an unbounded outer triangular face, and call the vertices and edges not adjacent to this unbounded face the \vocab{internal vertices} and \vocab{internal edges} of $M$, respectively. Equip $M$ with a \vocab{$3$-orientation}, meaning that all edges are directed so that each internal vertex has outdegree $3$. A \vocab{Schnyder wood} is a coloring of the internal edges of $M$ by the colors \{blue, green, red\} satisfying \vocab{Schnyder's rule}, which specifies that the edges adjacent to an internal vertex must be (in clockwise order) incoming blue, outgoing red, incoming green, outgoing blue, incoming red, outgoing green, and that each vertex has one outgoing edge of each color. We say that a Schnyder wood triangulation has \vocab{size} $n$ if there are $n$ internal vertices. 
\end{definition}

See \cref{fig:sampleschnyder} for an example. In any Schnyder wood triangulation, the blue edges form a tree rooted at one of the outer vertices, which we call the \vocab{blue root}; this tree is a spanning tree on the internal vertices and the blue root. We define the \vocab{red root} and \vocab{green root} similarly. Furthermore, Schnyder's rule forces all edges of a given color to be directed toward the root (more details can be found in the introduction of \cite{li2022schnyder}). This means that a Schnyder wood triangulation can be thought of as a triangulation decorated by three rooted trees, each of which contains all $n$ internal vertices.

\begin{definition}\label{schnyderpermutationdefn}
Let $M$ be a Schnyder wood triangulation of size $n$. The 3-dimensional \vocab{Schnyder wood permutation} 
\[
    \sigma_M = \left(\sigma_M^g, \sigma_M^r\right)
\]
associated with $M$ is the $3$-permutation $\sigma: [n] \to [n]^2$ defined as follows. Consider the depth-first clockwise traversals of the three trees of $M$. Assign each of the $n$ internal vertices a blue, red, and green label, which are the positions (from $1$ to $n$, ascending) in which the vertex appears in the corresponding traversals. Then $\sigma_M^g(i)$ (resp.\ $\sigma_M^r(i)$) is the green (resp.\ red) label of the vertex with blue label $i$.
\end{definition}

\cref{fig:sampleschnyder} below shows a Schnyder wood triangulation $M$ of size $10$, along with its Schnyder wood permutation $\sigma_M$. For example, $\sigma_M(3) = (5, 9)$, because the vertex visited third by the blue tree (that is, with blue label $3$) is visited fifth by the green tree and ninth by the red tree.

\begin{remark}\label{inverseofschnyder}
For ease of reading diagrams and following arguments in this paper, we note that $(\sigma_M^g)^{-1}$ -- that is, the inverse permutation of the first coordinate of $\sigma_M$ -- is thus the sequence of blue labels visited by the green tree's clockwise traversal, and similarly $(\sigma_M^r)^{-1}$ is the sequence of blue labels visited by the red tree's clockwise traversal. For example in \cref{fig:sampleschnyder}, $(\sigma_M^g)^{-1} = (10, 6, 1, 5, 3, 4, 9, 8, 2, 7)$ and $(\sigma_M^r)^{-1} = (8, 7, 2, 10, 9, 4, 6, 5, 3, 1)$.
\end{remark}

\begin{figure}[h]
\centerline{\includegraphics[width=14cm]{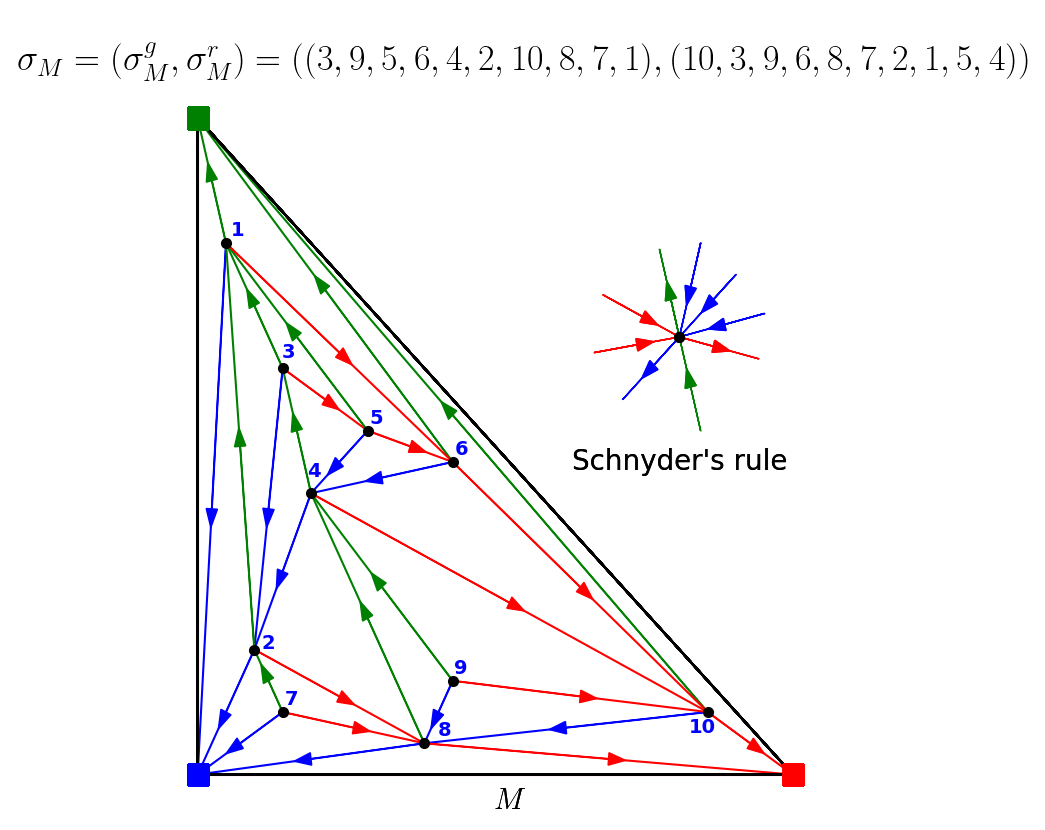}}
\caption{A Schnyder wood triangulation of size 10 with its corresponding Schnyder wood permutation on top. The roots are colored in their respective colors. We include only the blue labels for clarity. This Schnyder wood will be used as a running example also for several constructions in \cref{schnydersection}.}\label{fig:sampleschnyder}
\end{figure}

\medskip

Schnyder wood triangulations were first introduced in \cite{schnyderoriginal} as a way to embed planar graphs in the plane using straight line segments and with integer coordinates. Later applications of Schnyder wood triangulations include constructing dominating sets \cite{MATHESON1996565} and determining maximal planarity \cite{NAGAMOCHI2004223}. In more recent work, \cite{li2022schnyder} shows that Schnyder wood triangulations viewed as decorated planar maps converge to an LQG surface decorated with SLE curves in the peanosphere sense; specifically, the exploration curves of the three trees converge to three SLEs coupled in the imaginary geometry sense with pairwise angles all equal to $\frac{2\pi}{3}$.

As shown in \cref{schnyderwoodbijectperm}, the map $M \mapsto \sigma_M$ is a bijection between Schnyder wood triangulations of size $n$ and Schnyder wood permutations of size $n$. One of the motivations for studying Schnyder wood permutations is to explore how this connection between Schnyder wood triangulations and permutations at the discrete level extends to the continuum level; as explained later in  \cref{marginaldetermines,SLES-LQG} and the preceding discussion, the continuum connection is more subtle than one would expect.

Hence, we are interested in understanding the permuton limit for uniform large Schnyder wood permutations. The left-hand side of \cref{fig:largeexamples} illustrates an example -- notably, the limiting shape does not appear to be a deterministic measure on $[0, 1]^3$, and the support exhibits fractal behavior. Indeed, the limiting permuton for Schnyder wood permutations will be random, but some additional notation is needed to describe it precisely. We will do so in more detail in \cref{backgroundsection} and provide only the main features here. Specifically, we will introduce the skew Brownian permutons, a two-parameter family of universal limiting permutons introduced in \cite{borga2023skew} and further studied in \cite{borga2021permuton}, and then explain how they are used to construct the permuton limit of Schnyder wood permutations. 

\begin{figure}[!h]
\centering
\includegraphics[width=0.45\textwidth]{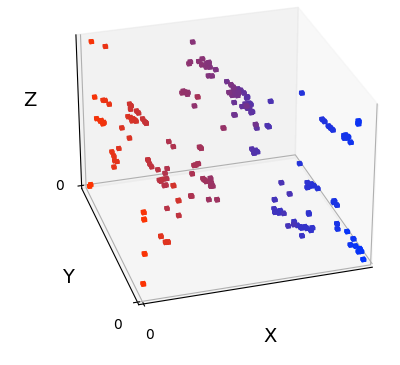}
\hspace{0.05\textwidth}
\includegraphics[width=0.45\textwidth]{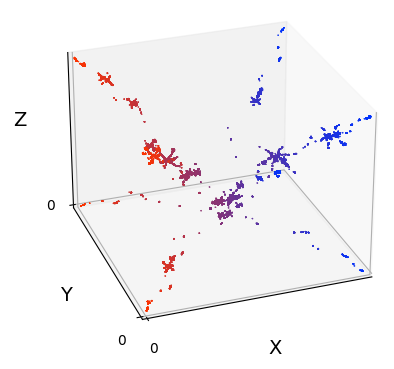}
\hspace{0.05\textwidth}
\centerline{\includegraphics[width=\textwidth]{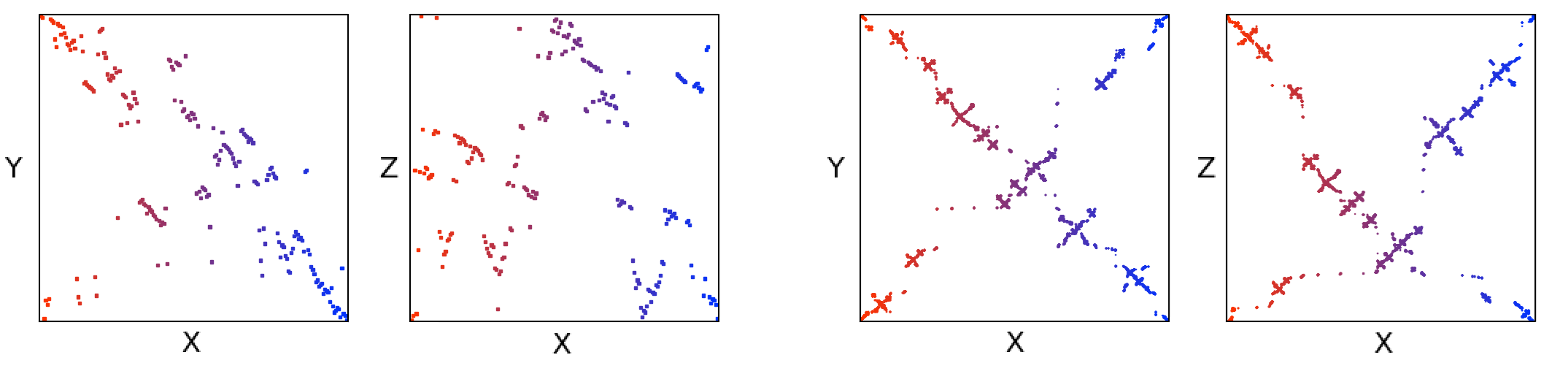}}
\caption{The permutons associated to a uniformly sampled Schnyder wood permutation of size $200$ (left) and a uniformly sampled $3$-separable permutation of size $40077$ (right). Much like in \cref{fig:permuton}, the color indicates the $x$-coordinate of the box (ranging from red to blue as $x$ ranges from $0$ to $1$). The two-dimensional marginals (projecting down onto the first and second coordinates and onto the first and third coordinates) are shown below their corresponding permutons. We highlight that the main difference with \cref{fig:3dpermutons} is that here the two simulations are for uniform Schnyder wood and 3-separable \emph{permutations}, respectively, while in \cref{fig:3dpermutons} the simulations are obtained by sampling permutations from the corresponding \emph{limiting permutons}, as discovered in \cref{mainresult3} and \cref{mainresult4}.}
\label{fig:largeexamples}
\end{figure}

\medskip

Let $\rho \in (-1,1)$ and $q \in [0, 1]$ be parameters, and let $W_\rho=(X_\rho, Y_\rho)$ be a two-dimensional \vocab{Brownian excursion of correlation $\rho$ in the first quadrant} on the time interval $[0, 1]$, which is a two-dimensional Brownian motion of correlation $\rho$ conditioned to stay in $\RR_{\ge 0}^2$ and start (at time 0) and end (at time 1) at the origin $(0, 0)$. Using a specific $q$-dependent system of stochastic differential equations\footnote{The specific stochastic differential equations used here are detailed in \cref{skewbrowniansdedefn} but are not necessary for understanding the statement of our main result in \cref{mainresult3}.} driven by $W_\rho$ and indexed by $u \in [0, 1]$, we may define a set of (coupled) one-dimensional stochastic processes $\left(Z_{\rho, q}^{(u)}(t)\right)_{t\in[0,1]}$. These processes may then be used to define the following random function on $[0, 1]$:
\begin{equation}\label{skewbrowniansdedefn2}
\phi_{\rho, q}(t) = \text{Leb}\left(\left\{x \in [0, t): Z_{\rho, q}^{(x)}(t) < 0\right\} \cup \left\{x \in [t, 1]: Z_{\rho, q}^{(t)}(x) \ge 0\right\}\right).
\end{equation}
We then define the \vocab{skew Brownian permuton $\mu_{\rho, q}$ driven by $W_\rho$ and of skewness $q$} to be the (random) measure on $[0, 1]^2$ satisfying, for any measurable set $A\subseteq[0,1]^2$,
\[
    \mu_{\rho, q}(A) = (\text{Id}, \phi_{\rho, q})_\ast \text{Leb}(A) = \text{Leb}\left(\left\{t \in [0, 1]: (t, \phi_{\rho, q}(t)) \in A\right\}\right).
\]
The interpretation of $\mu_{\rho, q}$ will become clearer in the subsequent sections of the paper, but intuitively, $Z_{\rho, q}^{(u)}$ is a collection of paths on $[0,1]$ indexed by $u\in[0, 1]$, and $\phi_{\rho, q}(t)$ encodes what proportion of these paths lie below the path $Z_{\rho, q}^{(t)}$. $\mu_{\rho, q}$ is then the ``continuous permutation'' corresponding to this encoding.

In previous work \cite{borga2022scaling, borga2021permuton}, the permuton limits for uniform Baxter, strong-Baxter, and semi-Baxter permutations have all been determined to be various elements of this universality class. Specifically, the limits for the three classes of permutations are approximately $\mu_{-0.5, 0.5}$, $\mu_{-0.22, 0.3}$, and $\mu_{-0.81, 0.5}$, respectively. Our next main result describes the permuton limit of Schnyder wood permutations in terms of $\mu_{\rho, q}$ for a fourth choice of these parameters. 

\begin{theorem}\label{mainresult3}
Let $\sigma_{n}$ be a uniform 3-dimensional Schnyder wood permutation of size $n$. Let
\[\rho = -\frac{\sqrt{2}}{2}\qquad \text{and} \qquad q = \frac{1}{1 + \sqrt{2}}.\]
Then $\mu_{\sigma_{n}}$ converges in distribution to the \vocab{Schnyder wood permuton} $\mu_S$ defined as follows. Let $W_\rho$ be a two-dimensional Brownian excursion of correlation $\rho$ and $W'_\rho$ be its time-reversal. Let $\mu_{\rho, q}^g$ and $\mu_{\rho, q}^r$ be (coupled) skew Brownian permutons of parameter $(\rho, q)$, driven by $W'_\rho$ and $W_\rho$ respectively, with associated random functions $\phi^g_{\rho,q}$ and $\phi^r_{\rho,q}$ as in \cref{skewbrowniansdedefn2}. Then for each measurable subset $A \subseteq [0, 1]^3$, the Schnyder wood permuton assigns to it the measure
\begin{equation}\label{schnyderpermutonexplicit}
    \mu_S(A) = \text{Leb}\left(\left\{t \in [0, 1]: \left(t \,,\,  \phi^g_{\rho,q}(1 - t) \, , \,
    1 - \phi^r_{\rho,q}(t)\right) \in A\right\}\right).
\end{equation}
In particular, the two 2-dimensional marginals $\mu_S^{g}$ and $\mu_S^{r}$ of $\mu_S$, defined for all measurable sets $B, C \subseteq [0,1]$ by
\begin{equation*}
\mu_S^{g}(B\times C)=\mu_S(B \times C \times [0,1]) \qquad \text{ and } \qquad \mu_S^{r}(B\times C) = \mu_S(B \times [0,1] \times C), 
\end{equation*}
satisfy
\begin{equation}\label{eq:weifvwebfowep}
\mu_S^{g}=f_\ast(\mu_{\rho, q}^g) \quad \text{ and } \quad \mu_S^{r}=h_\ast(\mu_{\rho, q}^r), 
\end{equation}
where the maps $f, h: [0, 1]^2 \to [0, 1]^2$ are defined by $f(x, y) = (1-x, y)$ and $h(x, y) = (x, 1-y)$.
\end{theorem}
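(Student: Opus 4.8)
The plan is to prove the convergence $\mu_{\sigma_n}\to\mu_S$ in distribution by a direct route: encode the Schnyder wood permutation by a discrete lattice path together with some ``branching data'', prove an invariance principle for this encoding, and then push the convergence through a functional that is almost surely continuous into the space of Borel probability measures on $[0,1]^3$ with the weak topology, whose continuum value is $\mu_S$. The characterization of \cref{tfaemain}, and the rest of the theory of \cref{theorysection}, supply the equivalent reformulations in terms of pattern frequencies, but the functional route has the advantage of pinning the limit down on the nose. Once $\mu_{\sigma_n}\to\mu_S$ is established, the marginal identities \eqref{eq:weifvwebfowep} are read off directly from \eqref{schnyderpermutonexplicit}.

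\textbf{Step 1 (discrete encoding).} Using the bijection $M\mapsto\sigma_M$ of \cref{schnyderwoodbijectperm} together with the combinatorics of $3$-orientations, I would encode a uniform Schnyder wood triangulation of size $n$ by a lattice excursion $\mathcal{W}_n=(\mathcal{X}_n,\mathcal{Y}_n)$ in the quadrant $\mathbb{Z}_{\ge0}^2$, decorated with data recording at each step a left/right branching choice. Reading the internal vertices in the order of the blue tree's clockwise depth-first traversal -- which is exactly the index $i$ of $\sigma_M$ -- the increments of $\mathcal{W}_n$ track the positions of vertex $i$ relative to the green and red traversals, and Schnyder's rule together with the outdegree-$3$ condition constrains these increments to a prescribed set of moves. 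The key structural point is that the green and red labels of the vertices are recovered from $\mathcal{W}_n$ and its branching data by the discrete coalescent-walk construction underlying the skew Brownian permuton (recalled in \cref{backgroundsection}), so that $\sigma_n^g$ and $\sigma_n^r$ are two $2$-dimensional permutations built from \emph{the same} walk. A short computation with the resulting step distribution shows that its correlation tends to $-\tfrac{\sqrt2}{2}$, accounting for $\rho$, while the asymptotic fraction of left-branchings concentrates at $\tfrac{1}{1+\sqrt2}$, accounting for $q$.

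\textbf{Step 2 (invariance principle).} I would then prove a conditioned Donsker-type theorem: after rescaling space by $n^{-1/2}$ and time by $n^{-1}$, $\mathcal{W}_n$ converges in distribution, in $C([0,1],\RR^2)$ with the uniform topology, to the two-dimensional Brownian excursion $W_\rho=(X_\rho,Y_\rho)$ of correlation $\rho=-\tfrac{\sqrt2}{2}$ in the first quadrant (this is closely related to the peanosphere convergence of Schnyder woods established in \cite{li2022schnyder}). By Skorokhod's representation theorem I may assume this convergence, jointly with that of the rescaled branching profile, holds almost surely on one probability space. The decisive structural observation is that the green and red traversals run in opposite cyclic senses relative to the blue traversal, so the walk that governs the blue-to-green relabeling is the time-reversal of the one that governs the blue-to-red relabeling; this is precisely why $W'_\rho$, the time-reversal of $W_\rho$, appears next to $W_\rho$ in \eqref{schnyderpermutonexplicit}.

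\textbf{Step 3 (continuity and conclusion).} The technical heart is to show that the map sending $\mathcal{W}_n$ and its branching data to the permuton $\mu_{\sigma_n}$ is almost surely continuous at the limiting excursion, with continuum value the measure built from the SDE solutions $Z_{\rho,q}^{(u)}$ and the functions $\phi^g_{\rho,q},\phi^r_{\rho,q}$ via \eqref{skewbrowniansdedefn2}. This splits into: (a) joint convergence of the discrete coalescent-walk trajectories to the family $\bigl(Z_{\rho,q}^{(u)}\bigr)_{u\in[0,1]}$, using well-posedness of the SDE system of \cref{skewbrowniansdedefn} and continuity of its solution map in the driving path; (b) uniform convergence of the discrete ``proportion of paths lying below the $t$-th path'' to $\phi_{\rho,q}$ after discarding a Lebesgue-null set of exceptional times; and (c) the bookkeeping that reassembles these data into the three coordinates of $\mu_{\sigma_n}$ and passes to the limit, carefully tracking the reflection $t\mapsto 1-t$ produced by reversing the green traversal and the reflection $y\mapsto 1-y$ produced by reversing the red values -- these yield exactly the maps $f$ and $h$. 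Combining Steps 1--3 gives $\mu_{\sigma_n}\to\mu_S$ in distribution, and \eqref{eq:weifvwebfowep} follows by projecting \eqref{schnyderpermutonexplicit} onto the first-and-second and first-and-third coordinates and using that each $\phi_{\rho,q}$ is Lebesgue-measure-preserving, so that $f_\ast(\mu_{\rho,q}^g)$ and $h_\ast(\mu_{\rho,q}^r)$ are genuine $2$-dimensional permutons. I expect the main obstacle to be twofold: setting up the combinatorial encoding of Step 1 so that the two relabelings genuinely come from a single walk (this is also where the subtlety of the continuum connection alluded to in the introduction lives), and making the joint continuity of Step 3 strong enough to control the \emph{full} $3$-dimensional permuton -- i.e.\ the coupling between its two marginals -- rather than the two marginals separately.
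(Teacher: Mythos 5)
Your Steps 1 and 2 are essentially the approach the paper takes: the encoding of a uniform Schnyder wood by a conditioned $2$-dimensional lattice excursion (Lemma \ref{randomwalkisuniformschnyder1}, Proposition \ref{randomwalkisuniformschnyder2}), the observation that $\sigma_n^g$ and $\sigma_n^r$ are driven by the \emph{same} walk and its time-reversal (Proposition \ref{mainresult1}), and a conditioned Donsker-type theorem giving the Brownian excursion of correlation $-\sqrt2/2$ in the limit (Corollary \ref{jointproductpreresult}). You correctly identify the time-reversal mechanism as the structural reason for the $W_\rho'$ vs.\ $W_\rho$ coupling.

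Step 3, however, is where you depart from the paper, and where the gap is. You propose to prove that the map from the discrete walk (plus branching data) to the permuton $\mu_{\sigma_n}$ is almost surely continuous at the limiting excursion, and to conclude by the continuous mapping theorem. This is not what the paper does, and it is a genuine obstruction rather than a technical chore. The measure $\mu_{\sigma_n}$ depends on the \emph{entire family} of sample paths of the coalescent-walk processes; the map $(X,Y) \mapsto \mu_{\rho,q}$ is built from the solutions $Z_{\rho,q}^{(u)}$ for all $u\in[0,1]$ simultaneously (via $\phi_{\rho,q}$ in \eqref{skewbrowniansdedefn2}), and there is no known uniform-in-$u$ continuity statement for this solution map in the driving path. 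This is precisely the difficulty that the pattern-frequency characterization of Theorem \ref{tfaemain}(4) is designed to circumvent: to prove permuton convergence it suffices to show that, for each fixed $k$, the pattern extracted at $k$ \emph{independently uniformly sampled} indices converges in distribution. This reduces the problem to joint convergence of only \emph{finitely many} sample paths of the coalescent-walk processes (Corollary \ref{jointproductresult}), which is tractable via the pathwise SDE well-posedness of \cite{borga2023skew}. Your item (b) -- uniform convergence of the discrete ``proportion below'' function to $\phi_{\rho,q}$ -- is exactly the statement one would need but does not have; the pattern route replaces it with an almost-sure pointwise statement at finitely many random times, which is both sufficient and provable.

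Two further points are glossed over. First, the skewness $q=\frac{1}{1+\sqrt2}$ is \emph{not} an asymptotic fraction of left-branchings in any direct combinatorial sense; it arises from the relative heavy-tail asymptotics of the lengths of positive vs.\ negative excursions of the discrete coalescent-walk paths (Lemma \ref{asympreturntimes}, Corollary \ref{returnasymptotics}), computed via Wiener--Hopf-type estimates (Proposition \ref{nplimit}). Determining $q$ requires this local-limit analysis of return times, not just the step distribution. Second, a subtlety specific to Schnyder woods that your sketch does not confront is that the coalescent-walk processes do \emph{not} have starting points at every integer, but only at the (random) endpoints of the $(-k,1)$ steps; the paper needs Lemma \ref{schnyderlocationtail} to show these starting points are asymptotically evenly spaced with exponentially good probability, which is essential to identify the limiting SDE start time with $u$ in the scaling limit. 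Without that concentration estimate, the identification of the limit in Step 3(a) does not go through even along finitely many paths.
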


For simplicity, we will often write
\[
    \mu_S = \left(\mu_S^{g}\,,\,\mu_S^{r}\right).
\]
As shown in \cref{fig:schnyder_size3}, there are Schnyder wood triangulations $M$ (of the same size) with the same marginal $\sigma_M^g$ but different marginal $\sigma_M^r$. This immediately implies that, at the discrete level, $\sigma_M^g$ (or $\sigma_M^r$) alone does not determine $\sigma_M$. Surprisingly, the situation is quite different in the continuum limit.

\begin{figure}[ht]
\centerline{\includegraphics[width=16cm]{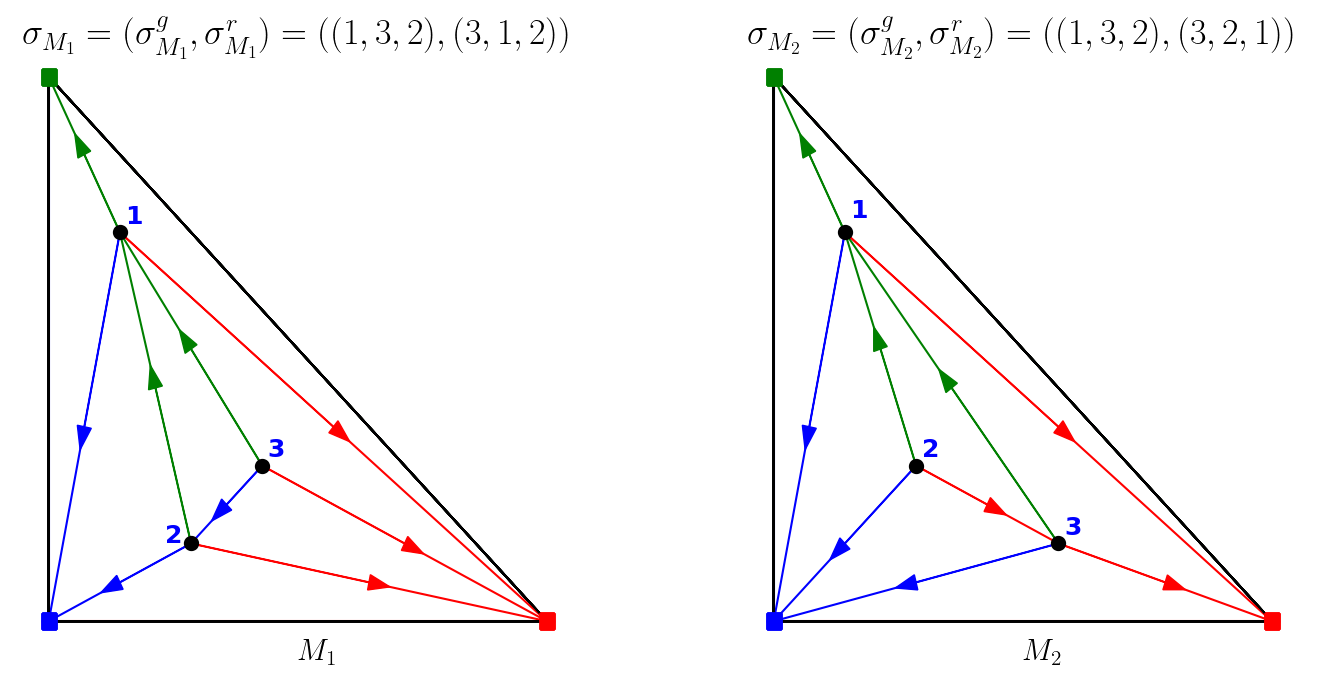}}
\caption{\label{fig:schnyder_size3}Two Schnyder woods $M_1$ and $M_2$ of size $3$, where $\sigma_{M_1}^g = \sigma_{M_2}^g$ but $\sigma_{M_1}^r \ne \sigma_{M_2}^r$.}
\end{figure}

\begin{proposition}\label{marginaldetermines}
The Schnyder wood permuton $\mu_S = \left(\mu_S^{g}\,,\,\mu_S^{r}\right)$ is determined by the marginal $2$-permuton $\mu_S^{g}$ (or by the marginal $2$-permuton $\mu_S^{r}$). That is, there exists a deterministic measurable function $F$ such that $\mu_S=F(\mu_S^{g})$.
\end{proposition}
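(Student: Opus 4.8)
The plan is to exploit the fact that the Schnyder wood permuton of \cref{mainresult3} is assembled from a \emph{single} Brownian excursion, together with reconstruction properties of skew Brownian permutons. First, a trivial reduction: since $f(x,y)=(1-x,y)$ and $h(x,y)=(x,1-y)$ are involutions, \cref{mainresult3} gives $\mu_{\rho,q}^g=f_\ast(\mu_S^g)$ and $\mu_{\rho,q}^r=h_\ast(\mu_S^r)$, so $\mu_{\rho,q}^g$ is a deterministic measurable function of $\mu_S^g$, and it is enough to (i) recover $\mu_{\rho,q}^r$ from $\mu_{\rho,q}^g$, and then (ii) recover $\mu_S$ from the pair $(\mu_{\rho,q}^g,\mu_{\rho,q}^r)$. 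Step (ii) is immediate from \cref{schnyderpermutonexplicit}: writing $r(t)=1-t$, that formula says $\mu_S=(\mathrm{Id},\,\phi^g_{\rho,q}\circ r,\,1-\phi^r_{\rho,q})_\ast\mathrm{Leb}$, and each of $\phi^g_{\rho,q},\phi^r_{\rho,q}$ is recovered Lebesgue-a.e.\ (hence as a Borel map) from $\mu_{\rho,q}^g=(\mathrm{Id},\phi^g_{\rho,q})_\ast\mathrm{Leb}$ and $\mu_{\rho,q}^r=(\mathrm{Id},\phi^r_{\rho,q})_\ast\mathrm{Leb}$ by disintegrating along the uniform first marginal.

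Everything thus reduces to step (i). The key structural point is that both $\mu_{\rho,q}^g$ and $\mu_{\rho,q}^r$ are \emph{deterministic} functions of the driving excursion: by the construction recalled in \cref{backgroundsection} (see \cite{borga2023skew}), there is a fixed Borel map $\Theta_{\rho,q}$ with $\mu_{\rho,q}=\Theta_{\rho,q}(W_\rho)$, so $\mu_{\rho,q}^g=\Theta_{\rho,q}(W'_\rho)$ and $\mu_{\rho,q}^r=\Theta_{\rho,q}(W_\rho)$, where $W_\rho\mapsto W'_\rho$ is the deterministic, continuous, involutive time-reversal on excursion space. Hence step (i) follows once one knows that $W'_\rho$ — equivalently $W_\rho$ — is almost surely a measurable function $\Psi_{\rho,q}(\mu_{\rho,q}^g)$ of $\mu_{\rho,q}^g$; this is precisely the assertion that a skew Brownian permuton a.s.\ determines its driving Brownian excursion, which I would draw from the skew-Brownian-permuton literature (\cite{borga2021permuton,borga2023skew}) or else establish directly as sketched below. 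Chaining
\[
\mu_S^g \ \xrightarrow{\ f_\ast\ }\ \mu_{\rho,q}^g \ \xrightarrow{\ \Psi_{\rho,q}\ }\ W'_\rho \ \xrightarrow{\ \text{time-reversal}\ }\ W_\rho \ \xrightarrow{\ \Theta_{\rho,q}\ }\ \mu_{\rho,q}^r
\]
and then applying the step-(ii) reconstruction to $(\mu_{\rho,q}^g,\mu_{\rho,q}^r)$ yields the required deterministic Borel function $F$ with $\mu_S=F(\mu_S^g)$ almost surely; the argument from $\mu_S^r$ is symmetric, recovering $W_\rho$ from $\mu_{\rho,q}^r=h_\ast(\mu_S^r)$.

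The main obstacle is the input that a skew Brownian permuton a.s.\ determines its driving excursion, for the parameters $\rho=-\sqrt{2}/2$, $q=1/(1+\sqrt{2})$ of \cref{mainresult3}. A self-contained route is the standard one: $\mu_{\rho,q}^g$ gives the function $\phi^g_{\rho,q}$, from which one recovers the whole coalescent-walk field $(Z_{\rho,q}^{(u)})_{u\in[0,1]}$, which in turn encodes $W_\rho$ (its first coordinate essentially from $Z^{(0)}_{\rho,q}$, its second from the accumulated local-time corrections along the family). In fact only a weak version of this is needed: recovery of $W_\rho$ up to the time-reversal ambiguity already suffices here, since applying $\Theta_{\rho,q}$ to the two candidate excursions returns the unordered pair $\{\mu_{\rho,q}^g,\mu_{\rho,q}^r\}$, and the already-known $\mu_{\rho,q}^g$ then singles out $\mu_{\rho,q}^r$ almost surely (they coincide only on a $\PP$-null event). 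The remaining work is routine: checking Borel measurability of each intermediate map — time-reversal on $C([0,1],\RR^2)$, the pushforwards $f_\ast,h_\ast$, the maps $\Theta_{\rho,q},\Psi_{\rho,q}$, and the final pushforward of \cref{schnyderpermutonexplicit} — so that $F$ is genuinely Borel, with $F$ left arbitrary on a $\PP$-null set.
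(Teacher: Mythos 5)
Your proposal is correct and matches the paper's own argument essentially step for step: the crux you identified — that the skew Brownian permuton a.s.\ determines its driving two-dimensional Brownian excursion — is exactly \cref{prop:det-perm} in the paper, citing \cite{borgagwynne2024}, so the self-contained reconstruction of $W_\rho$ from the coalescent-walk field that you sketched is not needed. The remaining chain (pull back $\mu_S^g$ to $\mu_{\rho,q}^g$ by $f$, recover $W'_\rho$, time-reverse to get $W_\rho$, solve the SDE to obtain $\phi^g_{\rho,q},\phi^r_{\rho,q}$, and assemble $\mu_S$ via \cref{schnyderpermutonexplicit}) is the same as the paper's, with your detour through $\mu_{\rho,q}^r=\Theta_{\rho,q}(W_\rho)$ being an equivalent reordering rather than a different method.
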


The proof of \cref{marginaldetermines} can be found at the end of \cref{permutonsubsection}, and it follows combining \cref{mainresult3} and \cite[Theorem 1.1]{borgagwynne2024}.

\medskip

For the reader familiar with the construction of skew Brownian permutons in terms of SLEs and LQG surfaces~\cite{borga2023skew}, we point out that the coupling of the two marginals of $\mu_S = \left(\mu_S^g \, , \, \mu_S^r \right)$ can be also geometrically described as follows. Let 
\[
    \kappa=16, \quad \gamma=1,\quad \text{and} \quad \chi=\frac{\sqrt \kappa}{2}- \frac{2}{\sqrt \kappa}= 3/2.
\]
Let $\hat{h}$ be a whole-plane Gaussian free field (viewed modulo additive multiples of $2\pi\chi$), and let $(\eta^b,\eta^r,\eta^g)$ be three whole-plane space-filling SLE$_\kappa$ counter-flow lines of $\hat{h}$ from $\infty$ to $\infty$ of angles $\left(0,\frac{2\pi}{3},\frac{4\pi}{3}\right)$, respectively.

Let $h$ be a random generalized function on $\mathbb{C}$, independent of $\hat{h}$, corresponding to a singly marked unit-area $\gamma$-Liouville quantum sphere $(\mathbb C, h, \infty)$, and let $\mu_h$ be its associated measure of the $\gamma$-LQG area. We parametrize each of $\eta^b$, $\eta^r$, and $\eta^g$ by $\mu_h$-mass, i.e., 
\begin{equation*}
	\mu_h(\eta^b([0,t])) = \mu_h(\eta^r([0,t])= \mu_h(\eta^g([0,t]) = t,\qquad\text{for all } t\in[0,1].
\end{equation*} 
Let $\psi^{b,\circ}:[0,1]\to[0,1]$ for $\circ\in\{g,r\}$ be Lebesgue measurable functions such that
\begin{equation}\label{eq:wefivfvweofu}
    \eta^b(t)=\eta^{\circ}\left(\psi^{b,\circ}(t)\right),\qquad \text{for all $t\in[0,1]$.}
\end{equation}
The $2$-permuton $\mu^{b,\circ}$ associated with $(\mu_h,\eta^b,\eta^\circ)$ for $\circ\in\{r,g\}$ is defined by
\begin{equation}\label{eq:permutonfromcoupledcurves}
    \mu^{b,\circ}(A) = \text{Leb}\left(\left\{t \in [0, 1]: (t, \psi^{b,\circ}(t)) \in A\right\}\right).
\end{equation}
\begin{proposition}\label{SLES-LQG}
The Schnyder wood permuton $\mu_S=(\mu_S^{g}\,,\,\mu_S^{r})$ of \cref{mainresult3} satisfies
\begin{equation}\label{eq:wefivoweuiqbfqwop}
    \mu_S \stackrel{d}{=} \text{Leb}\left(\left\{t \in [0, 1]: \left(t,  \psi^{b,g}(t), \psi^{b,r}(t)\right) \in A\right\}\right).
\end{equation}
Moreover, $\mu_S^{g}\stackrel{d}{=}\mu^{b,g}$ and $\mu_S^{r}\stackrel{d}{=}\mu^{b,r}$.
\end{proposition}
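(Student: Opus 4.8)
The plan is to assume \cref{mainresult3} and then identify the explicit formula \cref{schnyderpermutonexplicit} for $\mu_S$ with the SLE/LQG formula \cref{eq:wefivoweuiqbfqwop} by a parameter-matching and symmetry-tracking argument, using the description of skew Brownian permutons in terms of space-filling SLE curves on Liouville quantum gravity surfaces from \cite{borga2023skew} (see also \cite{borga2022permutons}). Since the two marginal claims $\mu_S^g\stackrel{d}{=}\mu^{b,g}$ and $\mu_S^r\stackrel{d}{=}\mu^{b,r}$ follow from the joint identity \cref{eq:wefivoweuiqbfqwop} by projecting onto the first--second and first--third coordinates, it suffices to establish \cref{eq:wefivoweuiqbfqwop}. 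By \cref{mainresult3}, the left-hand side is the pushforward of Lebesgue measure on $[0,1]$ by $t\mapsto\big(t,\ \phi^g_{\rho,q}(1-t),\ 1-\phi^r_{\rho,q}(t)\big)$, where $\phi^g_{\rho,q}$ and $\phi^r_{\rho,q}$ are the functions in \cref{skewbrowniansdedefn2} built from a single Brownian excursion $W_\rho$ of correlation $\rho=-\tfrac{\sqrt2}{2}$ (the green one from its time reversal $W'_\rho$) together with the skew-$q$ families $Z^{(u)}_{\rho,q}$; so the task is to show that this coupled pair of random functions has the same law, modulo the Lebesgue pushforward, as $\big(\psi^{b,g},\psi^{b,r}\big)$ from \cref{eq:permutonfromcoupledcurves}.

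The bridge is the representation theorem of \cite{borga2023skew}: for parameters in the relevant range, the skew Brownian permuton $\mu_{\rho,q}$ driven by $W_\rho$ equals in distribution the $2$-permuton built from a pair of whole-plane space-filling $\mathrm{SLE}_\kappa$ counterflow lines of a Gaussian free field, parametrized by $\gamma$-LQG mass of an independent unit-area quantum sphere, where $\gamma$ solves $\rho=-\cos(\pi\gamma^2/4)$, $\kappa=16/\gamma^2$, and the skewness $q$ is the explicit function of the angle difference $\theta$ between the two curves recorded there; moreover this correspondence respects the natural couplings on both sides (fix $W_\rho$, hence $\eta^b$, and vary $q$, hence $\theta$). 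First I would check the numerics: $\rho=-\tfrac{\sqrt2}{2}=-\cos(\pi/4)$ forces $\gamma=1$ and $\kappa=16$, matching the statement, and substituting the angle differences $\tfrac{2\pi}{3}$ and $\tfrac{4\pi}{3}$ of the pairs $(\eta^b,\eta^r)$ and $(\eta^b,\eta^g)$ into the angle-to-skewness formula gives $q=\tfrac{1}{1+\sqrt2}$ --- directly for one pair and, since $\tfrac{4\pi}{3}=2\pi-\tfrac{2\pi}{3}$ amounts to interchanging the two curves, after applying the reflection symmetry $q\mapsto 1-q$ of the family for the other. This realizes each $\mu^{b,\circ}$, $\circ\in\{r,g\}$, as a skew Brownian permuton of parameters $(-\tfrac{\sqrt2}{2},\tfrac1{1+\sqrt2})$ attached to $(\eta^b,\eta^\circ)$.

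It remains to reconcile the two explicit formulas and the joint coupling. On the analytic side, \cref{schnyderpermutonexplicit} carries on the green coordinate the time reversal $W'_\rho$ and the argument reflection $t\mapsto 1-t$, and on the red coordinate the value reflection $\phi^r\mapsto 1-\phi^r$; on the SLE side $\psi^{b,\circ}$, defined by $\eta^b(t)=\eta^\circ(\psi^{b,\circ}(t))$, has the fixed orientation coming from parametrizing the three counterflow lines by $\mu_h$-mass from $\infty$ to $\infty$ at angles $0,\tfrac{2\pi}{3},\tfrac{4\pi}{3}$. I would match these using the known transformation rules for the skew Brownian permuton under the symmetries of the square (the maps $f$ and $h$, time reversal of the driving excursion, transposition) together with the corresponding moves on the SLE/LQG side (reversing a curve's parametrization, shifting an angle, swapping the roles of the two curves), combined with the angle identifications above; matched term by term these convert \cref{eq:permutonfromcoupledcurves} into \cref{schnyderpermutonexplicit}. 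Finally, the joint identity \cref{eq:wefivoweuiqbfqwop} requires the version of the representation theorem in which a \emph{single} field $\hat h$ and a \emph{single} quantum sphere generate all three curves $\eta^b,\eta^r,\eta^g$ at once; under the correspondence, the joint law of $(\psi^{b,g},\psi^{b,r})$ obtained this way must be exactly the coupling of $\mu^g_{\rho,q}$ and $\mu^r_{\rho,q}$ through the common excursion $W_\rho$ (and $W'_\rho$) and the shared processes $Z^{(u)}_{\rho,q}$ used in \cref{mainresult3}. I expect this last point to be the main obstacle: promoting the two-curve representation theorem to a joint statement for the triple of angle-coupled space-filling SLEs on a common surface, and verifying that the resulting coupling is the one in \cref{mainresult3} rather than a different coupling with the same two marginals. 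The symmetry bookkeeping in the preceding step, though routine, is also delicate and would need to be carried out carefully.
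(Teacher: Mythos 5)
Your overall strategy matches the paper's: invoke the SLE/LQG representation of the skew Brownian permuton (\cref{lqgdescription2}), match parameters ($\gamma=1$, $\kappa=16$, $\theta=\tfrac{2\pi}{3}$ giving $q=\tfrac{1}{1+\sqrt2}$ via \cref{rem:angle}), and track the square-symmetries $f,h$ and the time-reversal to reconcile the formulas. The marginal claims indeed follow by projection once \cref{eq:wefivoweuiqbfqwop} is established.

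Two points of divergence from what is actually written in the paper are worth flagging. First, the angle bookkeeping: you reason via $\tfrac{4\pi}{3}=2\pi-\tfrac{2\pi}{3}$ and a swap-of-curves symmetry $q\mapsto 1-q$; the paper instead observes that time-reversing $\eta^b$ shifts its angle by $\pi$, so the relevant angle between the reversed $\eta^b$ and $\eta^g$ is $\tfrac{4\pi}{3}-\pi=\tfrac{\pi}{3}$, and then applies \cref{lqgdescription2} with $\widetilde q = 1-\tfrac{1}{1+\sqrt2}$ (noting that reversing a curve also swaps left/right boundary lengths, i.e.\ exchanges $X'_\rho$ and $Y'_\rho$, so one more $q\mapsto 1-q$ brings it back). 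Your route may also close, but as stated it is not a complete argument: you would still need to justify precisely why interchanging the two curves corresponds to $q\mapsto 1-q$, and why that is the correct symmetry to invoke here rather than reflecting the time-reversed curve.

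Second, and more substantively, you identify "promoting the two-curve representation theorem to a joint statement for the triple" as the main obstacle. This worry is misplaced given the form in which \cref{lqgdescription2} is already stated: the proposition is not merely a distributional identity but says (i) the driving excursion $(X_\rho,Y_\rho)$ is the deterministic left/right LQG boundary-length process of $\eta^b$ alone, independent of which second curve you compare it against, and (ii) $1-\psi_{\gamma,\theta}(t)=\phi_{\rho,q}(t)$ holds almost surely once $Z_{\rho,q}$ is taken to be the strong solution driven by that $(X_\rho,Y_\rho)$. So the joint coupling of the green and red marginals through a single excursion is automatic: both $\phi^g_{\rho,q}$ and $\phi^r_{\rho,q}$ are built from the same $(X_\rho,Y_\rho)$ (and its reversal), which is read off from the same $\eta^b$. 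The paper exploits this by constructing a single almost-sure coupling of all the objects involved (introducing $\chi^{b,g}$ and $\chi^{b,r}$, identifying them a.s.\ with $\phi^g_{\rho,q}$ and $1-\phi^r_{\rho,q}$, then with $\psi^{b,g}$ and $\psi^{b,r}$), and the equality \cref{eq:wefivoweuiqbfqwop} then holds almost surely in that coupling, hence in distribution. If you had written your argument out, the missing step is exactly this: recognizing that \cref{lqgdescription2} already hands you the almost-sure identification, so no new joint representation theorem is needed.
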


The proof of \cref{SLES-LQG} can be found at the end of \cref{permutonsubsection}. In words, this result justifies that $\mu_S=(\mu_S^g,\mu_S^r)$ can be obtained by comparing the space-filling SLEs $\eta^b$ and $\eta^g$ of angle $0$ and $\frac{4\pi}{3}$ (giving $\mu_S^g$)  and the space-filling SLEs $\eta^b$ and $\eta^r$ of angle $0$ and $\frac{2\pi}{3}$ (giving $\mu_S^r$). Moreover, in the proof of \cref{marginaldetermines}, we will show as a corollary of \cite[Theorem 1.1]{borgagwynne2024} that $\mu_S^g$ (or equivalently $\mu_S^r$) determines these three curves together with the LQG-sphere $(\mathbb C, h, \infty)$ (up to conjugation).

\subsubsection{The $d$-dimensional Brownian separable permuton}

Our other class of high-dimensional permutations considered in this paper are $d$-separable permutations. These permutations were introduced in \cite{asinowski2008separable} and also described in \cite{bonichon2022baxter} as a generalization of the well-studied (2-dimensional) \vocab{separable permutations}. 

Before we give the definition, we describe the main features of the 2-dimensional case, and we point the reader to \cite{separablelimit} for more details. A permutation $\sigma \in \mc{S}_{2, n}$ is \vocab{separable} if it avoids the patterns $(2, 4, 1, 3)$ and $(3, 1, 4, 2)$. Such permutations have various nice properties. For example, they are exactly the permutations that can be sorted by series of pop-stacks \cite{popstacks}, they are the smallest nontrivial substition-closed permutation class \cite{Bassino_2020}, and they are in bijection with slicing floorplans \cite{floorplans} and with Schr\"oder trees \cite{separablelimit}. This last bijection is due to the fact that separable permutations are exactly those which can be repeatedly split into two contiguous parts, such that all entries in one part are larger than all entries in the other. In other words, separable permutations are those that can be built by what are called the ``direct sum'' and ``skew sum'' permutation operations.

We now give a precise definition of separable permutations in any dimension:

\begin{definition}\label{dseparablepermutationdefn}
Let $d \ge 2$ be an integer. Given two $d$-permutations $\sigma_1, \sigma_2$ of size $n_1, n_2$ and a \vocab{sign sequence} $s = (s_1, \cdots, s_{d-1}) \in \{\pm 1\}^{d-1}$ of length $(d-1)$, we define the \vocab{block sum permutation} $\sigma_1 \encircle{s} \sigma_2$ by placing $\sigma_2$ ``above'' $\sigma_1$ in the $j$--th coordinate if $s_j = 1$ and ``below'' otherwise:
\[
    (\sigma_1 \encircle{s} \sigma_2)^{(j)} = \begin{cases} 
    (\sigma_1^{(j)} \oplus \sigma_2^{(j)}):= (\sigma_1(1)^{(j)}, \cdots, \sigma_1(n_1)^{(j)}, \sigma_2(1)^{(j)} + n_1, \cdots, \sigma_2(n_2)^{(j)} + n_1), & \text{if }
    s_j = +1, \\ 
    (\sigma_1^{(j)} \ominus \sigma_2^{(j)}):=(\sigma_1(1)^{(j)} + n_2, \cdots, \sigma_1(n_1)^{(j)} + n_2, \sigma_2(1)^{(j)}, \cdots, \sigma_2(n_2)^{(j)}), & \text{if }s_j = -1. \end{cases}
\]
A \vocab{$d$-separable permutation} is a $d$-permutation which can be obtained from trivial size-$1$ permutations via block sums.
\end{definition}

An example of a non-separable $3$-permutation is $\sigma = ((1,3,2), (2,1,3))$; indeed, we cannot decompose this permutation as a block sum of sizes $2$ and $1$ (resp.\ $1$ and $2$) because of the first (resp.\ second) coordinate.

\begin{remark}\label{seppatternavoidance}
Just like for ordinary separable permutations, \cite{asinowski2008separable} provides a description of $d$-separable permutations via (lower-dimensional) pattern avoidance.  \cite[Theorem 4.1]{bonichon2022baxter} restates this result in notation similar to our paper, and they consider pattern occurrence in a slightly more general setting, allowing for patterns of lower dimension to occur. Specifically, a $d$-permutation is separable if and only if the following is true: all of its $3$-dimensional marginals avoid the pattern $((1, 3, 2), (2, 1, 3))$ and all of its symmetries (that is, its images under symmetries of the $3$-dimensional cube), and all of its $2$-dimensional marginals avoid the patterns $(2, 4, 1, 3)$ and $(3, 1, 4, 2)$. However, the characterization in terms of block sums is more useful for us. 
\end{remark}

To state the permuton limit for uniform $d$-separable permutations, it is useful to consider the degenerate case $\mu_{1, q}$ of the skew Brownian permutons $\mu_{\rho, q}$. In \cite{borga2023skew}, it has been proven that $\mu_{1, 1-p}$ agrees with the \vocab{Brownian separable permuton} $\mu^B_p$ introduced in \cite{Bassino_2020}, which is described as follows. Let $e(t)$ be a one-dimensional Brownian excursion on $[0, 1]$, and let $(s(\ell))$ be an iid sequence of signs which are each $+1$ with probability $p$ and $-1$ with probability $1-p$, indexed by the (countable) local minima of $e$. This construction yields a total order $<_{e, p}$ on all but a null set of $[0, 1]$, in which $x <_{e, p} y$ if the minimum on $[x, y]$ is labeled $+1$ and $y <_{e, p} x$ otherwise. This allows us to define the random function
\begin{equation*}
    \psi_{e, p}(t) = \text{Leb}\left(\left\{x \in [0, 1]: x <_{e, p} t\right\}\right)
\end{equation*}
and the corresponding \vocab{Brownian separable permuton} of parameter $p$ as
\[
    \mu^B_p(A) = (\text{Id}, \psi_{e, p})_\ast \text{Leb}(A) = \text{Leb}\left(\left\{t \in [0, 1]: (t, \psi_{e, p}(t)) \in A\right\}\right).
\]
For a more precise construction, see the discussion before \cref{brownian-separable-d-permuton} in \cref{skewbrownianpermutonsection}.

In \cite{Borga_2020}, building on \cite{separablelimit}, $\mu^B_p$ has been shown to be the permuton limit for uniform permutations from many substitution-closed classes (with the value of $p$ depending on properties of the particular class) -- in particular, uniform separable permutations converge to $\mu^B_{1/2}$. 

To state our result, we generalize this construction. For $p_1, \cdots, p_{d-1}\in[0,1]$, the \vocab{Brownian separable $d$-permuton} $$\mu^B_{p_1, \cdots, p_{d-1}}$$ 
is defined analogously to $\mu^B_p$, but each local minimum of a single Brownian excursion is now labeled by a sequence of $(d-1)$ independent signs that are $+1$ with probabilities $p_1, \cdots, p_{d-1}$ respectively, and the permuton now encodes the ``continuous permutation'' on $[0, 1]$ induced by the orders in each of the $(d-1)$ coordinates. A more precise definition can be found in \cref{brownian-separable-d-permuton}. Our final main result shows that $d$-separable permutations also converge to the Brownian separable $d$-permuton for a particularly simple choice of parameters.

\begin{theorem}\label{mainresult4}
Let $\sigma_n$ be a uniform $d$-separable permutation of size $n$. Then $\mu_{\sigma_n}$ converges in distribution to the Brownian separable $d$-permuton $\mu^B_{1/2, \cdots, 1/2}$.
\end{theorem}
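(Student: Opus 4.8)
The plan is to apply the characterization of $d$-permuton convergence from \cref{tfaemain}: it suffices to show that for every $d$-permutation $\tau$ of size $k$, the expected pattern frequency $\EE[\freq(\tau,\sigma_n)]$ converges to $\PP(P_{\mu^B_{1/2,\dots,1/2}}[k]=\tau)$, where $\sigma_n$ is uniform among $d$-separable permutations of size $n$. The strategy mirrors the two-dimensional argument of \cite{separablelimit,Borga_2020}: exploit the recursive (``block sum'') structure of $d$-separable permutations to encode them by decorated trees, pass to a uniform random tree whose scaling limit is the CRT, and read off the limiting pattern frequencies from the CRT together with iid $\{\pm1\}^{d-1}$ sign decorations on the branch points.

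First I would set up the bijective combinatorics. A $d$-separable permutation of size $n$ is built from size-$1$ permutations via the operations $\encircle{s}$ for $s\in\{\pm1\}^{d-1}$; as in the $2$-dimensional case one should check that this yields a bijection (after imposing a canonical, e.g.\ ``right-comb'' or ``no two adjacent internal nodes carrying compatible signs'' normalization) between $d$-separable permutations of size $n$ and a class of Schr\"oder-type trees with $n$ leaves whose internal nodes are decorated by sign sequences in $\{\pm1\}^{d-1}$, with a suitable local constraint preventing redundant decompositions. The key structural point is that the relative order of $k$ chosen leaves in each coordinate $j$ is determined, exactly as in one dimension, by the sign in coordinate $j$ attached to the most recent common ancestor of each pair of leaves. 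Hence, conditionally on the tree shape and the signs, the pattern induced on a uniformly random $k$-subset of leaves is a deterministic function of the induced subtree (with its node decorations), and crucially the $(d-1)$ coordinates are governed by the \emph{same} tree but by \emph{independent} coordinates of the sign sequences.

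Next I would carry out the probabilistic limit. After passing through the bijection, a uniform $d$-separable permutation of size $n$ corresponds to a uniform element of the relevant decorated-tree class; since each internal node carries an independent uniform sign in $\{\pm1\}^{d-1}$ and this decoration does not interact with the tree shape (the constraint is local and can be absorbed into the enumeration in the standard way, exactly as for $\mu^B_{1/2}$), the underlying tree is, up to the same analytic arguments as in \cite{separablelimit,Borga_2020,Bassino_2020}, a critical Galton--Watson tree conditioned to have $n$ leaves, which converges after rescaling to the Brownian CRT. The expected pattern frequency $\EE[\freq(\tau,\sigma_n)]$ equals the probability that $k$ uniformly chosen leaves induce the decorated subtree realizing $\tau$; taking $n\to\infty$, these $k$ leaves converge to $k$ iid uniform points of the CRT, their reduced subtree converges to the reduced CRT on $k$ leaves, and the branch-point signs remain iid uniform on $\{\pm1\}^{d-1}$. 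This limiting object is precisely what defines $\mu^B_{1/2,\dots,1/2}$: the order in coordinate $j$ of two points $x,y$ is $+1$ or $-1$ according to the $j$-th coordinate of the sign at their branch point, and these are independent across $j$ because $1/2$ is symmetric (so each marginal is the Brownian excursion order $<_{e,1/2}$ and they are coupled through the common excursion $e$). Therefore $\lim_n \EE[\freq(\tau,\sigma_n)] = \PP(P_{\mu^B_{1/2,\dots,1/2}}[k]=\tau)$, and \cref{tfaemain} yields convergence in distribution of $\mu_{\sigma_n}$ to $\mu^B_{1/2,\dots,1/2}$.

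The main obstacle is establishing the bijection with an appropriate canonical tree class and verifying that the resulting random tree is (asymptotically) the correct conditioned Galton--Watson tree --- i.e.\ that the local constraint ruling out redundant block decompositions does not disturb the CRT scaling limit, and that it is compatible with placing iid uniform $\{\pm1\}^{d-1}$ decorations on internal nodes. In dimension two this is exactly the content of the Schr\"oder-tree encoding and the analysis of \cite{separablelimit}; the $d$-dimensional case adds only the bookkeeping of a length-$(d-1)$ sign vector at each node, and since the pattern in each coordinate depends only on that coordinate's signs, the combinatorial enumeration and the generating-function/singularity analysis are unchanged in form, with the $(d-1)$-fold product of signs factoring through. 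I would therefore spend the bulk of the proof on this encoding and its equidistribution properties, and then invoke the $2$-dimensional analytic machinery essentially verbatim, coordinate by coordinate, to identify the limit.
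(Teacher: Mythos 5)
Your proposal follows essentially the same route as the paper: encode $d$-separable permutations by decorated Schr\"oder-type trees, identify the underlying random tree as a conditioned critical Galton--Watson tree converging to the CRT, and read the limiting pattern off from branch-point decorations to match \cref{prop:pattern-sep-perm}. You also correctly flag the decoration step as the main obstacle. However, the way you propose to handle it has a genuine gap.

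You assert that ``each internal node carries an independent uniform sign in $\{\pm1\}^{d-1}$'' and that ``the $(d-1)$-fold product of signs factoring through'' lets the $2$-dimensional analysis carry over coordinate by coordinate. Neither claim is correct as stated. Under the uniform measure on $d$-separable permutations, the sign sequences on the internal vertices of $S(\sigma)$ are \emph{not} iid uniform: the defining constraint is that a vertex's sign sequence differs from its parent's \emph{in at least one coordinate} (not coordinate by coordinate), so the conditional law of a child's sign given its parent's is uniform over $2^{d-1}-1$ values, and this constraint is not a product of independent per-coordinate constraints. In $d=2$ this degenerates to strict alternation (the sign is deterministic given the root and depth), so you cannot literally reuse the $d=2$ mechanism either. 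The resolution in the paper is a reencoding (\cref{dsep-bijection2}): replace each non-root sign sequence by the ``swap sequence'' recording in which coordinates it differs from its parent; these swap sequences are genuinely iid uniform on $\{0,1\}^{d-1}\setminus\{0\}$ under the uniform measure, which is what makes the Galton--Watson sampling of \cref{separablegaltonwatson} work. Then iid uniformity of the \emph{sign} sequences is only recovered asymptotically, and only at the branch points of the subtree induced by $k$ random leaves: one shows (via the CRT scaling of the conditioned GW tree) that those branch points are at pairwise distance $\gtrsim n^{1/4}$ with high probability, and that the parity of $1$s accumulated coordinate-wise along such long paths of iid swap sequences mixes to uniform on $\{\pm1\}^{d-1}$ (\cref{topologicalordersampling} plus the proof of \cref{mainresult4}). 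Without some version of this mixing-over-long-paths argument, your step ``the branch-point signs remain iid uniform'' is unjustified, and it is precisely where the parameter $1/2$ in $\mu^B_{1/2,\dots,1/2}$ comes from.
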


In contrast to the Schnyder wood permuton $\mu_S$ -- recall \cref{marginaldetermines} -- we additionally show (\cref{prop:non-triviality}) that the Brownian separable $d$-permuton is not determined by its lower-dimensional marginals, so it is an ``honest'' $d$-dimensional random permuton (with random nontrivially-correlated lower-dimensional marginals). Moreover, as detailed in \cref{separabledpermuton_remark} below, we believe that other natural models of high-dimensional permutations will also converge to the Brownian separable $d$-permuton, possibly with different parameters $(p_1, \cdots, p_{d-1})$.

\subsection{Outline, key ideas for the proofs, and final remarks}\label{sec:conc}

We now briefly describe the key ideas for the proofs of each of our main results. In each case, we generalize the established techniques for $2$-permutations and $2$-permutons; in particular, we expect that the ideas for proving $d$-permuton convergence developed here are applicable in other combinatorial and probabilistic settings where random $d$-permutations may arise.

\medskip

The main goal of \cref{theorysection} is to prove that weak convergence $\mu_n \to \mu$ of $d$-dimensional permutons can be equivalently formulated as convergence of pattern frequencies $\EE[\freq(\tau, \sigma_n)]$, as detailed in \cref{tfaemain}. The essential lemma required for this is that a permuton $\mu$ can be well-approximated by sampling a large number $N$ of iid points from $\mu$, reading off the relative order of those $N$ points as a $d$-dimensional permutation $\sigma_N$ (in the sense of \cref{defnsamplefrompermuton}), and then considering $\mu_{\sigma_N}$ as an approximation of $\mu$. These approximation results are verified first in the deterministic case (\cref{muvsmuk}) and then the random case (\cref{muvsmuk2}); the crux of the argument is then that encoding all pattern frequencies of a fixed size into a permuton yields a good approximation for the candidate limiting permuton itself.

\medskip 

The purpose of \cref{backgroundsection} is to give a more thorough introduction to the skew Brownian permutons and the Brownian separable permutons, providing rigorous definitions. We will also introduce the notion of \vocab{coalescent-walk processes}, originally introduced in \cite{borga2022scaling}, which will play a fundamental role in \cref{schnydersection}.

\medskip
    
Next, \cref{schnydersection} is dedicated to describing the permuton limit for Schnyder wood permutations. This limit is found by constructing a pair of coalescent-walk processes which recover the $3$-permutation that we aim to study. In short, a coalescent-walk process is a collection of non-crossing random walks that ``stick together'' once they intersect; illustrations of coalescent-walk processes can be seen in \cref{fig:samplecoalescent} and \cref{fig:sampleschnyder2}, and a formal definition is provided in \cref{coalescentintrosection}. We prove that Schnyder wood permutations bijectively encode Schnyder wood triangulations (\cref{schnyderwoodbijectperm}), which are themselves in bijection with certain random walk excursions with specified increments (\cref{randomwalkisuniformschnyder1}). While a uniform rescaled such walk converges to a Brownian excursion, recovering the permutation in the limit requires us to extract more information than just the Brownian excursion itself, and thus we must study the scaling limits for the full coalescent-walk processes instead. This is the subject matter of \cref{walklimits}.

There are two primary differences between our work and previous constructions of this type, as in \cite{borga2021permuton,borga2022scaling}. First of all, working with a $3$-permutation instead of a $2$-permutation means that we must construct a coalescent-walk process for each of $\sigma_M^g$ and $\sigma_M^r$. The surprise here (as detailed in \cref{mainresult1}) is that these coalescent-walk processes are in fact driven by the same walk (except forward in one case and backward in the other), making the bijections much simpler to prove and the dependence between the marginals very strong. Second, our coalescent-walk processes only recover the 3-permutation of interest if we specify specific starting points from which to grow our walk in the coalescent-walk process (as in \cref{coalescentdefn}), rather than beginning a walk at every integer point as in previous cases. Thus, we must take more care to study the distribution of the starting points and show that they are asymptotically evenly spaced out to leading order (as done in \cref{schnyderlocationtail}).

\begin{remark}\label{semibaxterrmk}
It can be proved (though we omit the proof in this paper for brevity) that for any Schnyder wood permutation $\sigma_M$, $\sigma_M^r$ is always a \vocab{semi-Baxter} permutation, meaning it avoids the vincular pattern $2-41-3$, and $\sigma_M^g$ is always the inverse of such a permutation. However, because these marginals are not uniform over all possibilities, the parameters $(\rho, q) = (-\frac{\sqrt{2}}{2}, \frac{1}{1+\sqrt{2}})$ that specify the Schnyder wood permuton limit do not agree with the parameters for the permuton limit of uniform semi-Baxter permutations (proved in \cite{borga2021permuton} to be $(\rho, q) = (-\frac{1 + \sqrt{5}}{4}, \frac{1}{2})$). It would be interesting to understand if other natural weighted models of pattern-avoiding permutations exhibit similar behavior.
\end{remark}

\begin{remark}\label{inversionrmk}
While explicit formulas for the distribution of pattern frequencies in $\mu_{\rho, q}$ are not generally known, \cite[Proposition 1.14]{borga2023baxter} shows that the expected frequency of inversions is a function of the angle $\theta$ between the space-filling SLEs in the alternate construction of the skew Brownian permuton (see \cref{lqgdescription2}). While the general relation between $\theta$ and $\rho, q$ is nonexplicit, it is known in our particular case thanks to \cref{rem:angle}. In particular, $\theta\left(\rho=-\frac{\sqrt 2}{2},q=\frac{1}{1+\sqrt{2}}\right)=\frac{2\pi}{3}$. Plugging this angle into \cite[Proposition 1.14]{borga2023baxter}, we have that 
\[
    \EE\left[\freq((2, 1), \mu_{\rho, q}^g\right] = \EE\left[\freq((2, 1), \mu_{\rho, q}^r\right] = \frac{1}{3},
\]
and therefore after applying the reflections in \cref{eq:weifvwebfowep} we have
\[
    \EE\left[\freq((2, 1), \mu^g_S\right] = \EE\left[\freq((2, 1), \mu^r_S\right] = \frac{2}{3}.
\]
Simulations for this result can be found at \href{http://www.jacopoborga.com/2024/12/27/high-dimensional-permutons-the-schnyder-wood-and-brownian-separable-d-permuton/}{this webpage}. It would be interesting to determine any further information about the distribution of pattern frequencies, such as the expectation of any $3$-pattern frequency in the full permuton $\mu_S$.
\end{remark}

\medskip

Finally, \cref{dseparablesection} identifies the permuton limit for $d$-separable permutations for all $d \ge 2$. Here, we make use of bijections (\cref{dsep-bijection} and \cref{dsep-bijection2}) between $d$-separable permutations and trees with labeled internal vertices, in which the shape and labeling of the tree encode the order and signs with which block sums are performed to construct the permutation. This generalizes the bijection for (ordinary) separable permutations. We then argue that such trees can be sampled by instead sampling conditioned Galton-Watson trees (\cref{separablegaltonwatson}), for which limit theorems have been established and are sufficient for characterizing pattern frequency.

\begin{remark}\label{separabledpermuton_remark}
We conjecture that the Brownian separable $d$-permuton, much like the Brownian separable permuton, is a universal limit that describes other natural classes of $d$-dimensional permutations. In particular, the strategy of representing such permutations via ``packed trees'' (whose vertices are now more complicated gadgets encoding permutation patterns) in \cite{Borga_2020} may generalize to higher dimensions, and the parameters $p_i$ may be computable in terms of properties of the $d$-dimensional permutation class.
\end{remark}

We close this introduction by noting that other generalizations of permutations to higher dimensions have been considered in other works. For example, bijections on the vertices of higher-dimensional graphs such as the torus $\mathbb{Z}^2/(n\mathbb{Z} \times m\mathbb{Z})$ \cite{hammondhelmuth} and integer lattice $\mathbb{Z}^d$ \cite{elboimsly} have been studied, primarily analyzing cycle structure for permutations biased towards the identity map. Drawing more parallels to our work, \cite{latinons} studies scaling limits of Latin squares, which are higher-dimensional permutations in the sense that fixing a particular value yields an ordinary permutation. While this latter work also establishes convergence results based on pattern density and approximation by finite objects, the objects we study in this paper generalize permutations more naturally to any dimension $d$. In particular, the explicit examples of scaling limits we construct have direct connections to existing universality classes of ordinary permutons.

\paragraph{Acknowledgments.} J.B.\ was partially supported by the NSF under Grant No.\ DMS-2441646, and A.L.\ was partially supported by the NSF under Grant No.\ DGE-2146755.

\section{High-dimensional permuton theory}\label{theorysection}

In this section, we prove \cref{tfaemain} and establish other $d$-dimensional generalizations of results in (ordinary) permuton theory. The case $d = 2$ was first considered for deterministic permutations in \cite{hoppen2012limits} and then random ones in \cite{Bassino_2020}, and we will take an analogous path for general $d$.

Throughout, we work with various probability measures; to make the source of randomness clear, we may write $\PP^\mu$ or $\EE^\mu$ when computing probabilities or expectations with respect to a permuton $\mu$.

\subsection{The theory for the deterministic setting}

Recall that a $d$-dimensional permuton is a Borel probability measure on $[0, 1]^d$ with uniform $1$-dimensional marginals. First, we consider the case where $\sigma_n$ is some deterministic $d$-dimensional permutation for each $n$, meaning that $\mu_{\sigma_n}$ is some deterministic measure on $[0, 1]^d$. We wish to describe the permuton limit of these permutations $\sigma_n$. 

To specify a probability measure $\mu$ on $[0, 1]^d$, we can equivalently specify the cumulative distribution function
\[
    F(\vec{x}) = \mu\left([0, x_1] \times \cdots \times [0, x_d]\right)
\]
for all $\vec{x} \in [0, 1]^d$. Then requiring uniform $1$-dimensional marginals corresponds to the requirement that $F(1, \cdots, 1, x, 1, \cdots, 1) = x$ for all $x \in [0, 1]$ in each coordinate, and the probability of lying within a box $\prod_{i=1}^d [x_i, y_i]$ is then a sum and difference of $2^d$ values of $F(\vec{x})$ via inclusion-exclusion. Thus if we have two measures $\mu_1, \mu_2$ on $[0, 1]^d$ with distribution functions $F_1$ and $F_2$, we have
\begin{equation}\label{twod}
\mu_1\left(\prod_{i=1}^d [x_i, y_i]\right) - \mu_2 \left(\prod_{i=1}^d [x_i, y_i]\right) \le 2^d ||F_1 - F_2||_\infty,
\end{equation}
and we also have
\begin{equation}\label{triineq}
    |F(\vec{y}) - F(\vec{x})| \le \sum_{i=1}^d \mu\left([0, 1] \times \cdots \times [0, 1] \times [x_i, y_i] \times \cdots [0, 1] \times \cdots \times [0, 1]\right) = \sum_{i=1}^d |y_i - x_i|.
\end{equation}

\begin{proposition}\label{weakinf}
Let $\mu_n, \mu$ be $d$-dimensional permutons with distribution functions $F_n, F$ respectively. Then $\mu_n$ converges weakly to $\mu$ if and only if $||F_n - F||_{\infty} \to 0$ (that is, $F_n(\vec{x}) - F(\vec{x}) \to 0$ uniformly on $[0, 1]^d$).
\end{proposition}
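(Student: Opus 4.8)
The plan is to prove both directions using the standard equivalence between weak convergence and pointwise convergence of distribution functions at continuity points, upgraded to uniform convergence here because the limiting distribution function $F$ is continuous (in fact Lipschitz, by \cref{triineq}).

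\textbf{The ``only if'' direction.} Suppose $\mu_n \to \mu$ weakly. First I would fix $\vec{x} \in [0,1]^d$ and show $F_n(\vec{x}) \to F(\vec{x})$. Since the indicator of the box $[0,x_1]\times\cdots\times[0,x_d]$ is not continuous, I would sandwich it between continuous functions: for small $\eps > 0$, pick continuous $g^-_\eps \le \mathds{1}_{[0,\vec x]} \le g^+_\eps$ with $g^+_\eps - g^-_\eps$ supported in an $\eps$-neighborhood of the boundary of the box. Integrating against $\mu_n$ and passing to the limit gives $\limsup_n F_n(\vec x) \le \int g^+_\eps \, d\mu$ and $\liminf_n F_n(\vec x) \ge \int g^-_\eps \, d\mu$; then let $\eps \to 0$ and use that $\mu$ assigns zero mass to the boundary hyperplanes $\{y_i = x_i\}$ (each such hyperplane has $\mu$-measure $0$ since the $i$-th marginal is uniform, hence non-atomic) to conclude both bounds converge to $F(\vec x)$. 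This gives pointwise convergence. To upgrade to uniform convergence, I would invoke the standard Pólya-type argument: the $F_n$ are uniformly Lipschitz (constant depending only on $d$, by \cref{triineq}) and $F$ is continuous on the compact set $[0,1]^d$, so pointwise convergence of an equicontinuous family to a continuous limit on a compact set is automatically uniform. Concretely: given $\delta > 0$, cover $[0,1]^d$ by finitely many balls of radius $\delta$; control $|F_n - F|$ at the centers using pointwise convergence and everywhere else using the uniform modulus of continuity.

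\textbf{The ``if'' direction.} Suppose $\|F_n - F\|_\infty \to 0$. I want $\int f \, d\mu_n \to \int f \, d\mu$ for every bounded continuous $f$ on $[0,1]^d$. By \cref{twod}, for any box $B = \prod_{i=1}^d [x_i,y_i]$ we have $|\mu_n(B) - \mu(B)| \le 2^d \|F_n - F\|_\infty \to 0$. Now partition $[0,1]^d$ into a grid of $m^d$ small boxes of side $1/m$; approximate $f$ uniformly to within $\eps$ by a step function $f_m$ constant on each grid box (using uniform continuity of $f$ on the compact cube, choosing $m$ large). Then $|\int f \, d\mu_n - \int f\, d\mu| \le |\int f \, d\mu_n - \int f_m \, d\mu_n| + |\int f_m \, d\mu_n - \int f_m \, d\mu| + |\int f_m \, d\mu - \int f \, d\mu|$; the first and third terms are each at most $\eps$ since both measures are probability measures, and the middle term is a finite sum $\sum_{\text{boxes } B} c_B (\mu_n(B) - \mu(B))$, which is bounded by $\|f\|_\infty \cdot m^d \cdot 2^d \|F_n - F\|_\infty \to 0$ as $n \to \infty$ (with $m$, $\eps$ fixed). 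Taking $\eps \to 0$ finishes the proof.

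\textbf{Expected main obstacle.} The genuinely substantive point is the boundary-mass argument in the ``only if'' direction: one must verify that $\mu$ charges no mass to the hyperplanes $\{y_i = x_i\}$, which is exactly where the uniform-marginal hypothesis is used (these are permutons, not arbitrary measures). Everything else — the sandwiching, the grid approximation, the equicontinuity upgrade from pointwise to uniform — is routine, though the equicontinuity step deserves a line or two of care since it is what makes ``uniform'' (rather than merely pointwise) convergence fall out. A minor bookkeeping nuisance is keeping track of the slightly different inclusion–exclusion signs in \cref{twod} for boxes touching the boundary of $[0,1]^d$, but this does not affect the bound.
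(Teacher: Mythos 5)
Your proof is correct and follows essentially the same route as the paper's: for the direction $\|F_n-F\|_\infty\to 0 \Rightarrow$ weak convergence, both you and the paper partition $[0,1]^d$ into a fine grid, use uniform continuity of the test function, and bound box-mass differences via the inclusion–exclusion estimate of \cref{twod}; for the converse, both derive pointwise convergence of $F_n$ at every $\vec{x}$ from the fact that the box $\prod_i[0,x_i]$ is a $\mu$-continuity set (because the uniform marginals make each bounding hyperplane $\mu$-null), and both then upgrade pointwise to uniform convergence via the $d$-Lipschitz bound of \cref{triineq} on a finite lattice --- you phrase this as a Pólya-type equicontinuity argument and re-derive the portmanteau step by sandwiching, while the paper cites portmanteau directly and writes the lattice estimate explicitly, but the underlying mechanism is identical.
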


\begin{proof}
For one direction, suppose $||F_n - F||_{\infty} \to 0$. Fix $\eps > 0$ and a bounded continuous function $g: [0, 1]^d \to \RR$. By uniform continuity, there is some positive integer $k$ such that breaking up $[0, 1]^d$ into $k^d$ subcubes of equal size (with all vertices at $\frac{1}{k}$-lattice points), we have $|g(\vec{y}) - g(\vec{x})| < \frac{\eps}{2}$ whenever $\vec{x}, \vec{y}$ are within the same cube $C$. Then we have

\begin{align*}
    \int_{[0, 1]^d} g \, d\mu_n - \int_{[0, 1]^d} g \, d\mu &\le \sum_{\text{cubes }C}\int_C g \, d\mu_n - \int_C g \, d\mu \\
    &\le \sum_{\text{cubes }C}\left(\mu_n(C) \max_C g  - \mu(C) \min_C g \right) \\
    &= \sum_{\text{cubes }C}\left(\mu_n(C) \max_C g  - \mu(C) \max_C g + \mu(C) \max_C g - \mu(C) \min_C g \right) \\
    &\le \sum_{\text{cubes }C}\left((\mu_n(C) - \mu(C))||g||_{\infty} + \mu(C) \frac{\eps}{2} \right) \\
    &\le 2^d k^d ||F_n - F||_\infty ||g||_\infty + \frac{\eps}{2},
\end{align*}
where in the last step we use \cref{twod} and that there are $k^d$ cubes. Since $||F_n - F||_\infty$ tends to zero by assumption, this means that for $n$ large enough this whole quantity is less than $\eps$. The same argument works with the roles of $\mu$ and $\mu_n$ swapped, so we have shown weak convergence. 

\medskip

For the other direction, again fix $\eps > 0$. Since $\mu$ has uniform marginals, the box $\prod_i [0, x_i]$ is a continuity set of $\mu$ for any $\vec{x} \in [0, 1]^d$ (meaning that its boundary has $\mu$-measure zero). Thus for each $\vec{x} \in [0, 1]^d$, $F_n(\vec{x})$ converges to $F(\vec{x})$ as $n \to \infty$ (by the portmanteau theorem). Let $k = \lceil \frac{4d}{\eps} \rceil$, and choose $n$ large enough so that $|F_n(\vec{y}) - F(\vec{y})| < \frac{\eps}{2}$ for all (finitely many) $\frac{1}{k}$-lattice points $\vec{y}$. Then for any $\vec{x}$, let $\vec{y}$ be the $\frac{1}{k}$-lattice point $(\frac{1}{k} \lfloor kx_1 \rfloor, \cdots, \frac{1}{k} \lfloor kx_d \rfloor)$. By \cref{triineq}, we have that
\begin{align*}
    |F_n(\vec{x}) - F(\vec{x})| &\le |F_n(\vec{x}) - F_n(\vec{y})| + |F_n(\vec{y}) - F(\vec{y})| + |F(\vec{y}) - F(\vec{x})|\\
    &\le \sum_{i=1}^d |y_i - x_i| + \frac{\eps}{2} + \sum_{i=1}^d |y_i - x_i|,
\end{align*}
and this last quantity is at most $\frac{2d}{k} + \frac{\eps}{2} < \eps$ by assumption. Thus we have uniform convergence of $F_n$ to $F$ as desired.
\end{proof}

We may also endow the space of such permutons with a metric (primarily to be consistent with previous literature on permutons):

\begin{definition}
Let $\mu_1, \mu_2$ be two $d$-dimensional permutons. The \vocab{box distance} $d_{\square}(\mu_1, \mu_2)$ is given by
\[
    d_{\square}(\mu_1, \mu_2) = \sup_{\substack{\vec{x}, \vec{y} \in [0, 1]^d \\ x_i < y_i \,\, \forall i}} \left| \mu_1\left(\prod_{i=1}^{d} [x_i, y_i]\right) - \mu_2\left(\prod_{i=1}^{d} [x_i, y_i]\right) \right|.
\]
\end{definition}
Letting $F_1, F_2$ be the distribution functions of $\mu_1, \mu_2$, we have by \cref{twod} that
\[
    ||F_1 - F_2||_{\infty} \le d_{\square}(\mu_1, \mu_2) \le 2^d ||F_1 - F_2||_{\infty}.
\]
In particular, $d_{\square}(\mu_n, \mu) \to 0$ if and only if $F_n \to F$ uniformly; in such a situation we write that $\mu_n \to \mu$.

Recall the definition of pattern frequency for a permuton from \cref{defnsamplefrompermuton}. In the deterministic case, $\text{freq}(\tau, \mu)$ is just a constant. We have the following useful estimate relating pattern frequencies in permutations and their corresponding permutons (which explains why we use the same notation for both):

\begin{lemma}\label{permapprox}
Let $\tau \in \mc{S}_{d, k}$ and $\sigma \in \mc{S}_{d, n}$ for $1 \le k \le n$. Then 
\[
    \left|\freq(\tau, \sigma) - \freq(\tau, \mu_\sigma)\right| \le \frac{1}{n} \binom{k}{2}.
\]
\end{lemma}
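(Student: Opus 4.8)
The plan is to realize the permuton-sampling procedure behind $\freq(\tau,\mu_\sigma)$ as a two-stage experiment and to compare it directly with the combinatorial count $\freq(\tau,\sigma)$, with the discrepancy absorbed entirely into the probability of an index collision.

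First I would note that a point drawn from $\mu_\sigma$ can be generated by picking an index $i\in[n]$ uniformly at random and then a point uniformly in the box
\[
B_i := \Bigl(\tfrac{i-1}{n},\tfrac in\Bigr]\times\prod_{j=1}^{d-1}\Bigl(\tfrac{\sigma(i)^{(j)}-1}{n},\tfrac{\sigma(i)^{(j)}}{n}\Bigr];
\]
this is immediate from \cref{eq:defn-d-perm}, since $\mu_\sigma$ spreads mass $\tfrac1n$ uniformly over each $B_i$ and the boxes $B_i$ partition the support of $\mu_\sigma$. Hence sampling the $k$ iid points $\vec x_1,\dots,\vec x_k$ that define $P_{\mu_\sigma}[k]$ is the same as sampling iid uniform indices $i_1,\dots,i_k\in[n]$ with replacement and then, conditionally, one uniform point inside each $B_{i_a}$.

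Next I would let $E$ be the event that $i_1,\dots,i_k$ are pairwise distinct. On $E$ the boxes $B_{i_a}$ are pairwise disjoint along the first coordinate, and for each fixed $j$ they are pairwise disjoint along the $(j{+}1)$-st coordinate as well, because $\sigma$ restricts to an ordinary permutation in coordinate $j$, so the intervals $(\tfrac{\sigma(i)^{(j)}-1}{n},\tfrac{\sigma(i)^{(j)}}{n}]$ over distinct $i$ are disjoint and ordered exactly as the integers $\sigma(i)^{(j)}$. Thus on $E$ the points $\vec x_1,\dots,\vec x_k$ are in generic position: their relative order in each of the $d$ coordinates matches that of the grid points $(i_a,\sigma(i_a)^{(1)},\dots,\sigma(i_a)^{(d-1)})$, which is by definition the pattern $\pat_I(\sigma)$ with $I=\{i_1,\dots,i_k\}$ (\cref{defnpattern,defnsamplefrompermuton}). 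Hence $P_{\mu_\sigma}[k]=\pat_I(\sigma)$ on $E$. Moreover, conditionally on $E$ the tuple $(i_1,\dots,i_k)$ is uniform over injective $k$-tuples, so its underlying set $I$ is uniform over $\binom{[n]}{k}$, giving $\PP\bigl(P_{\mu_\sigma}[k]=\tau,\,E\bigr)=\PP(E)\,\freq(\tau,\sigma)$.

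Finally I would conclude with a short estimate: writing $\freq(\tau,\mu_\sigma)=\PP(P_{\mu_\sigma}[k]=\tau,E)+\PP(P_{\mu_\sigma}[k]=\tau,E^c)$ and using the previous step,
\[
\freq(\tau,\mu_\sigma)-\freq(\tau,\sigma)=\PP\bigl(P_{\mu_\sigma}[k]=\tau,\,E^c\bigr)-\PP(E^c)\,\freq(\tau,\sigma),
\]
and since both terms on the right-hand side lie in $[0,\PP(E^c)]$ we obtain $\bigl|\freq(\tau,\mu_\sigma)-\freq(\tau,\sigma)\bigr|\le\PP(E^c)$; a union bound over the $\binom k2$ pairs $\{a,b\}$, each contributing $\PP(i_a=i_b)=\tfrac1n$, then gives $\PP(E^c)\le\tfrac1n\binom k2$, as claimed. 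The only point requiring real care is the identification $P_{\mu_\sigma}[k]=\pat_I(\sigma)$ on $E$ in the sense of \cref{defnsamplefrompermuton}, but this reduces entirely to the disjointness of the coordinate intervals, which holds precisely because $\sigma$ is a permutation in each coordinate; the remaining steps are routine.
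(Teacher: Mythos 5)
Your proposal is correct and follows essentially the same approach as the paper: realize sampling from $\mu_\sigma$ as a two-stage experiment (pick a box uniformly, then a point uniformly inside it), condition on the event that the $k$ sampled indices are distinct to couple the permuton pattern to a uniform pattern of $\sigma$, and bound the collision probability by a union bound over pairs. Your write-up spells out the coordinate-wise disjointness of the boxes (and the resulting identification $P_{\mu_\sigma}[k]=\pat_I(\sigma)$ on the no-collision event) more explicitly than the paper, but the underlying argument is the same.
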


\begin{proof}
View $\freq(\tau, \sigma)$ as the probability that a uniform random subset of the $k$ points of $\sigma$ are in the right relative order $\tau$, and view $\freq(\tau, \mu_\sigma)$ as the probability that $k$ random points sampled from $\mu_\sigma$ are in the right relative order. Recall that $\mu_\sigma$ assigns a mass of $\frac{1}{n}$ to each of $n$ boxes of side length $\frac{1}{n}$. By a union bound, the probability that these $k$ points are not selected from distinct boxes is at most $\frac{1}{n} \binom{k}{2}$, since any two points have a probability $\frac{1}{n^2}$ of both being sampled from any given box (and there are $n$ possible boxes and $\binom{k}{2}$ pairs of points). But conditioned on the event that all $k$ points are chosen from distinct boxes, we can sample from $\mu_\sigma$ by choosing a uniform subset of $k$ of the $n$ boxes and then uniformly choosing a point inside; this couples the sampling of $\mu_\sigma$ to that of $\sigma$. Thus the frequencies can only differ by at most $\frac{1}{n} \binom{k}{2}$, as desired.
\end{proof}

The main result of this subsection is the following characterization of convergence:

\begin{proposition}\label{permconti}
Let $\mu_n$ be a sequence of $d$-dimensional permutons. Then $\mu_n \to \mu$ if and only if for every $k$ and every $\tau \in \mc{S}_{d, k}$, we have $\freq(\tau, \mu_n) \to \freq(\tau, \mu)$. 
\end{proposition}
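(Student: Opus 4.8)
The plan is to prove the two implications separately. The forward direction ($\mu_n\to\mu\Rightarrow\freq(\tau,\mu_n)\to\freq(\tau,\mu)$) is soft, while the backward direction combines a compactness argument for the associated cumulative distribution functions with the fact that pattern frequencies determine a permuton.

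For the forward direction, recall from \cref{weakinf} that $\mu_n\to\mu$ (uniform convergence of CDFs, equivalently convergence in the box distance) is the same as weak convergence $\mu_n\Rightarrow\mu$, which in turn implies weak convergence of the product measures $\mu_n^{\otimes k}\Rightarrow\mu^{\otimes k}$ on $([0,1]^d)^k$ for every fixed $k$. The function $g_\tau\colon(\vec x_1,\dots,\vec x_k)\mapsto\mathds 1\{\mathrm{Perm}(\vec x_1,\dots,\vec x_k)=\tau\}$ from \cref{defnsamplefrompermuton} is bounded, and it is continuous at every configuration whose coordinates $x_a^{(j)}$ are pairwise distinct within each of the $d$ coordinates $j$, since small perturbations preserve all the strict inequalities defining the relative order. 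As each one-dimensional marginal of $\mu$ is uniform, hence atomless, the set of configurations violating this distinctness has $\mu^{\otimes k}$-measure zero, so $g_\tau$ is $\mu^{\otimes k}$-a.e.\ continuous; the portmanteau theorem then gives $\freq(\tau,\mu_n)=\int g_\tau\,d\mu_n^{\otimes k}\to\int g_\tau\,d\mu^{\otimes k}=\freq(\tau,\mu)$.

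For the backward direction, assume $\freq(\tau,\mu_n)\to\freq(\tau,\mu)$ for all $\tau$. By \cref{triineq} the CDFs $F_n$ of $\mu_n$ are uniformly bounded and $1$-Lipschitz on $[0,1]^d$, so by Arzelà--Ascoli every subsequence of $(F_n)$ has a further subsequence converging uniformly to some $G$; the inclusion--exclusion nonnegativity constraints, the grounding on the lower boundary, and the uniform-marginal normalization all pass to uniform limits, so $G$ is the CDF of a permuton $\nu$, and $\mu_{n_j}\to\nu$ by \cref{weakinf}. By the forward direction just proved, $\freq(\tau,\nu)=\lim_j\freq(\tau,\mu_{n_j})=\freq(\tau,\mu)$ for all $\tau$. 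Hence it suffices to show that pattern frequencies determine a permuton, i.e.\ that $\freq(\cdot,\nu)\equiv\freq(\cdot,\mu)$ forces $\nu=\mu$: then every subsequential limit of $(\mu_n)$ equals $\mu$, which by compactness gives $\mu_n\to\mu$. To prove this separation statement I would use the sampling approximation: for a permuton $\lambda$, the random permutons $\mu_{P_\lambda[N]}$ built from $N$ iid $\lambda$-points (as in \cref{defnsamplefrompermuton}) converge in the box distance to $\lambda$ almost surely as $N\to\infty$ --- this is itself a small deterministic approximation lemma, provable via the Glivenko--Cantelli theorem, which controls the rank-normalization of the sampled points precisely because the marginals of $\lambda$ are uniform. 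Granting it, note that for any $\rho\in\mc S_{d,N}$ one has $\freq(\rho,\mu)=\PP(P_\mu[N]=\rho)$ directly from \cref{defnsamplefrompermuton}, so $\freq(\cdot,\mu)\equiv\freq(\cdot,\nu)$ implies $P_\mu[N]\stackrel{d}{=}P_\nu[N]$ and therefore $\mu_{P_\mu[N]}\stackrel{d}{=}\mu_{P_\nu[N]}$ as random $d$-permutons, for every $N$; letting $N\to\infty$ and applying the sampling approximation on both sides, the common distributional limit must be both $\mu$ and $\nu$, and since these are deterministic, $\mu=\nu$.

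The main obstacle is the sampling-approximation lemma $\mu_{P_\lambda[N]}\to\lambda$: this is where the uniform-marginal hypothesis is genuinely used, via Glivenko--Cantelli, to show that replacing each sampled coordinate by its normalized rank displaces points by $o(1)$ uniformly, so that $\mu_{P_\lambda[N]}$ is asymptotically the empirical measure of the sample, which converges weakly to $\lambda$. Everything else --- the portmanteau argument in the forward direction, the Arzelà--Ascoli compactness for the $F_n$, and the identity $\freq(\rho,\lambda)=\PP(P_\lambda[N]=\rho)$ for $|\rho|=N$ --- is routine.
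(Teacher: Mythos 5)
Your proposal is correct, and its overall architecture (both implications, compactness, reduction to the fact that pattern frequencies characterize a permuton) coincides with the paper's. The forward direction is the same portmanteau argument: you phrase it as a.e.\ continuity of $g_\tau$, the paper as $\mu^{\otimes k}$-null boundary of the event $E_\tau$; these are equivalent. Where you diverge is in the two supporting tools for the backward direction. (i) For compactness you invoke Arzel\`a--Ascoli on the CDFs, using the $1$-Lipschitz bound from \cref{triineq}; the paper instead calls Prohorov's theorem on $[0,1]^d$. Both are fine, and yours is arguably more self-contained given that \cref{triineq} and \cref{weakinf} have already set up the Lipschitz/uniform-convergence framework --- though you do then owe the (easy) verification that uniform marginals survive a uniform limit of CDFs, which Prohorov sidesteps since weak limits of permutons on a compact cube automatically have the right marginals. (ii) For the separation step (``pattern frequencies determine a permuton'') you propose to reprove the sampling approximation $\mu_{P_\lambda[N]}\to\lambda$ via Glivenko--Cantelli. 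That lemma is exactly \cref{muvsmuk} in the paper, which proves a quantitative version via Hoeffding's inequality; the paper's proof of \cref{permconti} simply cites it and uses it through a triangle inequality rather than a distributional-limit argument, but your route to deducing $\nu=\mu$ is also valid. The paper's Hoeffding-based quantitative form is what is reused later (in \cref{muvsmuk2} and \cref{tfaemain}), so if you were writing the whole section you would want that version anyway; the Glivenko--Cantelli argument would suffice for \cref{permconti} alone. In short, you have found a correct proof that differs from the paper's mainly in swapping Prohorov for Arzel\`a--Ascoli and in not recognizing that \cref{muvsmuk} is already available as a black box.
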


In particular, if $\sigma_n$ is a sequence of $d$-dimensional permutations with $|\sigma_n| \to \infty$ and $\mu_n = \mu_{\sigma_n}$, then $\freq(\tau, \sigma_n) \to \freq(\tau, \mu_n)$ by \cref{permapprox} and thus this statement also applies to permutation frequency when we consider a sequence of permutations $\sigma_n$.

\begin{remark}\label{eventuallyconstant}
If $|\sigma_n|$ does not go to infinity but $\freq$ converges, then the sequence must be constant; indeed, let $a = \liminf_{n \to \infty} |\sigma_n|$. Then summing $\freq(\tau, \sigma_n)$ over all $\tau \in \mc{S}_{d, a + 1}$ yields $0$ if $|\sigma_n| = a$ and $1$ otherwise, so convergence of $\freq$ can only occur if $|\sigma_n| = a$ eventually (that is, our permutations are eventually of some given size). Then since there are finitely many $d$-dimensional permutations of size $a$, convergence of frequency only holds if $\sigma_n$ is eventually constant.  
\end{remark}

We will prove \cref{permconti} by first showing that we can approximate a permuton $\mu$ in box distance by (the permuton associated to) the random permutation $P_{\mu}[k]$ introduced in \cref{defnsamplefrompermuton}.

\begin{proposition}\label{muvsmuk}
Let $\mu$ be a $d$-dimensional permuton. For all sufficiently large $k$, we have
\[
    \PP\left(d_\square(\mu, \mu_{P_\mu[k]}) \le d2^{d+2} k^{-1/4}\right) \ge 1 - \exp(-\sqrt{k}).
\]
\end{proposition}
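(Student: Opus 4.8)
The plan is to bound the box distance $d_\square(\mu, \mu_{P_\mu[k]})$ by controlling, uniformly over a suitable finite net of boxes, the difference between $\mu(B)$ and the empirical mass $\mu_{P_\mu[k]}(B)$. First I would sample the $k$ iid points $\vec x_1, \dots, \vec x_k$ from $\mu$ defining $P_\mu[k]$, and observe that $\mu_{P_\mu[k]}$ is close (in box distance, by an $O(k^{-1})$ error coming from the $\tfrac1k\times\cdots\times\tfrac1k$ box discretization, analogous to \cref{permapprox}) to the empirical measure $\hat\mu_k = \tfrac1k\sum_{i=1}^k \delta_{\vec x_i}$. So it suffices to show $d_\square(\mu, \hat\mu_k)$ is small with high probability. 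Since $\mu$ and $\hat\mu_k$ are both probability measures, controlling the box distance reduces (via \cref{twod}) to controlling $\|F - \hat F_k\|_\infty$, where $F, \hat F_k$ are the cumulative distribution functions; up to a factor $2^d$ these are comparable.

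Next I would set up a discretization and union bound. Pick a mesh size: partition $[0,1]^d$ using $\lceil k^{1/4}\rceil$ equally spaced points in each coordinate, giving $O(k^{d/4})$ lattice points $\vec y$. For each such $\vec y$, $\hat F_k(\vec y) = \tfrac1k\#\{i : \vec x_i \le \vec y\}$ is a sum of $k$ iid Bernoulli$(F(\vec y))$ random variables, so by Hoeffding's inequality $\PP(|\hat F_k(\vec y) - F(\vec y)| > t) \le 2\exp(-2kt^2)$. Taking $t$ of order $k^{-1/4}$ makes the exponent of order $-\sqrt k$, and a union bound over the $O(k^{d/4})$ lattice points still leaves a bound of the form $\exp(-c\sqrt k)$ for large $k$ (the polynomial factor $k^{d/4}$ is absorbed). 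Then, to pass from the lattice to all of $[0,1]^d$, I would use the Lipschitz-type bound \cref{triineq} for $F$, together with the analogous monotonicity/sandwiching for the empirical $\hat F_k$: for arbitrary $\vec x$, round down each coordinate to the nearest lattice point $\vec y^-$ and up to $\vec y^+$; then $\hat F_k(\vec y^-) \le \hat F_k(\vec x) \le \hat F_k(\vec y^+)$ and likewise for $F$, and $|F(\vec y^+) - F(\vec y^-)| \le d/\lceil k^{1/4}\rceil$ by \cref{triineq}. This bounds $|\hat F_k(\vec x) - F(\vec x)|$ by the lattice error plus $O(d\,k^{-1/4})$ uniformly, hence $\|\hat F_k - F\|_\infty = O(d\,k^{-1/4})$ off the bad event.

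Finally I would assemble the constants: combining $d_\square(\mu,\hat\mu_k) \le 2^d\|F - \hat F_k\|_\infty$, the rounding error, the Hoeffding bound, and the $O(k^{-1})$ correction passing from $\hat\mu_k$ to $\mu_{P_\mu[k]}$, one gets $d_\square(\mu, \mu_{P_\mu[k]}) \le d\,2^{d+2} k^{-1/4}$ on an event of probability at least $1 - \exp(-\sqrt k)$ for $k$ large (the looseness in the constant $d2^{d+2}$ gives room to absorb the lower-order terms and the subexponential polynomial factor). The one genuinely delicate point is the bookkeeping of constants and mesh size so that the union bound over $O(k^{d/4})$ points against a tail $\exp(-ck^{1/2})$ still yields $\exp(-\sqrt k)$ with the stated constant $d2^{d+2}k^{-1/4}$; everything else is the standard Glivenko--Cantelli/DKW-type argument specialized to permutons, where the uniform-marginals hypothesis is exactly what makes the c.d.f.\ Lipschitz (\cref{triineq}) and the boxes continuity sets. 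I expect this constant-chasing, rather than any conceptual issue, to be the main obstacle.
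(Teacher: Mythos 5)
There is a genuine gap, and it lies exactly where you wave your hands: the comparison between $\mu_{P_\mu[k]}$ and the empirical measure $\hat\mu_k = \tfrac1k\sum_i\delta_{\vec x_i}$. You claim this is a deterministic $O(k^{-1})$ error ``coming from the box discretization, analogous to \cref{permapprox},'' but that is false. The mass of $\mu_{P_\mu[k]}$ is placed on boxes whose positions are determined by the \emph{ranks} $r_i$ of the points in each coordinate, i.e.\ near $(r_i-\tfrac12)/k$; the corresponding Dirac of $\hat\mu_k$ sits at the actual coordinate value. These differ by the deviation of the order statistic from its quantile, which is of order $k^{-1/2}$ (not $k^{-1}$), and even that holds only with high probability, not surely. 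For instance, if the $r$-th smallest first coordinate is at $r/k + \Theta(k^{-1/2})$, then a box test against $\{x_1 \le r/k\}$ already shows $d_\square(\mu_{P_\mu[k]}, \hat\mu_k) \gtrsim k^{-1/2}$ on a typical sample. So this step is not a harmless discretization error; it is itself a concentration-of-order-statistics statement, and it is in fact the substantive half of the proposition. (Also, \cref{permapprox} is a union bound on two iid samples landing in the same $1/n$-box, a statement about pattern \emph{frequency}, not box distance; it does not transfer.)

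Your second half — the Glivenko–Cantelli-type control of $\|\hat F_k - F\|_\infty$ via Hoeffding at $O(k^{d/4})$ lattice points, a union bound, and the Lipschitz bound \cref{triineq} to interpolate — is correct and matches the paper's mechanism, including the choice $t\sim k^{-1/4}$ so that $\exp(-2kt^2) = \exp(-2\sqrt k)$ swallows the polynomial prefactor. What you are missing is the other piece. The paper avoids introducing $\hat\mu_k$ entirely: it works directly with $F_k(\vec a)$ at $\tfrac1k$-lattice points, observes that $kF_k(\vec a)$ counts points whose $i$-th coordinate is at most the $a_i$-th order statistic for every $i$, and splits into two cases at each $\vec a$: (1) some order statistic deviates from $a_i/k$ by more than $k^{-1/4}$ — bounded by a binomial Hoeffding bound, and (2) all order statistics are within $k^{-1/4}$, in which case $F_k(\vec a)$ is sandwiched by an empirical CDF at a nearby point, again Hoeffding plus \cref{triineq}. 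Case (1) is precisely the order-statistics control your proposal is missing. If you want to keep your decomposition through $\hat\mu_k$, you would need to add a separate high-probability bound on $d_\square(\mu_{P_\mu[k]}, \hat\mu_k)$ of order $k^{-1/4}$ using DKW or Hoeffding on the per-coordinate order statistics (and redo the budgeting of constants and error probabilities); the two-route decomposition is workable, but the intermediate step cannot be treated as a free, deterministic $O(k^{-1})$ loss.
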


In particular, this means that $\mu_{P_\mu[k]}$ converges to $\mu$ in probability as $k \to \infty$.

\begin{proof}
Let $F$ be the distribution function for $\mu$, and for each $k$, let $F_k$ be the distribution function for $\mu_{P_\mu[k]}$. We will estimate $||F - F_k||_{\infty}$ and use that $d_{\square}(\mu_1, \mu_2) \le 2^d ||F_1 - F_2||_{\infty}$ to conclude. 

First, we only consider the $\frac{1}{k}$-lattice points $\vec{a} = \frac{1}{k}(a_1, \cdots, a_d)$ with all $a_i \in [k]$; let this set of points be denoted $L$. Recall that $\mu_{P_\mu[k]}$ assigns a mass of $\frac{1}{k}$ to $k$ different (random) $\frac{1}{k}$-lattice boxes in $[0, 1]^d$. Thus, $kF_k(\vec{a})$ counts how many of the $k$ points sampled for $P_\mu[k]$ satisfy the following condition: for all $i$, the $i$--th coordinate of the point is at most the $a_i$--th largest out of all $k$ points sampled.

We claim that we have the bounds
\begin{equation}\label{hoeffdingbound}
    \PP\left(F_k(\vec{a}) - F(\vec{a}) > 2dk^{-1/4}\right) < (d+1) \exp(-2\sqrt{k}), \quad \PP\left(F_k(\vec{a}) - F(\vec{a}) < 2dk^{-1/4}\right) < (d+1) \exp(-2\sqrt{k}).
\end{equation}
We'll just prove the first bound; the second one is shown in basically the same way. The event $\{F_k(\vec{a}) - F(\vec{a}) > 2dk^{-1/4}\}$ only occurs if one of the following cases occurs:
\begin{enumerate}
    \item For some $1 \le i \le d$, the $a_i$--th largest coordinate in the $i$--th direction is bigger than $\frac{a_i}{k} + k^{-1/4}$. 
    \item The $a_i$--th largest coordinate in the $i$--th direction is at most $\frac{a_i}{k} + k^{-1/4}$ for all $k$ (call this event $E$), but we still have $F_k(\vec{a}) - F(\vec{a}) > 2dk^{-1/4}$.
\end{enumerate}
For case (1), since $\mu$ is a permuton, each of the $k$ points has a uniform $i$--th coordinate. So the number of points $N_1$ whose $i$--th coordinate is less than $\frac{a_i}{k} + k^{-1/4}$ is binomial with parameters $(k, \frac{a_i}{k} + k^{-1/4})$. By Hoeffding's theorem, the probability that a $\text{Bin}(k, p)$ random variable is $k\eps$ smaller (also larger) than its mean is at most $\exp(-2k\eps^2)$. Thus the event in case (1) occurs with probability 
\[
    \PP(N_1 < a_i) = \PP(N_1 - (a_i + k^{3/4}) < -k \cdot k^{-1/4}) \le \exp(-2\sqrt{k})
\]
for each $i$ and thus an overall probability of less than $d\exp(-2\sqrt{k})$. 

Next, we can bound the probability for case (2) from above by 
\[
    \PP\left(E \cap \left\{F_k(\vec{a}) - F\left(\vec{a} + k^{-1/4}\vec{1}\right) > dk^{-1/4}\right\}\right),
\]
since by \cref{triineq} we know that $F(\vec{a} + k^{-1/4}\vec{1}) \le F(\vec{a}) + dk^{-1/4}$. But if $E$ occurs, then $kF_k(\vec{a})$ is at most the number of sampled points for $P_\mu[k]$ whose $i$--th coordinate is at most $\frac{a_i}{k} + k^{-1/4}$ for all $i$, which is a binomial random variable $N_2$ with parameters $\left(k, F\left(\vec{a} + k^{-1/4}\vec{1}\right)\right)$. Thus again by Hoeffding's theorem we can bound this from above by 
\[
    \PP\left(\frac{N_2}{k} - F\left(\vec{a} + k^{-1/4}\vec{1}\right) > dk^{-1/4}\right) = \PP(N_2 - kF\left(\vec{a} + k^{-1/4}\vec{1}\right) > k \cdot dk^{-1/4}) \le \exp(-2d^2\sqrt{k}) < \exp(-2\sqrt{k}).
\]
A union bound over these cases yields the result of \cref{hoeffdingbound}. 

\medskip

Now applying \cref{hoeffdingbound} to all such points $\vec{a} \in L$ (of which there are $k^d$ in total), and doing a union bound, we see that
\begin{equation}\label{approxlatticebound}
    \PP\left(\sup_{\vec{a} \in L} \left|F_k(\vec{a}) - F(\vec{a})\right|  > 2dk^{-1/4} \right) \le 2(d+1)k^d\exp(-2\sqrt{k}).
\end{equation}
To finish the proof of the proposition, now take some arbitrary $\vec{x} \in [0, 1]^d$ and let $\vec{y} = (\frac{1}{k} \lceil kx_1 \rceil, \cdots, \frac{1}{k} \lceil kx_d \rceil) \in L$ be its lattice point approximation. Then we have by \cref{triineq} that
\begin{align*}
    \left|F_k(\vec{x}) - F(\vec{x})\right| &\le \left|F_k(\vec{x}) - F_k(\vec{y})\right| + \left|F_k(\vec{y}) - F(\vec{y})\right| + \left|F(\vec{y}) - F(\vec{x})\right| \\
    &\le \frac{2d}{k} + \left|F_k(\vec{y}) - F(\vec{y})\right| \\
    &\le 2dk^{-1/4} + \left|F_k(\vec{y}) - F(\vec{y})\right|,
\end{align*}
so combining this with \cref{approxlatticebound} shows that
\[
    \PP\left(\sup_{\vec{x} \in [0, 1]^d} \left|F_k(\vec{x}) - F(\vec{x})\right| > 4dk^{-1/4} \right) \le 2(d+1)k^d\exp(-2\sqrt{k}).
\]
Now choose $m$ so that $2(d+1)k^d \le \exp(\sqrt{k})$ for all $k \ge m$ to find that 
\[
    \PP(||F_k - F||_\infty > 4dk^{-1/4}) \le \exp(-\sqrt{k})
\]
for all sufficiently large $k$. Finally, the fact that $d_\square(\mu, \mu_{P_\mu[k]}) \le 2^d ||F_k - F||_\infty$ yields the result.
\end{proof}

We will now show \cref{permconti} by making use of this approximation of $\mu$ by $P_\mu[k]$.

\begin{proof}[Proof of \cref{permconti}]
Suppose $\mu_n \to \mu$, and fix some integer $k$ and $\tau \in \mc{S}_{d, k}$. Let $\vec{x}_{1, n}, \cdots, \vec{x}_{k, n}$ be iid elements of $[0, 1]^d$ from $\mu_n$; then $(\vec{x}_{1, n}, \cdots, \vec{x}_{k, n})$ converges weakly to $(\vec{x}_1, \cdots, \vec{x}_k)$ iid sampled from $\mu$. Let $E_\tau$ be the set of points $(\vec{y}_1, \cdots, \vec{y}_k)$ such that the relative order of the coordinates match up with $\tau$ (that is, $\text{Perm}(\vec{y}_1, \cdots, \vec{y}_k) = \tau$). This is a subset of $[0, 1]^{kd}$ independent of $n$ with boundary of measure zero (since all coordinates have uniform marginals by definition, and lying on the boundary of $E_\tau$ requires some two of the $kd$ coordinates to be equal, which is a probability-zero event). Thus 
\[
    \PP\left(\left(\vec{x}_{1, n}, \cdots, \vec{x}_{k, n}\right) \in E_\tau\right) \to \PP\left(\left(\vec{x}_1, \cdots, \vec{x}_k\right) \in E_\tau\right),
\]
and these probabilities on the left and right side are exactly the definitions of $\freq(\tau, \mu_n)$ and $\freq(\tau, \mu)$ respectively. Thus we do have convergence of frequency, proving the forward direction.

On the other hand, suppose $\freq(\tau, \mu_n) \to \freq(\tau, \mu)$ for all $\tau$. Let $f: [0, 1]^d \to \RR$ be a bounded continuous function, and suppose for the sake of contradiction that $\EE^{\mu_n}[f]$ does not converge to $\EE^{\mu}[f]$. The sequence $\EE^{\mu_n}[f]$ is bounded (since $f$ is bounded), so there must be some subsequence $\EE^{\mu_{n(k)}}[f]$ converging to $x \ne \EE^{\mu}[f]$. Since $[0, 1]^d$ is compact, Prohorov's theorem (\cite[Theorem 5.1]{billing}) implies that the space of $d$-dimensional permutons is compact. Thus some further subsequence of the $\mu_{n(k)}$s, say $\mu_{n(k(\ell))}$, must converge weakly to some $\mu'$. Since all $\mu_{n(k(\ell))}$ have uniform marginals, so does $\mu'$. By the proof of the forward direction, we have $\freq(\tau, \mu_{n(k(\ell))}) \to \freq(\tau, \mu')$ for all $\tau$. But we also know that $\freq(\tau, \mu_{n(k(\ell))}) \to \freq(\tau, \mu)$ by assumption. This means that for every $k$, $\mu_{P_\mu[k]}$ and $\mu_{P_{\mu'}[k]}$ agree, since those permutons are fully determined by the frequencies of all $d$-dimensional permutations of size $k$. By \cref{muvsmuk} and the triangle inequality, that means $d_{\square}(\mu, \mu') = 0$; this can only happen if $\mu = \mu'$. Thus $\EE^{\mu_{n(k(\ell))}}[f] \to \EE^{\mu'}[f] = \EE^{\mu}[f]$, which is a contradiction with $\EE^{\mu_{n(k)}}[f] \to x$. Therefore we do have convergence of expectation and thus weak convergence. 
\end{proof}

Next, we show that all permutons can indeed be approximated by an appropriate sequence of permutations:

\begin{proposition}\label{findpermutationsequence}
Let $\mu$ be a $d$-dimensional permuton. Then there is a sequence of $d$-dimensional permutations $\sigma_n$ such that $\mu_{\sigma_n} \to \mu$.
\end{proposition}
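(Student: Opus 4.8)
The plan is to extract the desired sequence directly from the sampling construction $P_\mu[k]$ of \cref{defnsamplefrompermuton}, using the quantitative approximation already established in \cref{muvsmuk}. First observe that $P_\mu[k]$ is almost surely a well-defined element of $\mc{S}_{d,k}$: since $\mu$ has uniform (hence non-atomic) $1$-dimensional marginals, the $k$ iid sampled points $\vec{x}_1, \dots, \vec{x}_k$ have pairwise distinct coordinates in each of the $d$ directions with probability one, so their relative order is unambiguous and the random permutation is genuinely defined.

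Next, fix $k$ large enough that \cref{muvsmuk} applies, so that
\[
    \PP\left(d_\square(\mu, \mu_{P_\mu[k]}) \le d2^{d+2}k^{-1/4}\right) \ge 1 - \exp(-\sqrt{k}) > 0.
\]
In particular this event is nonempty, so there exists at least one deterministic realization of the sampled points, i.e.\ a permutation $\sigma_k \in \mc{S}_{d,k}$, with $d_\square(\mu, \mu_{\sigma_k}) \le d2^{d+2}k^{-1/4}$. For the finitely many small values of $k$ to which \cref{muvsmuk} does not apply, let $\sigma_k$ be an arbitrary element of $\mc{S}_{d,k}$. (Alternatively, since $\sum_{k} \exp(-\sqrt{k}) < \infty$, the Borel--Cantelli lemma shows that almost every realization of the sequence $(P_\mu[k])_k$ already satisfies $\mu_{P_\mu[k]} \to \mu$, which yields the existence of such a deterministic sequence without extracting it term by term.)

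Finally, by construction $d_\square(\mu, \mu_{\sigma_n}) \to 0$ as $n \to \infty$; writing $F$ and $F_n$ for the distribution functions of $\mu$ and $\mu_{\sigma_n}$, the comparison $\|F_n - F\|_\infty \le d_\square(\mu, \mu_{\sigma_n}) \le 2^d \|F_n - F\|_\infty$ recorded after the definition of the box distance shows that $\|F_n - F\|_\infty \to 0$, which by \cref{weakinf} is precisely the weak convergence $\mu_{\sigma_n} \to \mu$. There is no real obstacle here: every ingredient is either an elementary observation or \cref{muvsmuk} itself, into whose concentration estimate the entire content of the statement has already been front-loaded.
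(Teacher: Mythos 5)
Your proof is correct and follows essentially the same route as the paper's: both rest entirely on the concentration bound of \cref{muvsmuk}, the paper extracting the sequence via Borel--Cantelli from a single nested iid sample while you extract it term by term from the positive-probability event (and also note the Borel--Cantelli variant). The differences are purely cosmetic.
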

\begin{proof}
Let $(\vec{x}_1, \vec{x}_2, \cdots)$ be an infinite sequence of iid elements of $[0, 1]^d$ sampled from $\mu$, and let $\sigma_n = \text{Perm}(\vec{x}_1, \vec{x}_2, \cdots, \vec{x}_n)$ (which is a $d$-dimensional permutation almost surely). By \cref{muvsmuk}, for any $\eps > 0$, we must have $d_{\square}(\mu, \mu_{\sigma_n}) \le \eps$ eventually almost surely, which is equivalent to $\mu_{\sigma_n} \to \mu$ weakly, as desired.
\end{proof}

Finally, we check completeness of the space of $d$-dimensional permutons:

\begin{proposition}
A sequence of $d$-dimensional permutons $\mu_n$ converges if and only if it is Cauchy with respect to $d_\square$.
\end{proposition}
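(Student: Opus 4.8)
The plan is to reduce the statement to the completeness of the space of distribution functions under the uniform norm, using the equivalences already established in this section. One direction is immediate: if $\mu_n$ converges (say to $\mu$, which is automatically a permuton since weak limits of permutons have uniform marginals), then $d_\square(\mu_n,\mu)\to 0$ by \cref{weakinf} together with the sandwich inequality $\|F_1-F_2\|_\infty \le d_\square(\mu_1,\mu_2)\le 2^d\|F_1-F_2\|_\infty$, so $\mu_n$ is Cauchy with respect to $d_\square$ by the triangle inequality. The substance is the converse.

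Suppose $\mu_n$ is Cauchy for $d_\square$. By the sandwich inequality, the distribution functions $F_n$ form a Cauchy sequence in $(\,C([0,1]^d),\|\cdot\|_\infty\,)$, which is complete, so $F_n\to F$ uniformly for some continuous $F:[0,1]^d\to\RR$. The key step is to check that $F$ is the distribution function of a $d$-dimensional permuton. Passing to the limit in \cref{triineq} shows $F$ is Lipschitz in each coordinate, and more importantly one should verify that the inclusion-exclusion alternating sums of $F$ over the vertices of any box $\prod_i[x_i,y_i]$ — which equal $\mu_n(\prod_i[x_i,y_i])\ge 0$ for each $n$ — remain nonnegative in the limit; together with $F(1,\dots,1)=\lim F_n(1,\dots,1)=1$ and $F(\vec x)=0$ whenever some coordinate is $0$, this makes $F$ the cdf of a Borel probability measure $\mu$ on $[0,1]^d$ (a standard fact about $d$-increasing functions, or one can invoke Prohorov: the $\mu_n$ are tight on the compact space $[0,1]^d$, so a subsequence converges weakly to some $\mu$, and then $F$ must be its cdf by the portmanteau theorem applied to the continuity sets $\prod_i[0,x_i]$). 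The uniform-marginals condition is inherited since $F_n(1,\dots,1,x,1,\dots,1)=x$ for all $n$ passes to the limit, giving $F(1,\dots,1,x,1,\dots,1)=x$.

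Once $\mu$ is identified as a permuton with cdf $F$, uniform convergence $F_n\to F$ gives $\mu_n\to\mu$ by \cref{weakinf} (equivalently by the sandwich inequality, $d_\square(\mu_n,\mu)\le 2^d\|F_n-F\|_\infty\to 0$), which by definition is the statement that $\mu_n$ converges. This completes the proof. I expect the only genuinely delicate point to be the verification that the uniform limit $F$ of distribution functions is again a distribution function — specifically that the box-masses stay nonnegative; this is where I would lean on Prohorov's theorem and the portmanteau characterization of weak convergence via the continuity sets $\prod_i[0,x_i]$, exactly as in the second half of the proof of \cref{weakinf}, rather than re-deriving the $d$-increasing-function criterion by hand. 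Everything else is a routine assembly of results already proved in this subsection.
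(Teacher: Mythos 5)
Your proof is correct and follows essentially the same route as the paper's: pass to the distribution functions via the sandwich inequality, use completeness of $(C([0,1]^d),\|\cdot\|_\infty)$, and identify the limit as a permuton cdf by extracting a weakly convergent subsequence via Prohorov and matching distribution functions. The only cosmetic difference is that you mention the direct $d$-increasing-function route before dismissing it, whereas the paper goes straight to the subsequence argument.
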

\begin{proof}
If $\mu_n$ converges to some $\mu$, then $d_{\square}(\mu_n, \mu) \to 0$, so the sequence is Cauchy. For the other direction, let $F_n$ be the distribution function for $\mu_n$ for each $n$. By \cref{twod}, the distribution functions are Cauchy with respect to the sup norm, so $F_n$ converges to some function $F: [0, 1]^d \to \RR$. To show that $F$ is the distribution function of a $d$-dimensional permuton, take a subsequence $\mu_{n(k)}$ that converges weakly to some $\mu'$; as in the proof of \cref{permconti}, $\mu'$ again has uniform $1$-dimensional marginals and thus is a $d$-permuton with some distribution function $F'$. By \cref{weakinf} we know that $||F_{n(k)} - F'||_{\infty} \to 0$, so in fact $F = F'$ and we do have convergence to an actual $d$-dimensional permuton, as desired. 
\end{proof}

\subsection{The theory for the random setting}

We now discuss sequences of random permutations and their limiting random permutons $\mu$. We can think of points sampled from $\mu$ as first sampling $\mu$ from some distribution and then sampling iid points $\vec{x}_1, \cdots, \vec{x}_k$ conditionally on $\mu$. The bound from \cref{muvsmuk} still holds even if $\mu$ is random:

\begin{proposition}\label{muvsmuk2}
Let $\mu$ be a $d$-dimensional random permuton. For all sufficiently large $k$, we have 
\[
    \PP^{\mu}\left(d_\square(\mu, \mu_{P_\mu[k]}) \le d2^{d+2} k^{-1/4}\right) \ge 1 - \exp(-\sqrt{k}).
\]
\end{proposition}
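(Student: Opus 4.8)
The plan is to reduce the random case to the deterministic one, which has already been proved in \cref{muvsmuk}. The key observation is that \cref{muvsmuk} is a statement about a fixed (deterministic) permuton: for every deterministic $d$-permuton $\nu$ and every sufficiently large $k$, the event that $d_\square(\nu, \mu_{P_\nu[k]}) \le d 2^{d+2} k^{-1/4}$ has probability at least $1 - \exp(-\sqrt{k})$, where the randomness is only in the $k$ iid sample points drawn from $\nu$. Moreover, the threshold ``$k$ sufficiently large'' in that proof was chosen to depend only on $d$ (it was the requirement $2(d+1)k^d \le \exp(\sqrt k)$), not on the permuton $\nu$. So we have a uniform-in-$\nu$ statement.

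Now suppose $\mu$ is a random $d$-permuton, defined on some probability space, and let $\PP^\mu$ denote the joint law of $\mu$ together with the $k$ conditionally-iid points $\vec x_1, \dots, \vec x_k$ sampled from $\mu$ (so that $P_\mu[k] = \mathrm{Perm}(\vec x_1, \dots, \vec x_k)$). First I would fix $k$ large enough (depending only on $d$) that \cref{muvsmuk} applies to every deterministic permuton. Then I would condition on the value of $\mu$: writing $g(\nu) = \PP(d_\square(\nu, \mu_{P_\nu[k]}) \le d 2^{d+2} k^{-1/4})$ for the conditional probability given $\mu = \nu$, \cref{muvsmuk} says $g(\nu) \ge 1 - \exp(-\sqrt k)$ for every deterministic $\nu$. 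Therefore, integrating over the law of $\mu$,
\[
    \PP^{\mu}\left(d_\square(\mu, \mu_{P_\mu[k]}) \le d 2^{d+2} k^{-1/4}\right) = \EE\!\left[g(\mu)\right] \ge 1 - \exp(-\sqrt k),
\]
which is exactly the claimed bound. This is really just the tower property of conditional expectation combined with the fact that the deterministic bound holds pointwise in the conditioned permuton.

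The only genuine subtlety — and the thing I would want to state carefully rather than the ``main obstacle'' (there isn't a hard one) — is a measurability check: one must know that $\nu \mapsto g(\nu)$ is a measurable function of the permuton $\nu$ (with respect to the Borel $\sigma$-algebra induced by $d_\square$ on the space of $d$-permutons), so that $g(\mu)$ is a bona fide random variable and the expectation $\EE[g(\mu)]$ makes sense. This follows from standard facts: the map sending a permuton $\nu$ to the law of $(\vec x_1, \dots, \vec x_k)$ is continuous (indeed weak convergence $\nu_n \to \nu$ implies weak convergence of the product measures $\nu_n^{\otimes k} \to \nu^{\otimes k}$), the event $\{d_\square(\nu, \mu_{\mathrm{Perm}(\vec x_1,\dots,\vec x_k)}) \le c\}$ depends measurably on $(\nu, \vec x_1, \dots, \vec x_k)$, and integrating out the $\vec x_i$'s preserves measurability by Fubini/Tonelli. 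With this in hand the proof is complete. I would keep the write-up to a couple of sentences, since the content is entirely the observation that \cref{muvsmuk} is uniform in the underlying permuton and hence survives averaging.
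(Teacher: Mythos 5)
Your argument is exactly the paper's: enlarge the probability space to $(\mu, \vec{x}_1,\dots,\vec{x}_k)$, note that \cref{muvsmuk} applies pointwise for each fixed value $\nu$ of $\mu$ with a threshold on $k$ depending only on $d$, and integrate over the law of $\mu$. Your added remark on measurability of $\nu\mapsto g(\nu)$ is a reasonable (and correct) supplement to what the paper treats implicitly via the phrase ``any measurable functional $H$.''
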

\begin{proof}
Consider the formula for $\freq(\tau, \mu)$ in \cref{defnpattern}, and note that this quantity is now a random variable since $\mu$ is random. We may enlarge the probability space under consideration to $(\mu, \vec{x}_1, \cdots, \vec{x}_k)$, where we first sample $\mu$ and then (conditionally) sample the $\vec{x}_i$s iid from $\mu$. This distribution is characterized by specifying that for any measurable functional $H$ on the space, we have
\[
    \EE^{\mu, \vec{x}_1, \cdots, \vec{x}_k}\left[H(\mu, \vec{x}_1, \cdots, \vec{x}_k)\right] = \EE^{\mu}\left[ \int_{([0, 1]^d)^k} H(\mu, \vec{x}_1, \cdots, \vec{x}_k) \mu(d\vec{x}_1) \cdots \mu(d\vec{x}_k)\right].
\]
We plug in the functional $H(\nu, \vec{x}_1, \cdots, \vec{x}_k) = \mathds{1}\left\{d_\square(\nu, \mu_{P_\nu[k]}) \le d2^{d+2} k^{-1/4}\right\}$ into the equation above (switching notation to avoid overloading the use of $\mu$). Then \cref{muvsmuk} shows that the quantity inside the expectation on the right is bounded from below by $1 - \exp(-\sqrt{k})$, and the left-hand side is exactly the probability in the proposition statement. Thus the result follows.
\end{proof}

With this bound, we can now state and prove the main characterization of $d$-dimensional permuton convergence, showing that weak convergence is equivalent to convergence of all pattern frequencies:

\begin{proof}[Proof of \cref{tfaemain}]
First assuming (1), we have that the vector
\[
    (\freq(\tau_1, \mu_{\sigma_n}), \cdots, \freq(\tau_m, \mu_{\sigma_n})) \rightarrow (\freq(\tau_1, \mu), \cdots, \freq(\tau_m, \mu))
\] 
converges in distribution for any finitely many $d$-dimensional permutations $\tau_1, \cdots, \tau_m$; indeed, the mapping $x \mapsto (\freq(\tau_1, x), \cdots, \freq(\tau_m, x))$ is continuous by \cref{permconti}. Since the random variables $\freq(\tau_i, \mu_{\sigma_n})$ and $\freq(\tau_i, \sigma_n)$ differ by $O(1/n)$ by \cref{permapprox}, $(\freq(\tau_1, \sigma_n), \cdots, \freq(\tau_m, \sigma_n))$ converges to $(\freq(\tau_1, \mu), \cdots, \freq(\tau_m, \mu))$ in distribution as well, which is exactly what (2) states. 

(2) implies (3) is clear -- convergence in distribution implies convergence in expectation since $\freq$ is bounded by $1$. 

(3) also immediately implies (4), because the frequency of any pattern $\tau$ in $\sigma_n$ is by definition exactly the probability that $\tau$ is the pattern of $\sigma_n$ on a uniform subset $I_{n,k}$. (And the constants $c_\tau$ will indeed sum to $1$, since $\sum_{\tau \in \mc{S}_{d, k}} \EE[\freq(\tau, \sigma_n)] = 1$ for any $n \ge k$ and there are only finitely many $d$-dimensional permutations of size $k$ in the sum.)

The hard implication is the last one, that is, (4) implies (1). First notice that for fixed $k$, we have 
\[
    \PP(P_{\mu_{\sigma_n}}[k] = \tau) = \EE^{\sigma_n}[\freq(\tau, \mu_{\sigma_n})],
\] 
and again by \cref{permapprox} the right-hand side has the same limit as $\EE^{\sigma_n}[\freq(\tau, \sigma_n)]$ as $n \to \infty$. But this latter expectation is exactly the probability that $\pat_{I_{n,k}}(\sigma_n) = \tau$. Since we assume this probability converges, $P_{\mu_{\sigma_n}}[k]$ converges to $\rho_k$ in distribution and thus $\mu_{P_{\mu_{\sigma_n}}[k]}$ converges to $\mu_{\rho_k}$ in distribution as well (since all objects here are made up of $\frac{1}{k}$-side length squares, meaning there are only finitely many possible values they can take on).

From here, we can prove weak convergence by letting $f$ be an arbitrary bounded continuous function from the space of $d$-dimensional permutons to $\RR$. First notice that
\begin{align*}
    \left|\EE[f(\mu_{\sigma_n})] - \EE[f(\mu_{P_{\mu_{\sigma_n}}[k]})]\right|
    \le &\,\EE\left[\left|f(\mu_{\sigma_n}) - f(\mu_{P_{\mu_{\sigma_n}}[k]})\right|; d_\square(\mu_{\sigma_n}, \mu_{P_{\mu_{\sigma_n}}[k]}) \le d2^{d+2}k^{-1/4}\right] \\ 
    &+  \EE\left[\left|f(\mu_{\sigma_n}) - f(\mu_{P_{\mu_{\sigma_n}}[k]})\right|; d_\square(\mu_{\sigma_n}, \mu_{P_{\mu_{\sigma_n}}[k]}) > d2^{d+2}k^{-1/4}\right].
\end{align*} 
(Here the expectations on the right-hand side are taken with respect to the ``enlarged probability spaces'' as in the proof of \cref{muvsmuk2}.) By \cref{muvsmuk2}, the second term is at most $2\sup|f| \exp(-\sqrt{k})$, and the first term is at most $\omega(d2^{d+2}k^{-1/4})$, where $\omega$ is the modulus of continuity for $f$. Thus we have
\[
    \left|\EE[f(\mu_{\sigma_n})] - \EE[f(\mu_{P_{\mu_{\sigma_n}}[k]})]\right| \le \omega(d2^{d+2}k^{-1/4}) + 2\sup|f| \exp(-\sqrt{k}).
\]
Recalling that the space of $d$-dimensional permutons is compact, consider any convergent subsequence of $\mu_{\sigma_n}$ converging to some $\mu'$. Since $\mu_{P_{\mu_{\sigma_n}}[k]}$ converges to $\mu_{\rho_k}$ as $n \to \infty$, we also have
\[
    \left|\EE[f(\mu')] - \EE[f(\mu_{\rho_k})]\right| \le \omega(d2^{d+2}k^{-1/4}) + 2\sup|f| \exp(-\sqrt{k}).
\]
Taking $k \to \infty$, both terms on the right-hand side go to zero because $f$ is a bounded continuous function on a compact space. Thus the whole right-hand side goes to zero, implying that $\EE[f(\mu_{\rho_k})]$ converges to $\EE[f(\mu')]$. Since our choice of subsequence does not depend on $f$, this means $\mu_{\rho_k}$ converges weakly to $\mu'$. But then this means that any convergent subsequence of $\mu_{\sigma_n}$ converges to $\mu'$, so the whole sequence must converge (because any subsequence has a further convergent subsequence), proving that $\mu_{\sigma_n}$ has a limit as desired. 

Finally, assuming these conditions hold, the equality of various quantities follows from the proofs above. Indeed, the proof that (1) implies (2) shows that $(v_\tau)_\tau$ is distributed as $(\freq(\tau, \mu))_\tau$ and hence that $\EE[v_\tau] = \EE[\freq(\tau, \mu)]$ for any permutation $\tau$ of an arbitrary size $k$, and convergence in expectation (from (2) implies (3)) means this quantity is also $c_\tau$. And this quantity is also the probability mass $\PP(\rho_k = \tau)$, since (as described in the proof of (3) implies (4)) this is exactly the limit of the expected pattern frequencies $\EE[\freq(\tau, \sigma_n)]$. To conclude, we indeed have $P_\mu[k] \stackrel{d}{=} \rho_k$ -- that is, $\PP(P_{\mu}[k] = \tau) = \PP(\rho_k = \tau)$ for all $\tau$ of size $k$ -- because the former quantity is exactly $\EE[\text{freq}(\tau, \mu)]$, which we've just shown is equal to $\PP(\rho_k = \tau)$.
\end{proof}

We conclude this section with results that show that the space of permutons is the ``correct'' candidate space of limits:

\begin{proposition}\label{randunique}
If two random permutons $\mu, \mu'$ satisfy 
\[
    \PP^{\mu,\vec{x}_1, \cdots, \vec{x}_k}\left(P_\mu[k] = \tau\right) = \PP^{\mu',\vec{x}_1', \cdots, \vec{x}_k'}\left(P_{\mu'}[k] = \tau\right)
\]
for all sufficiently large $k$ and all $\tau \in \mc{S}_{d, k}$, then $\mu = \mu'$ in distribution.
\end{proposition}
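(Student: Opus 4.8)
The plan is to leverage the approximation result of \cref{muvsmuk2} and reduce the statement to the standard fact that the law of a random element of a compact metric space is determined by the expectations of bounded continuous functionals. First I would observe that $\mu_{P_\mu[k]}$ is a deterministic measurable function of the random $d$-permutation $P_\mu[k]$, which -- since $\mu$ has uniform $1$-dimensional marginals, so that the $k$ sampled points almost surely have pairwise distinct coordinates in each of the $d$ directions -- takes values in the \emph{finite} set $\mc{S}_{d,k}$ almost surely. Consequently the law of $\mu_{P_\mu[k]}$ is entirely specified by the numbers $\PP^{\mu,\vec{x}_1,\cdots,\vec{x}_k}(P_\mu[k]=\tau)$ as $\tau$ ranges over $\mc{S}_{d,k}$, and the hypothesis of the proposition therefore gives $\mu_{P_\mu[k]}\stackrel{d}{=}\mu_{P_{\mu'}[k]}$ for all sufficiently large $k$.

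Next I would invoke \cref{muvsmuk2}: since $\PP^\mu\!\left(d_\square(\mu,\mu_{P_\mu[k]})\le d2^{d+2}k^{-1/4}\right)\ge 1-\exp(-\sqrt k)$ (and similarly for $\mu'$), working on an enlarged probability space carrying $\mu$ together with an infinite iid sequence $\vec{x}_1,\vec{x}_2,\cdots$ sampled from $\mu$ -- exactly as in the proof of \cref{findpermutationsequence} -- Borel--Cantelli yields $d_\square(\mu,\mu_{P_\mu[k]})\to 0$ almost surely, hence $\mu_{P_\mu[k]}\to\mu$ in distribution (and likewise $\mu_{P_{\mu'}[k]}\to\mu'$). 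Since the space of $d$-dimensional permutons equipped with $d_\square$ is a compact metric space (as established earlier in this section, with $d_\square$ metrizing the weak topology), for any bounded continuous $f$ on this space we get $\EE[f(\mu_{P_\mu[k]})]\to\EE[f(\mu)]$ and $\EE[f(\mu_{P_{\mu'}[k]})]\to\EE[f(\mu')]$. Combining this with the equality in distribution from the previous paragraph, $\EE[f(\mu_{P_\mu[k]})]=\EE[f(\mu_{P_{\mu'}[k]})]$ for all large $k$, and passing to the limit gives $\EE[f(\mu)]=\EE[f(\mu')]$ for every bounded continuous $f$.

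Finally I would conclude with the standard measure-theoretic fact that on a compact (more generally Polish) metric space the law of a random element is uniquely determined by the expectations of bounded continuous functionals; hence $\mu\stackrel{d}{=}\mu'$. I do not expect a genuine obstacle here: the only points requiring care are the bookkeeping on the enlarged probability space $(\mu,\vec{x}_1,\cdots,\vec{x}_k)$ implicit in the phrase ``the law of $\mu_{P_\mu[k]}$'', and the verification that $d_\square$-convergence transports bounded continuous functionals of permutons as expected -- both routine. The one load-bearing input is \cref{muvsmuk2}, which is precisely what guarantees that the discrete approximations $\mu_{P_\mu[k]}$ recover $\mu$ in the limit, and which is already available.
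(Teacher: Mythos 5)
Your argument is correct and takes essentially the same route as the paper: both proofs fix a bounded continuous functional $\phi$ on the (compact) space of $d$-permutons, use \cref{muvsmuk2} to show that $\EE[\phi(\mu_{P_\mu[k]})]\to\EE[\phi(\mu)]$, observe that the hypothesis forces $\mu_{P_\mu[k]}\stackrel{d}{=}\mu_{P_{\mu'}[k]}$, and conclude. The only cosmetic difference is that the paper establishes the approximation step via a direct telescoping bound on the expectation difference using a modulus-of-continuity estimate, whereas you first upgrade the $1-\exp(-\sqrt k)$ probability bound to almost-sure convergence of $\mu_{P_\mu[k]}\to\mu$ via Borel--Cantelli on the enlarged space carrying an infinite iid sequence (as in \cref{findpermutationsequence}); both routes are valid and rest on the same key input.
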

\begin{proof}
Let $\phi$ be any bounded continuous function from the space of $d$-dimensional permutons to $\RR$. Then for all $k$ we have (letting $\vec{\mathbf{x}}$ denote $(\vec{x}_1, \cdots, \vec{x}_k)$ iid from $\mu$ and $\vec{\mathbf{x}}'$ similarly sampled from $\mu$')
\begin{align*}
    &\EE^{\mu}[\phi(\mu)]- \EE^{\mu'}[\phi(\mu')] \\
    &= \left(\EE^{\mu}[\phi(\mu)] - \EE^{\mu, \vec{\mathbf{x}}}[\phi(\mu_{P_\mu[k]})]\right) +  \left(\EE^{\mu, \vec{\mathbf{x}}}[\phi(\mu_{P_\mu[k]})] - \EE^{\mu', \vec{\mathbf{x}}'}[\phi(\mu_{P_{\mu'}[k]})]\right) + \left(\EE^{\mu', \vec{\mathbf{x}}'}[\phi(\mu_{P_{\mu'}[k]})] - \EE^{\mu'}[\phi(\mu')]\right) \\
    &= \EE^{\mu, \vec{\mathbf{x}}}\left[\phi(\mu) - \phi(\mu_{P_\mu[k]})\right] +  \left(\EE^{\mu, \vec{\mathbf{x}}}[\phi(\mu_{P_\mu[k]})] - \EE^{\mu', \vec{\mathbf{x}}'}[\phi(\mu_{P_{\mu'}[k]})]\right) + \EE^{\mu', \vec{\mathbf{x}}'}\left[\phi(\mu_{P_{\mu'}[k]}) - \phi(\mu')\right] .
\end{align*}
Taking $k \to \infty$, the middle term is eventually zero by assumption, and by \cref{muvsmuk} the first and last expectations also go to zero (using the same modulus of continuity argument as in the end of the proof of \cref{tfaemain}). Thus $\mu$ and $\mu'$ must have the same expectation for $\phi$, meaning they are equal in distribution.
\end{proof}

Finally, we show a strengthening of \cref{findpermutationsequence} now for random permutons:

\begin{proposition}
Let $\mu$ be a random $d$-dimensional permuton. Then there is a sequence of random $d$-dimensional permutations $\sigma_n$ such that $\mu_{\sigma_n} \to \mu$. Furthermore, for any \vocab{consistent family} of $d$-dimensional permutations (meaning that $|\sigma_n| = n$ and a uniformly random $k$-subset of $\sigma_n$ is distributed as $\sigma_k$ for all $n \ge k$), we have convergence to some unique permuton.
\end{proposition}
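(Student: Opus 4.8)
The plan is to treat the two claims separately, reducing each to machinery already in place. For the existence of an approximating sequence, I would repeat the argument of \cref{findpermutationsequence} essentially verbatim, but feed it the random estimate \cref{muvsmuk2} in place of the deterministic one. Sample $\mu$ from its law, and then, conditionally on $\mu$, sample an infinite iid sequence $\vec x_1, \vec x_2, \dots$ from $\mu$; because permutons have non-atomic one-dimensional marginals, almost surely no two of the $\vec x_i$ share a coordinate in any direction, so $\sigma_n := \mathrm{Perm}(\vec x_1, \dots, \vec x_n)$ is a well-defined $d$-permutation for every $n$, and by construction $\sigma_n = P_\mu[n]$ conditionally on $\mu$. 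By \cref{muvsmuk2}, $\PP^\mu\big(d_\square(\mu, \mu_{\sigma_n}) > d2^{d+2} n^{-1/4}\big) \le \exp(-\sqrt n)$ for all large $n$; since this is summable, Borel--Cantelli gives $d_\square(\mu, \mu_{\sigma_n}) \to 0$ almost surely, hence $\mu_{\sigma_n} \to \mu$ weakly and, a fortiori, in distribution.

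For the second claim, let $(\sigma_n)_{n \ge 1}$ be an arbitrary consistent family. Consistency says exactly that $\pat_{I_{n,k}}(\sigma_n) \stackrel{d}{=} \sigma_k$ for all $n \ge k$, where $I_{n,k}$ is a uniform $k$-subset of $[n]$ independent of $\sigma_n$; thus $\PP\big(\pat_{I_{n,k}}(\sigma_n) = \tau\big) = \PP(\sigma_k = \tau)$ does not depend on $n$, so condition (4) of \cref{tfaemain} holds with $\rho_k = \sigma_k$ (the required convergence being trivial). By \cref{tfaemain}, $\mu_{\sigma_n}$ converges in distribution to some random $d$-permuton $\mu$, and the concluding part of that theorem gives $\PP(P_\mu[k] = \tau) = \PP(\rho_k = \tau) = \PP(\sigma_k = \tau)$ for every $k$ and every $\tau \in \mc S_{d,k}$.

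Uniqueness then follows from \cref{randunique}: if $(\sigma_n)$ and $(\sigma_n')$ are consistent families with $\sigma_k \stackrel{d}{=} \sigma_k'$ for all $k$ (in particular, taking the two families equal shows that a single family has a well-defined limit in distribution), their limits $\mu, \mu'$ satisfy $\PP(P_\mu[k] = \tau) = \PP(P_{\mu'}[k] = \tau)$ for all large $k$ and all $\tau$, whence $\mu \stackrel{d}{=} \mu'$. Thus the limiting permuton exists and its law is determined by the laws of the $\sigma_k$. I do not expect a serious obstacle: all the analytic content — the approximation estimates, compactness of the space of $d$-permutons, and the equivalence \cref{tfaemain} — is already established, and the only genuinely new input is the elementary observation that consistency forces the pattern-frequency expectations to be exactly constant in $n$, making convergence automatic. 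The one step meriting a line of care is verifying that $\mathrm{Perm}$ is almost surely well defined in the first part, which uses only non-atomicity of the marginals; and, as a sanity check, the iid construction of the first part is itself a consistent family, so the uniqueness statement confirms that its limit $\mu$ is the canonical one attached to the given random permuton.
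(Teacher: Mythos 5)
Your proposal is correct and matches the paper's argument essentially verbatim: set $\sigma_n = P_\mu[n]$, apply \cref{muvsmuk2} with Borel--Cantelli for existence, then for a general consistent family observe that condition (4) of \cref{tfaemain} holds trivially and invoke \cref{randunique} for uniqueness. The extra remarks you add (non-atomicity of the marginals making $\mathrm{Perm}$ almost surely well-defined, and the observation that the iid construction is itself a consistent family) are accurate and are the same remarks the paper makes, parenthetically, in its proof.
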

\begin{proof}
For the first statement, let $\sigma_n = P_{\mu}[n]$ (notice in particular that for any $n \ge k$, we may pick $n$ points at uniform to form $P_\mu[n]$ and a uniformly random subset of them to form $P_\mu[k]$, so in fact these $\sigma_n$ form a consistent family). Then \cref{muvsmuk2} shows that $d_\square(\mu, \mu_{\sigma_n}) \to 0$ almost surely by Borel-Cantelli, so we have convergence in distribution. On the other hand, if we have a consistent family of permutations, condition (4) of \cref{tfaemain} holds (so that the $\mu_{\sigma_n}$s do converge weakly to some random permuton) and \cref{randunique} yields uniqueness, completing the proof. 
\end{proof}

\section{Background: the skew Brownian permutons and coalescent-walk processes}\label{backgroundsection}

We collect here some preliminary results that will be used later in \cref{schnydersection} and \cref{dseparablesection}.

\subsection{The skew Brownian permutons}\label{skewbrownianpermutonsection}

The purpose of this section is to give a more thorough introduction to the skew Brownian permutons, providing rigorous definitions and references to the known results. For completeness, we restate some of the constructions from the introduction and gather all of the necessary objects in one place.

Let $\rho \in (-1,1)$ and $q \in [0, 1]$ be parameters, and let $W_\rho=(X_\rho, Y_\rho)$ be a two-dimensional \vocab{Brownian excursion of correlation $\rho$ in the first quadrant} on the time interval $[0, 1]$, which is a two-dimensional Brownian motion of correlation $\rho$ conditioned to stay in $\RR_{\ge 0}^2$ and start (at time 0) and end (at time 1) at the origin $(0, 0)$. The details on how to construct such an object can be found for instance in \cite{millersheffield2018}, which cites \cite{shimura} for the original results. Consider the solutions $\left(Z_{\rho, q}^{(u)}(t)\right)_{t\in[0,1]}$ to the following family of stochastic differential equations driven by $(X_\rho, Y_\rho)$ and indexed by $u \in [0, 1]$: 
\begin{equation}\label{skewbrowniansdedefn}
\begin{cases} 
dZ_{\rho, q}^{(u)}(t) = \mathds{1}\left\{Z_{\rho, q}^{(u)}(t) > 0\right\} d Y_\rho(t) - \mathds{1}\left\{Z_{\rho, q}^{(u)}(t) < 0\right\} d X_\rho(t)+ (2q-1) dL^{Z_{\rho, q}^{(u)}}(t), & t \in (u, 1), \\ 
Z_{\rho, q}^{(u)}(t) = 0, & t \in [0, u],
\end{cases}
\end{equation}
where $L^{Z_{\rho, q}^{(u)}}$ is the symmetric local-time process at zero of $Z_{\rho, q}^{(u)}$ -- that is, 
\[
    L^{Z_{\rho, q}^{(u)}} = \lim_{\varepsilon \to 0} \frac{1}{2\varepsilon} \int_0^t \mathds{1}\left\{Z_{\rho, q}^{(u)}(s) \in [-\varepsilon, \varepsilon]\right\} ds.
\]
Intuitively, the sample paths $Z_{\rho, q}^{(u)}(t)$ are started at zero at time $u$, follow $Y_\rho$ (resp.\ $-X_\rho$) above (resp.\ below) the $x$-axis, and have probability $q$ of starting a positive excursion each time they hit $0$. For this SDE, existence and uniqueness of strong solutions is known for $\rho \in (-1, 1)$ (\cite[Theorem 1.7]{borga2023skew}), and furthermore the walks started at $u_1 \ne u_2$ do not cross but do coalesce at a later time (\cite[Proposition 5.2]{borga2022scaling}). This allows us to define the following random function on $[0, 1]$:
\begin{equation}\label{eq:defn-ass-process}
    \phi_{\rho, q}(t) = \text{Leb}\left(\left\{x \in [0, t): Z_{\rho, q}^{(x)}(t) < 0\right\} \cup \left\{x \in [t, 1]: Z_{\rho, q}^{(t)}(x) \ge 0\right\}\right).
\end{equation}

\begin{definition}\label{skewbrownianremark}
With the notation above, the \vocab{skew Brownian permuton $\mu_{\rho, q}$ driven by $(X_\rho, Y_\rho)$ of skewness $q$} is the pushforward of Lebesgue measure on $[0, 1]$ via the mapping $(\text{Id}, \phi_{\rho, q})$. In other words, for any Borel set $A$, we have
\begin{equation}\label{eq:defn-perm}
    \mu_{\rho, q}(A) = (\text{Id}, \phi_{\rho, q})_\ast \text{Leb}(A) = \text{Leb}\left(\left\{t \in [0, 1]: (t, \phi_{\rho, q}(t)) \in A\right\}\right).
\end{equation}

\end{definition}

This measure $\mu_{\rho, q}$ may be interpreted as the continuous-time permutation induced by the collection of paths $Z_{\rho, q}^{(u)}$ for $u \in [0, 1]$. Specifically, almost surely, $Z_{\rho, q}$ induces a (random) total ordering $<_{\rho, q}$ on a subset of $[0, 1]$ of Lebesgue measure 1, such that for $t_1 \le t_2 \in [0, 1]$, we have 
\begin{equation}\label{skewbrownianordering}
    t_1 <_{\rho, q} t_2,\quad\text{ if } \quad Z_{\rho, q}^{(t_1)}(t_2) < 0,\qquad\text{ and }\qquad t_2 <_{\rho, q} t_1, \quad\text{ otherwise.}
\end{equation}
Then $\phi_{\rho, q}(t)$ describes the Lebesgue measure of points smaller than $t$ under this total ordering $<_{\rho, q}$. In particular, as shown in \cite[Lemma 5.5]{borga2022scaling}, almost surely, for almost every $t_1 < t_2 \in[0,1]$,
\begin{equation}\label{rel-order}
    t_1 <_{\rho, q} t_2 \text{ and } \phi_{\rho, q}(t_1)< \phi_{\rho, q}(t_2),\qquad\text{ or}, \qquad t_2 <_{\rho, q} t_1\text{ and } \phi_{\rho, q}(t_2)< \phi_{\rho, q}(t_1).
\end{equation}

Recalling \cref{defnsamplefrompermuton}, we also note for future reference that if $P_{\mu_{\rho, q}}[k]$ is obtained from the $k$ iid points 
\[
    \bigg(\left(v_1,\phi_{\rho, q}(v_1)\right),\dots,\left(v_k,\phi_{\rho, q}(v_k)\right)\bigg),
\]
with $v_1 < \cdots < v_k$ iid uniform random variables relabeled to be ordered (note that these $k$ points have the same law as $k$ iid ordered points sampled from $\mu_{\rho, q}$), then almost surely for all $i$ and $j$,
\begin{equation}\label{eq:patt-from-perm}
    (P_{\mu_{\rho, q}}[k])(i)<(P_{\mu_{\rho, q}}[k])(j)\iff \phi_{\rho, q}(v_i)<\phi_{\rho, q}(v_j).
\end{equation}

We also recall that $\mu_{\rho, q}$ determines $(X_\rho, Y_\rho)$:

\begin{proposition}[{\cite[Theorem 1.1 and Remark 1.4]{borgagwynne2024}}]\label{prop:det-perm}
   Almost surely, the permuton $\mu_{\rho, q}$  determines the two-dimensional Brownian excursion $(X_\rho, Y_\rho)$ of correlation $\rho$, in the sense that there exists a measurable function $G$ such that $(X_\rho, Y_\rho)=G(\mu_{\rho, q})$.
\end{proposition}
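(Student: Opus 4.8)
The plan is to separate a soft reduction, carried out directly from the constructions recalled above, from the one genuinely hard input, which is exactly \cite[Theorem 1.1 and Remark 1.4]{borgagwynne2024}. For the soft part, first note that $\mu_{\rho,q}$ determines the random function $\phi_{\rho,q}$ (as a Lebesgue-a.e.\ defined map): by \cref{eq:defn-perm}, $\mu_{\rho,q}$ is the pushforward of Lebesgue measure on $[0,1]$ under $t\mapsto\bigl(t,\phi_{\rho,q}(t)\bigr)$, so its disintegration along the (uniform) first coordinate is $\mu_{\rho,q}(dt\,dy)=dt\,\delta_{\phi_{\rho,q}(t)}(dy)$, and uniqueness of disintegrations makes $\phi_{\rho,q}(t)$ a measurable functional of $\mu_{\rho,q}$ for a.e.\ $t$. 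Next, \cref{rel-order} gives, almost surely, $t_1<_{\rho,q}t_2\iff\phi_{\rho,q}(t_1)<\phi_{\rho,q}(t_2)$ for a.e.\ pair $t_1<t_2$, so the random total order $<_{\rho,q}$ is a measurable functional of $\mu_{\rho,q}$; and by \cref{skewbrownianordering} this order records the sign of $Z_{\rho,q}^{(u)}(t)$ for a.e.\ pair $u\le t$. In particular $\mu_{\rho,q}$ determines the full ``sign field'' of the coalescing flow $\bigl(Z_{\rho,q}^{(u)}\bigr)_{u\in[0,1]}$, i.e.\ the system of maximal intervals on which each path is strictly positive or strictly negative.

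It then remains to upgrade this order/sign data to the real-valued processes $(X_\rho,Y_\rho)$. On a maximal negative interval $(a,b)$ of $Z_{\rho,q}^{(u)}$ the local-time term in \cref{skewbrowniansdedefn} vanishes and the equation reads $dZ_{\rho,q}^{(u)}=-dX_\rho$ with $Z_{\rho,q}^{(u)}(a)=0$, so there $Z_{\rho,q}^{(u)}=-(X_\rho-X_\rho(a))$; symmetrically, on a maximal positive interval $Z_{\rho,q}^{(u)}$ equals an increment of $Y_\rho$. Thus the sign field already exhibits the excursion decompositions of $X_\rho$ and of $Y_\rho$ --- but not the magnitudes of the increments, and recovering these magnitudes is the crux. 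In \cite{borgagwynne2024} this is done by passing to the SLE-decorated LQG description of $\mu_{\rho,q}$: for suitable $\kappa$, $\gamma$, and angle $\theta=\theta(\rho,q)$, one can realize $\mu_{\rho,q}$ as the permuton built (as in \cref{eq:permutonfromcoupledcurves}) from a pair of LQG-mass-parametrized space-filling $\mathrm{SLE}_\kappa$ counter-flow lines on a unit-area $\gamma$-LQG sphere in such a way that $(X_\rho,Y_\rho)$ is precisely the (angle-generalized) mating-of-trees boundary-length encoding of that decorated surface. \cite[Theorem 1.1]{borgagwynne2024} shows that the permuton determines this decorated surface up to conjugation; the mating-of-trees correspondence then reconstructs $(X_\rho,Y_\rho)$ from the surface by a deterministic measurable map; and \cite[Remark 1.4]{borgagwynne2024} combines the two steps into a measurable $G$ with $(X_\rho,Y_\rho)=G(\mu_{\rho,q})$.

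The hard part is entirely concentrated in \cite[Theorem 1.1]{borgagwynne2024}: one must show that the purely order-theoretic data read off from the permuton pins down not just the ``combinatorial pattern'' of the two space-filling curves but the actual conformal structure of the underlying LQG surface and of the curves on it --- a uniqueness-of-embedding statement in the spirit of the Duplantier--Miller--Sheffield mating-of-trees theorem, whose proof rests on a conformal removability / resampling argument. Everything else --- the disintegration, the use of \cref{rel-order} and \cref{skewbrownianordering}, and the mating-of-trees reconstruction --- is deterministic post-processing. (A self-contained, LQG-free alternative would instead try to read off $X_\rho(t)$ and $Y_\rho(t)$ directly from the sign field by a scaling / local-time limit over the nested family of negative, respectively positive, excursion intervals surrounding $t$; this looks plausible via fine sample-path properties of two-dimensional correlated Brownian excursions, but is at least as delicate and is not the route taken in the literature.)
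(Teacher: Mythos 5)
The paper does not prove this proposition; it simply cites it from \cite[Theorem 1.1 and Remark 1.4]{borgagwynne2024}, and your proposal ultimately does the same --- the genuine content is delegated to that reference, exactly as in the paper. Your ``soft reduction'' (disintegration gives $\phi_{\rho,q}$, which via \cref{rel-order} and \cref{skewbrownianordering} gives the order and sign field) is a correct and illuminating warm-up, but since the cited theorem already delivers $(X_\rho,Y_\rho)$ directly from $\mu_{\rho,q}$, this reduction is expository rather than load-bearing, and your description of the LQG/conformal-removability mechanism matches the route taken in the reference.
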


\medskip

We next describe a connection between the skew Brownian permuton and certain objects in random geometry. We assume here a certain familiarity of the reader with SLEs and LQG surfaces; for more details, we refer the reader to \cite[Sections 1 and 4]{borga2023skew}. We point out that the next result is not needed for almost all results in our paper, and we only use it for the proof of \cref{SLES-LQG}.

\begin{proposition}[{\cite[Theorem 1.1]{millersheffield2018},\cite[Theorem 1.17, Proposition 4.1]{borga2023skew}}]\label{lqgdescription2}
Let $\gamma \in (0, 2)$, $\kappa = 16/\gamma^2$,  $\chi = \frac{\sqrt \kappa}{2}- \frac{2}{\sqrt \kappa}$, and $\rho = -\cos(\pi \gamma^2/4)$. Let $(\mathbb{C}, h, \infty)$ be a $\gamma$-LQG sphere with quantum area $1$. Let $\hat{h}$ be a whole-plane Gaussian free field (viewed modulo additive multiples of $2\pi\chi$) independent of the LQG sphere. For $\theta \in [0, \pi]$, let $(\eta_0, \eta_\theta)$ be two whole-plane space-filling SLE$_\kappa$ counter-flow lines of $\hat{h}$ of angle  0 and $\theta$, respectively. Parameterize the pair of curves by the $\mu_h$-LQG area measure so that $\eta_0(0) = \eta_0(1) = \eta_\theta(0) = \eta_\theta(1) = \infty$ and $\mu_h(\eta_0([0, t])) = \mu_h(\eta_\theta([0, t])) = t$ for all $t \in [0, 1]$.

Let $\psi_{\gamma, \theta}:[0,1]\to [0,1]$ be a measurable function such that $\eta_0(t)=\eta_\theta(\psi_{\gamma, \theta}(t))$. Then the measure\footnote{In \cite{borga2023skew} (and also \cite{borga2021permuton}), the notion of left and right boundaries for counter-flow lines are swapped compared to this paper. This is why we have here $1-\psi_{\gamma, \theta}$ instead of $\psi_{\gamma, \theta}$.} 
\[
    (\text{Id}, 1-\psi_{\gamma, \theta})_\ast \text{Leb}
\]
is a skew Brownian permuton of parameters $(\rho, q)$ for some (nonexplicit) $q = q_\gamma(\theta)$.

\medskip

Furthermore, define $X_\rho(t)$ and $Y_\rho(t)$ to be the $\nu_h$-LQG length measures for the left and right outer boundaries of $\eta_0([0, t])$. Then, up to time reparameterization, $(X_\rho, Y_\rho)$ has the law of a two-dimensional Brownian excursion of correlation $\rho$, and it determines the curve-decorated quantum surface $((\mathbb{C} \cup \{\infty\}, h, \infty), \eta_0)$. More precisely, $(X_\rho(s), Y_\rho(s))|_{s\in[0,t]}$ determines $((\mathbb{C} \cup \{\infty\}, h, \infty), \eta_0)$ restricted to $\eta_0([0,t])$.

Finally, if we define $Z_{\rho, q}$ to be the solutions to \cref{skewbrowniansdedefn} driven by $(X_\rho, Y_\rho)$ with parameter $q$ and consider the corresponding function $\phi_{\rho, q}$ as in \cref{eq:defn-ass-process}, then almost surely we have that
\[
    1-\psi_{\gamma,\theta}(t) = \phi_{\rho, q}(t),\qquad\text{for almost all $t \in (0, 1)$.}
\]
\end{proposition}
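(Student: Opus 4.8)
The statement collects several results already available in the literature, so the plan is to match each clause to its source and then reconcile the differing boundary-labelling conventions. The starting point is \cite[Theorem 1.1]{millersheffield2018}, which realizes the $\gamma$-LQG sphere decorated by the space-filling SLE$_\kappa$ pair $(\eta_0,\eta_\theta)$ in the mating-of-trees framework: the pair of $\nu_h$-length processes $(X_\rho,Y_\rho)$ of the left and right outer boundaries of $\eta_0([0,t])$ is, after a deterministic time change, a two-dimensional correlated Brownian excursion in the first quadrant with $\rho=-\cos(\pi\gamma^2/4)$, and this process measurably recovers the curve-decorated quantum surface $((\mathbb{C}\cup\{\infty\},h,\infty),\eta_0)$. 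The refinement that $(X_\rho(s),Y_\rho(s))|_{s\in[0,t]}$ already determines $\eta_0$ restricted to $\eta_0([0,t])$ follows because the encoding is adapted, so each initial segment of the boundary-length process recovers the corresponding initial segment of the decorated surface.

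For the remaining clauses I would invoke \cite[Theorem 1.17 and Proposition 4.1]{borga2023skew}, which is exactly where the above mating-of-trees picture is translated into skew Brownian permuton language. Theorem 1.17 there identifies the measure built from the comparison function $\psi_{\gamma,\theta}$ (defined by $\eta_0(t)=\eta_\theta(\psi_{\gamma,\theta}(t))$) with a skew Brownian permuton of parameters $(\rho,q)$ for some non-explicit $q=q_\gamma(\theta)$; Proposition 4.1 produces the SDE representation, showing that the function recording the relative position of $\eta_0$ against $\eta_\theta$ agrees, for almost every $t$, with the function $\phi_{\rho,q}$ built from the strong solutions $Z_{\rho,q}^{(u)}$ of \cref{skewbrowniansdedefn} driven by $(X_\rho,Y_\rho)$ and passed through \cref{eq:defn-ass-process}. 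Concatenating these statements yields both the claim that $(\mathrm{Id},1-\psi_{\gamma,\theta})_\ast\mathrm{Leb}$ is a skew Brownian permuton and the final a.e.\ identity $1-\psi_{\gamma,\theta}(t)=\phi_{\rho,q}(t)$.

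The only step requiring genuine care -- and the only place where the form stated here departs from the cited one -- is the convention clash flagged in the footnote: \cite{borga2023skew} and \cite{borga2021permuton} interchange the notions of left and right boundary of a counter-flow line relative to the present paper. Under this swap the two coordinates of the boundary-length excursion are exchanged, which at the level of the induced total order in \cref{skewbrownianordering} -- equivalently, after the reflection $Z_{\rho,q}^{(u)}\mapsto -Z_{\rho,q}^{(u)}$ of the solutions of \cref{skewbrowniansdedefn} -- reverses the continuous permutation and hence replaces $\psi_{\gamma,\theta}$ by $1-\psi_{\gamma,\theta}$ in the permuton and in its associated process. I would verify explicitly that, once the direction in which the angle $\theta$ is measured is flipped to match, the skewness $q=q_\gamma(\theta)$ entering \cref{skewbrowniansdedefn} is the asserted one, and that the deterministic time reparameterization supplied by \cite{millersheffield2018} is harmless since $\psi_{\gamma,\theta}$ and $\phi_{\rho,q}$ are only defined up to a common Lebesgue-null set. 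The main obstacle is thus entirely this cross-referencing of conventions among three papers; there is no new probabilistic content to prove.
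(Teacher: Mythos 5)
The paper offers no proof of this proposition beyond the citation brackets, so the only viable "proof" is exactly the source-tracing and convention reconciliation you describe: the mating-of-trees encoding and measurability claims come from \cite[Theorem~1.1]{millersheffield2018}, the permuton identification and SDE representation from \cite[Theorem~1.17, Proposition~4.1]{borga2023skew}, and the footnoted left/right boundary swap accounts for $\psi_{\gamma,\theta}\mapsto 1-\psi_{\gamma,\theta}$. Your account matches the paper's intent, including the correct observation that swapping boundaries amounts to a reflection $Z\mapsto -Z$ in the coalescent SDE and hence complements $\phi_{\rho,q}$.
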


\begin{remark}\label{rem:angle}
    It is proved\footnote{Note that our parameter $q$ is equal to $1-p$, where $p$ is as in \cite[Propositions 4.4 and 5.10]{li2022schnyder}.} in \cite[Propositions 4.4 and 5.10]{li2022schnyder} that $q_{\gamma=1}(\frac{2\pi}{3})=\frac{1}{1+\sqrt{2}}$. Moreover, from \cite[Remark 1.18]{borga2023skew}, we have that $q_\gamma(\theta)+q_\gamma(\pi-\theta)=1$ for all $\theta\in[0,1]$ (and all $\gamma\in(0,2)$). In particular, we have that $q_{\gamma=1}(\frac{\pi}{3})=1-\frac{1}{1+\sqrt{2}}$.
\end{remark}

\subsection{The Brownian separable $d$-permutons}

Next, we precisely describe the degenerate skew Brownian permutons $\mu_{1,q}$ and the relation that is used to define them in an alternate way. For further details, see for instance \cite[Section 1.4]{maazoun2020brownian}. Let $e(t)$ be a one-dimensional Brownian excursion on $[0, 1]$, and let $(s(\ell))$ be an iid sequence of signs which are each $+1$ with probability $p$ and $-1$ with probability $1-p$, indexed by the (countable) local minima of $e$. Much like the SDE in \cref{skewbrowniansdedefn} yields a random order on $[0, 1]$, we may define an order $<_{e, p}$ by the following rule. There is a random set $N$ of measure zero such that for any $x, y \in [0, 1] \setminus N$ (say $x < y$ without loss of generality), $e$ achieves a unique minimum on $[x, y]$ at some point $\ell$. We then say that 
$$x <_{e, p} y,\quad \text{ if }\quad s(\ell) = 1, \quad \text{ and } \quad y <_{e, p} x\quad \text{ otherwise},$$ 
and this yields a total order on $[0, 1] \setminus N$. This thus allows us to define the random function
\begin{equation}\label{biasedbrownianpsi}
    \psi_{e, p}(t) = \text{Leb}\left(\left\{x \in [0, 1]: x <_{e, p} t\right\}\right)
\end{equation}
and the corresponding \vocab{Brownian separable permuton} of parameter $p$
\[
    \mu^B_p(A) = (\text{Id}, \psi_{e, p})_\ast \text{Leb}(A) = \text{Leb}\left(\left\{t \in [0, 1]: (t, \psi_{e, p}(t)) \in A\right\}\right).
\]
With this, we are now prepared to rigorously describe the $d$-dimensional generalization of this construction:

\begin{definition}\label{brownian-separable-d-permuton}
Let $p_1, \cdots, p_{d-1} \in [0, 1]$ be real numbers. The \vocab{Brownian separable $d$-permuton} $\mu^B_{p_1, \cdots, p_{d-1}}$ of parameters $p_1, \cdots, p_{d-1}$ is the $d$-dimensional permuton defined as follows. Let $e(t)$ be a one-dimensional Brownian excursion on $[0, 1]$, and let $(s(\ell))$ be an iid sequence of variables of the form $s_\ell = (s_1(\ell), \cdots, s_{d-1}(\ell))$ indexed by the local minima $\ell$ of $e(t)$, where the $s_i$ are independently $+1$ with probability $p_i$ and $-1$ with probability $1 - p_i$. 

For all $1 \le i \le d-1$, we may then define the relation $<_e^{(i)}$ similarly to $<_{e, p}$ above, setting $x <_e^{(i)} y$ if $s_i(\ell) = 1$ and $y <_e^{(i)} x$ otherwise. Defining $\psi_e^{(i)}(t)$ as in \cref{biasedbrownianpsi} but now with the relation $<_e^{(i)}$ in place of $<_{e, p}$, the Brownian separable $d$-permuton of parameters $p_1, \cdots, p_{d-1}$ is then defined by
\begin{equation}\label{brownian-separable-d-permuton-eqn}
    \mu^B_{p_1, \cdots, p_{d-1}}(A) = (\text{Id}, \psi_e^{(1)}, \cdots, \psi_e^{(d-1)})_\ast \text{Leb}(A)=\text{Leb}\left(\left\{t \in [0, 1]: (t,  \psi_e^{(1)}(t), \cdots, \psi_e^{(d-1)}(t)) \in A\right\}\right).
\end{equation}
\end{definition}

While this construction may appear difficult to work with, several helpful properties of Brownian excursions turn out to make studying pattern frequencies of $\mu^B_{p_1, \cdots, p_{d-1}}$ tractable, as we will see in \cref{prop:pattern-sep-perm}.

\subsection{Coalescent-walk processes}\label{coalescentintrosection}

Coalescent-walk processes, first introduced in \cite{borga2022scaling}, are collections of random walks coupled by a single driving two-dimensional walk. Permuton convergence for uniform Baxter, strong-Baxter, and semi-Baxter permutations have each been previously obtained by constructing bijections of those permutations with certain two-dimensional walks and then using the walks to drive coalescent-walk processes that can also recover the original permutation. In each case, the limiting driving walk converges to a two-dimesnional Brownian excursion, and the discrete coalescent-walk process converges to a continuous coalescent-walk process described by an SDE, which leads to a description of the permuton as in \cref{skewbrownianpermutonsection}.

For the three-dimensional Schnyder wood permutations considered in \cref{schnydersection}, a similar encoding will be described using a pair of two-dimensional walks and thus a pair of coalescent-walk processes. However, we will need a more general definition here than the one introduced in previous works.

\begin{definition}\label{coalescentdefn}
Let $I$ be an interval of $\mathbb{Z}$, and let $J \subseteq I$. A \vocab{coalescent-walk process on $I$ with starting points in $J$} is a collection of one-dimensional random walks $Z = \{(Z_s^{(j)})_{s \ge j, s \in I}\}_{j \in J}$ satisfying the following two conditions:
\begin{itemize}
    \item For all $j \in J$, we have $Z_j^{(j)} = 0$,
    \item For any $j_1, j_2 \in J$, if $Z_s^{(j_1)} \ge Z_s^{(j_2)}$, then $Z_{s'}^{(j_1)} \ge Z_{s'}^{(j_2)}$ for all $s' \ge s$.
\end{itemize}
We will typically refer to each walk $Z_s^{(j)}$ as a \vocab{path} or \vocab{sample path} of $Z$.
\end{definition}

In words, a coalescent-walk process is a collection of random walks started at zero which do not cross. Furthermore, the definition implies that if $Z_s^{(j_1)} = Z_s^{(j_2)}$, then $Z_{s'}^{(j_1)} = Z_{s'}^{(j_2)}$ for all $s' \ge s$ -- that is, the two paths coalesce for all future time. This second condition allows us to establish a total order on the set of sample paths, or more precisely the starting points $J$, in one of the following two ways: 

\begin{definition}\label{totalorder}
Let $Z$ be a coalescent-walk process on $I$ with starting points in $J$. Define the order $\le^{\text{up}}$ on $J$ as follows:
\begin{itemize}
    \item For all $j \in J$, $j \le^{\text{up}} j$.
    \item For all $j_1 < j_2 \in J$, we have $j_1 \le^{\text{up}} j_2$ if $Z^{(j_1)}_{j_2} < 0$ and $j_2 \le^{\text{up}} j_1$ otherwise.
\end{itemize}
Also define the order $\le^{\text{down}}$ on $J$ similarly, except now with $j_1 \le^{\text{down}} j_2$ if $Z^{(j_1)}_{j_2} > 0$ and $j_2 \le^{\text{down}} j_1$ otherwise.
\end{definition}

\cref{coalescentpermutationexample} below provides an example. The same proof strategy as in \cite[Proposition 2.9]{borga2022scaling} shows that both $\le^{\text{up}}$ and $\le^{\text{down}}$ in fact yield total orders on $J$. This enables us to define two permutations associated to a coalescent-walk process: 

\begin{definition}\label{coalescentpermutation}
Let $Z$ be a coalescent-walk process on $I$ with starting points in $J$, and let $n = |J|$. Given a labeling $f: J \to [n]$ of the starting points, the permutation $\sigma = \sigma^{\text{up}}(Z)$ is the only map $[n] \to [n]$ such that for all $j_1, j_2 \in J$,
\[
    j_1 \le^{\text{up}} j_2 \iff \sigma(f(j_1)) \le \sigma(f(j_2)).
\]
Define $\sigma^{\text{down}}(Z)$ analogously but with the order $\le^{\text{down}}$ instead. 
\end{definition}

For any $j \in J$, we refer to both the sample path started at $j$ and the starting point itself as being labeled $f(j)$.

\begin{example}\label{coalescentpermutationexample}
Consider the coalescent-walk process $Z$ shown in \cref{fig:samplecoalescent}. The total orders on $J = \{2, 3, 5, 7\}$ induced by the process are $7 \le^{\text{up}} 5 \le^{\text{up}} 3 \le^{\text{up}} 2$ and $5 \le^{\text{down}} 2 \le^{\text{down}} 3 \le^{\text{down}} 7$, so the permutations given by the labeling $f(2) = 1, f(3) = 2, f(5) = 3, f(7) = 4$ are $\sigma^{\text{up}}(Z) = (4, 3, 2, 1)$ and $\sigma^{\text{down}}(Z) = (2, 3, 1, 4)$.
\end{example}

\begin{figure}[ht]
\begin{center}
\begin{tikzpicture}[scale=0.85]
\draw[thick, ->] (-1, 0) -- (8.5, 0);
\draw[thick, ->] (0, -2) -- (0, 3.5);
\draw[gray, thin] (1, -2) -- (1, 3);
\draw[gray, thin] (2, -2) -- (2, 3);
\draw[gray, thin] (3, -2) -- (3, 3);
\draw[gray, thin] (4, -2) -- (4, 3);
\draw[gray, thin] (5, -2) -- (5, 3);
\draw[gray, thin] (6, -2) -- (6, 3);
\draw[gray, thin] (7, -2) -- (7, 3);
\draw[gray, thin] (8, -2) -- (8, 3);
\draw[gray, thin] (-1, -2) -- (8, -2);
\draw[gray, thin] (-1, -1) -- (8, -1);
\draw[gray, thin] (-1, 1) -- (8, 1);
\draw[gray, thin] (-1, 2) -- (8, 2);
\draw[gray, thin] (-1, 3) -- (8, 3);
\draw[very thick, green!50!black] (2, 0) -- (3, 2) -- (4, 2) -- (5, 0);
\draw[very thick, green!50!black] (3, 0) -- (4, -1) -- (5, 0) -- (6, 2) -- (7, 1) -- (8, 0);
\draw[very thick, green!50!black] (7, 0) -- (8, -2);
\draw[green!50!black, fill=green!50!black] (2, 0) circle(0.1cm);
\draw[green!50!black, fill=green!50!black] (3, 0) circle(0.1cm);
\draw[green!50!black, fill=green!50!black] (5, 0) circle(0.1cm);
\draw[green!50!black, fill=green!50!black] (7, 0) circle(0.1cm);
\draw[very thick, dashed, yellow!50!black] (8, 0.25) -- (6, 2.25) -- (5, 0.4) -- (4.125, 2.2) -- (2.875, 2.2) -- (1.75, 0) -- (2, -0.4) -- (3.125, 1.8) -- (3.875, 1.8) -- (4.75, 0) -- (4, -0.75) -- (3, 0.25) -- (2.75, 0) -- (4, -1.25) -- (5.25, 0) -- (6.05, 1.7) -- (8, -0.25);
\draw[very thick, dashed, yellow!50!black] (8, -1.65) -- (7, 0.35) -- (6.75, 0) -- (7.8, -2);
\node[blue] at (1.6, 0.3) {\large{1}};
\node[blue] at (3.4, 0.3) {\large{2}};
\node[blue] at (5.4, 0.3) {\large{3}};
\node[blue] at (6.6, 0.3) {\large{4}};
\node[gray] at (1, -0.5) {\small{1}};
\node[gray] at (2, -0.5) {\small{2}};
\node[gray] at (3, -0.5) {\small{3}};
\node[gray] at (4, -0.5) {\small{4}};
\node[gray] at (5, -0.5) {\small{5}};
\node[gray] at (6, -0.5) {\small{6}};
\node[gray] at (7, -0.5) {\small{7}};
\node[gray] at (8, -0.5) {\small{8}};
\end{tikzpicture}
\end{center}
\caption{An example of a coalescent-walk process $Z$ on $I = [8]$ with $J = \{2, 3, 5, 7\}$. The gray labels indicate the points in $I$. The labeling used to recover the permutations $\sigma^{\text{up}}(Z)$ and $\sigma^{\text{down}}(Z)$, shown in blue, is given by $f(2) = 1, f(3) = 2, f(5) = 3, f(7) = 4$. The traversal of the forest is shown in gold (see the explanations below \cref{coalescentpermutationexample} for more details).} \label{fig:samplecoalescent}
\end{figure}
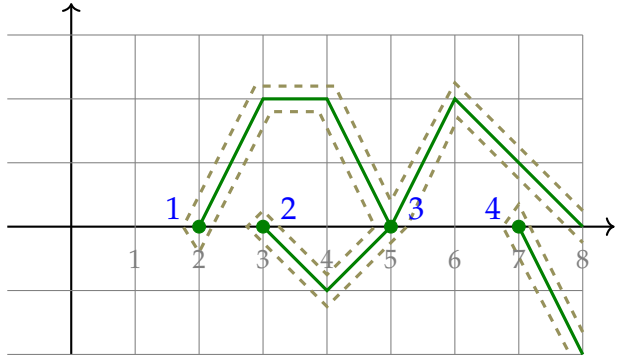

An alternate way to understand these permutations is by viewing the coalescent-walk process as a forest of trees with vertices given by the starting points of the sample paths. Here, each collection of paths that has coalesced together forms a tree, and edges of the tree correspond to pieces of paths between two starting points. The trees may then be ordered from bottom to top by the final height of the path at the end of the interval $I$. With this perspective, $\sigma^{\text{up}}(Z)$ traverses the trees from bottom to top, such that $i$ is the $(\sigma^{\text{up}}(Z)(i))$--th visited label by the \emph{clockwise} traversal. Similarly, $\sigma^{\text{down}}(Z)$ traverses the trees from top to bottom, such that $i$ is the $(\sigma^{\text{down}}(Z)(i))$--th visited label by the \emph{counterclockwise} traversal. This perspective aligns with how our Schnyder wood permutations are defined in terms of the traversals of the green and red trees (\cref{schnyderpermutationdefn}) and subsequently recovered by the traversals of the trees in the coalescent-walk processes (\cref{mainresult1}).

\begin{example}
Consider now the forest traversal marked in gold in \cref{fig:samplecoalescent}. The bottom-to-top (clockwise) traversal first traverses the tree containing the vertex labeled $4$, and then it traverses the tree with the vertices labeled $1$, $2$, and $3$, first visiting $3$ before visiting its children $2$ and finally $1$. Meanwhile, the top-to-bottom (counterclockwise) traversal first traverses the tree with vertices labeled $1, 2, 3$ (again visiting $3$ first, but now visiting label $1$ before label $2$) and then the tree with label $4$. These traversal orders agree with the permutations calculated in \cref{coalescentpermutationexample}.
\end{example}

In \cref{schnydersection}, two explicit coalescent-walk processes $Z^g_M$ and $Z^r_M$ will be associated to each Schnyder wood triangulation $M$, such that $\sigma_M^g = \sigma^{\text{up}}(Z^g_M)$ and $\sigma_M^r = \sigma^{\text{down}}(Z^r_M)$.  These coalescent-walk processes will be driven by a single 2-dimensional random walk $W_M = (X_M, Y_M)$ bijectively encoding $M$. The details are described in \cref{combinatorics}, but in short, the sample paths of $Z^r_M$ roughly follow the increments of $Y_M$ above the $x$-axis and of $-X_M$ below the $x$-axis, with some additional local rules near $0$; meanwhile, the sample paths of $Z^g_M$ do the same but with the time-reversal of $(X_M, Y_M)$ and with slightly different local rules. In particular, these (discrete) coalescent-walk processes will resemble the ``continuous coalescent-walk processes'' of the skew Brownian permuton defined in \cref{skewbrowniansdedefn}.

\section{The random 3-dimensional permuton limit of Schnyder wood permutations}\label{schnydersection}

The main goal of this section is to prove \cref{mainresult3}, that is, to show that uniform 3-dimensional Schnyder wood permutations, introduced in \cref{schnyderpermutationdefn}, converge in the permuton sense to the Schnyder wood permuton $\mu_S$ defined in the statement of \cref{mainresult3}.

Our main tool to show permuton convergence is to use coalescent-walk processes (\cref{coalescentdefn}). Hence, given a Schnyder wood triangulation $M$ with corresponding Schnyder wood permutation $\sigma_M = (\sigma_M^g, \sigma_M^r)$, we first encode the two 2-dimensional marginals $\sigma_M^g$ and $\sigma_M^r$, and in fact the tree structure of the green and red spanning trees of $M$, through two coalescent-walk processes (as shown in \cref{mainresult1}). We then use this encoding to prove \cref{mainresult3} in \cref{walklimits,permutonsubsection}.

The rest of this section is organized as follows:
\begin{itemize}
    \item The main goal of \cref{combinatorics} is to state the encoding of  Schnyder wood permutations in terms of coalescent-walk processes.
    We present here all the necessary combinatorial constructions and results needed in \cref{walklimits,permutonsubsection} and postpone their proofs to \cref{schnydercombinatoricssectionproofs}, since they are quite long but not relevant for our main goal.
    \item \cref{walklimits} develops the preliminary probabilistic results needed for the proof of \cref{mainresult3}. In \cref{walklimits-1}, we state the main result for convergence of discrete coalescent-walk processes to their continuous scaling limits (\cref{unconditionedconvergence}). The proof of this proposition, given in \cref{tech-proof}, is important but quite technically involved and should be skipped on a first read.

    \item Finally in \cref{permutonsubsection}, building on \cref{unconditionedconvergence}, we complete the proof of \cref{mainresult3}.
\end{itemize}

\subsection{Encoding Schnyder wood triangulations and permutations through pairs of coalescent-walk processes}\label{combinatorics}

Before we begin the constructions of the coalescent-walk processes, we state the following result, whose proof is postponed to the beginning of \cref{schnydercombinatoricssectionproofs}.

\begin{proposition}\label{schnyderwoodbijectperm}
The map $M \mapsto \sigma_M=(\sigma_M^g, \sigma_M^r)$ introduced in \cref{schnyderpermutationdefn} is injective, meaning that there is a bijection between Schnyder wood triangulations of size $n$ and Schnyder wood permutations of size $n$. 
\end{proposition}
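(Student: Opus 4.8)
The plan is to prove injectivity by reconstructing $M$ from the pair of sequences $\sigma_M=(\sigma_M^g,\sigma_M^r)$. The first, essentially formal, step is to reduce the problem to recovering the three monochromatic spanning trees. Since in a Schnyder wood triangulation of size $n$ the blue, green and red trees partition the $3n$ internal edges, and since Schnyder's rule prescribes the cyclic order of the edges around every internal vertex to be (incoming blue, outgoing red, incoming green, outgoing blue, incoming red, outgoing green), the combinatorial map $M$ is determined by the three rooted plane trees $T^b_M, T^g_M, T^r_M$ on the common vertex set (the $n$ internal vertices together with the three outer roots): each tree, as an embedded plane tree, specifies the clockwise order of the edges of its colour around each vertex, and Schnyder's rule dictates how the three colours are interleaved, so there is at most one Schnyder wood triangulation with a given triple of colour trees. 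Hence it suffices to show that $\sigma_M$ determines $T^b_M$, $T^g_M$ and $T^r_M$.

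Next I would extract from $\sigma_M$ the three clockwise depth-first traversal orders of the trees, expressed with respect to the blue labelling of the internal vertices: the blue traversal is the identity order $1,2,\dots,n$, and by \cref{inverseofschnyder} the green and red traversals are $(\sigma_M^g)^{-1}(1),\dots,(\sigma_M^g)^{-1}(n)$ and $(\sigma_M^r)^{-1}(1),\dots,(\sigma_M^r)^{-1}(n)$. The crux is then a combinatorial lemma --- a consequence of Schnyder's rule and planarity --- giving a criterion, phrased only in terms of these three orders, for when the vertex with blue label $j$ is a descendant of the vertex with blue label $i$ in the blue tree. Equivalently, the lemma determines the size $s_i$ of the blue subtree of each vertex $i$, since in a clockwise DFS that subtree occupies the contiguous block of labels $\{i,i+1,\dots,i+s_i-1\}$. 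This reconstructs $T^b_M$ as a rooted plane tree; the analogous criteria, obtained by cyclically rotating the roles of the three colours, reconstruct $T^g_M$ and $T^r_M$, and then the reduction above recovers $M$. An alternative route, more in line with the rest of \cref{combinatorics}, is to first establish the bijection between Schnyder wood triangulations and their associated pair of coalescent-walk processes $(Z^g_M,Z^r_M)$, driven by a single $2$-dimensional walk $W_M$, and then to show that $\sigma_M=(\sigma^{\text{up}}(Z^g_M),\sigma^{\text{down}}(Z^r_M))$ recovers $(Z^g_M,Z^r_M)$, hence $W_M$, hence $M$; this repackages the same information.

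I expect the main obstacle to be establishing and rigorously proving the ancestry criterion. This requires a careful analysis of the planar structure of a Schnyder wood: understanding the three regions into which the monochromatic paths from a vertex $v$ to the three roots divide the triangulation, tracking which vertices lie in which region and in what order along the relevant paths, and translating all of this into statements about the relative positions of vertices in the blue, green and red clockwise traversals. This is exactly the sort of lengthy-but-routine bookkeeping that the paper defers to \cref{schnydercombinatoricssectionproofs}; once the criterion is in hand, the remaining ingredients --- the edge partition, the forced local rotation, and the reassembly of $M$ from its three colour trees --- are purely formal.
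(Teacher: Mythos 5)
Your proposal is correct in outline and its core ingredients mirror the paper's: both proofs reduce the problem to recovering the colour trees from $\sigma_M$, both rely on a combinatorial constraint (derived from Schnyder's rule and planarity) relating a vertex's position in one colour's DFS traversal to its label in another colour, and both then handle the remaining colours by cyclically rotating the roles of blue/green/red. The genuine difference is in the final reassembly step. You recover all three plane trees $T^b_M, T^g_M, T^r_M$ and argue directly that they determine the map: Schnyder's rule fixes the cyclic interleaving of colours at each vertex, hence the rotation system, hence the embedding. The paper instead only reconstructs the green and blue tree shapes and then invokes the bijection of Bonichon et al.\ between Schnyder woods and pairs (star realizer, prefix-flip sequence): the green tree pins down the star realizer and the blue tree pins down the flip sequence. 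Your route is more self-contained, avoiding the external star-realizer machinery at the cost of a short map-theoretic verification (that a consistent rotation system at every vertex, including the three roots and the outer triangle, yields a unique planar map) and of having to reconstruct a third tree; the paper's route offloads the reassembly to a citation and gets away with two trees. On the combinatorial criterion itself: the paper's Facts~1 and~2 state that in the green tree, a vertex's blue label exceeds its green-parent's blue label, and that later siblings in the clockwise green DFS have smaller blue labels; these let one reconstruct the green tree from $(\sigma^g_M)^{-1}$ by a simple one-pass algorithm tracking the root-to-current-vertex path, which is a bit slicker than determining all subtree sizes. Your analogous criterion for the blue tree should, by the cyclic structure of Schnyder's rule (blue$\to$red$\to$green), be phrased in terms of \emph{red} labels rather than green; worth flagging so you prove the right rotated statement. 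With that caveat, the ``lengthy-but-routine bookkeeping'' you anticipate is exactly the content of the paper's Facts~1 and~2 and their proof by region-analysis, and your plan would indeed go through.
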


Both coalescent-walk processes encoding $\sigma_M = (\sigma_M^g, \sigma_M^r)$ will now be defined in terms of a two-dimensional random walk which bijectively encodes the structure of the Schnyder wood triangulation $M$. We will do so with the help of the following result. 

\begin{proposition}[\cite{li2022schnyder}, Theorem 1.4]\label{schnyderwoodstring}
Each Schnyder wood triangulation $M$ of size $n$ can be associated with a \vocab{Schnyder wood string} $s_M$ with characters in $\{\texttt{g},\texttt{b},\texttt{r}\}$ -- containing $n$ copies of \texttt{g}, $n$ copies of \texttt{b} and $n$ copies of \texttt{r} -- in the following way (c.f.\ \cref{fig:sampleschnyderloop}). A loop is traced around the blue tree of $M$, starting on the outer side of the edge between the green and red root, circling around the green root in clockwise order, traveling along the boundary of the blue tree in clockwise order, circling around the red root in clockwise order, and finally returning to the starting point. A \texttt{g}, \texttt{b}, or \texttt{r} is marked down each time this loop crosses or visits a green, blue, or red edge (respectively) for the second time\footnote{For blue edges, the ``second time'' corresponds to the time when the loop visits the second side of each blue edge.}.

This Schnyder wood string has the property that for any prefix of the string, there are at least as many \texttt{g}s as \texttt{b}s and at least as many \texttt{b}s as \texttt{r}s, and \texttt{r} steps are never followed by a \texttt{b} step. Furthermore, the map $M \mapsto s_M$ is a bijection between Schnyder wood triangulations of size $n$ and strings with this property.
\end{proposition}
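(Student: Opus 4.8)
Since the statement is exactly \cite[Theorem~1.4]{li2022schnyder} up to a translation of conventions, the most direct route is to cite it, after checking that our notions of Schnyder wood triangulation and of clockwise tree traversal match theirs; for completeness I sketch a self-contained argument. The plan is to split the claim into three parts: (i) $s_M$ is well defined, has exactly $n$ copies of each letter, and satisfies the prefix inequalities $\#\texttt{g}\ge\#\texttt{b}\ge\#\texttt{r}$ together with the constraint that an \texttt{r} is never immediately followed by a \texttt{b}; (ii) the map $M\mapsto s_M$ is injective; (iii) it is surjective onto the set of strings with that property.

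For (i), I would observe that the loop is the contour exploration of the blue spanning tree $B$ of $M$, rooted at the blue root and drawn with the given planar embedding, started from the marked outer corner. This loop visits each blue edge twice (once on each side), and between consecutive blue visits it sweeps the corners of $B$ in clockwise order; each green (resp.\ red) internal edge is swept across exactly twice, and a letter is recorded only at the second such visit/crossing. Since $M$ has $n$ internal vertices, each carrying exactly one outgoing edge of each colour, and each internal edge is the outgoing edge of a unique internal vertex, there are $n$ green, $n$ blue and $n$ red internal edges, so $|s_M|=3n$ with $n$ of each letter. The prefix inequalities are then an instance of a nesting phenomenon: reading the edges around any internal vertex in clockwise order according to Schnyder's rule (incoming blue, outgoing red, incoming green, outgoing blue, incoming red, outgoing green), one sees that along the contour every ``opening'' of a blue edge is preceded by an ``opening'' of a green edge that encloses it, and every ``opening'' of a red edge is preceded by an ``opening'' of a blue edge that encloses it; the ``no \texttt{r} before \texttt{b}'' constraint is the boundary case of the latter. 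I would make this precise by an induction along the contour exploration (equivalently on $|M|$, peeling a leaf of $B$), maintaining a stack of currently-open green, blue and red half-edges.

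For (ii) and (iii), given any string $w$ with the stated property, the subword of \texttt{b}'s, with the two occurrences of each blue edge matched like parentheses, reconstructs $B$ as an abstract rooted plane tree together with its contour; the interleaved \texttt{g}'s and \texttt{r}'s then prescribe, corner by corner of $B$, where the green and red half-edges attach and in which cyclic order, and Schnyder's rule forces a unique way to pair these half-edges into oriented edges so that each internal vertex gets exactly one outgoing edge of each colour. The prefix inequalities guarantee that this reconstruction never gets stuck (there is always a pending blue corner at which to attach a half-edge, and a pending half-edge available to close), and one checks that the resulting decorated map is a simple plane triangulation satisfying Schnyder's rule; applied to $w=s_M$ it returns $M$, which gives both injectivity and surjectivity.

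The hard part will be the bookkeeping in (i) and (iii): turning the purely local statement ``Schnyder's rule holds at every internal vertex'' into the global assertion that the contour word is balanced with $\#\texttt{g}\ge\#\texttt{b}\ge\#\texttt{r}$, and conversely showing that every such balanced word rebuilds a genuine Schnyder wood. Both directions hinge on the same fact --- that planarity together with the fixed cyclic pattern of Schnyder's rule forces the green, blue and red half-edges met along the blue contour to appear in a properly nested order --- which is most cleanly handled by the stack/induction argument above, and is precisely the content of the proof in \cite{li2022schnyder}.
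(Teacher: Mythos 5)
The paper gives no proof of this statement; it simply cites \cite[Theorem 1.4]{li2022schnyder}, which is exactly your primary suggestion, so in that respect your proposal matches the paper.

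A side remark on your supplementary sketch, since you flag it as the "hard part": the nesting direction you state appears to be backwards. The string $s_M$ records \emph{second} visits, so if a green edge's two crossings enclosed a blue edge's two traversals the recorded \texttt{b} would come \emph{before} the recorded \texttt{g}, giving $\#\texttt{b}\ge\#\texttt{g}$ in prefixes --- the opposite of what is wanted. To deduce $\#\texttt{g}\ge\#\texttt{b}\ge\#\texttt{r}$ from a nesting of closings, the enclosure would have to run the other way (red outermost, then blue, then green), or the bookkeeping has to be phrased as a matching rather than literal bracket nesting, since the green and red edges are transversal to the contour of the blue tree rather than part of it. This does not affect the validity of citing \cite{li2022schnyder}, but the sketch as written would need that claim reversed before it could be completed.
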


An example of this construction can be seen in \cref{fig:sampleschnyderloop}. Since both the loop (shown in gold) and the numbering of the blue tree are constructed via a clockwise traversal, the loop visits the vertices in ascending order with respect to the blue labeling.

\begin{figure}[ht]
\centerline{\includegraphics[width=14cm]{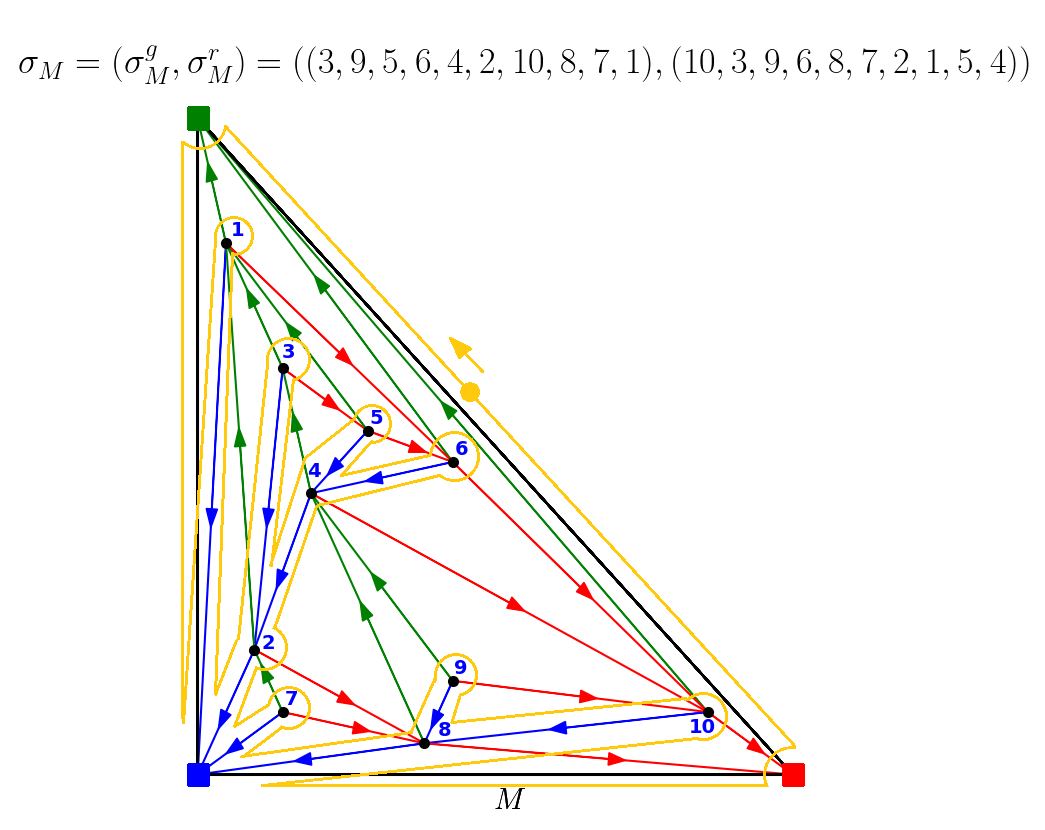}}
\caption{\label{fig:sampleschnyderloop}The Schnyder wood triangulation $M$ from \cref{fig:sampleschnyder} with traced loop described in \cref{schnyderwoodstring} shown in gold. The loop begins at the marked vertex and is traversed in the direction indicated by the gold arrow. Traveling around this loop, we can determine the Schnyder wood string $s_M = \texttt{gbggbgrgbrrgbbbgbrrggbrrrgbbrr}$, following the instructions given in \cref{schnyderwoodstring}.}
\end{figure}

We now describe a procedure for constructing a two-dimensional random walk $W_M = (X_M, Y_M)$ from the string $s_M$ obtained in \cref{schnyderwoodstring}. First, move the leading character of $s_M$ (which will always be a \texttt{g} by \cref{schnyderwoodstring}) to the end of $s_M$. The string now consists only of \texttt{b}s and (possibly empty) segments of \texttt{r}s followed by a single \texttt{g}. Replace each \texttt{b} with an increment of $(1, -1)$, and replace each \texttt{rr}$\cdots$\texttt{rrg} with an increment of $(-k, 1)$, where $k\geq 0$ is the number of \texttt{r}s in the segment. 

\begin{lemma}\label{randomwalkisuniformschnyder1}
The above process $s_M \mapsto W_M$ is well-defined and yields a bijection between Schnyder wood strings of length $3n$ and walks of length $2n$ with increments in the set $\{(-k, 1): k \ge 0\} \cup \{(1, -1)\}$, starting and ending at the origin, and conditioned to stay in $\{x \ge 0, y \ge -1\}$.
\end{lemma}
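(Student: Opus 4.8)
The plan is to verify three things in turn: (i) the map $s_M \mapsto W_M$ is well-defined, i.e. the recipe actually parses every Schnyder wood string into increments of the claimed type; (ii) the resulting walk has length $2n$, starts and ends at the origin, and stays in the region $\{x \ge 0,\, y \ge -1\}$; and (iii) the map is a bijection onto the set of all such walks, which I would prove by exhibiting an explicit inverse and invoking \cref{schnyderwoodstring}.

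For (i): after cyclically moving the leading \texttt{g} to the end, I claim the modified string decomposes \emph{uniquely} as a concatenation of blocks, each block being either a single \texttt{b} or a maximal run \texttt{r}$^k$\texttt{g} with $k \ge 0$. This follows from the structural constraints in \cref{schnyderwoodstring}: \texttt{r} steps are never followed by \texttt{b} steps, so every \texttt{r} is eventually followed (before any \texttt{b}) by a \texttt{g}, and the run between consecutive \texttt{g}'s consists of some \texttt{b}'s and then some \texttt{r}'s — so regrouping "each \texttt{g} with the run of \texttt{r}'s immediately preceding it" partitions the string. (One should check the edge cases: the string after rotation begins with the old second character; the constraint "at least as many \texttt{g}'s as \texttt{b}'s as \texttt{r}'s in every prefix" guarantees there is no dangling \texttt{r} at the end with no \texttt{g} to close it.) Each \texttt{b} contributes an increment $(1,-1)$ and each \texttt{r}$^k$\texttt{g} contributes $(-k,1)$, both in the allowed increment set. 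Since the original string has $n$ copies of each of \texttt{g}, \texttt{b}, \texttt{r}, and rotation preserves these counts, we get exactly $n$ increments of the form $(-k,1)$ (one per \texttt{g}) and $n$ increments $(1,-1)$ (one per \texttt{b}), for a total of $2n$ increments, giving the length claim in (ii). The total $x$-displacement is $n \cdot 1 - \sum k = n - n = 0$ (the $\sum k$ counts all \texttt{r}'s), and the total $y$-displacement is $-n + n = 0$, so $W_M$ starts and ends at the origin.

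For the containment in (ii): I want to translate the prefix inequalities of \cref{schnyderwoodstring} into statements about partial sums of $W_M$. If a prefix of the rotated string contains $b$ copies of \texttt{b} and $r$ copies of \texttt{r} and $g$ copies of \texttt{g} (after a whole number of blocks), then the corresponding partial sum of $W_M$ is $(b - r,\, -b + g)$. The constraint "\#\texttt{g} $\ge$ \#\texttt{b}" in the original string, tracked through the rotation, gives $g \ge b$ hence $y$-coordinate $\ge 0$ at block boundaries — but I need to be careful because the rotation of the leading \texttt{g} shifts the bookkeeping by one \texttt{g}, which is exactly what produces the weaker bound $y \ge -1$ rather than $y \ge 0$; I should check this offset carefully, as well as intermediate positions \emph{within} an \texttt{r}$^k$\texttt{g} block (where $x$ dips to its minimum just before the \texttt{g} fires). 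Similarly "\#\texttt{b} $\ge$ \#\texttt{r}" gives $b \ge r$, i.e. $x \ge 0$. This offset analysis is the one genuinely fiddly point and is where I expect to spend the most care — everything else is bookkeeping.

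For (iii): I would construct the inverse map directly. Given a walk of the stated type, read each increment $(1,-1)$ as \texttt{b} and each increment $(-k,1)$ as \texttt{r}$^k$\texttt{g}; concatenate to form a string of length $3n$, then cyclically move the final \texttt{g} to the front. One checks this string satisfies the prefix conditions of \cref{schnyderwoodstring} (running the partial-sum translation of (ii) backwards) and that the two maps are mutually inverse at the level of strings — this is immediate from the uniqueness of the block decomposition in (i). Composing with the bijection $M \leftrightarrow s_M$ of \cref{schnyderwoodstring} then gives the bijection between Schnyder wood strings of length $3n$ and walks of length $2n$ of the stated form, which is what the lemma asserts (the lemma is phrased as a bijection on strings, so strictly we only need steps (i)–(iii) above and not the composition, but it is worth noting for the sequel). $\qed$
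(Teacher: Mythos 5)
Your proposal is correct and follows essentially the same approach as the paper: decompose the rotated string into \texttt{b} and \texttt{r}$^k$\texttt{g} blocks via the no-\texttt{r}-then-\texttt{b} constraint, count steps, verify the return to the origin, transfer the prefix inequalities (with the one-\texttt{g} offset producing $y\ge -1$), and write down an explicit inverse reading increments back into characters and rotating the final \texttt{g} to the front. One small clarification: your worry about ``intermediate positions within an \texttt{r}$^k$\texttt{g} block'' is vacuous here, since $W_M$ takes each such block as a single $(-k,1)$ increment, so its positions are defined only at block boundaries and there is no intra-block $x$-dip to control.
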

\begin{proof}
Schnyder's rule specifies that (possibly empty) blocks of contiguous \texttt{r} steps are always followed by \texttt{g} steps except at the end of the Schnyder wood string. Thus, by moving the first \texttt{g} to the end, all \texttt{r}s are part of an \texttt{rr}$\cdots$\texttt{rrg} segment. Since each step of $W_M$ corresponds to either a \texttt{b} or \texttt{g} in $s_M$, there will indeed be $2n$ steps, and they all have increments in the set listed. Furthermore, $W_M$ will indeed start and end at the origin because the $y$-coordinate is incremented by $1$ for each \texttt{g} and decremented by $1$ for each \texttt{b}, and the $x$-coordinate is incremented by $1$ for each \texttt{b} and decremented by $1$ for each \texttt{r}. The restriction to stay in $\{x \ge 0, y \ge -1\}$ is because (by \cref{schnyderwoodstring}) we must have at least as many \texttt{g}s as \texttt{b}s and at least as many \texttt{b}s as \texttt{r}s in any prefix before moving the first \texttt{g} to the end.

Finally, to check that this is a bijection, we can always take any walk satisfying these conditions, replace each increment of $(1, -1)$ with a \texttt{b} and each $(-k, 1)$ with $k$ \texttt{r}s followed by a \texttt{g}, and then finally move the final character (which must be a \texttt{g} because our walk must end with a negative $x$-increment) to the beginning. The reasoning in the previous paragraph shows this is a valid Schnyder wood string, so we have a valid inverse map.
\end{proof}

The right-hand side of \cref{fig:sampleschnyder2} displays the running values of the $y$-coordinate $Y_M$ (shifted up by $2$ units) and the running values of the negative $x$-coordinate $-X_M$ (shifted down by $2$ units) in gold for the walk $W_M = (X_M, Y_M)$ obtained from the string $s_M$ in \cref{fig:sampleschnyderloop}. We also define the \vocab{time-reversal} of $W_M$, denoted $W_M' = (X_M', Y_M')$, to be the two-dimensional walk which swaps the order and sign of all increments of $W_M$. The running values of $Y_M'$ and $-X_M'$ are shown in gold on the left-hand side of \cref{fig:sampleschnyder2}.

\begin{figure}[ht]
\centerline{\includegraphics[width=\textwidth]{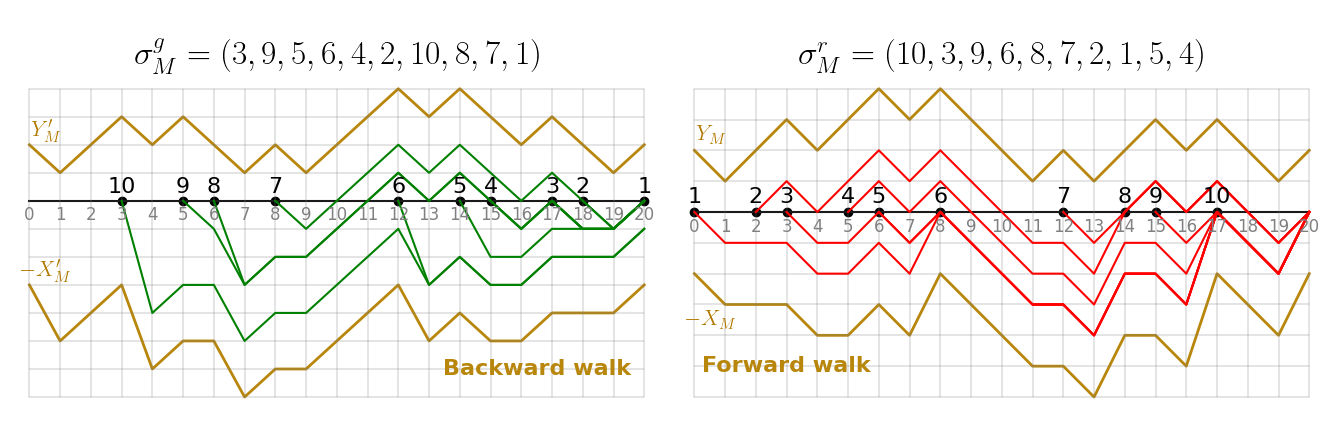}}
\caption{The two coalescent-walk processes introduced in \cref{cwrdefn2} and \cref{cwrdefn} associated to the Schnyder wood triangulation in \cref{fig:sampleschnyder}. Both are defined on the time interval $I=[0, 20]$; the black numbers (above the $x$-axis) are the labels of the walks started at those points. \textbf{Left:} This is the coalescent-walk process $Z^g_M = \WC^g(W_M')$. Note that here $J=\{3,5,6,8,12,14,15,17,18,20\}$. \textbf{Right:} This is the coalescent-walk process $Z^r_M = \WC^r(W_M)$. Note that here $J=\{0,2,3,5,6,8,12,14,15,17\}$.\label{fig:sampleschnyder2}}
\end{figure}

We are now ready to describe the construction of the two coalescent-walk processes (recall \cref{coalescentdefn}) which will encode the two 2-dimensional marginals $\sigma_M^g$ and $\sigma_M^r$. This will be done by defining two mappings $\WC^g$ and $\WC^r$, which each take as input a two-dimensional random walk and output a coalescent-walk process. The two coalescent-walk processes 
\begin{equation*}
    Z^g_M = \WC^g(W_M')\qquad \text{and} \qquad Z^r_M = \WC^r(W_M)
\end{equation*}
are then the central objects of study in later sections.

\begin{definition}\label{cwrdefn2}
Let $W = (X_t, Y_t)_{t\in I}$ be a random walk on a time interval $I = [a, b]$ whose increments all lie in the set $\{(k, -1): k \ge 0\} \cup \{(-1, 1)\}$. The \vocab{green coalescent-walk process associated to} $W$, denoted $\WC^g(W)$, is the coalescent-walk process $Z = \{(Z_s^{(j)})_{s \ge j, s \in I}\}_{j \in J}$ defined as follows. The starting points $J$ are the starting times of the $(k, -1)$ increments -- that is,  
\begin{equation*}
    J = \left\{j \in I \setminus \{b\}: Y_{j+1} - Y_j = -1\right\},
\end{equation*}
except that if there would be a starting point at the beginning of $I$, we move it to the end of $I$. Now for any $s \ge j$ such that $s+1 \in I$, the increment $Z_{s+1}^{(j)} - Z_s^{(j)}$ is determined as follows:
\begin{itemize}
    \item If $W_{s+1} - W_s = (k, -1)$, then $Z_{s+1}^{(j)} - Z_s^{(j)} = 
        \begin{cases} 
        -1 & \text{if }Z_s^{(j)} > 0, \\
        -k & \text{if }Z_s^{(j)} < 0, \\
        -k-1 & \text{otherwise.}
        \end{cases}$
    \item If $W_{s+1} - W_s = (-1, 1)$, then $Z_{s+1}^{(j)} - Z_s^{(j)} = 1$.
\end{itemize}
\end{definition}

The left-hand side of \cref{fig:sampleschnyder2} shows the green coalescent-walk process $\WC^g(W_M')$. In words, each sample path of the process typically copies the increments of $Y_M'$ while on or above the $x$-axis and the increments of $-X_M'$ otherwise. However, if the path is at height $0$ (on the $x$-axis) and there is a $(k, -1)$ increment, then it takes a step of $-k-1$ instead of just $-1$.

\begin{definition}\label{cwrdefn}
Let $W = (X_t, Y_t)_{t\in I}$ be a random walk on a time interval $I= [a, b]$ whose increments all lie in the set $\{(-k, 1): k \ge 0\} \cup \{(1, -1)\}$. The \vocab{red coalescent-walk process associated to} $W$, denoted $\WC^r(W)$, is the coalescent-walk process $Z = \{(Z_s^{(j)})_{s \ge j, s \in I}\}_{j \in J}$ defined as follows. The starting points $J$ are the ending times of the $(-k, 1)$ increments -- that is,  
\begin{equation*}
    J = \{j \in I\setminus\{a\}: Y_{j} - Y_{j-1} = +1\},
\end{equation*}
except that if there would be a starting point at the end of $I$, we move it to the beginning of $I$. Now for any $s \ge j$ such that $s+1 \in I$, the increment $Z_{s+1}^{(j)} - Z_s^{(j)}$ is determined as follows:
\begin{itemize}
    \item If $W_{s+1} - W_s = (-k, 1)$, then $Z_{s+1}^{(j)} - Z_s^{(j)} = 
        \begin{cases} 
        1 & \text{if }Z_s^{(j)} \ge 0, \\
        k & \text{if }Z_s^{(j)} < 0 \text{ and }Z_s^{(j)} + k \le 0, \\
        -Z_s^{(j)} & \text{otherwise.}
        \end{cases}$
    \item If $W_{s+1} - W_s = (1, -1)$, then $Z_{s+1}^{(j)} - Z_s^{(j)} = -1$.
\end{itemize}
\end{definition}

The right-hand side of \cref{fig:sampleschnyder2} shows the coalescent-walk process $\WC^r(W_M)$. Just like with the green coalescent-walk process, each sample path of this process typically copies the increments of $Y_M$ while on or above the $x$-axis and the increments of $-X_M$ otherwise. However, if the walk would have crossed over $0$, it instead only increments up to $0$. 

The reason for the special rules about starting points is related to the cycling of the Schnyder wood string in the construction above \cref{randomwalkisuniformschnyder1}, and it will become clearer in the proof of \cref{mainresult1} in \cref{schnydercombinatoricssectionproofs}.

\medskip

We are now ready to state the main result of this subsection, which states that the two $2$-dimensional marginals of a Schnyder wood permutation $\sigma_M=(\sigma_M^g,\sigma_M^r)$ -- and, in fact, even the shape of the corresponding green and red trees of $M$ -- can be read off from the two coalescent-walk processes $Z^g_M = \WC^g(W_M')$ and $Z^r_M = \WC^r(W_M)$. Recall from \cref{coalescentpermutation} that, given a coalescent-walk process $Z$ as in \cref{coalescentdefn}, one can canonically construct two corresponding permutations $\sigma^{\text{up}}(Z)$ and $\sigma^{\text{down}}(Z)$. 

\begin{proposition}\label{mainresult1}
For any Schnyder wood triangulation $M$ of size $n$, let $Z_M^g = \WC^g(W_M')$ and $Z^r_M = \WC^r(W_M)$ be its green and red coalescent-walk processes. Then $Z_M^g$ and $Z_M^r$ are indeed coalescent-walk processes in the sense of \cref{coalescentdefn}, so \cref{totalorder,coalescentpermutation} may be applied to them. Label the starting points of $Z_M^g$ in descending order $n, n-1, \cdots, 1$ from left to right, and label the starting points of $Z_M^r$ in ascending order $1, 2, \cdots, n$. Then the Schnyder wood permutation satisfies 
\begin{equation*}
    \sigma_M = \left(\sigma^{\text{up}}(Z^g_M), \sigma^{\text{down}}(Z^r_M)\right).
\end{equation*}
Additionally, for any $i, j \in [n]$ (we call a vertex of $M$ by its blue label), 
\begin{align*}
\text{there is a green}&\text{ (resp.\ red) directed edge from $i$ to $j$ in $M$} \\ 
&\iff \text{``the sample path labeled $i$ in $Z^g_M$ (resp.\ $Z^r_M$) first hits $j$,''}
\end{align*}
that is, if and only if the path in $Z_M^g$ (resp.\ $Z^r_M$) starting at the point with label $i$ passes through the starting point with label $j$, and it does so before passing through any others. 

Furthermore, there is a green (resp.\ red) directed edge from $i$ to the green (resp.\ red) root if and only if the path in $Z_M^g$ (resp.\ $Z^r_M$) labeled $i$ does not pass through any other starting points.
\end{proposition}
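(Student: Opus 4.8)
The plan is to split the proof into three parts: (i) verifying that $Z_M^g$ and $Z_M^r$ satisfy the two axioms of \cref{coalescentdefn} (so that, by the total-order argument cited below \cref{totalorder}, \cref{coalescentpermutation} applies); (ii) establishing the combinatorial dictionary between directed green/red edges of $M$ and first-hitting events in the two coalescent-walk processes (the two edge-versus-first-hitting equivalences in the statement, including the boundary case for edges to the root); and (iii) deducing the identity $\sigma_M = (\sigma^{\text{up}}(Z_M^g), \sigma^{\text{down}}(Z_M^r))$ from (ii). Part (iii) will be essentially formal: once (ii) is known, the forest of coalescing paths of $Z_M^r$ (resp.\ $Z_M^g$), in the sense described after \cref{coalescentpermutationexample}, is literally the red (resp.\ green) spanning tree of $M$ rooted at the red (resp.\ green) root; then $\sigma^{\text{down}}(Z_M^r)$ reads off positions in the counterclockwise traversal of that forest, which matches the definition of $\sigma_M^r$ via the depth-first traversal of the red tree in \cref{schnyderpermutationdefn}, and symmetrically $\sigma^{\text{up}}(Z_M^g)$ matches $\sigma_M^g$. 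The only real care needed in (iii) is to check that the traversal orientations and the ascending/descending labelings of the starting points line up — this is precisely the source of the asymmetry between the green and red coordinates (and hence between $\sigma^{\text{up}}$ and $\sigma^{\text{down}}$), forced by the fact that $Z_M^g$ is built from the time-reversal $W_M'$ while $Z_M^r$ is built from $W_M$.

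Part (i) is a routine finite case analysis. For each of the two increment types of $\WC^g$, namely $(k,-1)$ and $(-1,1)$, one checks that the map sending $z = Z_s^{(j)}$ to $Z_{s+1}^{(j)}$ is weakly nondecreasing in $z$, so that the relation $Z_s^{(j_1)} \ge Z_s^{(j_2)}$ propagates forward; similarly for the increment types $(-k,1)$ and $(1,-1)$ of $\WC^r$. For instance, under a $(k,-1)$ increment of $\WC^g$, positive heights $z$ map to $z - 1 \ge 0$, height $0$ maps to $-k-1$, and negative heights $z$ map to $z - k \le -k-1$, and monotonicity across the three regimes follows using only that heights are integers. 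The axiom $Z_j^{(j)} = 0$ holds by construction, and relocating a starting point from the start of $I$ to its end (for $\WC^g$), or conversely (for $\WC^r$), does not affect these checks. Finally $|J| = n$ because each occurrence of a $(k,-1)$ increment — equivalently each \texttt{g} in $s_M$, equivalently each internal vertex of $M$ — contributes exactly one starting point.

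The heart of the argument is part (ii), which I would prove by running the loop traversal of \cref{schnyderwoodstring}--\cref{randomwalkisuniformschnyder1} in tandem with the coalescent-walk dynamics. Concretely, index the $2n$ time steps of $W_M$ by the \texttt{b}'s and \texttt{g}'s of the cyclically shifted Schnyder wood string, so that a time step corresponds either to the loop visiting the second side of a blue edge (a \texttt{b}) or to the loop finishing a maximal block of red edges followed by a green edge (an \texttt{rr}$\cdots$\texttt{rrg} segment, giving increment $(-k,1)$). A starting point of $Z_M^r$ sits right after each \texttt{g}, i.e.\ at the moment the loop has just circled a vertex $v$ from the red side; the claim to prove by induction on the time step is that the path of $Z_M^r$ started there stays $\le 0$ until it returns to $0$, and that the first later starting time at which this happens is exactly the starting point of the red-parent of $v$ (the target of $v$'s outgoing red edge). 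The three-case increment rule near $0$ in \cref{cwrdefn} is precisely what records opening $v$'s own outgoing red edge versus closing red edges incoming to $v$ from below, and Schnyder's rule (the prescribed incoming/outgoing cyclic pattern of red edges around each vertex) is what makes this bookkeeping consistent; the ballot-type inequalities of \cref{schnyderwoodstring} guarantee that the walk does not escape the relevant half-plane. The ``no other starting point is hit'' case is then the boundary situation where the path never returns to $0$ among starting times before the end of $I$, which is exactly when $v$'s outgoing red edge points to the red root. The statement for $Z_M^g = \WC^g(W_M')$ follows by running the same analysis on the time-reversed loop, and the mild discrepancies between the local rules at $0$ in \cref{cwrdefn2} and \cref{cwrdefn} reflect that reversing the loop interchanges the roles of opening and closing an edge.

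I expect part (ii) to be the main obstacle: it requires pinning down the correct invariant relating, at each time step, the height profile of the coalescent-walk process to the set of currently ``open'' red (resp.\ green) edges of the loop, and carefully reconciling the ``second-time'' marking conventions, the cyclic shift of the leading \texttt{g}, and the starting-point relocation rule. Once this dictionary is in place, the axiom check of (i) and the passage from the forest structure to the permutation in (iii) are straightforward bookkeeping.
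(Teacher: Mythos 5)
Your overall decomposition into (i) axiom verification, (ii) edge-vs-first-hit dictionary, and (iii) reading off the permutation from the forest structure mirrors the shape of the paper's argument, but your plan for (ii) takes a genuinely different route: the paper does not establish the dictionary directly on $Z^g_M,Z^r_M$. Instead it introduces auxiliary ``pre-coalescent-walk processes'' $pZ^g_M=\pWC^g(pW_M')$ and $pZ^r_M=\pWC^r(pW_M)$ driven by the $3n$-step pre-walk from \cref{prerandomwalk} (one increment per character of $s_M$, so one increment per loop edge-traversal), proves the dictionary there (\cref{ordering,lemmareg,lemmaroot,lemmaorder}), and then transfers to $Z^g_M,Z^r_M$ by a step-by-step comparison showing the two sample paths stay offset by $1/2$ at corresponding times. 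The detour is not cosmetic: on $W_M$ each \texttt{rr}$\cdots$\texttt{rrg} block is collapsed into a single $(-k,1)$ increment, so your plan of ``running the loop traversal in tandem with the coalescent-walk dynamics'' on $Z^r_M$ has to track several edge crossings inside a single time step, which is exactly the bookkeeping the pre-processes are designed to avoid. Your direct route is not impossible, but be aware that you are signing up for a messier version of the loop-versus-height invariant than the one the paper actually verifies.

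Two concrete problems. First, your claimed invariant for $Z^r_M$ is wrong as stated: you say ``the path of $Z_M^r$ started there stays $\le 0$ until it returns to $0$,'' but in fact the $Z^r_M$ sample path started at a starting point first goes \emph{positive} (while the loop traverses the blue subtree below $v$), then crosses to negative at the step corresponding to $v$'s outgoing blue edge, then returns to $0$ at the step corresponding to the outgoing green edge of $v$'s red parent — this is exactly the shape (a)--(f) in the proof of \cref{lemmareg}, and it is what the $\ge 0$ versus $<0$ three-case rule near zero in \cref{cwrdefn} is encoding. A one-sided invariant will not capture the dynamics and the induction will not close. Second, your (iii) is not ``essentially formal'': the first-hit dictionary of (ii) recovers the red and green spanning trees only as \emph{abstract} rooted forests, whereas reading off $\sigma^{\text{up}}$ and $\sigma^{\text{down}}$ from the coalescent-walk processes requires knowing the \emph{plane} embedding of the forest, i.e.\ the cyclic order in which sibling paths coalesce into their common target. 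That is a separate combinatorial fact (the paper's \cref{lemmaorder}, whose proof requires \cref{ordering} and a case split on blue-tree ancestry for the red process) and it does not follow from ``which path first hits which.'' You need to add an explicit claim and argument that the vertical order of two sibling paths just before coalescence matches the clockwise order of the corresponding incoming edges at their common parent in $M$.
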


The proof of \cref{mainresult1} is given in \cref{schnydercombinatoricssectionproofs}.

For example, in the red coalescent-walk process of \cref{fig:sampleschnyder2}, the sample paths started at labels $9$, $4$, and $6$ all first hit the starting point labeled $10$, and correspondingly in \cref{fig:sampleschnyder}, there are red directed edges from vertices $9$, $4$ and $6$ to vertex $10$. ($9, 4, 6$ is also both the clockwise order of the children of vertex $10$ in $M$, as well as the order -- from top to bottom -- in which the paths appear in the red coalescent-walk process at the time labeled by $9$.) Similarly, in the green coalescent-walk process of \cref{fig:sampleschnyder2}, the sample paths started at $10$, $6$ and $1$ do not intersect any other starting points, and those are the vertices from which directed edges to the green root appear in the Schnyder wood triangulation in \cref{fig:sampleschnyder}. (And similarly, $10, 6, 1$ is the clockwise order of the children of the green root in $M$, as well as the order -- from bottom to top -- in which the paths appear in the green coalescent-walk process.)

\medskip

For conciseness, when referring to sample paths in coalescent-walk processes with labeled starting points, we will often simply write ``the sample path labeled $i$'' to indicate ``the sample path started at the vertex labeled by $i$''.

We conclude by noting that, as a consequence of the results discussed in this subsection, the diagram of functions in \cref{fig-comm-diagram} is a commutative diagram of bijections between objects of size $n$. 

\begin{figure}[h]
\centerline{\includegraphics[width=0.4\textwidth]{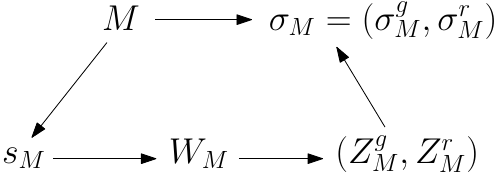}}
\caption{\label{fig-comm-diagram} The commutative diagram of bijections between objects of size $n$. The bijection $M \mapsto \sigma_M=(\sigma_M^g,\sigma_M^r)$ has been introduced in \cref{schnyderpermutationdefn} and proved to be a bijection in \cref{schnyderwoodbijectperm}, the bijection $M\mapsto s_M$ has been introduced in \cref{schnyderwoodstring}, the bijection $s_M\mapsto W_M$ has been introduced in \cref{randomwalkisuniformschnyder1}, the mapping $W_M \mapsto (Z_M^g,Z^r_M) = (\WC^g(W_M'),\WC^r(W_M))$ has been introduced in \cref{cwrdefn,cwrdefn2}, and finally, the mapping $(Z_M^g,Z^r_M) \mapsto \sigma_M=(\sigma_M^g,\sigma_M^r)$ has been introduced in \cref{mainresult1}. This proposition also shows that the diagram is a commutative diagram of bijections.}
\end{figure}

\subsection{Preliminary probabilistic results: scaling limits of the unconditioned coalescent-walk processes for Schnyder wood permutations}\label{walklimits}

The main goal of this section is to develop the preliminary probabilistic results needed for the proof of \cref{mainresult3} later in \cref{permutonsubsection}.

\subsubsection{Statements of the preliminary probabilistic results}\label{walklimits-1}

We begin by explaining how to sample a uniform Schnyder wood permutation of size $n$ via a conditioned 2-dimensional random walk.

\begin{proposition}\label{randomwalkisuniformschnyder2}
Let $W$ be a random walk in two dimensions started at the origin with iid steps drawn from the distribution 
\[
    \PP((1, -1)) = \frac{1}{2}, \quad \PP((-k, 1)) = \frac{1}{2^{k+2}} \quad\text{ for all }k \ge 0.
\]
Then the walk $W_n$, obtained from $W$ by conditioning the walk to end at time $2n$ at the origin and to stay in 
\[
    \left\{(x,y)\in\mathbb Z^2 : x \ge 0, y \ge -1\right\}
\]
from time $0$ to $2n$, is uniform among all possible walks of length $2n$ with these constraints. In particular, the corresponding random Schnyder wood permutation $\sigma_{n}=(\sigma_{n}^g,\sigma_{n}^r)$ of size $n$ obtained through the commutative diagram in \cref{fig-comm-diagram} is uniform.
\end{proposition}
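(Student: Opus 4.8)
The plan is to show that the random walk $W$ assigns \emph{equal} probability to every length-$2n$ path that both returns to the origin and stays in the region $\{x \ge 0,\, y \ge -1\}$. Since there are only finitely many such paths, conditioning on this event produces the uniform measure on them, and the statement about Schnyder wood permutations then follows from the commutative diagram in \cref{fig-comm-diagram}.

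First I would analyze which length-$2n$ paths can possibly return to the origin. If such a path uses $a$ steps of type $(1,-1)$ and $2n-a$ steps of type $(-k,1)$ (with varying $k \ge 0$), then its total $y$-displacement is $(2n-a)-a$, which vanishes exactly when $a = n$; so the path has exactly $n$ ``down'' steps and $n$ ``up'' steps. Writing $k_1,\dots,k_n \ge 0$ for the $x$-magnitudes of the up steps, the total $x$-displacement equals $n - \sum_{i} k_i$, which vanishes exactly when $\sum_i k_i = n$. Note that both constraints are forced by the return condition alone, before imposing the region constraint.

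Next I would compute the $W$-probability of any fixed such path: each down step contributes a factor $\tfrac12$ and each up step of magnitude $k_i$ contributes a factor $2^{-(k_i+2)}$, so the path has probability
\[
    \left(\tfrac12\right)^{n}\prod_{i=1}^{n} 2^{-(k_i+2)} \;=\; 2^{-n}\cdot 2^{-\sum_i k_i}\cdot 2^{-2n} \;=\; 2^{-4n},
\]
where I used $\sum_i k_i = n$. Crucially, this value is independent of the path, so all length-$2n$ paths returning to the origin are equiprobable; restricting to the sub-event that the path also stays in $\{x\ge 0,\, y\ge -1\}$ therefore yields the uniform distribution on precisely the set of walks described in \cref{randomwalkisuniformschnyder1}. (The conditioning event has positive probability for $n \ge 1$, since at least one such walk exists --- for instance the image of any Schnyder wood triangulation of size $n$.)

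Finally, I would invoke \cref{randomwalkisuniformschnyder1} together with the commutative diagram of bijections in \cref{fig-comm-diagram}: the uniform measure on these walks pushes forward, through the chain of bijections $W_M \mapsto s_M \mapsto M \mapsto \sigma_M$, to the uniform measure on Schnyder wood triangulations of size $n$, and hence (by \cref{schnyderwoodbijectperm}) to the uniform measure on Schnyder wood permutations of size $n$. I do not anticipate any real obstacle here; the only point requiring care is verifying that the geometric step weights $2^{-(k+2)}$ are arranged so that the product telescopes to the path-independent constant $2^{-4n}$ --- which is exactly why this step distribution was chosen.
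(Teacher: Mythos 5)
Your proof is correct and follows essentially the same approach as the paper: you compute that the probability of any specific admissible length-$2n$ walk is the path-independent constant $2^{-4n}$ (the paper writes this as $(1/2)^n \cdot (1/4)^n \cdot 2^{-n}$) by noting there must be exactly $n$ down steps and $n$ up steps with $\sum_i k_i = n$, and then you invoke the chain of bijections from \cref{fig-comm-diagram}. The extra remarks you include (that the counting constraints are forced by the return condition alone, and that the conditioning event has positive probability) are small, harmless elaborations.
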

\begin{proof}
The probability that $W_n$ is equal to any specific walk with the above constraints is exactly equal to  $\left(\frac{1}{2}\right)^n \cdot \left(\frac{1}{4}\right)^n \cdot \frac{1}{2^n}$, since we have $n$ steps of $(1, -1)$, $n$ steps of $(-k, 1)$ for some $k$, and the sum of the $-k$s must be $-n$ since the walk starts and ends at the origin. Since $\left(\frac{1}{2}\right)^n \cdot \left(\frac{1}{4}\right)^n \cdot \frac{1}{2^n}$ is independent of our choice of the specific walk, all walks do indeed occur with equal probability, as desired.

The latter claim follows immediately from the fact that the mappings in \cref{fig-comm-diagram} used to obtain $\sigma_{n}=(\sigma_{n}^g,\sigma_{n}^r)$ from $W_n$ are bijections.
\end{proof}

Let now $W_n$ be as in the statement of \cref{randomwalkisuniformschnyder2}. Our strategy for studying a uniform Schnyder wood permutation of size $n$ as $n \to \infty$ is thus to study the coalescent-walk processes $Z^g_n = \WC^g(W_n)$ and $Z^r_n = \WC^r(W_n)$, since they nicely encode the patterns of uniform Schnyder wood permutations (as shown in \cref{mainresult1}). Our first step is thus to study the limiting behavior of $Z^g_n$ and $Z^r_n$ by taking $n \to \infty$, scaling horizontally by $n$, and scaling vertically by $\sqrt{n}$. We will first study this limiting object when the driving random walk is unconditioned, as in \cref{unconditionedconvergence}, and then when it is conditioned, as in \cref{jointproductresult}. The reason is that the unconditioned case is simpler to analyze, and then information about the conditioned case can be extracted from the unconditioned one via absolute continuity arguments.

\begin{definition}\label{defnzunc}
Consider the (infinite) two-dimensional random walk $W^{*} = (X^{*}, Y^{*})$ started at the origin with iid steps drawn from the distribution 
\[
    \PP((1, -1)) = \frac{1}{2}, \quad \PP((-k, 1)) = \frac{1}{2^{k+2}} \quad\text{for all }k \ge 0.
\]
For any integer $n \ge 1$, we define the \vocab{unconditioned green coalescent-walk process} $Z^{*, g}_n$ and \vocab{unconditioned red coalescent-walk process} $Z^{*, r}_n$ as follows. Restrict $W^{*} = (X^{*}, Y^{*})$ to the interval $[0, 2n]$, and let $Z^{*, g}_n = \WC^g((W^{*})')$ and $Z^{*, r}_n = \WC^r(W^{*})$, where we recall that $(W^{*})'$ denotes the time reversal of $W^{*}$. 

Finally, extend all sample paths of $Z^{*, g}_n$ and $Z^{*, r}_n$ by setting them equal to zero to the left of their starting point. That is, for any starting point $j$, let $(Z^{*, g}_n)^{(j)}_t = 0$ for all integers $t < j$ and similar for $Z^{*, r}_n$.
\end{definition}

By a direct calculation, we see that the the increments $(X^{*}_i-X^{*}_{i-1}, Y^{*}_i-Y^{*}_{i-1})$ of the walk satisfy 
\begin{equation*}
    \EE[X^{*}_i-X^{*}_{i-1}] = 0,\quad \EE[(X^{*}_i-X^{*}_{i-1})^2] = 2, \quad \EE[Y^{*}_i-Y^{*}_{i-1}] = 0, \quad \EE[(Y^{*}_i-Y^{*}_{i-1})^2] = 1,
\end{equation*}
and 
\begin{equation*}
    \EE[(X^{*}_i-X^{*}_{i-1})(Y^{*}_i-Y^{*}_{i-1})] = -1,
\end{equation*}
so we may rescale in space and time and define the rescaled walks $\tilde{X}^{*}_n(t)$ and $\tilde{Y}^{*}_n(t)$, which are both functions from $[0, 1]$ to $\RR$, via 
\begin{equation}\label{xyunceqn}
    \tilde{X}^{*}_n(\cdot) = \text{LI}\left(\frac{1}{\sqrt{4n}} X^{*}(2nt)\right)_{t \in \{0, \frac{1}{2n}, \cdots, 1\}}, \quad \tilde{Y}^{*}_n (\cdot) = \text{LI}\left(\frac{1}{\sqrt{2n}} Y^{*}(2nt)\right)_{t \in \{0, \frac{1}{2n}, \cdots, 1\}},
\end{equation}
where $\text{LI}$ denotes the linear interpolation between adjacent multiples of $\frac{1}{2n}$. (In this paper, we will use tildes to represent rescaled and interpolated versions of our walks and processes.) Then, by Donsker’s theorem, $(\tilde{X}^{*}_n, \tilde{Y}^{*}_n)$ converges to a two-dimensional Brownian motion $(X^*_\rho, Y^*_\rho)$ on $[0, 1]$ with correlation $\rho = -\frac{\sqrt{2}}{2}$ and the same is true for the reversal $((\tilde{X}^{*}_n)', (\tilde{Y}^{*}_n)')$. 

We now wish to describe the law of the rescaled coalescent-walk processes driven by $(\tilde{X}^{*}_n, \tilde{Y}^{*}_n)$, defined as follows. For any starting point $j$ of the coalescent-walk process $Z^{*, \circ}$, and for $\circ\in \{g,r\}$, we define
\begin{equation}\label{zuncgeqn}
    \left(\tilde{Z}^{*, \circ}_n\right)^{(j/(2n))} (\cdot) = \text{LI}\left(\begin{cases} \frac{1}{\sqrt{2n}} (Z^{*, \circ}_n)^{(j)}_{2nt} & \text{if } (Z^{*, \circ}_n)^{(j)}_{2nt} \ge 0 \\ \frac{1}{\sqrt{4n}} (Z^{*, \circ}_n)^{(j)}_{2nt} & \text{if } (Z^{*, \circ}_n)^{(j)}_{2nt} < 0 \end{cases}\right)_{t \in \{0, \frac{1}{2n}, \cdots, 1\}}.
\end{equation}
The different rescaling in space above and below the $x$-axis is due to the difference in rescaling constants for $\tilde{X}$ and $\tilde{Y}$ above. $\tilde{Z}_n^{\ast, g}$ and $\tilde{Z}_n^{\ast, r}$ may be viewed as ``continuous-time coalescent-walk processes,'' and our main goal in this section is to show that they converge (along with their driving walks) to processes defined similarly to those in the skew Brownian permuton $\mu_{\rho, q}$ (recall \cref{skewbrowniansdedefn}), but driven by two-dimensional Brownian motions instead of two-dimensional Brownian excursions.

\begin{proposition}\label{unconditionedconvergence}
Let 
\begin{equation}
    \rho = -\frac{\sqrt{2}}{2}\qquad \text{and} \qquad q = \frac{1}{1 + \sqrt{2}}.
\end{equation}
Fix $u \in (0, 1)$, and let $j_n$ be the $x$-coordinate of the $\lceil nu \rceil$--th leftmost starting point of $\tilde{Z}_n^{*, r}$ for all $n$ (or $0$ if none exists). Then we have the following joint convergence\footnote{All of the spaces of continuous functions considered in this paper are implicitly endowed with the topology of uniform convergence on compact sets.} in $C([0, 1], \RR)^6$:
\begin{multline*}
\left(\tilde{X}_n^{*}\,,\, \tilde{Y}_n^{*}\,,\, (\tilde{X}_n^{*})'\,,\, (\tilde{Y}_n^{*})'\,,\, (\tilde{Z}_n^{*, g})^{(1 - j_n/(2n))}\,,\, (\tilde{Z}_n^{*\,,\, r})^{(j_n/(2n))}\right)\\
    \stackrel{d}{\longrightarrow}\quad \left(X^\ast_\rho\,,\, Y^\ast_\rho\,,\, (X^{\ast}_\rho)'\,,\, (Y^{\ast}_\rho)'\,,\, (Z^{\ast}_{\rho, q})'^{(1 - u)}\,,\, (Z^\ast_{\rho, q})^{(u)}\right),
\end{multline*}
where 
\begin{itemize}
    \item $(X^\ast_\rho, Y^\ast_\rho)$ is a Brownian motion of correlation $\rho$;  
    \item $((X^{\ast}_\rho)',(Y^{\ast}_\rho)')$ is its time-reversal; 
    \item $Z^\ast_{\rho, q}$ is the unique\footnote{\cite[Theorem 2.1]{borga2023skew} guarantees the uniqueness and existence of $Z_{\rho, q}^\ast$ and $(Z_{\rho, q})^\ast$ when the SDEs are driven by Brownian motions rather than Brownian excursions.} solution to the SDE in \cref{skewbrowniansdedefn} driven by $(X^\ast_\rho, Y^\ast_\rho)$; and 
    \item $(Z^{\ast}_{\rho, q})'$ is the unique solution to the SDE in \cref{skewbrowniansdedefn} driven by  $((X^{\ast}_\rho)', (Y^{\ast}_\rho)')$.
\end{itemize}
\end{proposition}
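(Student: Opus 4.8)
The plan is to adapt the strategy used in \cite{borga2022scaling} (and \cite{borga2021permuton}) for the coalescent-walk processes encoding Baxter-type permutations; the new features here are the different local rules of \cref{cwrdefn,cwrdefn2} and the need to track the location of a distinguished starting point. I would first dispose of the driving walks: by the moment computations preceding the statement together with Donsker's theorem, $(\tilde X_n^{*},\tilde Y_n^{*})\to (X_\rho^{*},Y_\rho^{*})$ with $\rho=-\sqrt2/2$, and since time-reversal is a continuous involution on $C([0,1],\RR)^2$, the first four coordinates of the vector jointly converge to $(X_\rho^{*},Y_\rho^{*},(X_\rho^{*})',(Y_\rho^{*})')$. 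Next I would locate $j_n$: the steps of $W^{*}$ are of $(-k,1)$-type with total probability $\sum_{k\ge0}2^{-(k+2)}=\tfrac12$, so by the law of large numbers the number of starting points of $\WC^r(W^{*})$ among the first $\lfloor 2nt\rfloor$ steps equals $nt(1+o(1))$ uniformly in $t\in[0,1]$; hence $j_n/(2n)\to u$ almost surely (and likewise $1-j_n/(2n)\to 1-u$, which is the relevant index for the green process, since the starting points of $\WC^g((W^{*})')$ are the images of those of $\WC^r(W^{*})$ under $s\mapsto 2n-s$).

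The next step is tightness. On or above the $x$-axis the increments of $(Z_n^{*,r})^{(j)}$ coincide with those of $Y^{*}$, and strictly below they coincide with those of $-X^{*}$, up to a nonnegative ``clipping'' correction accumulated only when the path returns to $0$ from below; an analogous decomposition holds for $\WC^g$ applied to $(W^{*})'$, the only difference being that the correction now takes the form of a bounded overshoot at down-steps from $0$. In either case each rescaled sample path is squeezed between a reflected version of the rescaled driving walk and the driving walk itself, so $C([0,1],\RR)$-tightness of the family of sample paths follows from tightness of $(\tilde X_n^{*},\tilde Y_n^{*})$ and of its reversal, exactly as in \cite{borga2022scaling}; tightness of the full $6$-tuple is then immediate.

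The heart of the argument is the identification of subsequential limits. Passing (via Prokhorov and Skorokhod) to an almost surely convergent subsequence, any limit $Z^{\infty}$ of $(\tilde Z_n^{*,r})^{(j_n/(2n))}$ is continuous, vanishes on $[0,u]$, and by the increment decomposition above equals an It\^o integral against $Y_\rho^{*}$ while positive and against $-X_\rho^{*}$ while negative, plus a continuous process of bounded variation that increases only on $\{Z^{\infty}=0\}$. A Tanaka/occupation-time computation identifies this last process as $(2q-1)L^{Z^{\infty}}$, the constant being pinned down by comparing the typical size of the clipping correction accumulated per excursion below the axis against the local time at $0$ of the limiting reflected process; the asymmetric vertical rescaling ($\sqrt{4n}$ below the axis versus $\sqrt{2n}$ above) and the geometric jump law of $X^{*}$ together produce exactly $2q-1=\tfrac{1-\sqrt2}{1+\sqrt2}$, i.e.\ $q=\tfrac{1}{1+\sqrt2}$. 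Thus $Z^{\infty}$ solves the SDE in \cref{skewbrowniansdedefn} driven by $(X_\rho^{*},Y_\rho^{*})$ with starting time $u$ (here one also uses continuity of the map sending a starting time to the corresponding solution, so that $j_n/(2n)\to u$ yields the correct limiting starting time), and strong uniqueness of this SDE for Brownian driving (\cite[Theorem 2.1]{borga2023skew}) forces $Z^{\infty}=(Z^{\ast}_{\rho,q})^{(u)}$. Applying the same argument to $(W^{*})'$ --- whose increments have the form required by $\WC^g$ and whose rescaling converges to $((X_\rho^{*})',(Y_\rho^{*})')$ --- identifies the green limit as $(Z^{\ast}_{\rho,q})'^{(1-u)}$.

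Finally, every coordinate of any subsequential limit of the $6$-tuple is a fixed measurable function of the single pair $(X_\rho^{*},Y_\rho^{*})$ (time-reversal for the third and fourth coordinates; the unique forward, resp.\ backward, SDE solution started at $u$, resp.\ $1-u$, for the sixth, resp.\ fifth), so the joint law of the limit is uniquely determined and the whole vector converges in distribution to the stated limit. The main obstacle is the identification step, and within it the exact evaluation of the skewness $q=\tfrac{1}{1+\sqrt2}$, which requires a careful excursion-level analysis of the clipping/overshoot mechanisms of $\WC^r$ and $\WC^g$ taking proper account of the two vertical scalings; this technical analysis is deferred to \cref{tech-proof}.
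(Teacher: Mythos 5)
Your outline has the right architecture---Donsker for the driving walks plus continuity of time-reversal, law of large numbers for $j_n/(2n)\to u$, tightness of the coupled tuple, subsequential-limit identification, and joint uniqueness because every coordinate of the limit is a measurable function of $(X^*_\rho,Y^*_\rho)$---and these steps all match the paper's proof. The tightness bound also matches (the paper bounds the modulus of continuity of the sample path by a combination of the moduli of $A$ and $B$, the random walks with the increments of $Y^*$ and $-X^*$).

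Where your proposal genuinely diverges from the paper is in the identification step and in how the skewness $q$ gets pinned down. You propose to show directly that any subsequential limit $Z^{\infty}$ solves the SDE \cref{skewbrowniansdedefn}, identifying the finite-variation correction as $(2q-1)L^{Z^{\infty}}$ via a Tanaka/occupation-time argument and extracting $q$ from the typical clipping accumulated per excursion. The paper takes a different route: it first proves the one-dimensional marginal distributions of the rescaled sample path converge to those of a skew Brownian motion of parameter $q=\frac{1}{1+\sqrt2}$, and that value is computed from explicit excursion-length asymptotics (\cref{asympreturntimes}, \cref{returnasymptotics}, \cref{asymptoticsforzeros}, culminating in \cref{foundq}), not from a clipping/local-time comparison. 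Knowing each marginal is a skew BM makes the later joint-uniqueness step much lighter, because the local-time term in the SDE is then furnished ``for free'' by the local-time description of a skew BM, rather than having to be extracted from the prelimiting corrections. Your route would require extra justification precisely at the point you gloss: you must show that the subsequential limit has a well-defined semimartingale local time at zero before you can even express the finite-variation part as a multiple of $L^{Z^{\infty}}$, and that the rescaled total clipping (which is discontinuous and mixes the $\sqrt{2n}$ and $\sqrt{4n}$ scalings at zero) actually converges to a multiple of that local time; these facts are immediate once you know the marginal is a skew BM but are not free otherwise. So while your skeleton is sound, the way you propose to compute $q$ is not what \cref{tech-proof} actually does, and deferring ``the technical analysis'' to \cref{tech-proof} therefore mislabels a different argument as your own: the excursion-level input you would actually borrow from the paper is the hitting-time asymptotics that feed \cref{foundq}, and with those in hand the cleaner path is the paper's (marginal $\Rightarrow$ skew BM $\Rightarrow$ SDE characterization of the joint limit) rather than the Tanaka-first path you sketch.
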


\medskip

The reason for considering $(\tilde{Z}_n^{*, g})^{(1 - j_n/(2n))}$ in the statement of \cref{unconditionedconvergence} is justified by the following simple fact:

\begin{lemma}\label{greenreddualstart}
Let $W$ be a random walk of length $2n$ with increments in the set $\{(-k, 1): k \ge 0\} \cup \{(1, -1)\}$. Then $\WC^r(W)$ has a starting point at time $j$ if and only if $\WC^g(W')$ has a starting point at time $2n - j$. 

Furthermore, those corresponding starting points have the same label if labeled (as in \cref{mainresult1}) in descending order from left to right for $Z^g_M$ and ascending order from left to right for $Z^r_M$.
\end{lemma}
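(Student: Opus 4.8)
The plan is to unwind \cref{cwrdefn,cwrdefn2} together with the definition of the time-reversal $W'$. The statement is purely combinatorial, so no probabilistic input enters. I would first fix notation: write $w_i = W_i - W_{i-1}$ for the $i$-th increment of $W$, so that the $i$-th increment of $W'$ is $W'_i - W'_{i-1} = -w_{2n-i+1}$. Then I would record the two descriptions of starting points. By \cref{cwrdefn}, before the endpoint relocation, $\WC^r(W)$ has a starting point at time $j$ exactly when $1 \le j \le 2n$ and $w_j$ is a $(-k,1)$-increment, and the relocation rule moves a starting point from the right endpoint $j = 2n$ of $I = [0,2n]$ to the left endpoint $j = 0$. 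Dually, by \cref{cwrdefn2}, before relocation $\WC^g(W')$ has a starting point at time $j'$ exactly when $0 \le j' \le 2n-1$ and the $(j'+1)$-st increment of $W'$ is a $(k,-1)$-increment; since that increment is $-w_{2n-j'}$, this happens exactly when $w_{2n-j'}$ is a $(-k,1)$-increment, and the relocation rule here moves a starting point from the left endpoint $j' = 0$ to the right endpoint $j' = 2n$.

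Next I would substitute $j = 2n - j'$ and observe that this involution of $\{0,1,\dots,2n\}$ carries the pre-relocation starting-point set of $\WC^g(W')$ onto that of $\WC^r(W)$, and simultaneously interchanges the two relocation rules (``right endpoint $\to$ left endpoint'' for $\WC^r(W)$ becomes ``left endpoint $\to$ right endpoint'' for $\WC^g(W')$). This immediately gives the first assertion: $j$ is a starting point of $\WC^r(W)$ if and only if $2n - j$ is a starting point of $\WC^g(W')$. For the labelling claim I would use that the two processes have the same (finite) number of starting points, since both counts equal the number of $(-k,1)$-increments of $W$ (for $\WC^g(W')$ this is the number of $(k,-1)$-increments of $W'$). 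Listing the starting points of $\WC^r(W)$ as $j_1 < j_2 < \dots < j_m$, the ascending labelling assigns label $i$ to $j_i$; by the first assertion the starting points of $\WC^g(W')$ are $2n - j_1 > 2n - j_2 > \dots > 2n - j_m$, so read from left to right they are $2n - j_m < \dots < 2n - j_1$, and the descending labelling assigns label $i$ to $2n - j_i$. Hence corresponding starting points carry the same label, which is the second assertion. (In the application to Schnyder wood triangulations, $m = n$, matching the labelling conventions of \cref{mainresult1}.)

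I do not expect a real obstacle here: the only delicate point is the bookkeeping around the endpoint-relocation conventions and the off-by-one shift concealed in the time-reversal indexing (the $(j'+1)$-st increment of $W'$ corresponding to the $(2n-j')$-th increment of $W$), together with the verification that the reflection $j \mapsto 2n - j$ genuinely interchanges the two relocation rules rather than producing a mismatch at the endpoints.
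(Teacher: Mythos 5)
Your proof is correct and essentially follows the paper's own argument: unwind the two definitions, identify starting points with $(-k,1)$-increments of $W$ (or equivalently $(k,-1)$-increments of $W'$), check that $j \mapsto 2n-j$ carries one starting-point set to the other while interchanging the two endpoint-relocation conventions, and then observe that the opposite labelling orders make corresponding points carry the same label. The only difference is that you are somewhat more explicit than the paper in tracking the off-by-one shift from time-reversal and in verifying that the relocation rules are genuinely interchanged rather than mismatched at the endpoints; the paper handles the $j=0 \leftrightarrow j'=2n$ boundary case by direct inspection.
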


The second part of this statement will be used later for the proof of \cref{jointproductpreresult}.

\begin{proof}
Recall the definitions of $\WC^g$ and $\WC^r$ from \cref{cwrdefn2} and \cref{cwrdefn}. The first part of the statement holds because the starting points of the red coalescent-walk processes are the ending times of the $(-k, 1)$ increments, while the starting points of the green coalescent-walk process are the starting times of the $(k, -1)$ increments. Thus for any $1 \le j \le 2n-1$, we have a starting point in $\WC^r(W)$ at $j$ if and only if $[j-1, j]$ is a $(-k, 1)$ increment in $W$, which occurs if and only if $[2n-j, 2n-j+1]$ is a $(k, -1)$ increment in the reversed walk $W'$, which occurs if and only if $n-j$ is a starting point in $\WC^g(W')$. Furthermore, we have a starting point in $\WC^r(W)$ at $0$ if and only if $[2n-1, 2n]$ is a $(-k, 1)$ increment in $W$, which occurs if and only if $[0, 1]$ is a $(k, -1)$ increment in $W'$, which occurs if and only if $2n$ is a starting point in $\WC^g(W')$. Since there is never a starting point in $\WC^r(W)$ (resp. $\WC^g(W')$) at $2n$ (resp. $0$), this shows the desired claim.

The second part then follows because $\WC^r(W)$ and $\WC^g(W')$ have the same number of starting points, and we label them in opposite order from left to right.
\end{proof}

The rest of this section is devoted to the proof of \cref{unconditionedconvergence}. We invite the reader to skip this proof at a first read (by skipping the next subsection) to first see how this result is used to complete the proof of \cref{mainresult3} in \cref{permutonsubsection}.

\subsubsection{Proofs of the preliminary probabilistic results}\label{tech-proof}

Our first goal is to show that each sample path $\left(\tilde{Z}^{*, \circ}_n\right)^{(\cdot/(2n))}$ converges to a skew Brownian motion of parameter $q$ (see \cref{foundq}). Recall that a skew Brownian motion is a Brownian motion for which the signs of the excursions away from $0$ are independently chosen to be positive with probability $q$ and negative with probability $1-q$; the reference \cite{skewbrownianmotion} provides explicit constructions and more background. We first determine the value of $q$, making use of the following fact about random walks. (Additional discussion and estimates may also be found in \cite[Section 2.1]{ngo2019limit}.)

\begin{proposition}[{\cite[Lemma D.1 and D.2]{borga2021permuton}, \cite[Proposition 11(i)]{doneyuniform}}]\label{nplimit}
Let $S_n$ be a one-dimensional random walk started at zero with iid centered steps and finite variance $\sigma^2$, and let $\tau = \inf\{n>0: S_n < 0\}$. Letting $S_\tau^i$ be iid copies of $S_\tau$, define
\[
    h(x) = \begin{cases} 1 + \sum_{j=1}^{\infty} \PP\left(S_{\tau}^1 + \cdots + S_{\tau}^j > -x\right) & \text{if }x > 0, \\ 0 & \text{otherwise}, \end{cases}
\]
and define $\tilde{h}(x)$ similarly for the walk $(-S_n)$. Then for any $x, y > 0$, we have
\[
    \PP((x + S_i)_{i \in [n]} > 0) \sim 2\frac{\EE[-S_{\tau}]}{\sigma\sqrt{2\pi}} \frac{h(x)}{\sqrt{n}}, 
\]
\[
    \PP\left(x + S_n = y, \quad (x + S_i)_{i \in [n]} > 0\right) \sim \frac{\EE[-S_\tau]}{\sigma\sqrt{2\pi} \sum_{z \in \NN} \tilde{h}(z) \PP(S_1 \le -z)} \frac{h(x)\tilde{h}(y)}{n^{3/2}},
\]
and these asymptotics hold uniformly over all $x, y$ with $\max(x, y) = o(\sqrt{n})$. 

Furthermore, suppose that for all negative $y$, we have $\PP(S_1 = y) = \alpha \gamma^{-y}$. Then $\EE[-S_\tau] = \frac{1}{1 - \gamma}$.
\end{proposition}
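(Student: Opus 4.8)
The three assertions are classical facts from the fluctuation theory of one-dimensional random walks, and in practice I would simply quote the first two from \cite{doneyuniform} (and the third is immediate); for completeness, here is the strategy behind each.

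I would start by recognizing $h$ (and, in the same way, $\tilde h$) as a renewal function. Writing $H_i = -S_\tau^i > 0$ for i.i.d.\ copies of the first strict descending ladder height of $S$, and $\Sigma_j = H_1 + \cdots + H_j$ (with $\Sigma_0 = 0$), one has $\{S_\tau^1 + \cdots + S_\tau^j > -x\} = \{\Sigma_j < x\}$, so $h(x) = \sum_{j \ge 0}\PP(\Sigma_j < x)$ is exactly the renewal function of the $H_i$; likewise $\tilde h$ is the renewal function of the strict descending ladder heights of $-S$. The plan then relies on two standard inputs: (i) $h$ is harmonic for the sub-Markovian kernel of $S$ killed upon leaving $(0,\infty)$, that is, $\EE\big[h(x + S_1)\mathds{1}\{x + S_1 > 0\}\big] = h(x)$ for every $x > 0$ (and $\tilde h$ plays the same role for $-S$), which one derives from the Wiener--Hopf factorization by splitting the walk at its first ladder epoch; and (ii) by the renewal theorem $h(x)/x \to 1/\EE[H] = 1/\EE[-S_\tau]$ as $x \to \infty$.

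With these in hand, I would prove the first estimate by the now-standard route (Bolthausen; Vatutin--Wachtel; Denisov--Wachtel): perform the Doob $h$-transform of the killed walk to obtain the walk conditioned to stay positive, prove a functional CLT for it with a Brownian meander as the scaling limit, and combine it with a uniform-integrability bound to get $\PP\big((x + S_i)_{i\in[n]} > 0\big) \sim \varkappa\,h(x)\,n^{-1/2}$ uniformly for $x = o(\sqrt n)$, where $\varkappa$ depends only on $\sigma$. Matching the large-$x$ asymptotics against the reflection-principle estimate $\PP\big((x+S_i)_{i\in[n]} > 0\big) \approx 2x/(\sigma\sqrt{2\pi n})$ and using $h(x)\sim x/\EE[-S_\tau]$ then identifies $\varkappa = 2\EE[-S_\tau]/(\sigma\sqrt{2\pi})$. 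For the second (local) estimate I would decompose a path that stays positive from $x$ at time $0$ to $y$ at time $n$ at its midpoint $\lfloor n/2\rfloor$, apply the survival estimate forwards from $x$ (yielding $h(x)$) and --- after time-reversal, which swaps $S$ with $-S$ and exchanges the two endpoints --- backwards from $y$ (yielding $\tilde h(y)$), and feed in a local CLT for the midpoint value, contributing the extra factor $n^{-1/2}$; careful bookkeeping of the Gaussian normalizations together with the normalization of $\tilde h$ relative to $h$ --- which is precisely where the factor $\sum_z \tilde h(z)\PP(S_1 \le -z)$ in the denominator arises --- gives the stated constant. The step I expect to be the main obstacle is exactly this local estimate: unlike the one-sided survival probability, it requires the two-sided midpoint decomposition plus a local CLT that is uniform in both endpoints, and correctly identifying $\sum_z \tilde h(z)\PP(S_1 \le -z)$ as the normalization dual to $h$; this entire circle of ideas, including uniformity on $\max(x,y) = o(\sqrt n)$, is carried out in detail in \cite{doneyuniform,borga2021permuton}, which I would invoke rather than reprove.

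Finally, for the identity $\EE[-S_\tau] = 1/(1-\gamma)$ when $\PP(S_1 = -k) = \alpha\gamma^k$ for $k \ge 1$, I would condition on $\{\tau = n\}$ and on $S_{n-1} = a \ge 0$: the event $\{S_1 \ge 0, \dots, S_{n-2}\ge 0, S_{n-1}=a\}$ depends only on $\xi_1,\dots,\xi_{n-1}$ and hence is independent of the step $\xi_n$, so conditionally $\xi_n$ has the law of $S_1$ given $\{S_1 \le -(a+1)\}$. Since the downward tail of $S_1$ is geometric with ratio $\gamma$, the overshoot $-(a+1) - \xi_n$ is then geometric with mass $(1-\gamma)\gamma^m$ at $m \ge 0$, irrespective of $a$; hence $-S_\tau = -(a + \xi_n) = m+1$ is distributed as $1 + \mathrm{Geom}$, and $\EE[-S_\tau] = 1 + \gamma/(1-\gamma) = 1/(1-\gamma)$.
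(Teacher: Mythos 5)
Your proposal is correct and matches the paper's approach: the paper states this proposition as a cited result (\cite{borga2021permuton}, \cite{doneyuniform}) with no proof given, exactly as you propose to do for the two asymptotic estimates, and your expository sketch of the renewal-function/Doob-$h$-transform machinery behind those references is accurate. Your elementary argument for $\EE[-S_\tau] = 1/(1-\gamma)$ via the memorylessness of the geometric overshoot is also correct and goes beyond what the paper spells out, but this part is essentially immediate and does not constitute a different route.
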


We will use the above result to study the return time of our Schnyder wood walks; specifically, we now obtain asymptotics on the length of the first excursion. To make this precise, we define an infinite version of \cref{defnzunc}.

\begin{definition}
Let $W^{*, r} = (X^{*, r}, Y^{*, r})$ be a random walk started at the origin with step distribution given as in \cref{defnzunc}, and let $W^{*, g} = (X^{*, g}, Y^{*, g})$ be a random walk started at the origin with the reversed step distribution (that is, $\PP((-1, 1)) = \frac{1}{2}$ and $\PP((k, -1)) = \frac{1}{2^{k+2}}$ for all $k \ge 0$).

Define the \vocab{infinite unconditioned green} (resp.\ \vocab{red}) \vocab{coalescent-walk process} $Z^{*, g}$ (resp.\ $Z^{*, r}$) to be the coalescent walk process on the interval $I = [0, \infty)$ given by $Z^{*, g} = \WC^g(W^{*, g})$ and $Z^{*, r}_n = \WC^r(W^{*, r})$, again extending all sample paths to the left by zero, except that we do not move starting points between the beginning and end of the interval as in \cref{cwrdefn2} and \cref{cwrdefn} because there is no right endpoint.
\end{definition}

We make the following remark for later use. 

\begin{remark}\label{infinitetofiniteremark}
Note that for any integer $n$, the restriction of $(W^{*, g}, Z^{*, g})$ (resp.\ $(W^{*, r}, Z^{*, r})$) to the interval $[0, 2n]$ has the same law as $((W^{*, g}_i)_{i \in [0, 2n]}, Z^{*, g}_n)$ (resp.\ $((W^{*, r}_i)_{i \in [0, 2n]}, Z^{*, r}_n)$) where $Z^{*, g}_n$ (resp.\ $Z^{*, r}_n$) is as in \cref{defnzunc}, except that the distributions of sample paths started at times $0$ and $2n$ may differ. Indeed, by the definitions of the maps $\WC^g$ and $\WC^r$ in \cref{cwrdefn2} and \cref{cwrdefn}, the value of the coalescent-walk process in any interval $[0, T]$ depends only on the value of the driving walk on $[0, T]$, except that the appearance of a starting point at $T$ depends on the increment of the walk in $[T, T+1]$ for the map $\WC^g$, and furthermore $Z_n^{*, g}$ and $Z_n^{*, r}$ may move starting points between $0$ and $T$ while $Z^{*, g}$ and $Z^{*, r}$ do not. These differences will not matter for our purposes, since we will always condition on sample paths passing through particular points in the subsequent asymptotics.
\end{remark}

We now establish a series of preliminary asymptotic estimates to later prove in \cref{foundq} that $\left(\tilde{Z}^{*, \circ}_n\right)^{(\cdot/(2n))}$ converges to a skew Brownian motion.

\begin{lemma}\label{asympreturntimes}
Fix an integer $u \ge 0$. Condition the driving walk $W^{*, g}$ on the interval $[0, u]$ so that some sample path of $Z^{\ast, g}$, say $(Z^{\ast, g})^{(j)}$ with $j \le u$, passes through the point $(u, 0)$, i.e.\ $(Z^{\ast, g})^{(j)}_u = 0$.. Define the random walk $G_i = (Z^{\ast, g})^{(j)}_{u + i}$ for all $i\geq 0$, and let $\tau_G$ be the first return time to $0$ for $G_i$. Define $R_i$ and $\tau_R$ analogously but using the coalescent-walk process $Z^{\ast, r}$ instead. Then, as $k \to \infty$, we have the asymptotics
\[
    \PP(\tau_G = 2k | G_1 > 0)\sim \frac{1}{\sqrt{4\pi}} k^{-3/2},\qquad \PP(\tau_G = k | G_1 < 0)\sim \frac{1}{\sqrt{\pi}} k^{-3/2},
\]
\[
    \PP(\tau_R = 2k | R_1 > 0) \sim \frac{1}{\sqrt{4\pi}} k^{-3/2},\qquad \PP(\tau_R = k | R_1 < 0) \sim \frac{1}{\sqrt{\pi}} k^{-3/2}.
\]
\end{lemma}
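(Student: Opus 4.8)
The plan is to observe that, conditionally on the stated event, the process $(G_i)_{i\ge0}:=\bigl((Z^{\ast, g})^{(j)}_{u+i}\bigr)_{i\ge0}$ (and similarly $(R_i)_{i\ge0}$) is a Markov chain started at $0$ whose transition mechanism can be read directly off \cref{cwrdefn2} and \cref{cwrdefn}: the increments of $Z^{\ast, g}$ after time $u$ are a deterministic function of the value $(Z^{\ast, g})^{(j)}_u=0$ and of the increments of $W^{\ast, g}$ after time $u$, which are iid and independent of the conditioning on $[0,u]$. Inspecting the rules, while the chain is strictly above the $x$-axis it performs a simple symmetric $\pm1$ random walk and an upward step out of $0$ is always $+1$; while it is strictly below the $x$-axis it follows the fixed walk $S$ with increments $+1$ with probability $\tfrac12$ and $-k$ with probability $2^{-k-2}$ for each $k\ge0$ (in the red case the down-from-$0$ overshoots of $S$ are capped at $0$, which does not change the time of return). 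The walk $S$ is centered with variance $2$ and is upward skip-free. The first step differs between the colors: $G_1=+1$ with probability $\tfrac12$ and $G_1=-m$ with probability $2^{-m-1}$ for $m\ge1$, whereas $R_1=\pm1$ with probability $\tfrac12$ each. Conditioning on a positive first step forces it to equal $+1$, after which the chain is a simple symmetric random walk until its first return to $0$; hence $\tau_G$ (resp.\ $\tau_R$) equals the first return time of a simple symmetric random walk to $0$, which is $2k$ with probability $\tfrac{1}{2k-1}\binom{2k}{k}2^{-2k}\sim\tfrac{1}{2\sqrt\pi}k^{-3/2}=\tfrac{1}{\sqrt{4\pi}}k^{-3/2}$ by the classical first-return formula and Stirling. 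This settles the two ``$>0$'' asymptotics.

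For $\PP(\tau_G=k\mid G_1<0)$, condition further on $G_1=-m$ (conditional probability $2^{-m}$). Since $S$ is upward skip-free, the chain then climbs back to $0$ without overshooting, so $\tau_G=1+\tau^{S}_m$, where $\tau^{S}_m$ is the first passage time of $S$ (started at $0$) to level $m$. The hitting-time theorem for skip-free walks gives $\PP(\tau^{S}_m=n)=\tfrac{m}{n}\PP(S_n=m)$, and the local central limit theorem (applicable since $S$ is aperiodic, centered, of variance $2$) gives $\PP(S_n=m)\sim\tfrac{1}{2\sqrt{\pi n}}$ for fixed $m$ together with the uniform bound $\PP(S_n=m)\le Cn^{-1/2}$. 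Dominated convergence then yields
\[
\PP(\tau_G=k\mid G_1<0)=\sum_{m\ge1}2^{-m}\,\PP\!\left(\tau^{S}_m=k-1\right)\ \sim\ \frac{1}{2\sqrt\pi}\,k^{-3/2}\sum_{m\ge1}m\,2^{-m}\ =\ \frac{1}{\sqrt\pi}\,k^{-3/2}.
\]

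For $\PP(\tau_R=k\mid R_1<0)$, conditioning on $R_1<0$ forces $R_1=-1$, and a short check of the capped below-axis dynamics shows that $\tau_R$ is then distributed as $\tau(S):=\inf\{n:S_n<0\}$. We apply \cref{nplimit} to $S$: for $y<0$ we have $\PP(S_1=y)=\tfrac14(1/2)^{-y}$, so in the notation there $\gamma=\tfrac12$ and $\EE[-S_{\tau(S)}]=(1-\gamma)^{-1}=2$, while $\sigma=\sqrt2$ and $h(1)=1$ because $S_{\tau(S)}<0$ almost surely. The first asymptotic of \cref{nplimit} (with $x=1$) gives $\PP(\tau(S)>n)=\PP\bigl((1+S_i)_{i\in[n]}>0\bigr)\sim\tfrac{2}{\sqrt{\pi n}}$. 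For the pointwise statement we decompose $\PP(\tau(S)=n)$ over the value of $S_{n-1}$ right before the walk goes negative and insert the second, local-in-space asymptotic of \cref{nplimit} (again with $x=1$); the normalizing sum $\sum_z\tilde h(z)\PP(S_1\le -z)$ then cancels against $\sum_y\tilde h(y+1)\PP(S_1\le -(y+1))$, producing $\PP(\tau(S)=n)\sim\tfrac{\EE[-S_{\tau(S)}]}{\sigma\sqrt{2\pi}}\,n^{-3/2}=\tfrac{1}{\sqrt\pi}\,n^{-3/2}$, while the exponential decay $\PP(S_1\le -z)=2^{-z-1}$ makes the tail of that sum negligible. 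This is the last of the four estimates, and it is also the main obstacle: unlike the green case, $S$ is not skip-free in the direction in which it crosses $0$, so there is no hitting-time shortcut and one genuinely needs the uniform, local version of the renewal asymptotics in \cref{nplimit} (not just the tail $\PP(\tau(S)>n)\sim\tfrac{2}{\sqrt{\pi n}}$) to pin down the constant $\tfrac{1}{\sqrt\pi}$.
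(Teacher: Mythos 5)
Your proposal is correct and the final constants all check out. The approach differs from the paper's in a way that is worth noting.

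For the two ``positive excursion'' estimates, you bypass \cref{nplimit} entirely: once the first step is forced to be $+1$, both processes evolve as a simple symmetric walk above the axis until the first return, and you read off the Catalan--Stirling asymptotic $\tfrac{1}{2k-1}\binom{2k}{k}2^{-2k}\sim\tfrac{1}{\sqrt{4\pi}}k^{-3/2}$ directly. The paper instead pairs adjacent steps to kill parity and feeds the resulting lazy walk $T$ into \cref{nplimit}, which is a heavier hammer for the same result. For the green negative excursion, you exploit the fact that $S$ is upward skip-free to use Kemperman's hitting-time theorem $\PP(\tau^S_m=n)=\tfrac{m}{n}\PP(S_n=m)$ plus the LCLT; again this is cleaner than the paper's decomposition over the last level before returning, which routes everything through \cref{nplimit} with the auxiliary walks $A,B$. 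Only for the red negative excursion do you genuinely need \cref{nplimit}, for the reason you yourself flag: the walk crosses $0$ with an overshoot in that direction, so there is no hitting-time shortcut. There the cancellation of the normalizing sum $\sum_z\tilde h(z)\PP(S_1\le -z)$ against the overshoot decomposition, together with the exponential tails of $\PP(S_1\le -z)$ to justify the interchange, is exactly the paper's computation (modulo a cosmetic sign convention on the auxiliary walks).

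One small inaccuracy: when you pass from $R$ to $S$ you assert $\tau_R\stackrel{d}{=}\tau(S)$, but since the forced step $R_1=-1$ is spent before the $S$-increments kick in, the correct identification is $\tau_R=1+\tau(S)$ (and similarly $\tau_G=1+\tau^S_m$ on $\{G_1=-m\}$, which you do use in your sum with the $k-1$). This off-by-one has no effect on the $k^{-3/2}$ asymptotics, and you in fact apply the asymptotics at $n=k-1$ in the green case, so the final answers are unaffected; but the stated distributional identity in the red case should be adjusted.
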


This lemma is phrased in terms of an arbitrary $u \ge 0$ rather than only considering starting points of the coalescent-walk processes, since we will want to apply the lemma to each excursion of the sample paths of $Z^{*, g}$ and $Z^{*, r}$ rather than just the initial one. Furthermore, by the discussion in \cref{infinitetofiniteremark}, these asymptotics also apply to the finite restrictions $Z^{*, g}_n$ and $Z^{*, r}_n$, though we must take $n \to \infty$ to get quantitative bounds as $k \to \infty$. This is indeed what we do in the proof of \cref{foundq}. 

\begin{proof}[Proof of \cref{asympreturntimes}]
For both the green and red processes, conditioning on starting positive, i.e.\ $G_1 > 0$ or $R_1 > 0$, means that the first increment is $+1$ and the subsequent excursion will evolve as a simple random walk. To avoid parity issues, only consider the walk at even times by combining groups of two adjacent steps into one (adding the increments together) and also halving the sizes of the increments (which does not change the distribution of the return time). So now our walk takes steps $+1, 0, -1$ with probability $1/4, 1/2, 1/4$ respectively; call this new walk $T_n$. 

Since we condition on the simple random walk starting positive and want asymptotics on the return time, we must have $T_1 = 1$ (since we can't have $T_1 = -1$, and if $T_1 = 0$ then our return time $\tau_G$ or $\tau_R$ is just $2$), which occurs with probability $\frac{1}{2}$. We then find for any $k > 2$ that 
\begin{align*}
    \PP(\tau_G = 2k | G_1 > 0) = \PP(\tau_R = 2k | R_1 > 0) &= \frac{1}{2} \cdot \PP(T_{k-1} = 1, T_k = 0, T_{[1, k-1]} > 0 | T_1 = 1) \\
    &= \frac{1}{8} \cdot \PP(1 + T_{k-2} = 1, (1 + T_i)_{i \in [k-2]} > 0)
\end{align*}
by translating time back by $1$ unit and because the final increment $T_k - T_{k-1}$ is $-1$ with probability $\frac{1}{4}$. Set $\tau = \inf\{n > 0: T_n < 0\}$. Applying \cref{nplimit}, we have $\EE[-T_\tau] = 1$, $\sigma = \frac{1}{\sqrt{2}}$, and $h(1) = \tilde{h}(1) = 1$ (since $T_\tau$ is always $-1$ and $-T$ and $T$ are identically distributed), so this simplifies to 
\begin{align*}
    \PP(\tau = 2k | G_1 > 0) = \PP(\tau = 2k | R_1 > 0) &\sim {\frac{1}{8}} \cdot \frac{1}{{\sqrt{\pi}} \sum_{z{\in \NN}} \tilde{h}(z) \PP(T_1 \le -z)} \frac{1}{(k-2)^{3/2}} \\
    &{\sim\frac{1}{8} \cdot \frac{1}{{\sqrt{\pi}} \cdot \tilde{h}(1) \PP(T_1 \le -1)} \frac{1}{(k-2)^{3/2}}} \\
    &\sim \frac{1}{\sqrt{4\pi}} k^{-3/2}. 
\end{align*}

For the other case, we condition $G$ and $R$ to lie below the $x$-axis during the excursion. We introduce some additional notation to keep equations more concise. Let $A$ (resp.\ $B$) be a one-dimensional random walk with iid increments $j$ (resp.\ $-j$) with probability $2^{-j-2}$ for all integer $j \ge -1$. Then {(because $R$ follows the increments of $-X^{\ast, r}$ below the $x$-axis)} $R$ and $A$ follow the same law except at $R$'s last step before hitting $0$, and {(because $G$ follows the increments of $-X^{\ast, g}$ below the $x$-axis)} $G$ and $B$ follow the same law, except that $G_1$ has the law of $B_1 - 1$ conditioned to be negative (due to the definition of the green coalescent-walk process on the $x$-axis).

We now begin our calculations. For the red walk, {the first increment is always $-1$ when conditioned to be negative, so} we may write
\[
    \PP(\tau_R = k | R_1 < 0) = \sum_{m = 1}^{\infty} 2^{-m-1} \PP\left((A_i)_{i\in[2, k-1]} < 0, A_{k-1} = -m \,\middle|\, A_1 = -1\right),
\]
where we've done casework on the last position of the walk before hitting the $x$-axis (the $2^{-m-1}$ term is the probability of making a sufficiently large upward jump from $-m$ so that the walk hits the $x$-axis at time $k$). Since the law of $-A_n$ is identical to the law of $B_n$, we may rewrite this {(first applying a time-translation by $1$ unit, then applying \cref{nplimit}) as} 
\begin{align*}
    \PP(\tau_R = k | R_1 < 0) &= \sum_{m = 1}^{\infty} 2^{-m-1} \PP\left((1 + B_i)_{i \in [1, k-2]} > 0, 1 + B_{k-2} = m\right) \\ 
    &\sim \sum_{m=1}^{\infty} 2^{-m-1} \frac{1/(1-1/2)}{\sqrt{2} \sqrt{2\pi} \sum_{z \in \NN} h_A(z) \PP(B_1 \le -z)} h_B(1) h_A(m) (k-2)^{-3/2},
\end{align*}
where this asymptotic holds term-by-term by \cref{nplimit} but also for the whole sum because of the $2^{-m-1}$ weighting factor. (More precisely, these asymptotics hold uniformly over all $m = o(\sqrt{k})$, so the $2^{-m-1}$ prefactor is enough for the remaining tail to decay faster than the main contribution.) Using that $h_A(z) = z$ because the $A$ walk is always at height $-1$ when it first becomes negative, we thus find that
\begin{align*}
    \PP(\tau_R = k | R_1 < 0) &\sim k^{-3/2} \sum_{m=1}^{\infty} \frac{2^{-m}}{\sqrt{4\pi} \sum_{z{ \in \NN}} z 2^{-z - 1}} \cdot 1 \cdot m \\
    &= k^{-3/2} \frac{1}{\sqrt{4\pi}} \sum_{m=1}^{\infty} m 2^{-m} \\
    &= \frac{1}{\sqrt{\pi}} k^{-3/2}. 
\end{align*}
Similarly, we do casework for the green walk based on the first step, {this time using that $h_B(1) = 1$:}
\begin{align*}
    \PP\left(\tau_G = k \,\middle|\, G_1 < 0\right) &= \sum_{m=1}^{\infty} 2^{-m} \,\PP\left((B_i)_{i \in [2, k-1]} < 0, B_{k-1} = -1, B_k = 0 \,\middle|\, B_1 = -m\right) \\
    &= \sum_{m=1}^{\infty} 2^{-m} \,\PP\left((A_i)_{i \in [2, k-1]} > 0, A_{k-1} = 1, A_k = 0 \,\middle|\, A_1 = m\right) \\
    &= \sum_{m=1}^{\infty} 2^{-m - 1} \,\PP\left((m + A_i)_{i \in [k-2]} > 0, m + A_{k-2} = 1\right) \\
    &\sim \sum_{m=1}^{\infty} 2^{-m - 1} \frac{1}{\sqrt{2} \sqrt{2\pi} \sum_{z{\in \NN}} h_B(z) \PP(A_1 \le -z)} \frac{h_A(m) h_B(1)}{(k-2)^{3/2}} \\
    &\sim k^{-3/2} \sum_{m=1}^{\infty} 2^{-m - 1} \frac{1}{\sqrt{2} \sqrt{2\pi} h_B({1}) \cdot \frac{1}{2}} m \cdot 1 \\
    &= k^{-3/2} \frac{1}{\sqrt{\pi}} \sum_{m=1}^{\infty} m 2^{-m-1} \\
    &= \frac{1}{\sqrt{\pi}} k^{-3/2},
\end{align*}
showing the desired asymptotics and completing the proof.
\end{proof}

\begin{corollary}\label{returnasymptotics}
Adopting the notation in \cref{asympreturntimes}, let $\tau$ be the first return time to $0$ for either $G_i$ or $R_i$. Then we have the asymptotics
\[
    \PP(\tau > n \,|\, \text{excursion is positive}) \sim \sqrt{\frac{2}{\pi}} n^{-1/2},
\]
\[
    \PP(\tau > n \,|\, \text{excursion is negative}) \sim \sqrt{\frac{4}{\pi}} n^{-1/2}.
\]
In particular, $\PP(\tau > n) \sim \frac{1}{\sqrt{2\pi}}(1 + \sqrt{2})n^{-1/2}$, {unless it is the first excursion of a sample path for $Z^{\ast, g}$, in which case $\PP(\tau > n) \sim \sqrt{\frac{4}{\pi}}n^{-1/2}$.}
\end{corollary}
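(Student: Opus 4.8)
The plan is to deduce the tail behaviour of $\tau$ from the pointwise estimates of \cref{asympreturntimes} by summation, being careful about two bookkeeping points: the parity of $\tau$ on positive excursions, and the probability that a given excursion is positive rather than negative. The only analytic ingredient needed is the elementary fact that a nonnegative sequence with $a_k \sim C k^{-3/2}$ satisfies $\sum_{k > m} a_k \sim 2C\,m^{-1/2}$ as $m \to \infty$; this follows by bounding $a_k$ between $(C \pm \eps)k^{-3/2}$ for $k$ large, comparing $\sum_{k>m} k^{-3/2}$ with $\int_m^\infty x^{-3/2}\,dx = 2m^{-1/2}$ by monotonicity, and sending $\eps \to 0$.

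For the two conditional statements, recall (from the proof of \cref{asympreturntimes}, where on a positive excursion the relevant path evolves as a simple random walk away from $0$) that on the event ``the excursion is positive'' the time $\tau$ is a.s.\ finite and even, whereas on ``the excursion is negative'' it is a.s.\ finite with no parity restriction. Hence, using the summation fact together with \cref{asympreturntimes},
\[
    \PP(\tau > n \mid \text{positive}) = \sum_{k:\,2k>n} \PP(\tau = 2k \mid \text{positive}) \sim \frac{1}{\sqrt{4\pi}}\cdot 2 \cdot (n/2)^{-1/2} = \sqrt{\tfrac{2}{\pi}}\, n^{-1/2},
\]
\[
    \PP(\tau > n \mid \text{negative}) = \sum_{k>n} \PP(\tau = k \mid \text{negative}) \sim \frac{1}{\sqrt{\pi}}\cdot 2\, n^{-1/2} = \sqrt{\tfrac{4}{\pi}}\, n^{-1/2};
\]
the constants in \cref{asympreturntimes} are the same for $G$ and $R$, so this holds whether $\tau = \tau_G$ or $\tau = \tau_R$.

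For the unrestricted statement I would compute the chance that the excursion of the sample path at $(u,0)$ is positive. Since the coalescent-walk process restricted to $[0,u]$ is a deterministic function of the driving walk on $[0,u]$ (by \cref{cwrdefn2,cwrdefn}), the conditioning event lies in $\sigma(W_0,\dots,W_u)$, so $W_{u+1}-W_u$ is independent of it with the original step law. Inspecting the increment rules at height $0$: the $Z^{\ast,r}$-excursion is positive exactly when $W_{u+1}-W_u$ is a $(-k,1)$ step (probability $\sum_{k\ge 0}2^{-k-2}=\tfrac12$), and the $Z^{\ast,g}$-excursion is positive exactly when $W_{u+1}-W_u$ is the $(-1,1)$ step (again probability $\tfrac12$) --- \emph{unless} $u$ is itself a starting point of the green path, in which case the definition of $J$ in \cref{cwrdefn2} forces $[u,u+1]$ to be a $(k,-1)$ step, so the first excursion is deterministically negative. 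Thus for every excursion except the initial one of a green sample path,
\[
    \PP(\tau > n) \sim \tfrac12\sqrt{\tfrac{2}{\pi}}\, n^{-1/2} + \tfrac12\sqrt{\tfrac{4}{\pi}}\, n^{-1/2} = \frac{1+\sqrt2}{\sqrt{2\pi}}\, n^{-1/2},
\]
while for the initial green excursion $\PP(\tau > n) \sim \sqrt{\tfrac{4}{\pi}}\, n^{-1/2}$.

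Everything here is routine arithmetic. The one place to be careful is the parity bookkeeping in the positive case: rewriting $\{2k>n\}$ as $\{k>n/2\}$ introduces the factor $(n/2)^{-1/2}=\sqrt2\,n^{-1/2}$, and omitting it gives the wrong constant. The only other point requiring a word is the legitimacy of summing the termwise asymptotics of \cref{asympreturntimes} --- this is exactly the summation fact above, valid since all terms are nonnegative and the $k^{-3/2}$ tails are summable.
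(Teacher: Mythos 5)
Your proof is correct and takes essentially the same route as the paper: sum the termwise asymptotics from \cref{asympreturntimes} via comparison with $\int x^{-3/2}\,dx$, keep track of the parity factor in the positive case, and then combine the two conditional tails using the probability-$\tfrac12$ split at height $0$ (with the special case that a green sample path's first step is forced negative by the definition of $J$). You are slightly more explicit than the paper about the measurability justification for the $\tfrac12$ split, but the substance is identical.
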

\begin{proof}
This follows from summing up the asymptotic values from \cref{asympreturntimes} and comparing to an integral. For the negative excursions, we use that
\[
    \sum_{k > n} k^{-3/2} \sim \int_n^{\infty} x^{-3/2} \, dx = 2n^{-1/2},
\]
so that indeed $\PP(\tau > n \,|\, \text{excursion is negative}) \sim \frac{1}{\sqrt{\pi}} 2n^{-1/2}$, as desired. Similarly for the positive excursions, we find that $\PP(\tau > 2m \,|\, \text{excursion is positive}) \sim \frac{1}{\sqrt{4\pi}} 2m^{-1/2}$, and then changing variables $m = \frac{n}{2}$ yields the result. 

Finally, the last claim follows from the fact that each sample path always has probability $\frac{1}{2}$ of starting a positive or negative excursion from zero (because it follows the next increment of $Y^{\ast, g}$ or $Y^{\ast, r}$, which are simple random walks), except that sample paths of $Z^{\ast, g}$ always begin with a negative increment.
\end{proof}

We will use the above result to estimate the asymptotic probability of returning to the origin after some large number of steps by thinking of $\tau$ as the step distribution of a (positive-integer valued) random walk.

\begin{corollary}\label{asymptoticsforzeros}
Again use the notation of \cref{asympreturntimes}. We have the asymptotics
\[
    \PP(G_n = 0)\sim \frac{\sqrt{2}}{(1 + \sqrt{2})\sqrt{\pi}}n^{-1/2},\qquad \PP(R_n = 0) \sim \frac{\sqrt{2}}{(1 + \sqrt{2})\sqrt{\pi}}n^{-1/2}
\]
as $n \to \infty$. Therefore, for any sample path of $Z^{*, g}$ or $Z^{*, r}$ beginning at a starting point $j$, we have the asymptotics
\[
    \PP\left((Z^{*, g})^{(j)}_{j + n} = 0\right)\sim \frac{\sqrt{2}}{(1 + \sqrt{2})\sqrt{\pi}}n^{-1/2}, \qquad\PP\left((Z^{*, r})^{(j)}_{j + n} = 0\right) \sim \frac{\sqrt{2}}{(1 + \sqrt{2})\sqrt{\pi}}n^{-1/2}. 
\] 
\end{corollary}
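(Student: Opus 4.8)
The plan is to identify the set of zeros of each sample path with the range of a renewal process and then invoke a strong renewal theorem, feeding in the sharp estimates of \cref{asympreturntimes,returnasymptotics}.

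First I would note that, under the conditioning of \cref{asympreturntimes}, the process $G_i=(Z^{\ast,g})^{(j)}_{u+i}$ is a time-homogeneous Markov chain on $\mathbb Z$ started at $0$: by \cref{cwrdefn2}, the increment $G_{i+1}-G_i$ is a deterministic function of $G_i$ and of the (fresh, iid, and independent of the conditioning on $[0,u]$) increment of $W^{\ast,g}$ on $[u+i,u+i+1]$, and since all upward increments equal $+1$ the path cannot overshoot $0$, so its returns to $0$ are well defined. Hence the successive excursions of $G$ away from $0$ are iid, $\{i\ge 0:G_i=0\}$ is the range of a renewal process with step distribution $\tau_G$, and the same holds for $R$ with step distribution $\tau_R$ by \cref{cwrdefn}. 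By \cref{returnasymptotics} both step distributions satisfy $\PP(\tau>n)\sim c\,n^{-1/2}$ with $c=\tfrac{1+\sqrt2}{\sqrt{2\pi}}$, and both renewal processes are aperiodic (negative excursions of lengths $2$ and $3$ both occur with positive probability).

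Next I would apply the strong renewal theorem at index $\alpha=\tfrac12$. A short generating-function computation fixes the constant: writing $F(s)=\EE[s^{\tau_G}]$, Karamata's Tauberian theorem together with $\PP(\tau_G>n)\sim c\,n^{-1/2}$ gives $1-F(s)\sim c\sqrt\pi\,(1-s)^{1/2}$ as $s\uparrow1$, so $\sum_n\PP(G_n=0)\,s^n=(1-F(s))^{-1}\sim(c\sqrt\pi)^{-1}(1-s)^{-1/2}$, and the strong renewal theorem upgrades this to
\[
\PP(G_n=0)\;\sim\;\frac{1}{c\pi}\,n^{-1/2}\;=\;\frac{\sqrt2}{(1+\sqrt2)\sqrt\pi}\,n^{-1/2},
\]
with the identical statement for $\PP(R_n=0)$ since $\tau_R$ has the same tail. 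The strong renewal theorem is not automatic at $\alpha=\tfrac12$, but \cref{asympreturntimes} provides exactly the missing ingredient, the local bound $\PP(\tau_G=n)\asymp\PP(\tau_R=n)\asymp n^{-3/2}$, under which the theorem holds (see \cite{doneyuniform}; related estimates appear in \cite{ngo2019limit}).

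Finally I would transfer this to the genuine sample paths. For $Z^{\ast,r}$, a path started at $j\in J$ is again a Markov chain of the above type started at $0$ whose first excursion is generic (the increment of $W^{\ast,r}$ on $[j,j+1]$ is fresh), so $\PP((Z^{\ast,r})^{(j)}_{j+n}=0)=\PP(R_n=0)$. For $Z^{\ast,g}$, a path started at $j\in J$ has a first (forced-negative) excursion of a different law, hence is a \emph{delayed} renewal process; but the delay $D$ satisfies $\PP(D>n)=O(n^{-1/2})$ and $\PP(D=n)=O(n^{-3/2})$ by \cref{asympreturntimes,returnasymptotics}, so splitting the renewal convolution $\sum_k\PP(D=k)\PP(G_{n-k}=0)$ into $k\le n/2$ and $k>n/2$ shows the delay does not affect the leading order, giving $\PP((Z^{\ast,g})^{(j)}_{j+n}=0)\sim\frac{\sqrt2}{(1+\sqrt2)\sqrt\pi}n^{-1/2}$. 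The only real obstacle is the critical index $\alpha=\tfrac12$ in the renewal theorem: one needs the local $n^{-3/2}$ control from \cref{asympreturntimes}, not merely the integrated tail, to justify passing from the generating-function asymptotics to the pointwise asymptotics for $\PP(G_n=0)$.
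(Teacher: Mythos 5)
Your proof is correct and follows essentially the same route as the paper: both treat the zeros of $G$ (resp.\ $R$) as the range of a renewal process and invoke a strong renewal theorem at the critical index $\alpha=\tfrac12$ (the paper cites \cite[Theorem~B]{Doney_97}), using \cref{returnasymptotics} together with the local $n^{-3/2}$ mass-function bound from \cref{asympreturntimes} as the needed input, and both dispose of the delayed-renewal subtlety created by the green process's forced-negative first excursion via the same rapid decay of the first return time. Your explicit Karamata/generating-function identification of the constant and your convolution-splitting argument for the delay simply unpack what the paper compresses into two sentences.
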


\begin{proof}
We may apply\footnote{The right-hand side in \cite[Theorem B]{Doney_97} should actually read $\frac{1}{\Gamma(\alpha)\Gamma(1-\alpha)}$ instead of how it reads in the paper.} \cite[Theorem B]{Doney_97}. The necessary assumption is satisfied with $\alpha = 1/2$ thanks to \cref{returnasymptotics}. Note that even though the distribution of the first return time is different than subsequent ones for each path of the green process, this does not change the asymptotics because we've shown that the mass function for $\tau$ {still} decays faster than $\frac{1}{\sqrt{n}}$. 
\end{proof}

With these calculations verified, we will next prove the convergence of the rescaled sample paths $\left(\tilde{Z}^{*, \circ}_n\right)^{(\cdot/(2n))}$ introduced in \cref{zuncgeqn} to skew Brownian motions and then subsequently prove joint convergence of these sample paths along with their driving walks. 

\begin{proposition}\label{foundq}
Fix $u \in (0, 1)$. For all $n$, let $j_n$ be the $x$-coordinate of the $\lceil nu \rceil$--th leftmost starting point of $\tilde{Z}_n^{*, r}$ (or $0$ if none exists). Then we have the following convergence in distribution in the space $C([0, 1], \RR)$:
\[
    (\tilde{Z}_n^{*, r})^{(j_n/(2n))} \stackrel{d}{\longrightarrow} B_q^{(u)},
\]
where $B_q^{(u)}(t)$ is zero for $t \in [0, u]$, and for $t \in [u, 1]$ it is a skew Brownian motion of parameter $q = \frac{1}{1 + \sqrt{2}}$ started at zero at time $u$. The analogous statement holds for $(\tilde{Z}_n^{*, g})^{(1-j_n/(2n))}$ as well (with $1-u$ in place of $u$). 
\end{proposition}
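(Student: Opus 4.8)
The plan is to reduce \cref{foundq} to an analysis of a single sample path of the (infinite) red coalescent-walk process through its excursion/renewal structure, and then invoke a functional invariance principle to deduce convergence to a standard skew Brownian motion; the skewness $q$ will come out of the ratio of the durations of positive versus negative excursions recorded in \cref{returnasymptotics}. I treat the red case; the green case is analogous up to one caveat below. First I would note that, since the increments $(1,-1)$ and $(-k,1)$ (over $k\ge 0$) each carry total probability $\tfrac12$ under the step law of $W^\ast$, the starting points of $Z^{\ast,r}_n$ — the endpoints of the $(-k,1)$-increments, by \cref{cwrdefn} — occur with asymptotic density $\tfrac12$ along $[0,2n]$, so by the law of large numbers the $\lceil nu\rceil$--th of them sits at a time $j_n$ with $j_n/(2n)\to u$ a.s. Moreover $j_n$ is a stopping time for the driving walk and $(Z^{\ast,r}_n)^{(j_n)}$ is born at height $0$ at time $j_n$, so by the strong Markov property together with \cref{infinitetofiniteremark} its restriction to $[j_n,2n]$ agrees in law (up to a negligible modification at the right endpoint) with the infinite-process path $(Z^{\ast,r})^{(0)}$ run for time $2n-j_n$. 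This path is autonomous: at each return to $0$ it starts, independently of the past, a positive excursion with probability $\tfrac12$ (an excursion of the lazy $\pm1$ walk following the increments of $Y^{\ast,r}$) or a negative one with probability $\tfrac12$ (an excursion of the heavy-tailed-downward walk following the increments of $-X^{\ast,r}$); hence the successive (length, sign) pairs of its excursions are i.i.d.

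Next I would read off the inputs. By \cref{returnasymptotics} a positive excursion has length $>m$ with probability $\sim\sqrt{2/\pi}\,m^{-1/2}$ and a negative one with probability $\sim\sqrt{4/\pi}\,m^{-1/2}$, so the sign-marginalized excursion length $\tau$ satisfies $\PP(\tau>m)\sim\tfrac1{\sqrt{2\pi}}(1+\sqrt2)\,m^{-1/2}$ — exactly the tail used in \cref{asymptoticsforzeros} — placing $\tau$ in the domain of attraction of the $\tfrac12$-stable subordinator. Therefore the rescaled zero set of the path converges to that of a standard Brownian motion, and conditionally on an excursion having length of order $n$ it is positive with asymptotic probability $\frac{\tfrac12\sqrt{2/\pi}}{\tfrac12\sqrt{2/\pi}+\tfrac12\sqrt{4/\pi}}=\tfrac{1}{1+\sqrt2}=q$, which is the origin of the skewness.

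The functional limit would then be assembled as follows. The piecewise space rescaling in \cref{zuncgeqn} — $1/\sqrt{2n}$ above the axis and $1/\sqrt{4n}$ below — is chosen precisely so that a macroscopic positive excursion (following $Y^{\ast,r}$, increment variance $1$) and a macroscopic negative excursion (following $-X^{\ast,r}$, increment variance $2$) both converge, by a standard invariance principle for conditioned walks, to \emph{standard} Brownian excursions. Combining this with the $\tfrac12$-stable excursion-length tail and the asymptotic sign probability $q$ of a long excursion, and gluing the rescaled excursions, one obtains convergence in $C([0,1],\RR)$ of $(\tilde Z_n^{\ast,r})^{(j_n/(2n))}$ to the process equal to $0$ on $[0,u]$ and to a standard skew Brownian motion of parameter $q$ on $[u,1]$ (equivalently, the solution of $dB=d\beta+(2q-1)\,dL^B$, i.e.\ the one-driving-walk specialization of \cref{skewbrowniansdedefn}). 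This follows the pattern of the single-path convergence arguments in \cite{borga2021permuton,borga2022scaling}, and \cite{maazoun2020brownian,skewbrownianmotion} can be used for the skew-Brownian-motion identification. The green statement is identical except that the special rule in \cref{cwrdefn2} forces the first excursion of each path of $Z^{\ast,g}$ to be negative rather than fair; this alters the law of a single excursion only, and by \cref{returnasymptotics} that excursion still has a length tail of the same order $m^{-1/2}$, so it is merely one among the $\Theta(\sqrt n)$ macroscopic excursions and does not affect the limit.

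The hard part will be making the gluing rigorous as a convergence statement in $C([0,1],\RR)$: establishing tightness (short excursions must contribute uniformly negligible fluctuations, via a maximal inequality plus the tail bound above), handling the randomness of the birth time $j_n$ and the truncation at time $2n$ via \cref{infinitetofiniteremark}, and absorbing the anomalous first excursion in the green case. All the genuinely probabilistic content — the excursion-length tails and the value of $q$ — is already contained in \cref{asympreturntimes,returnasymptotics,asymptoticsforzeros}; what remains is the functional-limit bookkeeping, which parallels the treatments in \cite{borga2021permuton,borga2022scaling}.
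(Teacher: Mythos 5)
Your proposal is correct and recovers the right skewness, but it takes a genuinely different route from the paper. You reduce, via the strong Markov property at the stopping time $j_n$, to the renewal structure of a single sample path at its zeros: the return time $\tau$ has a $\tfrac12$-stable tail with sign-dependent constants (\cref{returnasymptotics}), so a macroscopic excursion is positive with limiting probability $q=1/(1+\sqrt2)$, and the piecewise spatial rescaling of \cref{zuncgeqn} makes each side's macroscopic excursion converge to a standard Brownian excursion; you then propose to glue. The paper instead follows the local-CLT template of \cite{ngo2019limit}: for each fixed set of times it decomposes on the last zero before each time, uses \cref{asymptoticsforzeros} and \cref{returnasymptotics} to turn the resulting sum into a Riemann integral, evaluates it to recover the skew-Brownian one-dimensional marginal density $\tfrac{2q}{\sqrt{2\pi t}}e^{-x^2/(2t)}$ on $\{x>0\}$ (with $1-q$ on $\{x<0\}$), and then proves tightness separately. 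Your excursion viewpoint is conceptually cleaner --- it explains at a glance \emph{why} $q=1/(1+\sqrt2)$ (size-biasing toward the heavier-tailed negative excursions driven by $-X^{*,r}$) and streamlines the treatment of the random birth time --- but the deferred gluing is not mere bookkeeping: the reconstruction map from excursion point processes to paths is not continuous, and controlling the aggregate contribution of the many microscopic excursions so as to obtain convergence in $C([0,1],\RR)$, rather than just of the zero set together with finitely many large excursions, typically requires last-zero estimates of exactly the kind the paper's FDD computation already runs. The two routes are thus comparable in total effort; the paper's is the more standard way to get a full functional limit, while yours buys a sharper heuristic for the skewness parameter and a tidier reduction to the infinite process via the Markov property.
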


\begin{proof}
We first prove the case when $j_n = 0$ for all $n$ (meaning that the starting points are at the very beginning of the interval for our coalescent-walk processes), and at the end we explain how to {adapt the proof to} the general case. Note that the event that $j_n = 0$ actually occurs with probability tending to zero, but we present the proof this way to simplify notation and highlight the important asymptotics.

First, we show convergence of one-dimensional marginal distributions -- we follow the strategy from \cite{ngo2019limit}, and this argument works identically for either the green or red process. 

For notational convenience, we will write $(\tilde{Z}_n^{*, g})^{(0)}(t)$ or $(\tilde{Z}_n^{*, r})^{(0)}(t)$ as $\tilde{Z}(t)$ for the remainder of this proof, and we will similarly abbreviate $(Z_n^{*, g})^{(0)}_k$ or $(Z_n^{*, r})^{(0)}_k$ as $Z_k$. For a function $f$ and event $A$, we also use the notation $\EE[f; A]$ as shorthand for $\EE[f \cdot 1_A]$.  

Let $\phi$ be a bounded Lipschitz function. We compute $\EE[\phi(\tilde{Z}(t)); \tilde{Z} > 0]$ and $\EE[\phi(\tilde{Z}(t)); \tilde{Z} < 0]$ separately, showing that both converge to the corresponding values for a skew Brownian motion of parameter $q=\frac{1}{1+\sqrt{2}}$. We'll do the first term first, breaking up the sum into cases based on the last time $k$ before time $\lfloor 2nt \rfloor$ that we hit zero. Recalling that while $Z$ is positive, its increments are independent and have law identical to $Y^{*}_1 - Y^{*}_0$, let $S$ denote a random walk started at zero at time zero with iid increments of that form. Then we have
\begin{align*}
    \EE[\phi(\tilde{Z}(t)); &\,\tilde{Z} > 0]\\ 
    &= \EE\left[\phi\left(\frac{Z_{\lfloor 2nt \rfloor}}{\sqrt{2n}}\right); Z_{\lfloor 2nt \rfloor} > 0\right] + O(n^{-1/2}) \\
    &= \sum_{k=0}^{\lfloor 2nt \rfloor} \PP(Z_k = 0) \cdot \PP\left((S_i)_{i \in [1, \lfloor 2nt \rfloor - k]} > 0\right)  \cdot\EE\left[\phi\left(\frac{S_{\lfloor 2nt \rfloor - k}}{\sqrt{2n}}\right) \middle| (S_i)_{i \in [1, \lfloor 2nt \rfloor - k]} > 0\right] + O(n^{-1/2}),
\end{align*}
where using $\lfloor 2nt \rfloor$ instead of $2nt$ only gives us the error term $O(n^{-1/2})$ because each step of the walk has bounded expectation and $\phi$ is Lipschitz. {By \cref{asymptoticsforzeros} and \cref{returnasymptotics}, respectively} (and using \cref{infinitetofiniteremark} to apply our asymptotics for $Z^{\ast, g}$ to $Z_n^{\ast, g}$), we know that $\PP(Z_k = 0) = O(k^{-1/2})$ and $\PP(S_{[1, \lfloor {2nt} \rfloor - k]} > 0) = O((\lfloor {2nt} - k \rfloor)^{-1/2})$ and both terms are bounded by $1$, and $\phi$ is bounded. Thus each term of the series is $O(n^{-1/2})$; therefore {we may neglect} the first and last $n^{1/4}$ terms {and find that} $\EE[\phi(\tilde{Z}(t)); \tilde{Z} > 0]$ is equal to
\begin{align}\label{eq:ewifvweivbf}
    \sum_{k=n^{1/4}}^{\lfloor 2nt \rfloor - n^{1/4}} \PP(Z_k = 0) \cdot \PP\left((S_i)_{i \in[1, \lfloor 2nt \rfloor - k]} > 0\right) \cdot \EE\left[\phi\left(\frac{S_{\lfloor 2nt \rfloor - k}}{\sqrt{2n}}\right) \middle| (S_i)_{i \in [1, \lfloor 2nt \rfloor - k]} > 0\right] + O(n^{-1/4}).
\end{align}
Now, by the main result of \cite{bolthausen}, the endpoint of a walk conditioned to stay positive converges to the endpoint of a Brownian meander -- that is, for any $m \le \lfloor 2nt \rfloor$ with $m \to \infty$, we have (because the increments of $S_m$ each have variance $1$)
\[
    \EE\left[\phi\left(\frac{S_m}{\sqrt{2n}}\right) \middle| (S_i)_{i \in [1, m]} > 0\right] \sim \int_0^{\infty} \phi\left(\sqrt{\frac{m}{2n}}u\right) u e^{-u^2/2} \, du = \frac{2n}{m} \int_0^{\infty} \phi(x) x e^{-nx^2/m} \, dx.
\]
Thus, thanks to the above equation and {again} \cref{returnasymptotics,asymptoticsforzeros}, for any $\eps > 0$, we can choose $n$ large enough so that all three of the terms in the sum are multiplicatively within $\eps$ of their asymptotic value for all $k$ in the summation; this means, as $n \to \infty$, the sum in \cref{eq:ewifvweivbf} will converge to (note the extra factor of $\frac{1}{2}$ compared to \cref{returnasymptotics}, since we want the probability of staying positive rather than the probability of avoiding zero given that we stay positive)
\begin{align*}
        \sum_{k=n^{1/4}}^{\lfloor 2nt \rfloor - n^{1/4}}&\frac{\sqrt{2}}{(1 + \sqrt{2})\sqrt{\pi}} k^{-1/2} \frac{1}{2}\sqrt{\frac{2}{\pi}} (\lfloor 2nt \rfloor - k)^{-1/2} \frac{2n}{\lfloor 2nt \rfloor - k} \int_0^{\infty} \phi(x) x e^{-nx^2/(\lfloor 2nt \rfloor - k)} \, dx\\
    &= \frac{2n}{\pi(1 + \sqrt{2})}\sum_{k=n^{1/4}}^{\lfloor 2nt \rfloor - n^{1/4}} k^{-1/2} (\lfloor 2nt \rfloor - k)^{-3/2} \int_0^{\infty} \phi(x) x e^{-nx^2/(\lfloor 2nt \rfloor - k)} \, dx\\
    &= \frac{1}{\pi(1 + \sqrt{2})} \cdot \frac{1}{2n}\sum_{k=n^{1/4}}^{\lfloor 2nt \rfloor - n^{1/4}} \left(\frac{k}{2n}\right)^{-1/2}\left (\frac{\lfloor 2nt \rfloor - k}{2n}\right)^{-3/2} \int_0^{\infty} \phi(x) x e^{-x^2/2((\lfloor 2nt \rfloor - k)/(2n))} \, dx.
\end{align*}
In particular, this sum is a Riemann sum approximation for the integral expression
\[
    \frac{1}{\pi(1 + \sqrt{2})} \int_0^{t} s^{-1/2} (t-s)^{-3/2} \int_0^{\infty} \phi(x) x e^{-x^2/(2(t-s))} \, dx \, ds.
\]
So, as $n \to \infty$, our expectation will approach this integral (since the integrand is continuous in $s \in (0, t)$). We can now evaluate this integral. First, substituting $y = s/t$, the integral is equal to 
\begin{multline*}
    \frac{1}{\pi(1 + \sqrt{2})} \cdot \frac{1}{t} \int_0^1 y^{-1/2} (1-y)^{-3/2}  \int_0^{\infty} \phi(x) x e^{-x^2/(2t(1-y))} \, dx \, dy\\
    = \frac{1}{\pi(1 + \sqrt{2})} \cdot  \frac{1}{t} \int_0^{\infty} x \phi(x) \int_0^1 y^{-1/2} (1-y)^{-3/2} e^{-x^2/(4t(1-y))} \, dy \, dx.
\end{multline*}
Now substituting $z = \frac{1}{1-y} - 1 = \frac{y}{1-y}$ yields
\begin{multline*}
    \frac{1}{\pi(1 + \sqrt{2})} \cdot \frac{1}{t} \int_0^{\infty} x \phi(x) \int_0^{\infty} \frac{1}{\sqrt{z}} e^{-x^2(z+1)/(2t)} \, dz \, dx\\
    = \frac{1}{\pi(1 + \sqrt{2})} \cdot \frac{1}{t} \int_0^{\infty} x \phi(x) e^{-x^2/2t} \int_0^{\infty} \frac{1}{\sqrt{z}} e^{-\frac{x^2}{2t}z} \, dz \, dx.
\end{multline*}
We can now use the classical fact $\int_0^{\infty} \frac{1}{\sqrt{t}} \exp\left(-\alpha t\right) dt = \sqrt{\frac{\pi}{\alpha}}$ to simplify this to
\[
    \frac{1}{\pi(1 + \sqrt{2})} \cdot \frac{1}{t} \int_0^{\infty} x \phi(x) e^{-x^2/2t} \sqrt{\frac{2\pi t}{x^2}} \, dx.
\]
Summarizing, we have thus computed the limiting expectation {(recalling that we suppressed the dependence on $n$ in our notation $\tilde{Z}$)}
\begin{equation}\label{brownianqpart1}
    \EE[\phi(\tilde{Z}(t)); \tilde{Z} > 0] \sim \frac{1}{1 + \sqrt{2}} \int_0^{\infty} \phi(x) \frac{2e^{-x^2/2t}}{\sqrt{2\pi t}} \, dx.
\end{equation}
We get a similar result for the expectation conditioned on $\tilde{Z} < 0$, with the main differences being that we integrate $x$ over $(-\infty, 0)$ and that the term $\PP((S_i)_{i \in [1, \lfloor 2nt \rfloor - k ]}) > 0$ in \eqref{eq:ewifvweivbf} becomes $\PP((S_i')_{i \in [1, \lfloor 2nt \rfloor - k ]}) < 0$, where $S'$ now has increments identically distributed to $-(X_1^{*} - X_0^{*})$. (We also rescale $Z$ by $\sqrt{4n}$ instead of $\sqrt{2n}$ so that we again get convergence to the endpoint of a Brownian meander, and this does not affect any of the other calculations.) Thus we gain a factor of $\sqrt{2}$ from the different constant in \cref{returnasymptotics}, yielding 
\begin{equation}\label{brownianqpart2}
    \EE[\phi(\tilde{Z}(t)); \tilde{Z} < 0] \sim \frac{\sqrt{2}}{1 + \sqrt{2}} \int_{-\infty}^0 \phi(x) \frac{2e^{-x^2/2t}}{\sqrt{2\pi t}} \, dx.
\end{equation}
Summing up \cref{brownianqpart1} and \cref{brownianqpart2}, we see the one-dimensional marginals of our sample path converge to the one-dimensional marginals of the skew Brownian motion of parameter $\frac{1}{1 + \sqrt{2}}$ {(see for instance \cite[p. 40]{walshskew})}, as desired.

\medskip

The next step is to show convergence of finite-dimensional distributions. But these calculations are completely analogous to \cite{ngo2019limit}'s proof (in which they work out the two-dimensional case), so we only describe the general strategy here. Suppose we have $m$ fixed times $0 \le s_1 < s_2 < \cdots < s_m \le 1$ and want the joint distribution of $\tilde{Z}$ at those times. To show convergence, we must compute the value of $\EE\left[\prod_{i=1}^m\phi_i(\tilde{Z}(s_i))\right]$ for arbitrary Lipschitz bounded functions $\phi_i$. 

We may do so by (joint) casework on whether the sample path hits zero between the times $\lfloor 2ns_i \rfloor$ and $\lfloor 2ns_{i+1} \rfloor$ for each $i$, since the increments of the path on disjoint time-intervals are independent. If the path does hit zero, then we sum over all first hitting times $\lfloor 2ns_i \rfloor \le k \le \lfloor 2ns_{i+1} \rfloor$ and use asymptotics to estimate the quantities
\[
    \PP\left(Z_k = 0 \,\,\middle|\,\,Z_{\lfloor 2ns_i \rfloor}\right) \quad \text{ and } \quad \PP\left(Z_{\lfloor 2ns_{i+1} \rfloor} = t\,\,\middle|\,\, Z_k = 0\right). 
\]    
Otherwise, we instead use asymptotics to estimate 
\[
    \PP\left(Z_{\lfloor 2ns_{i+1} \rfloor} = t \text{ and } Z \text{ does not hit zero between }\lfloor 2ns_i \rfloor \text{ and } \lfloor 2ns_{i+1} \rfloor\,\,\middle|\,\, Z_{\lfloor 2ns_i \rfloor}\right), \qquad\text{for all }t.
\]
In the same way as in the one-dimensional case, we cut off the terms in the resulting $m$-way summation where we hit zero within $n^{1/4}$ of one of the $s_i$s; this is a vanishingly small fraction of all terms, so this doesn't affect the integral Riemann sum. By doing so, we ensure that the asymptotic estimates converge, so as $n \to \infty$, all of the relevant probabilities will converge to those given by a skew Brownian motion. And since for any finite $m$ there are only finitely many cases to consider, we will indeed converge to the correct expectation given by a skew Brownian motion of parameter $\frac{1}{1+\sqrt{2}}$. 

\medskip

Finally, we prove tightness of the sample paths $\tilde{Z}(t)$ (recall that this denotes either $(\tilde{Z}_n^{*, g})^{(0)}(t)$ or $(\tilde{Z}_n^{*, r})^{(0)}(t)$). Thanks to \cite[Theorem 7.3]{billing}, since $\tilde{Z}({0})=0$ for all $n$, it suffices to check that for any $\eps>0$ and $\gamma>0$, there is some $\delta>0$ such that $\PP[\omega_n(\delta) \ge \eps] \le \gamma$ for all sufficiently large $n$, where $\omega_n$ denotes the modulus of continuity of $\tilde{Z}(t)$. But we can write
\[
    \omega_n(\delta) = \sup_{|s-t|\leq \delta}|\tilde{Z}(t)-\tilde{Z}(s)| \le \frac{1}{\sqrt{2n}} \sup_{\substack{i,j \in [n],\\ |i-j| \le 2n\delta}} |A(i) - A(j)| + \frac{1}{\sqrt{4n}} \sup_{\substack{i,j \in [n],\\ |i-j| \le 2n\delta}} |B(i) - B(j)| + \frac{1}{\sqrt{4n}},
\]
{where $A$ and $B$ are the one-dimensional random walks from the proof of \cref{asympreturntimes}} and the last term appears because the first step of each (unscaled) sample path of $Z_n^{{\ast}, g}$ has an extra increment of $-1$. Taking $n \to \infty$, then $\delta \to 0$, the probability each of these terms is larger than $\frac{\eps}{3}$ goes to zero. Indeed, this is clear for the last term, and for the first two terms we use that the increments of ${A}$ and ${B}$ are iid{, centered,} and of finite variance, so {because the modulus of continuity of a Brownian motion tends to zero as $\delta \to 0$ by \cite[Lemma 1]{bmmodulus}, for sufficiently large $n$ these two corresponding terms will do so as well.}. This completes the proof for the case when $j_n = 0$.

\medskip

Now, we turn to the case of general starting points, focusing on the sample path $(\tilde{Z}_n^{*, r})^{(j_n/(2n))}$ (the proof for $(\tilde{Z}_n^{*, g})^{(1-j_n/(2n))}$ is similar). We again just need to show convergence of finite-dimensional marginal distributions and tightness. Recall from \cref{cwrdefn} that the starting points of $Z_n^{\ast, r}$ are the ending times of the $+1$ increments of $Y^{\ast, r}$, which is an unconditioned simple random walk, possibly along with a starting point at time $0$. Thus the $\lceil nu\rceil$--th leftmost starting point $j_n$ of $Z_n^{\ast, r}$ is the sum of either $\lceil nu\rceil - 1$ or $\lceil nu\rceil$ independent Geom($1/2$) random variables, and by the law of large numbers this means $\frac{j_n}{2n}$ converges in probability to $u$ and furthermore that the $\lceil nu \rceil$--th  leftmost starting point of $Z_n^{\ast, r}$ does indeed exist with high probability.  Unpacking the definition, we have for any $\delta, \eps > 0$ that with probability at least $1 - \eps$, $j_n \in [2n(u - \delta), 2n(u + \delta)]$ for all sufficiently large $n$. 

For one-dimensional marginal distributions, the distribution of $\tilde{Z}(t)$ at any time $t < u$ will then converge to $0$ (because by picking $\delta$ small enough we have with high probability that the starting point satisfies $j_n > 2nt$ when $t < u$), and so will the distribution at $t = u$. On the other hand, writing $\tilde{Z}^{(j_n/(2n))}(t)$ for $(\tilde{Z}_n^{*, {r}})^{(j_n/(2n))}(t)$, we may write for any $t > u$
\[
    \EE[\phi(\tilde{Z}^{(j_n/(2n))}(t))] = \EE\left[\phi(\tilde{Z}^{(j_n/(2n))}(t)); \left|\frac{j_n}{2n} - u\right| \le \delta\right] + \EE\left[\phi(\tilde{Z}^{(j_n/(2n))}(t)); \left|\frac{j_n}{2n} - u\right| > \delta\right].
\]
Because $\phi$ is bounded, the latter term goes to zero as $\delta \to 0$. Moreover, taking $\delta$ going to zero fast enough when $n$ goes to infinity, one can show (mimicking the computations above) that the former term converges to $\EE[\phi(\tilde{B}_{t-u})]$ where $\tilde{B}$ is a skew Brownian motion. This is possible because\footnote{Here, we also use the fact that the location of the starting point $j_n$ depends only on the increments of the walk $W$ up to $j_n$ and the final increment of $W$, and thus even when conditioning on the value of $j_n$, no conditioning is needed on the future of the walk except at the last unit interval, which does not affect the scaling limit.} the distribution of a skew Brownian motion at time $t - u + x$ converges to that at time $t - u$ as $x \to 0$.  Other than accounting for this subtlety, the arguments for finite-dimensional marginals and tightness carry through in exactly the same way as the $j_n = 0$ case, completing the proof.
\end{proof}

Next, we show that the convergence in \cref{foundq} is joint between the green and red coalescent-walk processes and also joint with that of the driving random walks, proving \cref{unconditionedconvergence}.

\begin{proof}[Proof of \cref{unconditionedconvergence}]
The proof is similar to that of Theorem 4.5 in \cite{borga2021permuton}, and we note the key points here. We have $(\tilde{X}_n^{*}, \tilde{Y}_n^{*}) \stackrel{d}{\to} (X^\ast_\rho, Y^\ast_\rho)$ by Donsker's theorem, hence the same for the time-reversal. Additionally, we have $(\tilde{Z}_n^{*, r})^{(j_n/(2n))} \stackrel{d}{\to} (Z^\ast_{\rho, q})^{(u)}$ by \cref{foundq} above, since the sample paths of $Z^\ast_{\rho, q}$ are skew Brownian motions of parameter $q$; see \cite[Remark 2.6]{borga2023skew}. Similarly, we have the convergence  $(\tilde{Z}_n^{*, g})^{(1 - j_n/(2n))}\stackrel{d}{\to} (Z^\ast_{\rho, q})'^{(1-u)}$. Thus we just need to show that this convergence is joint in the components, which we can do by showing that all joint subsequential limits are equal (because by Prohorov's theorem we have a tight family of measures).

We know that any subsequential limit must be of the form $(\overline{X}_\rho, \overline{Y}_\rho, \overline{X}'_\rho, \overline{Y}'_\rho, \overline{Z}'_q, \overline{Z}_q)$, consisting of a two-dimensional Brownian motion of correlation $\rho$, its time reversal, and two $q$-skew Brownian motions started at $u$ and $1-u$, respectively. Furthermore, defining the sigma-algebra $(\mc{F}_t)_t$ to be the completion of $\sigma(\overline{X}_\rho, \overline{Y}_\rho, \overline{Z}_q: s \le t)$ by null sets, $(\overline{X}_\rho, \overline{Y}_\rho)$ is a $(\mc{F}_t)_t$-adapted Brownian motion. Now, on any open interval on which $\overline{Z}$ is bounded away from zero, we satisfy the equation \cref{skewbrowniansdedefn}, since for each finite $n$ the processes $\tilde{Z}_n$ and $\tilde{Y}_n$ differ by exactly a constant on that interval, meaning the stochastic integrals in the limit also behave as the SDE specifies, and the local time term in that equation comes from the local time description of a skew Brownian motion. The conclusion is that the joint limit $(\overline{X}_\rho, \overline{Y}_\rho, \overline{Z}_q)$ is unique.

This same logic but now considering $(\overline{X}'_\rho, \overline{Y}'_\rho, \overline{Z}'_q)$ shows that the backwards walk also satisfies the appropriate SDE to prove uniqueness. The result follows because $\overline{X}_\rho'$ is determined by $\overline{X}_\rho$ (by time-reversal), so the joint limit of all six functions is also unique. 
\end{proof}

\subsection{Permuton convergence for 3-dimensional Schnyder wood permutations}\label{permutonsubsection}

We can now bring everything together, showing that there exists a three-dimensional permuton limit for uniform Schnyder wood permutations, as stated in \cref{mainresult3}. The proof of this result will use the characterization of permuton convergence in terms of convergence of patterns, proved in \cref{tfaemain}. 

Recall the statement of \cref{randomwalkisuniformschnyder2}, which asserts that if $W_n$ is the conditioned random walk described in the proposition statement, then the corresponding random Schnyder wood permutation $\sigma_{n}=(\sigma_{n}^g,\sigma_{n}^r)$ of size $n$ obtained through the commutative diagram in  \cref{fig-comm-diagram} is uniform. In the previous subsection, we worked with the unconditioned version of this random walk. Now we consider the conditioned one. 

To do so, we first establish the following lemma, recalling from \cref{cwrdefn} that the starting time of the path labeled $i$ in the coalescent-walk process $Z^r_{n}$ corresponds to the ending time of the $i$--th $(-k, 1)$ increment of $W_n$.

\begin{lemma}\label{schnyderlocationtail}
For all $n$, let $W_{n} = (X_{n}, Y_{n})$ be as in \cref{randomwalkisuniformschnyder2} with associated coalescent-walk process $Z^r_{n}$.
For each $1 \le i \le n$, denote by $p_{n, i}$ the $x$-coordinate of the $i$--th leftmost starting point in $Z^r_n$. Then
\[  
    \PP\left(\sup_{i \in [n]} (2i - p_{n, i}) > n^{3/4}\right) = o(e^{-n}).
\]
\end{lemma}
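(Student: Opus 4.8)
The plan is to translate the event into a statement about the height of the driving walk, and then to exploit that, by \cref{randomwalkisuniformschnyder2}, $W_n$ is the \emph{unconditioned} two-dimensional walk $W^\ast=(X^\ast,Y^\ast)$ conditioned on an event of only polynomially small probability, so the quantity in question becomes a ratio with a polynomially small denominator and a superexponentially small numerator. First, the reduction: by \cref{cwrdefn} the $i$-th leftmost starting point of $Z^r_n$ is the time $p_{n,i}$ at which the $i$-th $(-k,1)$-increment of $W_n$ ends, so among the first $p_{n,i}$ steps there are exactly $i$ up-steps and $p_{n,i}-i$ down-steps, whence $Y_n(p_{n,i})=2i-p_{n,i}$. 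Since $Y_n$ increases only at the ends of up-steps, and by exactly $1$ each time, its running maximum over $[0,2n]$ is attained at some $p_{n,i}$; therefore $\sup_{i\in[n]}(2i-p_{n,i})=\max_{t\in[0,2n]}Y_n(t)+O(1)$, the $O(1)$ absorbing the single starting point possibly relocated to time $0$ in \cref{cwrdefn}. It thus suffices to show $\PP(\max_{[0,2n]}Y_n>h)=o(e^{-n})$ for $h:=\lceil n^{3/4}\rceil$.

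Next, I would pass to the unconditioned walk. Let $\mathcal B_n:=\{W^\ast_{2n}=(0,0)\}\cap\{W^\ast_t\in\{x\ge 0,\ y\ge -1\}\ \forall\,t\le 2n\}$. By \cref{randomwalkisuniformschnyder2}, $W_n\stackrel{d}{=}W^\ast|_{[0,2n]}$ conditioned on $\mathcal B_n$, so
\[
\PP\!\Big(\max_{[0,2n]}Y_n>h\Big)=\frac{\PP\big(\mathcal B_n\cap\{\max_{[0,2n]}Y^\ast>h\}\big)}{\PP(\mathcal B_n)}.
\]
The denominator is bounded below by an inverse polynomial in $n$: this is a standard persistence estimate for a centered, exponentially integrable two-dimensional random walk conditioned to form a bridge staying in a cone (here the first quadrant, with step correlation $\rho=-\tfrac{\sqrt2}{2}$), obtained by combining the local limit theorem for $\PP(W^\ast_{2n}=(0,0))$ with cone-persistence asymptotics; equivalently one may use that $\PP(\mathcal B_n)$ equals $2^{-4n}$ times the number of size-$n$ Schnyder wood triangulations. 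Hence $\PP(\mathcal B_n)\ge c\,n^{-C}$ for constants $c,C>0$.

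The crux is bounding the numerator. On $\mathcal B_n\cap\{\max Y^\ast>h\}$, let $T:=\inf\{t\le 2n:Y^\ast_t=h\}$ and let $a=(T+h)/2$ be the number of up-steps among the first $T$ steps; it suffices to union-bound over the at most $2n$ values of $T$. On the $T$-th event three constraints hold simultaneously: (i) the $\pm1$ walk $Y^\ast$ first reaches level $h$ at time $T$ and, since $W^\ast_{2n}=0$, returns to $0$ by time $2n$ — a bridge reaching height $h$, controlled by reflection by $\exp(-c\,h^2/n)$; (ii) $X^\ast_T\ge 0$, and since $X^\ast_T=(a-h)-S_a$ with $S_m$ the sum of the first $m$ i.i.d.\ (mean $1$, geometric-type) increment magnitudes $k_j$, this forces $S_a\le a-h$, i.e.\ $S_a$ lies at least $h$ below its mean; (iii) $S_n=n$, so $(S_m)_m$ must dip $h$ below its mean by index $a$ and recover by index $n$, a bridge-type large deviation controlled via exponential (Chernoff) moments, and sharpened using that $X^\ast_t\ge0$ for \emph{all} $t\le T$ imposes ballot-type bounds $S_m\le d(m)$ on every partial sum. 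Summing the at most $2n$ contributions and optimizing over the excursion location $a\in[h,n-h]$ yields $\PP\big(\mathcal B_n\cap\{\max_{[0,2n]}Y^\ast>h\}\big)=o(e^{-n})$, and dividing by $\PP(\mathcal B_n)\ge c\,n^{-C}$ gives the claim.

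The main obstacle is the numerator estimate: one must make the two large-deviation inputs quantitative and uniform in the splitting parameters $T$ and $a$, and in particular handle the worst regime — $Y^\ast$ climbing gradually to height $h$ near the middle of $[0,2n]$ — by genuinely combining the cost of the $\pm1$ bridge reaching $h$ with the extra rigidity that the quadrant condition $X^\ast\ge 0$ together with $\sum_j k_j=n$ forces on the increment magnitudes $k_j$, rather than relying on either alone; carefully tracking these joint constraints is what produces the stated $o(e^{-n})$ rate.
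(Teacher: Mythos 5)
Your reduction to a statement about $\max_{[0,2n]} Y_n$ is correct and matches the spirit of the argument (the paper phrases it as a Dyck-path statement, which is the same object), and you correctly identify the three constraints that must be combined ($Y$-bridge reaching height $h$, $X\ge 0$ up to the hitting time, total mass $\sum_j k_j=n$). However, your route through the ratio $\PP(\mathcal B_n\cap\{\max Y^\ast>h\})/\PP(\mathcal B_n)$ with a direct joint large-deviations bound for the numerator is genuinely different from what the paper does, and it leaves the hard step completely undone. You say as much in your final paragraph: what remains is exactly the quantitative joint estimate in $(T,a)$, and there is no indication of how to carry it out. The paper sidesteps this entirely with a two-stage sampling decomposition plus a stochastic-domination argument: it first fixes the vector $\vec k=(k_1,\dots,k_n)$ of up-step magnitudes, observes that conditionally on $\vec k$ the relative order of up- and down-steps is distributed as a (negative) Dyck excursion conditioned to stay above the deterministic profile $\tilde K$ built from $\vec k$, and notes that this additional barrier conditioning can only push the running minimum \emph{up}. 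This reduces the whole estimate to a single unconditioned one-dimensional statement about the excursion of $\sum(D_j-1)$, with no joint optimization over $(T,a)$ and no Chernoff gymnastics for the $k_j$'s. That is the key structural idea you are missing, and it is what makes the proof short.

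There is also a substantive issue with the rate you claim. Running your own outline on the ``fastest climb'' scenario ($T=a=h$, all $h$ initial steps being $(0,1)$, i.e.\ all $k_j=0$) gives numerator of order $4^{-h}$ times a polynomially/stretched-exponentially small residual, and dividing by $\PP(\mathcal B_n)\ge c\,n^{-C}$ still leaves roughly $e^{-h\log 4}=e^{-n^{3/4}\log 4}$, which is \emph{not} $o(e^{-n})$; and the slow-climb regime $a\asymp n$ gives at best $e^{-\Theta(\sqrt n)}$. So ``carefully tracking the joint constraints'' will not produce the stated $o(e^{-n})$ via your method. (In fairness, the paper's own claimed $o(e^{-n})$, obtained by citing a Brownian-excursion maximum bound, looks too strong for the same reason — a Brownian excursion's supremum exceeding $0.1n^{1/4}$ has probability $\exp(-\Theta(\sqrt n))$, not $o(e^{-n})$ — but the weaker stretched-exponential rate is all that is needed downstream in \cref{jointproductpreresult}.) If you want to complete a proof along the paper's lines rather than yours, the conditioning-plus-domination trick is the thing to internalize; if you want to push your own route through, you will need to optimize the Chernoff bound jointly in $T$ and $a$ and should adjust your target rate to $e^{-\Theta(\sqrt n)}$.
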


Notice that we always have $i-1 \le p_{n, i} < 2i$ for all $i$. Indeed, by definition of $W_n$ in \cref{randomwalkisuniformschnyder2},
$Y_n$ has all increments $+1$ or $-1$, starts and ends at zero, and $Y_n(t)\geq -1$ for all $t$. Hence $Y_n$ may be interpreted as a Dyck path whose initial $+1$ increment has been moved to the end. Our claim then follows by recalling that the starting points in $Z^r_n$ occur at the right endpoints of the $+1$ increments of $Y_n$ (except that the starting point corresponding to that moved increment is still at the beginning of the interval).

\cref{schnyderlocationtail} essentially states that even though the starting points of the Schnyder wood coalescent-walk processes are not deterministically spaced out across the interval $[0, 2n]$, with high probability their positions will all be within $n^{3/4}$ of those deterministic locations.

\begin{proof}
We first prove an intermediate result, which is as follows: sample a uniform Dyck path of size $2n$, and let $T_i$ be the left endpoint of the $i$--th up step. Then we claim that
\begin{equation}\label{eq:partial-goal}
	\lim_{n \to \infty} \PP\left(\sup_{i \in [n]} (2i - T_i) > n^{3/4}\right) = 0.
\end{equation} 
Indeed, we can sample a uniform Dyck path by first letting $D_1, D_2, \cdots$ be iid $\text{Geom}(1/2) - 1$ random variables and creating a walk with an up step, then $D_1$ down steps, then an up step, then $D_2$ down steps, and so on, up to $D_n$ down steps, and then conditioning on the result being a Dyck path, meaning 
\begin{equation}\label{eq:widvfwb}
	D_1 + \cdots + D_n = n\quad\text{ and }\quad D_1 + \cdots + D_m \le m \quad \text{for all } m.
\end{equation} 
This conditioning results in a uniform Dyck path, because each configuration $(D_1=d_1,\dots,D_n=d_n)$ of down steps satisfying \cref{eq:widvfwb} is sampled with probability 
\begin{equation*}
	\prod_{j=1}^n \frac{1}{2^{d_j+ 1}} = \frac{1}{2^n \cdot 2^n}.
\end{equation*} 
With this notation, the $i$--th up step in the path occurs after $(D_1 + 1) + (D_2 + 1) + \cdots + (D_{i-1} + 1)$ steps. The random variables $D_i - 1$ are iid and centered, and \cref{eq:widvfwb} gives that their sum is $0$ and all partial sums are nonpositive. Thus, the process
\[
    \tilde{D}(x) = \text{LI}\left(\frac{(D_1 - 1) + (D_2 - 1) + \cdots + (D_{nx} - 1)}{\sqrt{n}}\right)_{x \in \{0, \frac{1}{n}, \cdots, 1\}}
\]
which linearly interpolates the partial sums between multiples of $\frac{1}{n}$ and then rescales by $n$ in time and $\sqrt{n}$ in space, converges uniformly to a negative Brownian excursion on $[0,1]$. By \cite[Theorem 1]{excursion}, the probability that the infimum of a negative Brownian excursion is less than $-0.1n^{1/4}$ is $o(e^{-n})$,  which means that with probability $1 - o(e^{-n})$, each rescaled partial sum $\frac{(D_1 - 1) + (D_2 - 1) + \cdots + (D_{i-1} - 1)}{\sqrt{n}}$ is in the interval $[-0.1n^{1/4}, 0]$, so $(D_1 + 1) + (D_2 + 1) + \cdots + (D_{i-1} + 1)$ is in the interval $[2(i-1)-0.1n^{3/4}, 2(i-1)]$. Since this quantity is the number of steps before the $i$--th up step, this implies \cref{eq:partial-goal}.

\medskip

To complete the proof of the lemma, notice that sampling a uniform random walk $W_{n} = (X_{n}, Y_{n})$ as in \cref{randomwalkisuniformschnyder2} can be done in two stages. Any such walk consists of $n$ ``down'' steps each of increment $(1, -1)$ and $n$ ``up'' steps of increments $(-k_1, 1), \cdots, (-k_n, 1)$. So our sampling process proceeds in the following way: 

\begin{itemize}
\item In stage $1$, we pick the values of the $k_i$s, weighted proportionally to the number of valid walks that may occur with those increments.
\item Then in stage $2$, we choose one of the walks with these increments staying in $\{(x, y) \in \mathbb{Z}^2: x \ge 0, y \ge -1\}$ uniformly at random. (Here, the only remaining randomness is the order in which up versus down steps are taken.)
\end{itemize}

The key observation is that for any fixed choice of $k_i$s, the sampling in stage $2$ can be described similarly to our intermediate result. Indeed, we are choosing a walk which takes $D_1'$ down steps, then an up step, then $D_2'$ down steps, then an up step, concluding with $D_n'$ down steps and finally an up step, and requiring that this walk stays within our desired region is equivalent to requiring
\[
    D_1' + \cdots + D_n' = n, \quad D_1' + \cdots + D_m' \le m \quad\text{and}\quad D_1' + \cdots + D_m' \ge k_1 + \cdots + k_m \quad \text{for all }m,
\]
where the conditions come from the $y$-coordinate being at least $-1$ and the $x$-coordinate being at least $0$, respectively. Just like in our intermediate result above, we may sample uniformly from all such possibilities by letting $D_i'$ be iid $\text{Geom}(1/2) - 1$ variables and conditioning on these inequalities. But now if we consider the rescaled process linearly interpolating between the partial sums of $(D_i' - 1)$, we may interpret the result as a negative Brownian excursion further conditioned to stay above the (negative) profile  (which is deterministic, since we condition on $k_1, \cdots, k_n$)
\[
    \tilde{K}(x) = \text{LI}\left(\frac{(k_1 - 1) + (k_2 - 1) + \cdots + (k_{nx} - 1)}{\sqrt{n}}\right)_{x \in \{0, \frac{1}{n}, \cdots, 1\}}.
\]
This conditioning can only bias the distribution of the (negative) infimum upward, meaning that it is still true that the partial sums $\frac{(D_1' - 1) + (D_2' - 1) + \cdots + (D_{i-1}' - 1)}{\sqrt{n}}$ are all in the interval $[-0.1n^{1/4}, 0]$ with high probability. Since the $i$--th leftmost starting point $Z^r_n$ is $(D_1' + 1) + \cdots + (D_{i-1}' + 1)$ (here noting that the first starting point is always $0$ because the final step in $W_n$ is always an  ``up step''), we find just like above that the starting time is in the interval $[2(i-1) - 0.1n^{3/4}, 2(i-1)]$ with high probability. Since this reasoning holds for any choice of $(k_1, \cdots, k_n)$, it also holds for $W_n = (X_n, Y_n)$ overall, completing the proof. 
\end{proof}

This estimate on the locations of starting points for the coalescent-walk processes now allows us to describe the sample paths started from those points. The next two results establish the conditioned version of the joint convergence proved in \cref{unconditionedconvergence} in the unconditioned case. We rescale our driving walk and coalescent-walk processes in the same way as in \cref{xyunceqn} and \cref{zuncgeqn}: letting $W_{n} = (X_{n}, Y_{n})$ as in \cref{randomwalkisuniformschnyder2} with associated coalescent-walk processes $Z^g_{n}$ and $Z^r_{n}$ as in \cref{cwrdefn2,cwrdefn}, we may define

\begin{equation}
    \tilde{X}_{n}(\cdot) = \text{LI}\left(\frac{1}{\sqrt{4n}} X_{n}(2nt)\right)_{t \in \{0, \frac{1}{2n}, \cdots, 1\}}, \quad \tilde{Y}_{n}(\cdot) = \text{LI}\left(\frac{1}{\sqrt{2n}} Y_{n}(2nt)\right)_{t \in \{0, \frac{1}{2n}, \cdots, 1\}},
\end{equation}
\begin{equation}\label{zgeqn}
    \left(\tilde{Z}^{\circ}_{n}\right)^{(j/(2n))} (\cdot) = \text{LI}\left(\begin{cases} \frac{1}{\sqrt{2n}} (Z^{\circ}_{n})^{(j)}_{2nt} & \text{if } (Z^{\circ}_{n})^{(j)}_{2nt} \ge 0 \\ \frac{1}{\sqrt{4n}} (Z^{\circ}_{n})^{(j)}_{2nt} & \text{if } (Z^{\circ}_{n})^{(j)}_{2nt} < 0 \end{cases}\right)_{t \in \{0, \frac{1}{2n}, \cdots, 1\}},
\end{equation}
where $\circ\in\{g,r\}$.

\begin{corollary}\label{jointproductpreresult}
Let $\rho = -\frac{\sqrt{2}}{2}$, $q = \frac{1}{1 + \sqrt{2}}$, and fix $u \in (0, 1)$. For all $n$, let $W_{n} = (X_{n}, Y_{n})$ be as in \cref{randomwalkisuniformschnyder2} with associated coalescent-walk processes $Z^g_{n}$ and $Z^r_{n}$. Let $j_n$ be the $x$-coordinate of the $\lceil nu \rceil$--th leftmost starting point of $Z^r_n$. Then we have the following joint convergence in $C([0, 1], \RR)^4 \times C((0, 1), \RR)^2$:
\begin{equation*}
	   \left(\tilde{X}_{n}, \tilde{Y}_{n}, \tilde{X}_{n}', \tilde{Y}_{n}', \left(\tilde{Z}^{g}_{n}\right)^{(1 - j_n/(2n))}, \left(\tilde{Z}^{r}_{n}\right)^{(j_n/(2n))}\right)
	\stackrel{d}{\longrightarrow} \left(X_\rho, Y_\rho, X_\rho', Y_\rho', (Z_{\rho, q}')^{(1 - u)}, (Z_{\rho, q})^{(u)}\right),
\end{equation*}
where $(X_\rho, Y_\rho)$ is a two-dimensional Brownian excursion of correlation $\rho$, $(X_\rho', Y_\rho')$ is its time-reversal, $Z_{\rho, q}$ is the unique solution to the SDE in \cref{skewbrowniansdedefn} driven by $(X_\rho, Y_\rho)$, and $Z_{\rho, q}'$ is the solution driven by $(X_\rho', Y_\rho')$.
\end{corollary}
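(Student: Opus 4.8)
The plan is to transfer the unconditioned joint convergence of \cref{unconditionedconvergence} to the conditioned setting via an absolute continuity argument, following the strategy used for the conditioned-walk scaling limits in \cite{borga2021permuton,borga2022scaling}. As a first step I would remove the randomness of the starting points: by \cref{schnyderlocationtail}, the $x$-coordinate $j_n$ of the $\lceil nu\rceil$--th leftmost starting point of $Z^r_n$ satisfies $j_n/(2n)\to u$ in probability, and by \cref{greenreddualstart} the matching starting point of $Z^g_n$ is $2n-j_n$ (carrying the same label), so $(2n-j_n)/(2n)\to 1-u$ in probability; hence, up to an event of vanishing probability, the two coalescent paths in the statement start at times that are, to leading order, $2nu$ and $2n(1-u)$, both in the interior of $[0,2n]$.

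Next, for a fixed $\eps\in(0,\min(u,1-u))$, I would establish the joint convergence restricted to the time interval $[\eps,1-\eps]$. The key point is that the conditioned walk $W_n$ restricted to $[2n\eps,2n(1-\eps)]$, together with the coalescent-walk increments on that interval (which, by the first step, pin down the relevant sample paths with high probability), has law absolutely continuous with respect to the corresponding unconditioned process, with a Radon--Nikodym derivative expressible through the probability that an unconditioned two-dimensional walk, started from the values of $W_n$ at times $2n\eps$ and $2n(1-\eps)$, can be extended over the two end segments $[0,2n\eps]$ and $[2n(1-\eps),2n]$ while staying in $\{x\ge 0,\ y\ge -1\}$ and returning to the origin at time $2n$. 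Applying the local central limit theorem asymptotics of \cref{nplimit} to both coordinates (and to the time-reversed walk), these Radon--Nikodym derivatives converge, locally uniformly in the endpoint positions, to the Radon--Nikodym derivative of the restriction to $[\eps,1-\eps]$ of a two-dimensional Brownian excursion of correlation $\rho$ with respect to a two-dimensional Brownian motion of correlation $\rho$. Combining this convergence of densities with \cref{unconditionedconvergence} and a Skorokhod-representation argument --- using that, once the starting points are fixed, each rescaled coalescent path is a functional of the rescaled driving walk that is continuous for uniform convergence on compacts, and that its limit solves the SDE \cref{skewbrowniansdedefn} driven by the limiting excursion --- gives the claimed joint convergence on $[\eps,1-\eps]$.

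Finally, since $\eps>0$ is arbitrary and $C((0,1),\RR)$ carries the topology of uniform convergence on compact subsets of $(0,1)$, this already yields convergence of the two coalescent-walk components; for the four driving-walk components I would upgrade to $C([0,1],\RR)$ by adding a tightness estimate near the two endpoints, where $W_n$ is pinned at the origin and the modulus-of-continuity bounds transfer from the unconditioned walk via the same absolute continuity, exactly as in the tightness argument in the proof of \cref{foundq}. The main obstacle will be the density-convergence step: writing the Radon--Nikodym derivative explicitly and controlling it requires carefully combining the \cref{nplimit} estimates for both end segments, and --- since $j_n$ is itself a functional of the walk, with a genuine dependence on the walk to the left of $[\eps,1-\eps]$ --- it is cleanest to first condition on $j_n$ (equivalently, on the locations of the up-steps of $Y_n$ in the first segment) and verify that this extra conditioning does not affect the relevant scaling limits, in the same spirit as the two-stage sampling in the proof of \cref{schnyderlocationtail}.
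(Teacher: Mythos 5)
Your overall architecture — fix the location of the starting point via \cref{schnyderlocationtail} and \cref{greenreddualstart}, transfer the unconditioned convergence of \cref{unconditionedconvergence} to the conditioned walk by absolute continuity on $[\eps,1-\eps]$, upgrade to uniform convergence on compacts of $(0,1)$ by Skorokhod, and treat the endpoints separately — matches the paper's proof. You also correctly flag a subtlety the paper glosses over: $j_n$ depends on the walk to the left of $[\eps,1-\eps]$, so one must condition on it before restricting.

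However, there is a genuine gap in the step you yourself identify as the crux. You propose to compute the discrete Radon--Nikodym derivative of the conditioned walk versus the unconditioned one on $[\eps,1-\eps]$ by ``applying the local central limit theorem asymptotics of \cref{nplimit} to both coordinates.'' \cref{nplimit} is a one-dimensional statement (a single walk conditioned to stay positive). Here the conditioning is on the two-dimensional walk $(X_n,Y_n)$ staying in the cone $\{x\ge 0,\ y\ge -1\}$ and returning to the origin, and the coordinates are \emph{not} independent: the single increment $(1,-1)$ moves both coordinates simultaneously, and the correlation is $\rho=-\sqrt2/2$. One cannot factor the cone constraint into two one-dimensional positivity constraints, so the exit-time and local limit asymptotics needed for your Radon--Nikodym computation are genuinely two-dimensional and are not supplied by \cref{nplimit}. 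What is needed is the walks-in-cones theory (Denisov--Wachtel / Duraj--Wachtel), which is exactly what the paper invokes --- it applies \cite[Theorem 6]{DURAJ20203920} (with the cone translated to $(0,-1)$) to obtain convergence of the conditioned walk to the Brownian excursion directly, and then uses the already-known \emph{continuum} absolute continuity of Brownian motion versus Brownian excursion on $[\eps,1-\eps]$ (\cite[Proposition A.1]{borga2023skew}), rather than a discrete Radon--Nikodym computation. Replacing your appeal to \cref{nplimit} with the appropriate cone local limit theorem would repair the gap; as written, the density step does not go through.
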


\begin{proof}
We just sketch the proof, since the technical steps are similar to those given in \cite[Appendix E]{borga2021permuton}. By \cref{randomwalkisuniformschnyder2}, we know that $(X_{n}, Y_{n})$ must start and end at $(0, 0)$, so we may apply \cite[Theorem 6]{DURAJ20203920} translated down by $1$ unit (that is, using a cone centered at $(0, -1)$ rather than the origin). This shows that $(X_{n}, Y_{n})$ converges to $(X_\rho, Y_\rho)$ and similarly shows the analogous result for the time-reversal, so that we have joint convergence for those four walks.

Now we make use of absolute continuity arguments, detailed in \cite[Proposition A.1]{borga2023skew}. A Brownian motion and Brownian excursion of the same correlation $\rho$ are absolutely continuous with respect to each other on $[\eps, 1-\eps]$ for any positive $\eps$. Thus because we have already established the desired convergence in distribution in the unconditioned case, we also establish it in the conditioned case on $[\eps, 1-\eps]$ because restriction from $[0, 1]$ to the smaller interval is a continuous function, and on any such interval measurability of the coalescent-walk processes in terms of the driving walks carries over from the discrete case to the scaling limit. 

By \cref{schnyderlocationtail}, except on an event of asymptotically exponentially small probability, the location $j_n$ of the $\lceil nu \rceil$--th leftmost starting point in the red coalescent-walk process $Z^r_{n} = \WC^r(W_n)$ is within $n^{3/4}$ of $2nu$. Furthermore, \cref{greenreddualstart} tells us that the location $2n - j_n$ of this same label $\lceil nu \rceil$ in the green coalescent-walk process $Z^g_n = \WC^g(W_n')$ is within $n^{3/4}$ of $2n(1 - u)$. Thus in the scaling limit we have that restricted to $[\eps, 1 - \eps]$ for any $\eps < \min(u, 1-u)$, $\left(\tilde{Z}^{r}_{n}\right)^{(j_n/(2n))}$ converges to $(Z_{\rho, q})^{(u)}$ in distribution, and similarly $\left(\tilde{Z}^{g}_{n}\right)^{(1 - j_n/(2n))}$ converges to $(Z_{\rho, q}')^{(1-u)}$ in distribution.

From here, simultaneously applying the result for all sufficiently small rationals $\eps$ and using Skorokhod's theorem, this result can be upgraded to uniform convergence on intervals not containing $0$ or $1$, hence convergence on $(0, 1)$. Finally, we also obtain convergence for the four walks $(\tilde{X}_{n}, \tilde{Y}_{n}, (\tilde{X}_{n}'), (\tilde{Y}_{n}'))$ at the endpoints $0$ and $1$ because all of those walks start and end at zero.
\end{proof}

Finally, we upgrade this last result to random times $u$, which will be necessary for reading off random patterns from the coalescent-walk processes:

\begin{corollary}\label{jointproductresult}
For all $n$, let $W_{n} = (X_{n}, Y_{n})$ be as in \cref{randomwalkisuniformschnyder2} with associated coalescent-walk processes $Z^g_{n}$ and $Z^r_{n}$. Let $(u_i)_i$ be an infinite sequence of iid uniform $[0, 1]$ variables, and for each $i$, let $j_{n, i}$ be the $x$-coordinate of the $\lceil nu_i \rceil$--th leftmost starting point of $Z^r_n$. Then we have the following joint convergence in the product topology:
\begin{multline*}
	\left(\tilde{X}_{n}, \tilde{Y}_{n}, \tilde{X}_{n}', \tilde{Y}_{n}', \left(\left(\tilde{Z}^{g}_{n}\right)^{(1 - j_{n, i}/(2n))}\right)_{i \in \NN}, \left(\left(\tilde{Z}^{r}_{n}\right)^{(j_{n, i}/(2n))}\right)_{i \in \NN}\right)\\
	\stackrel{d}{\longrightarrow}\quad \left(X_\rho, Y_\rho, X_\rho', Y_\rho', \left((Z_{\rho, q}')^{(1 - u_i)}\right)_{i \in \NN}, \left((Z_{\rho, q})^{(u_i)}\right)_{i \in \NN}\right),
\end{multline*}
where $(X_\rho, Y_\rho)$ is a two-dimensional Brownian excursion of correlation $\rho$, $(X_\rho', Y_\rho')$ is its time-reversal, $Z_{\rho, q}$ is the unique solution to the SDE in \cref{skewbrowniansdedefn} driven by $(X_\rho, Y_\rho)$, and $Z_{\rho, q}'$ is the solution driven by $(X_\rho', Y_\rho')$.
\end{corollary}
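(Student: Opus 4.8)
The plan is to deduce \cref{jointproductresult} from the single-time statement \cref{jointproductpreresult} by two routine reductions. First I would observe that convergence in distribution in the product topology is equivalent to weak convergence of every finite-dimensional projection (the countable product of the relevant Polish path spaces being metrizable, with bounded continuous cylinder functions forming a convergence-determining class). So it suffices to fix $m \in \NN$ and establish the joint convergence of the four rescaled walks $(\tilde X_n, \tilde Y_n, \tilde X_n', \tilde Y_n')$ together with the $2m$ coalescent-walk sample paths $\big((\tilde Z^g_n)^{(1 - j_{n,i}/(2n))}\big)_{i \le m}$ and $\big((\tilde Z^r_n)^{(j_{n,i}/(2n))}\big)_{i \le m}$ to the obvious limit.

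Second, for \emph{deterministic} times $v_1, \dots, v_m \in (0,1)$ in place of $u_1, \dots, u_m$, this finite-dimensional statement follows by exactly the argument of \cref{jointproductpreresult}. Indeed, \cref{schnyderlocationtail} already controls \emph{all} starting points of $Z^r_n$ simultaneously, so together with \cref{greenreddualstart} it locates each $j_{n,i}$ within $n^{3/4}$ of $2n v_i$ (and $2n - j_{n,i}$ within $n^{3/4}$ of $2n(1 - v_i)$) off an event of probability $o(e^{-n})$; and the absolute-continuity and measurability argument used there — reading the coalescent-walk processes off the driving walk on each interval $[\eps, 1-\eps]$ and upgrading to $(0,1)$ via Skorokhod's theorem — is insensitive to how many sample paths one follows. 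This produces a fixed limiting law $\nu_{v_1, \dots, v_m}$, depending measurably on $(v_1, \dots, v_m)$, namely the law of $\big(X_\rho, Y_\rho, X_\rho', Y_\rho', ((Z_{\rho,q}')^{(1-v_i)})_{i\le m}, ((Z_{\rho,q})^{(v_i)})_{i\le m}\big)$.

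Finally I would average over the random times. Let $F$ be a bounded continuous functional of the displayed finite tuple, allowing also dependence on $v_1, \dots, v_m$. Since $(u_i)_i$ is independent of $W_n$, Fubini's theorem shows that $\EE[F]$ equals the integral over $(v_1, \dots, v_m) \in [0,1]^m$ of $\EE_{W_n}[F]$, the inner expectation being over the walk alone with each $j_{n,i}$ built from $v_i$ and $W_n$. By the deterministic-times result this inner expectation converges for Lebesgue-a.e.\ $(v_1, \dots, v_m)$ to $\int F\, d\nu_{v_1, \dots, v_m}$, and it is bounded by $\sup|F|$ uniformly in $n$; dominated convergence then passes the limit through the integral, and the same computation applied to the limiting objects — again using independence of $(u_i)_i$ from the limiting Brownian excursion — identifies the limit with the expectation of $F$ under the claimed limiting law. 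As $m$ and $F$ were arbitrary, the convergence in the product topology follows.

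I do not expect a genuine obstacle: both reductions are standard, and the only point needing a line of justification is that \cref{jointproductpreresult} extends to finitely many deterministic starting times — but \cref{schnyderlocationtail} is already a uniform-in-$i$ estimate and the rest of that proof does not care how many paths are tracked, so nothing new is required. If anything, the mild subtlety is that the discrete index $j_{n,i}$ is a random function of \emph{both} $u_i$ and the walk, which is precisely why one conditions on the $u_i$ (i.e.\ passes to deterministic times) before taking scaling limits.
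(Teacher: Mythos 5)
Your proposal is correct and follows essentially the same two-step route as the paper: first handle finitely many deterministic times $v_1,\dots,v_m$, then average over the iid uniform $u_i$ via Fubini. The only minor divergence is in how the deterministic multi-path step is justified. You propose re-running the argument of \cref{jointproductpreresult} for $2m$ sample paths at once, observing that \cref{schnyderlocationtail} is already a uniform-in-$i$ estimate and that the absolute-continuity argument is indifferent to the number of tracked paths — which is valid. The paper instead obtains the multi-path statement directly from the single-$u$ version of \cref{jointproductpreresult} by noting that the limiting processes $Z_{\rho,q}^{(u)}$ and $(Z'_{\rho,q})^{(1-u)}$ are measurable functions of the driving Brownian excursion, so the joint law of any tight subsequential limit is pinned down by the marginal convergences; this saves re-examining the proof of \cref{jointproductpreresult}, but the end result is the same, and both reductions are legitimate.
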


\begin{proof}
\cref{jointproductpreresult} implies that this result holds for any finitely many deterministic $u_i$, because $Z_{\rho, q}'$ and $Z_{\rho, q}$ are measurable functions of their driving Brownian excursions (hence we may apply the corollary for the $u_i$ simultaneously). In particular, for any bounded continuous functional $\phi$, any deterministic $u_i$, and any finite $N$, we have 
\begin{multline*}
	    \EE\left[\phi\left(\tilde{X}_{n}, \tilde{Y}_{n}, \tilde{X}_{n}', \tilde{Y}_{n}', \left(\left(\tilde{Z}^{g}_{n}\right)^{(1 - j_{n, i}/(2n))}\right)_{i \in [N]}, \left(\left(\tilde{Z}^{r}_{n}\right)^{(j_{n, i}/(2n))}\right)_{i \in [N]}\right)\right] \\
	    \longrightarrow\quad
		 \EE\left[\phi\left(X_\rho, Y_\rho, X_\rho', Y_\rho', \left((Z_{\rho, q}')^{(1 - u_i)}\right)_{i \in [N]}, \left((Z_{\rho, q})^{(u_i)}\right)_{i \in [N]}\right)\right].
\end{multline*}
Now integrating each $u_i$ on $(0, 1)$ and swapping the order of integration by Fubini's theorem implies that this convergence of expectation holds if each $u_i$ is now uniform on $[0, 1]$, which is exactly the definition of convergence in the product topology.
\end{proof}

We are now ready to prove \cref{mainresult3}, showing permuton convergence via pattern convergence, thanks to \cref{tfaemain}.

\begin{proof}[Proof of \cref{mainresult3}]

For all $n$, let $W_{n} = (X_{n}, Y_{n})$ be the uniform conditioned walk as in \cref{randomwalkisuniformschnyder2} with associated uniform 3-dimensional Schnyder wood permutation $\sigma_{n}=(\sigma_{n}^g,\sigma_{n}^r)$, and let $Z^g_n$ and $Z^r_n$ be the corresponding random coalescent-walk processes obtained through the commutative diagram of bijections in \cref{fig-comm-diagram}.

By \cref{tfaemain}, to prove that $\mu_{\sigma_{n}}$ converges to the Schnyder wood permuton $\mu_S$, it suffices to prove that for any positive integer $k$, the random 3-dimensional pattern $\sigma_{n, k} = \pat_{I_{n, k}}(\sigma_n)$ converges in distribution to the random 3-permutation $P_{\mu_S}[k]$ of size $k$ (recall \cref{defnsamplefrompermuton}).

To do this, we first look at $\sigma_{n, k}$ and prove that it has a limiting distribution $\rho_k$ (Steps 1--4), and then we show that this limiting distribution is exactly $P_{\mu_S}[k]$ (Step 5). Finally, we prove the claim in \cref{eq:weifvwebfowep}, characterizing the marginals $\mu_S^{g}$ and $\mu_S^{r}$ of $\mu_S$ (Step 6).

\medskip

\noindent\emph{\underline{Step 1}: Sampling patterns from $\sigma_n$.}
Let $u_1,\cdots, u_k$ be iid uniform on $[0, 1]$, and let $\widetilde{\sigma}_{n,k}$ be the $3$-dimensional pattern of $\sigma_{n}$ of size $k$ on the indices $\widetilde{I}_{n,k} = \{\lceil nu_1 \rceil, \cdots, \lceil nu_k \rceil\}$ if they are all distinct, and the identity $3$-permutation otherwise. The latter case happens only with probability $O(\frac{1}{n})$ and is thus negligible as $n \to \infty$. In words, $\widetilde\sigma_{n, k}$ is, up to this $O(\frac{1}{n})$-probability event, the random permutation obtained when $k$ uniform points are selected from a uniform random Schnyder wood permutation of size $n$, and hence the total variation distance of $\widetilde\sigma_{n, k}$ and $\sigma_{n, k}$ tends to zero as $n \to \infty$. For this reason, from now on, we work conditioning on the event that the indices $\lceil nu_1 \rceil, \cdots, \lceil nu_k \rceil$ are all distinct, and we only look at $\widetilde\sigma_{n, k}$ (instead of $\sigma_{n, k}$). To simplify notation, we also drop the tilde from $\widetilde\sigma_{n, k}$, simply writing $\sigma_{n, k}$ (since we believe that this abuse of notation should not create any confusion).

\medskip

\noindent\emph{\underline{Step 2}: Reading patterns on $Z_n^g$ and $Z_n^r$.}
We now explain how to determine the pattern $\sigma_{n, k}=(\sigma_{n, k}^g,\sigma_{n, k}^r)$ from the coalescent walk processes $Z_n^g$ and $Z_n^r$. For all $1 \le i \le n$, let $g_{n, i}$ and $r_{n, i}$ be the $x$-coordinates of the starting points $\lceil nu_i \rceil$ in $Z^g_n$ and $Z^r_n$, respectively.
\begin{itemize}
\item Recall from \cref{mainresult1} that $\sigma_n^g = \sigma^{\text{up}}(Z^g_n)$, and recall also that the starting points of the paths in $Z^g_n$ are labeled in descending order from left to right. Therefore, recalling the definitions in \cref{totalorder} and \cref{coalescentpermutation}, for any $1 \le i, j \le k$ with $\lceil nu_i \rceil > \lceil nu_j \rceil$ (and hence $g_{n, i} < g_{n, j}$),
\begin{equation}\label{schnydergreenrelation}
\begin{split}
    \sigma_n^g(\lceil nu_i \rceil) < \sigma_n^g(\lceil nu_j \rceil) &\iff g_{n, i} \le^{\text{up}} g_{n, j} \\
    &\iff (Z^g_n)^{(g_{n, i})}_{g_{n, j}} < 0 \\
    &\iff \text{sgn}\left((\tilde{Z}^g_n)^{(g_{n, i}/(2n))}(g_{n, j}/(2n))\right) = -1.
\end{split}
\end{equation}
\item Similarly, $\sigma_n^r = \sigma^{\text{down}}(Z^r_n)$, and recall also that the starting points of the paths in $Z^r_n$ are labeled in ascending order from left to right. Therefore, for any $1 \le i, j \le k$ with $\lceil nu_i \rceil < \lceil nu_j \rceil$ (and hence $r_{n, i} < r_{n, j}$),
\begin{equation}\label{schnyderredrelation}
\begin{split}
    \sigma_n^r(\lceil nu_i \rceil) < \sigma_n^r(\lceil nu_j \rceil) &\iff r_{n, i} \le^{\text{down}} r_{n, j}\\
    &\iff (Z^r_n)_{r_{n, j}}^{(r_{n, i})} > 0 \\
    &\iff \text{sgn}\left((\tilde{Z}^r_n)^{(r_{n, i}/(2n))}(r_{n, j}/(2n)) \right) = +1.
\end{split}
\end{equation}
\end{itemize}

\noindent The patterns of $\sigma_n^g$ and $\sigma_n^r$ on the indices $\lceil nu_1 \rceil, \cdots, \lceil nu_k \rceil$ depend only on the above pairwise relations. Hence, in order to determine the distribution of the pattern $\sigma_{n, k}=(\sigma_{n, k}^g,\sigma_{n, k}^r)$, we need to study the signs of certain sample paths of $Z_n^g$ and $Z_n^r$ at certain times. This is the goal of the next step.

\medskip

\noindent\emph{\underline{Step 3:} The limiting behavior of the signs of paths in $Z_n^g$ and $Z_n^r$.} Since $u_1, \cdots, u_k$ are iid uniform, we may apply \cref{jointproductresult}. Additionally, by Skorokhod's theorem, we can assume that the convergence in \cref{jointproductresult} holds almost surely, and thus for each pair $i, j \in \binom{[k]}{2}$, setting
\begin{align*}
    &m^g_{n,i,j}:=\min(g_{n, i}, g_{n, j})/(2n),\qquad
    M^g_{n,i,j}:=\max(g_{n, i}, g_{n, j})/(2n),\\
    &m^r_{n,i,j}:=\min(r_{n, i}, r_{n, j})/(2n),\qquad
    M^r_{n,i,j}:=\max(r_{n, i}, r_{n, j})/(2n),
\end{align*}
we have that, almost surely,
\begin{equation}\label{eq:ebjwfwebfw1}
\text{sgn}\left((\tilde{Z}^g_{n})^{(m_{n,i,j}^{g})} (M_{n,i,j}^{g})\right)
    \longrightarrow \text{sgn}\left((Z'_{\rho, q})^{(\min(1 - u_i, 1 - u_j))} (\max(1 - u_i, 1 - u_j))\right),
\end{equation}
and also 
\begin{equation}\label{eq:ebjwfwebfw2}
    \text{sgn}\left((\tilde{Z}^r_{n})^{(m^r_{n,i,j})}(M^r_{n,i,j})\right)
    \longrightarrow \text{sgn}\left((Z_{\rho, q})^{(\min(u_i, u_j))}(\max(u_i, u_j))\right).
\end{equation}
Here we use that the quantities inside $\text{sgn}$ on the right-hand sides are nonzero almost surely (by \cite[Lemma 3.2]{borga2023skew}) and hence $\text{sgn}(\cdot)$ is a continuous function at this point almost surely. Note that the signs on the left-hand sides of these two statements are exactly the quantities needed to determine the pattern $\sigma_{n, k}=(\sigma_{n, k}^g, \sigma_{n, k}^r)$, as we explained in the previous step.

\medskip

\noindent\emph{\underline{Step 4:} Concluding that the 3-dimensional pattern $\sigma_{n,k}$ converges.} The convergence statements in \cref{eq:ebjwfwebfw1,eq:ebjwfwebfw2}, taken jointly for all $i, j$, imply that for any $k\geq 1$, the $3$-permutation pattern $\sigma_{n, k}= (\sigma_{n, k}^g,\sigma_{n, k}^r)$ indeed converges in distribution as $n \to \infty$ to the random limiting 3-permutation $\rho_k = (\rho_k^g,\rho_k^r)$, defined as follows:
\begin{align}
    &\text{for any } i > j, \qquad \rho_k^g(i) < \rho_k^g(j) \iff \text{sgn}\left((Z'_{\rho, q})^{(1 - v_i)}(1 - v_j)\right) = -1,\label{eq:rel1ord}\\
    &\text{for any }i < j, \qquad \rho_k^r(i) < \rho_k^r(j) \iff \text{sgn}\left((Z_{\rho, q})^{(v_i)}(v_j)\right) = +1,\label{eq:rel2ord}
\end{align}
where $v_1 < \cdots < v_k$ are uniform random variables on $[0,1]$, relabeled to be ordered. It remains to check that for all $k\geq 1$, $\rho_k=(\rho_k^g,\rho_k^r)$ is exactly distributed as $P_{\mu_S}[k]\in \mc{S}_{3, k}$, which is the goal of the next step.

\medskip 

\noindent\emph{\underline{Step 5}: Identifying the correct permuton limit.} Recall that $Z_{\rho, q}$ and $Z'_{\rho, q}$ induce
\begin{itemize}
    \item two total orders on a subset of $[0, 1]$ of Lebesgue measure one via \cref{skewbrownianordering}, which we call $<_{\rho, q}$ and $<'_{\rho, q}$, along with
    \item two random functions $\phi^r_{\rho, q}$ and $\phi^g_{\rho, q}$, defined as in \cref{eq:defn-ass-process}. 
\end{itemize}
We will thus also represent our permutation patterns in terms of these random functions. Note that, for any $i > j$ (we continue to have $i, j \in [k]$),
\begin{align*}
     \rho_k^g(i) < \rho_k^g(j) &\stackrel{\eqref{eq:rel1ord}}{\iff} \text{sgn}\left((Z'_{\rho, q})^{(1 - v_i)}(1 - v_j)\right) = -1 \\
    &\stackrel{\eqref{skewbrownianordering}}{\iff} 1 - v_i <'_{\rho, q} 1 - v_j \\
    &\stackrel{\eqref{rel-order}}{\iff} \phi^g_{\rho, q}(1 - v_i) < \phi^g_{\rho, q}(1 - v_j).
\end{align*}
Similarly, for any $i < j$,
\begin{align*}
    \rho_k^r(i) < \rho_k^r(j) &\stackrel{\eqref{eq:rel2ord}}{\iff}\text{sgn}\left((Z_{\rho, q})^{(v_i)}(v_j)\right) = +1\\
    &\stackrel{\eqref{skewbrownianordering}}{\iff} v_j <_{\rho, q} v_i \\
    &\stackrel{\eqref{rel-order}}{\iff} \phi_{\rho, q}^r(v_i) > \phi_{\rho, q}^r(v_j)\\
    &\iff 1-\phi_{\rho, q}^r(v_i)< 1- \phi_{\rho, q}^r(v_j).
\end{align*}
Summarizing, we get that for all $i$ and $j$ (by possibly swapping their roles in the implications above),
\begin{multline}\label{eq:condition}
      \left\{\rho_k^g(i) < \rho_k^g(j)\,\,,\,\,\rho_k^r(i) < \rho_k^r(j)\right\} 
      \iff\left\{\phi^g_{\rho, q}(1 - v_i) < \phi^g_{\rho, q}(1 - v_j)\,\,,\,\,1-\phi_{\rho, q}^r(v_i)< 1- \phi_{\rho, q}^r(v_j)\right\}.
\end{multline}
Recall now from \cref{schnyderpermutonexplicit} that $\mu_S$ is defined by
\begin{equation}\label{eq:dibcewib}
    \mu_S(A) = \text{Leb}\left(\left\{t \in [0, 1]: \left(t \,,\, \phi^g_{\rho,q}(1 - t) \,,\, 1 - \phi^r_{\rho,q}(t) \right) \in A\right\}\right),
\end{equation}
for all measurable subsets $A \subseteq [0, 1]^3$. Therefore, recalling \cref{eq:patt-from-perm}, we may obtain the permutation pattern
\[
    P_{\mu_S}[k]=\left(P^g_{\mu_S}[k],P^r_{\mu_S}[k]\right)
\]
by sampling the $k$ iid 3-dimensional points each with law $\mu_S$
\[
    \bigg(\left(v_1, \phi^g_{\rho,q}(1 - v_1), 1 - \phi^r_{\rho,q}(v_1)\right),\dots,\left(v_k, \phi^g_{\rho,q}(1 - v_k),1 - \phi^r_{\rho,q}(v_k)\right)\bigg),
\]
reordered so the $v_i$s are increasing. Then almost surely, for all $i$ and $j$ we have
\begin{equation*}
    (P^g_{\mu_S}[k])(i)<(P^g_{\mu_S}[k])(j)\iff \phi^g_{\rho,q}(1 - v_i)<\phi^g_{\rho,q}(1 - v_j)
\end{equation*}
and
\begin{equation*}
    (P^r_{\mu_S}[k])(i)<(P^r_{\mu_S}[k])(j)\iff 1 - \phi^r_{\rho,q}(v_i)<1 - \phi^r_{\rho,q}(v_j).
\end{equation*}
Comparing the latter two displayed equations with the conditions in \cref{eq:condition}, we immediately get that $\rho_k=P_{\mu_S}[k]$, as we wanted.

\medskip

\noindent\emph{\underline{Step 6:} Characterizing the marginals $\mu_S^{g}$ and $\mu_S^{r}$.} Finally, it remains to prove the claim in \cref{eq:weifvwebfowep}. Recalling that $f(x, y) = (1-x, y)$, we can rewrite the marginal $\mu_S^{g}$ as
\begin{align*}
    \mu_S^{g}(B\times C)= \mu_S(B\times C \times [0,1])
    &\stackrel{\eqref{eq:dibcewib}}{=}\text{Leb}\left(\left\{t \in [0, 1]: (t, \phi_{\rho, q}^g(1-t))\in B\times C\right\}\right)\\
    &\,\,=\,\,\text{Leb}\left(\left\{t \in [0, 1]: (1-t, \phi_{\rho, q}^g(t))\in B\times C\right\}\right)\\
    &\,\,=\,\,\text{Leb}\left( \left\{t \in [0, 1]: f(t, \phi_{\rho, q}^g(t))\in B\times C\right\}\right)\\
    &
    \stackrel{\eqref{eq:defn-perm}}{=}\mu_{\rho, q}^g(f^{-1}(B\times C)),
\end{align*}
where in the second equality we used that $t\mapsto1-t$ is a measure preserving map. Similarly, recalling that $h(x, y) = (x,1-y)$, we have that
\begin{align*}
    \mu_S^{g}(B\times C)\,\,=\,\,\mu_S(B \times [0,1] \times C )&\stackrel{
    \eqref{eq:dibcewib}
    }{=} \text{Leb}\left(\left\{t \in [0, 1]: (t, 1-\phi^r_{\rho, q}(t)) \in B \times C\right\}\right)\\
    &\,\,=\,\, \text{Leb}\left(\left\{t \in [0, 1]: h(t,\phi^r_{\rho, q}(t))\in B \times C\right\}\right)\\
    &\stackrel{\eqref{eq:defn-perm}}{=}  \mu^r_{\rho, q}(h^{-1}(B\times C)).
\end{align*}
This completes the proof.
\end{proof}

We conclude this section by proving \cref{marginaldetermines} and \cref{SLES-LQG}:

\begin{proof}[Proof of \cref{marginaldetermines}]
We show that the marginal $2$-permuton $\mu_S^{g}$ determines the Schnyder wood permuton
\begin{equation}\label{eq:edwviifvwe}
    \mu_S(A) = \text{Leb}\left(\left\{t \in [0, 1]: \left(t,  \phi^g_{\rho,q}(1 - t),
    1 - \phi^r_{\rho,q}(t)\right) \in A\right\}\right).
\end{equation}
The proof for $\mu_S^{r}$ is identical.

Recall from \cref{mainresult3} that $\mu_S^{g}=f_\ast(\mu_{\rho, q}^g)$ with $f(x, y) = (1-x, y)$, so equivalently $\mu_{\rho, q}^g = f^{-1}_\ast(\mu_S^g)$ is determined by $\mu_S^g$. Thus, it is enough to show that $\mu_{\rho, q}^g$ determines $\mu_S$.
Recall also from \cref{mainresult3} that $W_\rho$ is the two-dimensional Brownian excursion of correlation $\rho$ used in the construction of $\mu_{\rho, q}^r$ and $\phi_{\rho, q}^r$, and $W'_\rho$ is its time-reversal used in  the construction of $\mu_{\rho, q}^g$ and $\phi_{\rho, q}^g$.

By \cref{prop:det-perm},  $\mu_{\rho, q}^g$ determines $W'_\rho$, which in turn determines $W_\rho$. But $W'_\rho$ and $W_\rho$ clearly determine $\phi^g_{\rho,q}$ and $\phi^r_{\rho,q}$ (recall \cref{skewbrowniansdedefn2}). From this and \eqref{eq:edwviifvwe}, we can conclude that $\mu_S^{g}$ determines $\mu_S$.
\end{proof}

The reader unfamiliar with SLE and LQG can skip the next proof.

\begin{proof}[Proof of \cref{SLES-LQG}]
We will construct a coupled version of the objects involved in the proposition statement so that all of the equalities in distribution are actually almost sure equalities.

Recall that $(\eta^b,\eta^r,\eta^g)$ are three whole-plane space-filling SLE$_\kappa$ counter-flow lines of $\hat{h}$ of angle $\left(0,\frac{2\pi}{3},\frac{4\pi}{3}\right)$ respectively, and we are considering the curve-decorated quantum surface $((\mathbb{C} \cup \{\infty\}, h, \infty), \eta^b)$. Let $W_\rho=(X_\rho,Y_\rho)$ be the corresponding two-dimensional Brownian excursion of correlation $\rho$ given by the boundary LQG lengths as in \cref{lqgdescription2}.

Let $\chi^{b,g}: [0,1]\to[0,1]$ be a Lebesgue measurable function such that
\begin{equation}\label{eq:eivfouqwebfepw}
    \eta^b(1-t) = \eta^g\left(\chi^{b,g}(t)\right),\quad\text{ for all } t \in [0,1],
\end{equation}  
and let $\chi^{b,r}: [0,1]\to[0,1]$ be a Lebesgue measurable function such that 
\begin{equation}\label{eq:eivfouqwebfepw2}
    \eta^b(t) = \eta^r\left(\chi^{b,r}(t)\right),\quad\text{ for all } t \in [0,1].
\end{equation}
Also let $\mu_{\rho, q}^g$ and $\mu_{\rho, q}^r$ be (coupled) skew Brownian permutons of parameter $(\rho, q)$, driven by $W'_\rho=(X'_\rho,Y'_\rho)$ and $W_\rho=(X_\rho,Y_\rho)$ respectively, with associated random functions $\phi^g_{\rho,q}$ and $\phi^r_{\rho,q}$ as in \cref{skewbrowniansdedefn2}. By \cref{lqgdescription2} (used with $q=\frac{1}{1+\sqrt{2}}$ and so $\theta=\frac{2\pi}{3}$ thanks to \cref{rem:angle}), we have that almost surely
\begin{equation}\label{eq:eboufebbfephfe2}
    1-\chi^{b, r}(t)= \phi^r_{\rho,q}(t),\quad\text{ for almost every } t \in [0,1].
\end{equation}
Moreover, noting that the angle between the (time-reversed) SLE $\eta^b(1-t)$ and the (non-time-reversed) SLE $\eta^g(t)$ is $\frac{\pi}{3}$, by \cref{lqgdescription2} (used with $\widetilde{q}=1-\frac{1}{1+\sqrt{2}}$ and so $\widetilde{\theta}=\frac{\pi}{3}$ thanks to \cref{rem:angle}), we have that almost surely
\begin{equation*}
    1-\chi^{b, g}(t)= \widetilde{\phi}^g_{\rho,q}(t),\quad\text{ for almost every } t \in [0,1],
\end{equation*}
where $\widetilde{\phi}^g_{\rho,q}(t)$ is the random function in \cref{skewbrowniansdedefn2} when one considers\footnote{Note that here the coordinates of the reversed Brownian excursion are swapped; indeed, time-reversing an SLE also swaps the roles of left and right.} $(Y'_\rho,X'_\rho)$ as the driving Brownian excursion and $\widetilde{q}=1-\frac{1}{1+\sqrt{2}}$ as the skewness. Since the coalescent-walk process corresponding to $\widetilde{\phi}^g_{\rho,q}(t)$ can be obtained from the one corresponding to $\phi^g_{\rho,q}(t)$ by flipping vertically (i.e.\ along the $x$-axis) all of the sample paths (that is, exchanging the roles of $Y'_\rho$ and $X'_\rho$ and changing the parameter $\widetilde{q}=1-\frac{1}{1+\sqrt{2}}$ into $1-\widetilde{q}=q=q(\frac{2\pi}{3})=\frac{1}{1+\sqrt{2}}$), we conclude that $\widetilde{\phi}^g_{\rho,q}(t)=1-\phi^g_{\rho,q}(t)$, and so that \begin{equation}\label{eq:eboufebbfephfe3}
    \chi^{b, g}(t)= \phi^g_{\rho,q}(t),\quad\text{ for almost every } t \in [0,1].
\end{equation}

Recall now from \cref{eq:wefivfvweofu} that $\psi^{b,\circ}:[0,1]\to[0,1]$ for $\circ\in\{r,g\}$ are Lebesgue measurable functions such that (for later reasons, it is convenient to write the first relation in this alternative way)
\begin{equation}\label{eq:wejfboweubfwohewfiweoh}
    \eta^b(1-t)=\eta^{g}\left(\psi^{b,g}(1-t)\right),\qquad \text{for all $t\in[0,1]$,}
\end{equation}
and
\begin{equation}\label{eq:wefivhbweuofbweo2}
    \eta^b\left(t\right)=\eta^r\left(\psi^{b,r}(t)\right),\qquad \text{for all $t\in[0,1]$.}
\end{equation}
Comparing \cref{eq:wejfboweubfwohewfiweoh} with \cref{eq:eivfouqwebfepw}, and using that almost surely almost every point $x\in\mathbb{C}$ is a simple point of $\eta^g$, we get that, almost surely, 
\begin{eqnarray*}
    \phi^g_{\rho,q}(t)\stackrel{\eqref{eq:eboufebbfephfe3}}{=}\chi^{b,g}(t)=\psi^{b,g}(1-t),\quad\text{ for almost every } t \in [0,1].
\end{eqnarray*}
From this we can conclude that, almost surely,
\begin{equation}\label{eq:wefobuwebofcnwepifhwe}
    \phi^g_{\rho,q}(1-t)=\psi^{b,g}(t),\quad\text{ for almost every } t \in [0,1].
\end{equation}
Similarly, comparing \cref{eq:wefivhbweuofbweo2} and \cref{eq:eivfouqwebfepw2}, we get that,  almost surely,
\begin{eqnarray*}
    \phi^r_{\rho,q}(t)\stackrel{\eqref{eq:eboufebbfephfe2}}{=}1-\chi^{b,r}(t)=1-\psi^{b,r}(t),\quad\text{ for almost every } t \in [0,1].
\end{eqnarray*}
From this we can conclude that, almost surely, 
\begin{equation}\label{eq:wefobuwebofcnwepifhwe2}
    1 - \phi^r_{\rho,q}(t)=\psi^{b,r}(t),\quad\text{ for almost every } t \in [0,1].
\end{equation}
From \cref{eq:wefobuwebofcnwepifhwe,eq:wefobuwebofcnwepifhwe2}, we conclude that, almost surely, for all measurable sets $A\subset [0,1]^3$,
 \begin{align*}
     \mu_S(A) &\stackrel{\eqref{schnyderpermutonexplicit}}{=} \text{Leb}\left(\left\{t \in [0, 1]: \left(t,  \phi^g_{\rho,q}(1 - t),
     1 - \phi^r_{\rho,q}(t)\right) \in A\right\}\right)\\
     &\,=\,\text{Leb}\left(\left\{t \in [0, 1]: \left(t,\psi^{b,g}(t),
     \psi^{b,r}(t)\right) \in A\right\}\right).
 \end{align*}
This proves the claim in \cref{eq:wefivoweuiqbfqwop}. Moreover, it follows from the definition of $\mu^{b, \circ}$ in \cref{eq:permutonfromcoupledcurves} that almost surely $\mu_S^{g}=\mu^{b,g}$ and $\mu_S^{r}=\mu^{b,r}$. This ends the proof.
\end{proof}

\section{The random $d$-dimensional permuton limit of $d$-separable permutations}\label{dseparablesection}

We now turn to describing the $d$-dimensional permuton limit of $d$-separable permutations, proving \cref{mainresult4}.

\subsection{A bijection with labeled trees}

Recall the definition of $d$-separable permutation provided in \cref{dseparablepermutationdefn}. While a $d$-separable permutation may be decomposed in different block sums,\footnote{For instance, the identity $d$-permutation $\sigma$ of size $n$ can be decomposed as $\sigma=\sigma_1 \encircle{s} \sigma_2$, where $s=(+, \dots, +)$, $\sigma_1$ is an identity $d$-permutation of size $\ell \in [n-1]$, and $\sigma_2$ is an identity $d$-permutation of size $n - \ell$.} the sign sequence $s$ for the block sum is always unique. Indeed, if there were two ways of decomposing a $d$-separable permutation $\sigma$ with different signs in the $j$--th coordinate into pieces of size $a_1 + a_2$ and $b_1 + b_2$ (for some $a_1, a_2, b_1, b_2 \in \NN$ such that $a_1 + a_2 = b_1 + b_2 = n$), then the first $\min(a_1, b_1)$ values of $\sigma(i)^{(j)}$ need to be among the top $a_1$ and also the bottom $b_1$ positions (or vice versa), and the last $\min(a_2, b_2)$ values need to be among the top $a_2$ and also bottom $b_2$ positions (or vice versa). Since either $a_1 + b_1$ or $a_2 + b_2$ is at most $n$ (their sum is $2n$), this cannot occur. 

Thus, given any $d$-separable permutation $\sigma$ of size at least 2, there is a \emph{unique} sign sequence $s$, called a \vocab{primary block structure} as in \cite{asinowski2008separable}, such that $\sigma = \sigma_1 \encircle{s} \sigma_2$ for two $d$-separable permutations $\sigma_1, \sigma_2$. Furthermore, block summing with the \emph{same} sign sequence $s$ is associative, so we can uniquely decompose any $d$-separable permutation $\sigma$ of size at least 2 as 
\begin{equation}\label{eq:block-struct}
    \sigma=\sigma_1 \encircle{s} \sigma_2 \encircle{s} \cdots \encircle{s} \sigma_N  \quad\text{for some}\quad N \ge 2,
\end{equation}
where each $\sigma_i$ is a $d$-separable permutation which cannot be written further as $\sigma_{i,1} \encircle{s} \sigma_{i,2}$ for any $d$-separable permutations $\sigma_{i, 1}, \sigma_{i, 2}$. Using \cref{eq:block-struct}, we may thus recursively encode $\sigma$ in a signs-labeled rooted plane tree $S(\sigma)$ as follows: 
\begin{itemize}
\item If $\sigma$ is the trivial permutation of size $1$, then the tree $S(\sigma)$ is just a single (unlabeled) vertex. 
\item Otherwise, label the root of the tree $S(\sigma)$ with the sign sequence $s$, give the root $N$ children, and have those $N$ children be the signs-labeled plane trees for $\sigma_1, \cdots, \sigma_N$ in order from left to right. 
\end{itemize}
An example is shown in \cref{fig:separable} below, encoding the $3$-separable permutation
\[
    \sigma = ((1, 3, 2, 4), (4, 2, 3, 1))
    =
    \text{id} \encircleBig{{\footnotesize{(+, -)}}} ((2,1),(1,2))\normalsize \encircleBig{{\footnotesize{(+, -)}}} \text{id}
    =
    \text{id} \encircleBig{{\footnotesize{(+, -)}}} \Big(\normalsize\text{id} \encircleBig{{\footnotesize{(-, +)}}} \text{id}\Big)\normalsize \encircleBig{{\footnotesize{(+, -)}}} \text{id},
\]
where $\text{id}$ denotes the identity $3$-permutation.

\begin{figure}[ht]
\begin{minipage}{.33\textwidth}
\begin{center}
\begin{tikzpicture}[scale=0.85]
\draw[thick] (0, 0) -- (-2, 2);
\draw[thick] (0, 0) -- (0, 2);
\draw[thick] (0, 0) -- (2, 2);
\draw[thick] (0, 2) -- (1, 4);
\draw[thick] (0, 2) -- (-1, 4);
\draw[fill=white] (0, 0) circle(0.6cm);
\node at (0, 0) {\small$(+, -)$};
\draw[fill=black] (-2, 2) circle(0.2cm);
\draw[fill=white] (0, 2) circle(0.6cm);
\draw[fill=black] (2, 2) circle(0.2cm);
\node at (0, 2) {\small$(-, +)$};
\draw[fill=black] (-1, 4) circle(0.2cm);
\draw[fill=black] (1, 4) circle(0.2cm);
\node[black] at (-2, 2.5) {$1$};
\node[black] at (-1, 4.5) {$2$};
\node[black] at (1, 4.5) {$3$};
\node[black] at (2, 2.5) {$4$};
\draw[blue, <->, thick, dashed] (-1.8, 2.7) arc (124:56:3.24);
\end{tikzpicture}
\end{center}
\end{minipage}
\begin{minipage}{.33\textwidth}
\centerline{\includegraphics[width=\textwidth]{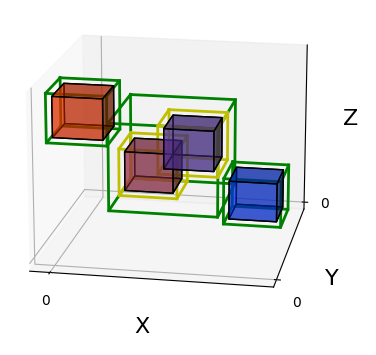}}
\end{minipage}%
\begin{minipage}{.33\textwidth}
\begin{center}
\begin{tikzpicture}[scale=0.85]
\draw[thick] (0, 0) -- (-2, 2);
\draw[thick] (0, 0) -- (0, 2);
\draw[thick] (0, 0) -- (2, 2);
\draw[thick] (0, 2) -- (1, 4);
\draw[thick] (0, 2) -- (-1, 4);
\draw[fill=white] (0, 0) circle(0.6cm);
\node at (0, 0) {\small$(+, -)$};
\draw[fill=black] (-2, 2) circle(0.2cm);
\draw[fill=white] (0, 2) circle(0.6cm);
\draw[fill=black] (2, 2) circle(0.2cm);
\node at (0, 2) {\small$(1, 1)$};
\draw[fill=black] (-1, 4) circle(0.2cm);
\draw[fill=black] (1, 4) circle(0.2cm);
\node[black] at (-2, 2.5) {$1$};
\node[black] at (-1, 4.5) {$2$};
\node[black] at (1, 4.5) {$3$};
\node[black] at (2, 2.5) {$4$};
\end{tikzpicture}
\end{center}
\end{minipage}
\caption{\textbf{Left}: The sign tree $S(\sigma)$ encoding the block structure of the separable $3$-permutation $\sigma = ((1,3,2,4), (4,2,3,1))$. The dashed blue arrow indicates that when flipping the order of the root's children to determine $\sigma^{(2)}$, vertex $1$ is now last and vertex $4$ is first. \textbf{Middle}: The $3$-dimensional diagram of the 3-permutation $\sigma$, with box decomposition indicating the block structure (dark green corresponding to $(+, -)$ and yellow corresponding to $(-, +)$). \textbf{Right}: The swap tree $\text{Sw}(\sigma)$ encoding the permutation $\sigma$.}
\label{fig:separable}
\end{figure}

We now describe the inverse process for recovering the original $d$-separable permutation from a tree of this form. Given a signs-labeled rooted plane tree $S(\sigma)$, we recover $\sigma^{(k)}$ for any $1 \le k \le d-1$ via the following process:
\begin{itemize}
\item Number the leaves in the $S(\sigma)$ from $1$ to $n$ in order from left to right.
\item For each internal vertex $v$ of the tree, if the $k$--th sign in the sign sequence of $v$ is $-1$, then flip the order of the children of $v$. That is, if $v$ previously had children $w_1, \cdots, w_N$ from left to right, then reorder them so that the children (together with their subtree) are $w_N, \cdots, w_1$ from left to right but keeping the subtrees rooted at $w_1, \cdots, w_N$ oriented in the same way. (Note that this procedure yields the same result regardless of the order that we perform the operations on internal vertices.)
\item Finally, $\sigma(1)^{(k)}, \cdots, \sigma(n)^{(k)}$ is the sequence of numbers for the leaves of the reordered tree from left to right.
\end{itemize}

For example in \cref{fig:separable}, $\sigma^{(2)} = (4, 2, 3, 1)$ because we reorder the children of the root (so that leaf $4$ is on the left and leaf $1$ is on the right) but not the children of the other internal vertex (so that leaf $2$ is still to the left of leaf $3$). 

\begin{proposition}\label{dsep-bijection}
The construction above yields a bijection $\sigma\mapsto S(\sigma)$ between $d$-separable permutations of size $n$ and rooted plane trees with $n$ leaves whose internal vertices all satisfy the following conditions:
\begin{itemize}
\item they are labeled by a sign sequence of length $(d-1)$,
\item they have at least two children, and
\item their sign sequence is different from that of their parent (if a parent exists).
\end{itemize}
Call such trees \vocab{sign trees} (of size $n$).
\end{proposition}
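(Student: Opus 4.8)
The plan is to argue by induction on the size $n$, leaning on the two facts established in the discussion preceding the proposition: that every $d$-separable permutation of size $\ge 2$ admits a \emph{unique} sign sequence $s$ for which it is $s$-decomposable (its primary block structure), and that block summing with a fixed sign sequence is associative, so that the maximal decomposition \cref{eq:block-struct} is well-defined. The base case $n = 1$ is immediate: the only $d$-separable permutation of size $1$ is the trivial one, $S$ sends it to the single-vertex tree, and that is the unique sign tree with one leaf.

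For the inductive step I would first check that $S$ is well-defined, i.e.\ that $S(\sigma)$ is a sign tree with $n$ leaves whenever $|\sigma| = n \ge 2$. Writing $\sigma = \sigma_1 \encircle{s} \cdots \encircle{s} \sigma_N$ for the decomposition in \cref{eq:block-struct}, each $\sigma_i$ has size $< n$, so by the inductive hypothesis $S(\sigma_i)$ is a sign tree; hence $S(\sigma)$ is a rooted plane tree whose root has $N \ge 2$ children, whose internal vertices carry length-$(d-1)$ sign sequences, and whose leaf count is $\sum_i |\sigma_i| = n$. The one remaining condition to verify is that no child's sign label equals $s$: if $\sigma_i$ has size $\ge 2$, its root label in $S(\sigma_i)$ is the primary block structure $s_i$ of $\sigma_i$, and $s_i \ne s$ because by the choice of the decomposition $\sigma_i$ is not $s$-decomposable (were $s_i = s$, then $\sigma_i$ would be $s$-decomposable, contradicting maximality). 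Conversely, given a sign tree $T$ of size $n \ge 2$ with root label $s$ and children the sign trees $T_1, \dots, T_N$ ($N \ge 2$), by induction $T_i = S(\tau_i)$ for a unique $d$-separable $\tau_i$, and I set $\sigma := \tau_1 \encircle{s} \cdots \encircle{s} \tau_N$. Then $\sigma$ is $d$-separable; $s$ is its primary block structure (it is $s$-decomposable, and the primary block structure is unique); and each $\tau_i$ is not $s$-decomposable (trivially if $|\tau_i| = 1$, and otherwise because its primary block structure is the root label of $T_i$, which differs from $s$ by the sign-tree condition). Therefore $\tau_1 \encircle{s} \cdots \encircle{s} \tau_N$ is exactly the maximal decomposition of $\sigma$, so $S(\sigma)$ has root $s$ with children $S(\tau_i) = T_i$, i.e.\ $S(\sigma) = T$. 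This shows $S \circ R = \mathrm{id}$, where $R$ is the recursive reconstruction $T \mapsto \tau_1 \encircle{s} \cdots \encircle{s} \tau_N$ just described, and the symmetric computation gives $R \circ S = \mathrm{id}$, so $S$ is a bijection onto sign trees.

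Finally I would verify that the explicit leaf-numbering-and-flipping procedure stated before the proposition computes this recursive inverse $R$. This reduces to the observation that, in each coordinate $1 \le k \le d-1$, the block sum $\tau_1 \encircle{s} \cdots \encircle{s} \tau_N$ produces coordinate $k$ by concatenating the $k$-th coordinates of the $\tau_i$ in order if $s_k = +1$ and in reverse order if $s_k = -1$ (directly from \cref{dseparablepermutationdefn}), which is precisely the effect of flipping the children of the corresponding internal vertex on the left-to-right order of the leaves; a short induction then identifies the two procedures. The order-independence of the flips (used implicitly in the statement) holds because a flip at a vertex $v$ only permutes the maximal subtrees hanging from $v$ as rigid blocks, so the final left-to-right order of the leaves is determined, independently at each internal vertex, by which side of its parent's local ordering each child lands on; performing these independent choices in any order yields the same leaf order. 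I expect the point requiring the most care to be the well-definedness of $S$ — specifically the clause that an internal vertex's sign sequence differs from its parent's — since that is exactly where both the uniqueness of the primary block structure and the maximality of the decomposition \cref{eq:block-struct} are needed; the remaining steps are routine unwinding of the definitions.
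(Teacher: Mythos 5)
Your proof is correct and takes essentially the same approach as the paper's: well-definedness of $S$ via uniqueness of the primary block structure, verification that the output is a sign tree (with the key point being the parent-child label distinctness, which both you and the paper tie to maximality of the decomposition in \cref{eq:block-struct}), and bijectivity via the recursive inverse. The paper compresses the bijectivity direction to a single sentence ("easily follows from the above description of the inverse process") whereas you flesh out the induction, the composition identities $S \circ R = \mathrm{id}$ and $R \circ S = \mathrm{id}$, and the equivalence of the recursive inverse with the stated leaf-numbering-and-flipping procedure (including the order-independence of the flips, which the paper asserts parenthetically without proof) — all correctly.
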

\begin{proof}
Since the primary block structure of any $d$-separable permutation is unique, the procedure for generating the tree is inductively well-defined. In any such rooted plane tree resulting from a $d$-separable permutation of size $n$, there are $n$ leaves (corresponding to the $n$ trivial permutations after the decomposition is finished), all internal vertices have degree at least $2$ (because the block sum $\sigma = \sigma_1 \encircle{s} \cdots \encircle{s} \sigma_N$ has at least two summands), and vertices have different labels from their parents because each $\sigma_i$ cannot be further written as a block sum using $\encircle{s}$, meaning that each vertex is either trivial (corresponding to a leaf) or of a different primary block structure  from its parent (hence a different label). Thus the constructions do map between the two sets of objects in the proposition statement. The fact that the map is a bijection easily follows from the above description of the inverse process.
\end{proof}

\begin{remark}\label{signtreeinverse}
Since we have a bijection, we may denote the inverse map $S(\sigma) \mapsto \sigma$ by $S^{-1}$. Note that if $T$ is a tree satisfying the first two conditions of \cref{dsep-bijection} but not necessarily the third, then $S^{-1}(T)$ is still well-defined and still yields a valid $d$-separable permutation, since this inverse construction is still taking a sequence of block sums of $d$-permutations of size $1$. This will be useful later when using these trees to determine patterns.
\end{remark}

Towards determining the limiting permuton of uniform $d$-separable permutations as $n \to \infty$, it is useful to obtain some independence by slightly transforming our encoding sign trees, removing the constraint that internal vertices have different labels from their parents. We do so with an alternate labeling:

\begin{definition}\label{defn:swap-seq}
A \vocab{swap sequence} of length $m$ is an element of the set $\{0, 1\}^{m} \setminus \{0^{m}\}$. 
\end{definition}

We now take the trees from \cref{dsep-bijection} and relabel them in the following alternate way. In a tree $S(\sigma)$, each \emph{non-root} internal vertex $v$ is labeled with a sign sequence $s_v$ of length $d-1$ which differs from its parents $p$'s sign sequence $s_p$. We will thus define a new tree which we call $\text{Sw}(\sigma)$ by replacing the label of each non-root internal vertex with a swap sequence. Specifically, we let the label at vertex $v$ be the binary string of length $d-1$ whose $j$--th coordinate is $0$ if $s_v^{(j)} = s_p^{(j)}$ and $1$ otherwise. We leave the label of the root unchanged, so that the root is still labeled with a sign sequence but all other vertices are labeled with a swap sequence. An example is shown in the right-hand side of \cref{fig:separable}.

Because the sign sequences $s_v$ and $s_p$ always differ in at least one coordinate, the label of $v$ in $\text{Sw}(\sigma)$ will be a swap sequence. And furthermore, given a tree labeled with swap sequences at all non-root internal vertices, we can uniquely determine the original sign sequences by exploring the tree from the root down to the leaves. Combined with \cref{dsep-bijection}, this thus yields the following result:

\begin{proposition}\label{dsep-bijection2}
The map $\sigma \mapsto \text{Sw}(\sigma)$ yields a bijection between $d$-separable permutations of size $n$ and rooted swap-labeled plane trees with $n$ leaves satisfying the following conditions:
\begin{itemize}
\item each internal vertex has at least two children,
\item the root is labeled by a sign sequence of length $(d-1)$, and
\item each other internal vertex is labeled with a swap sequence of length $(d-1)$.
\end{itemize}
Call such trees \vocab{swap trees} (of size $n$). 
\end{proposition}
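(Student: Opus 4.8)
The plan is to factor the map $\sigma \mapsto \text{Sw}(\sigma)$ as the composition of the bijection $\sigma \mapsto S(\sigma)$ from \cref{dsep-bijection} with a relabeling map $\Phi$ that sends each sign tree of size $n$ to a swap tree of size $n$, and then to check that $\Phi$ is itself a bijection. Since \cref{dsep-bijection} already carries the substantive combinatorial content (the uniqueness of the primary block structure and of the decomposition in \cref{eq:block-struct}), what remains is purely a local relabeling argument carried out one internal vertex at a time.

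First I would verify that $\Phi$ is well-defined, i.e.\ that it actually lands in the set of swap trees. The underlying rooted plane tree is unchanged, so every internal vertex still has at least two children, and the root's label (a sign sequence of length $d-1$) is untouched. For a non-root internal vertex $v$ with parent $p$, its new label is the length-$(d-1)$ binary string recording the coordinates in which $s_v$ and $s_p$ disagree; because $v$ and $p$ carry different sign sequences in a sign tree, this string is not the all-zero string and is therefore a legitimate swap sequence. Hence $\Phi$ maps sign trees of size $n$ to swap trees of size $n$.

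Next I would construct the inverse $\Phi^{-1}$ by a root-to-leaf pass over the tree. Given a swap tree $T$, keep the underlying plane tree and the root's sign label $s_0$, and then process the remaining internal vertices in any order in which each vertex is treated after its parent: for a non-root internal vertex $v$ whose parent $p$ has already been assigned a sign sequence $s_p$, set $s_v^{(j)} := s_p^{(j)}$ if the $j$-th bit of $v$'s swap label is $0$ and $s_v^{(j)} := -\,s_p^{(j)}$ if that bit is $1$, for each $1 \le j \le d-1$. This is well-defined (each $s_v$ depends only on the already-determined $s_p$), and since $v$'s swap label is nonzero we get $s_v \ne s_p$, so the output is a sign tree. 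By construction the $j$-th bit of $v$'s swap label in $\Phi(\Phi^{-1}(T))$ equals $\mathds{1}\{s_v^{(j)} \ne s_p^{(j)}\}$, the bit we started with, and symmetrically $\Phi^{-1}(\Phi(S(\sigma))) = S(\sigma)$; thus $\Phi$ is a bijection between sign trees and swap trees of size $n$. Composing $\Phi$ with the bijection of \cref{dsep-bijection} shows that $\sigma \mapsto \text{Sw}(\sigma) = \Phi(S(\sigma))$ is a bijection from $d$-separable permutations of size $n$ onto swap trees of size $n$.

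I do not expect a genuine obstacle here; the only point requiring a moment's care is that the reconstruction of the sign sequences must be performed in a root-to-leaf order, and that the ``nonzero swap sequence'' constraint in the definition of swap trees corresponds exactly to the ``label differs from the parent's label'' constraint defining sign trees — both facts being immediate from the definitions.
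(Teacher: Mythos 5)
Your proof is correct and follows essentially the same route as the paper: both factor through the sign-tree bijection of \cref{dsep-bijection}, observe that non-root labels become legitimate (nonzero) swap sequences precisely because sign sequences differ from the parent's, and invert by a root-to-leaf pass reconstructing the sign sequences. Your write-up simply makes explicit, and checks in detail, what the paper states more briefly.
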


Thus, to sample a uniform $d$-separable permutation of size $n$, we may sample such a swap tree with $n$ leaves uniformly at random and then apply the bijection in \cref{dsep-bijection2}. Since the swap tree and sign tree associated with a $d$-separable permutation have the same skeleton, we may refer to a vertex as being labeled with both a swap sequence and a sign sequence. 

We conclude this subsection by showing that the tree representations $S(\sigma)$ of a $d$-separable permutation $\sigma$ work well with reading off patterns:

\begin{lemma}\label{swaptreepattern}
Let $\sigma$ be a $d$-separable permutation of size $n$, and let $I = \{i_1 < i_2 < \cdots < i_k\} \subset \binom{[n]}{k}$. Then the pattern of $\sigma$ on $I$ can be obtained by constructing the subtree of $S(\sigma)$ induced by the leaves in $I$ as follows: construct the rooted plane subtree $T$ of $S(\sigma)$ whose vertices are the leaves in $S(\sigma)$ labeled with $I$ (in clockwise order) and the \vocab{closest common ancestor} of each pair of those leaves (that is, the first common {vertices} on the paths from those leaves to the root). Label the leaves of $T$ by $1, 2, \cdots, k$ in clockwise order instead of $i_1, \cdots, i_k$, and for each internal vertex $v$, label $v$ with its corresponding sign in $S(\sigma)$. The pattern on $I$ is then $S^{-1}(T)$ (here recall the discussion in \cref{signtreeinverse}).
\end{lemma}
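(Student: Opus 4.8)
\textbf{Proof proposal for \cref{swaptreepattern}.}

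The plan is to induct on the structure of $S(\sigma)$, using the recursive (block-sum) description of both $\sigma$ and its tree. Write $\sigma = \sigma_1 \encircle{s} \cdots \encircle{s} \sigma_N$ with primary block structure $s$, so the root of $S(\sigma)$ is labeled $s$ with children $S(\sigma_1), \dots, S(\sigma_N)$ from left to right. The leaves of $S(\sigma_m)$ correspond to a contiguous block of values $\{n_{m-1}+1, \dots, n_m\}$ in each coordinate (where $n_m = |\sigma_1| + \cdots + |\sigma_m|$), with the blocks stacked according to the signs $s$: in coordinate $j$, block $m$ lies above block $m'$ iff ($m > m'$ and $s_j = +1$) or ($m < m'$ and $s_j = -1$). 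First I would partition $I$ according to which child subtree each leaf falls into, say $I \cap (\text{leaves of } S(\sigma_m)) = I_m$, and let $M = \{m : I_m \neq \emptyset\}$. If $|M| = 1$, the induced subtree $T$ has root equal to the root of the induced subtree of $S(\sigma_m)$ (the original root is not a closest common ancestor of any pair), the pattern of $\sigma$ on $I$ equals the pattern of $\sigma_m$ on $I_m$ (contiguity of blocks in all coordinates), and we finish by the inductive hypothesis applied to $\sigma_m$. If $|M| \geq 2$, the root of $S(\sigma)$ \emph{is} the closest common ancestor of any pair of leaves lying in distinct child subtrees, so it is a vertex of $T$; its children in $T$ are exactly the induced subtrees $T_m$ for $m \in M$, in left-to-right order. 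By induction each $T_m$ (relabeled) satisfies $S^{-1}(T_m) = \pat_{I_m}(\sigma_m)$.

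The core of the argument is then to check that $S^{-1}(T) = \pat_I(\sigma)$ when $|M| \geq 2$. By definition of $S^{-1}$ (see \cref{signtreeinverse}), $S^{-1}(T) = S^{-1}(T_{m_1}) \encircle{s} \cdots \encircle{s} S^{-1}(T_{m_r})$ where $M = \{m_1 < \cdots < m_r\}$ and $s$ is the root label inherited from $S(\sigma)$ --- note the root label of $T$ is still $s$ even though $s$ might now coincide with the label of some child of $T$ (this is precisely why we need \cref{signtreeinverse}, allowing $S^{-1}$ on trees that violate the "different from parent" condition). So it suffices to prove: for each coordinate $j$, the relative order of the values $\sigma(i)^{(j)}$ for $i \in I$ is the block sum (with sign $s_j$) of the relative orders within each $I_m$. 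Within a single block $I_m$, this is the inductive hypothesis, which gives $\pat_{I_m}(\sigma_m)^{(j)}$ as the relative order. Across blocks, if $s_j = +1$ then all values of $\sigma^{(j)}$ in block $m_1$ are below all those in block $m_2$, etc. (and symmetrically for $s_j = -1$), which is exactly the defining property of $\oplus$ / $\ominus$ in \cref{dseparablepermutationdefn}. Hence $\pat_I(\sigma)^{(j)}$ is the $\encircle{s}$-block-sum in coordinate $j$, matching $S^{-1}(T)^{(j)}$.

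One bookkeeping point to handle carefully: the leaves of $T$ must be relabeled $1, \dots, k$ in clockwise order before applying $S^{-1}$, and I should confirm that "clockwise order of leaves in $T$" agrees with "increasing order of the $i_\ell$" --- this holds because $T$ is an induced \emph{plane} subtree of the plane tree $S(\sigma)$, and in $S(\sigma)$ the leaves appear left-to-right in the order $1, 2, \dots, n$ by construction of the bijection in \cref{dsep-bijection}. Similarly I should note that the closest common ancestor of a pair of leaves in $S(\sigma)$ is well-defined and that the set of all such ancestors, together with $I$, is closed under taking closest common ancestors, so $T$ is genuinely a tree; this is a standard fact about rooted trees and I would state it without much fuss. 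The base case $n = 1$ (or $|I| = 1$) is immediate: $T$ is a single leaf and $S^{-1}(T)$ is the trivial $d$-permutation of size $1$, which is the pattern on any single index.

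\textbf{Main obstacle.} The only real subtlety is the interaction with \cref{signtreeinverse}: when we pass to the induced subtree, consecutive internal vertices may end up with equal sign sequences (some intermediate blocks get deleted, so a child of the root of $T$ might carry the same sign as the root). We must therefore invoke $S^{-1}$ in the extended sense, and the verification above --- that block-summing with sign $s$ is still correct even when the summands themselves were built using the same sign $s$ --- relies on the associativity of $\encircle{s}$ for a fixed sign sequence, which was already observed in the discussion preceding \cref{eq:block-struct}. Making this associativity bookkeeping airtight, rather than any single computation, is where the care is needed.
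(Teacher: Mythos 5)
Your proof is correct, but it takes a genuinely different route from the paper's. The paper's proof is a direct pairwise argument: for a fixed pair $a<b$ and coordinate $j$, the relative order of $\sigma(i_a)^{(j)}$ and $\sigma(i_b)^{(j)}$ is governed solely by the $j$-th sign at the closest common ancestor of $i_a$ and $i_b$ in $S(\sigma)$, because reorderings at every other internal vertex do not affect the relative left/right position of those two leaves; the same pair has the same closest common ancestor, carrying the same sign, in $T$, so $S^{-1}(T)$ produces the same relative order. That's the whole proof. Your approach instead does structural induction on the primary block-sum decomposition, partitioning $I$ by child subtree and checking that both $S^{-1}$ and $\pat_I(\cdot)$ factor through $\encircle{s}$. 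Both approaches are sound; the paper's is shorter because it identifies the single invariant (the sign at the cca) that answers each pairwise comparison at once, whereas yours reconstructs the whole permutation level by level and requires the preliminary observations that $T$ decomposes correctly at its root, that the $|M|=1$ case reduces cleanly, and that $S^{-1}$ commutes with block sums.

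One small correction in emphasis: you locate the ``main obstacle'' in the associativity of $\encircle{s}$. Associativity is not actually load-bearing in your argument. The identity $S^{-1}(T) = S^{-1}(T_{m_1}) \encircle{s} \cdots \encircle{s} S^{-1}(T_{m_r})$ holds directly from the procedural definition of $S^{-1}$ (number leaves, flip children per sign, read off positions), regardless of what labels the $T_{m_i}$'s roots carry --- you never need to flatten a nested block sum or compare two parenthesizations. The only genuine subtlety is exactly the one \cref{signtreeinverse} handles: that $S^{-1}$ is still defined when a child of $T$'s root inherits the same sign as the root, which you do invoke. So the remark about associativity can be dropped without weakening the argument. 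Everything else --- the contiguity of the index blocks, the identification of the root as the cca across distinct child subtrees, the $|M|=1$ reduction, and the clockwise/left-to-right labeling bookkeeping --- checks out.
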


For example, suppose we are given the swap tree in the right diagram of \cref{fig:separable} and wish to determine the pattern on the indices $\{2, 4\}$. We see that the closest common ancestor is the root, which is labeled $(+, -)$. Thus the pattern of $((1, 3, 2, 4), (4, 2, 3, 1))$ on those indices is $((1, 2), (2, 1))$. Similarly, if we want the pattern on indices $\{2, 3\}$, the closest common ancestor is the other internal vertex of the tree. After converting its label back to a sign $(-, +)$, we find that the pattern on those indices is $((2, 1), (1, 2))$. Importantly, while we only need to consider the labels of the \emph{sign} tree at all closest common ancestors, those labels do still depend on the labels of all \emph{swap} sequences between those closest common ancestors and the root of $\text{Sw}(\sigma)$.

\begin{proof}
To determine the pattern of $\sigma$ on $I$, it suffices to determine for each coordinate $1 \le j \le d-1$ and pair of indices $1 \le a < b \le k$ whether $\sigma_{i_a}^{(j)} < \sigma_{i_b}^{(j)}$. Let $p$ be the closest common ancestor of leaves $i_a$ and $i_b$ in the original sign tree $S(\sigma)$. Recall from above the definition of $S^{-1}$ and the subsequent proof of \cref{dsep-bijection}. While determining the $j$--th coordinate of $\sigma$, leaf $i_a$ stays to the left of leaf $i_b$ if and only if the $j$--th sign of $p$'s label is $+1$ (since the reorderings at all other internal vertices do not change the relative order of $i_a$ and $i_b$). When we construct our subtree $T$ induced by the leaves in $I$, the leaves marked $a$ and $b$ again have common parent $p$ labeled by the same sign as in $S(\sigma)$, so the $k$-permutation $\tau = S^{-1}(T)$ will indeed have $\tau_a^{(j)} < \tau_b^{(j)}$ if and only if the condition above holds. This is exactly the definition of the pattern on the indices $I$.
\end{proof}

\begin{remark}\label{topologicalordersampling}
For later convenience, we describe more explicitly the process of converting a swap sequence tree $\widetilde{T}$ into its corresponding sign sequence tree $T$. Let $(v_1, v_2, \cdots, v_N)$ be an ordering of the vertices in $\widetilde{T}$ such that all descendants of a vertex come after that vertex in the ordering (in particular, $v_1$ must be the root). The root $v_1$ is still labeled with a sign sequence, and for all $i \ge 2$, we label $v_i$ with a sign sequence $s$ as follows. The parent of $v_i$ has already been labeled with some sign sequence $s'$ because it comes earlier in the ordering; now for each coordinate $j$, we set $s^{(j)}$ to be the same as $(s')^{(j)}$ if $v_i$'s swap sequence has a $0$ in the $j$--th coordinate and $s^{(j)}$ to be different from $(s')^{(j)}$ otherwise.

More generally, this implies that if $v$ and $w$ are two vertices in $\widetilde{T}$ such that $w$ is a descendant of $v$ and they are separated by a path of length $\ell$, the sign sequences at $v$ and $w$ are related as follows: look at the swap sequences on the path from $v$ to $w$, including $w$ but not $v$. If there are an even number of $1$s in the $j$--th coordinates of those sequences, then the sign sequences at $v$ and $w$ agree. Otherwise, they disagree.
\end{remark}

\subsection{Permuton convergence for $d$-separable permutations}

As with Schnyder wood permutations, convergence to a $d$-dimensional permuton for $d$-separable permutations will be shown via convergence of patterns. The advantage of introducing the bijection in \cref{dsep-bijection2} is that sampling uniform swap trees with a fixed number of leaves can be described in terms of sampling conditioned Galton-Watson trees.

\begin{proposition}\label{separablegaltonwatson}
Let $a = 2^{d-1} - 1$ be the number of swap sequences of length $(d-1)$ (recall \cref{defn:swap-seq}), and let $b = 1 - \sqrt{\frac{a}{a+1}}$. Consider the random variable $\xi$ defined by the probability mass function
\[
    \PP(\xi = r) = \begin{cases} ab^{r-1} & r \ge 2, \\ 1 - \sum_{r=2}^{\infty} ab^{r-1} & r = 0, \\ 0 & r = 1. \end{cases}
\]
Then viewing $\xi$ as the offspring distribution for a Galton-Watson tree, $\xi$ is critical (mean $1$), aperiodic, and has finite variance. Furthermore, we may sample a uniform swap tree with $n$ leaves in the following manner:
\begin{enumerate}
    \item Sample a Galton-Watson tree with offspring distribution $\xi$, conditioned to have $n$ leaves.
    \item Label the root uniformly by one of the $2^d$ sign sequences uniformly at random.
    \item Label each internal vertex with one of the $a = 2^{d-1} - 1$ swap sequences uniformly at random.
\end{enumerate}
\end{proposition}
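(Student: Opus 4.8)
The plan is to verify the three claimed properties of $\xi$ (criticality, aperiodicity, finite variance) by direct computation, and then to establish the sampling recipe by a counting argument that matches the Galton--Watson weights to the uniform measure on swap trees with $n$ leaves. First I would handle the offspring distribution. Writing $p_r = ab^{r-1}$ for $r \ge 2$, the normalization $p_0 = 1 - \sum_{r \ge 2} ab^{r-1} = 1 - \frac{ab}{1-b}$ is forced, and one checks $p_0 \in (0,1)$ using $b = 1 - \sqrt{a/(a+1)} \in (0,1)$. For criticality I would compute $\EE[\xi] = \sum_{r \ge 2} r\, ab^{r-1}$; since $\sum_{r\ge 0} b^r = \frac{1}{1-b}$ and $\sum_{r \ge 1} r b^{r-1} = \frac{1}{(1-b)^2}$, the sum $\sum_{r \ge 2} r b^{r-1} = \frac{1}{(1-b)^2} - 1$, so $\EE[\xi] = a\left(\frac{1}{(1-b)^2} - 1\right)$. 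The key algebraic identity is that $b = 1 - \sqrt{a/(a+1)}$ gives $1-b = \sqrt{a/(a+1)}$, hence $(1-b)^2 = \frac{a}{a+1}$ and $\frac{1}{(1-b)^2} - 1 = \frac{a+1}{a} - 1 = \frac{1}{a}$, so $\EE[\xi] = a \cdot \frac{1}{a} = 1$. Aperiodicity is immediate since $p_0, p_2, p_3 > 0$ so $\gcd$ of the support is $1$. Finite variance follows because $\EE[\xi^2] = \sum_{r\ge2} r^2 ab^{r-1}$ is a convergent series (geometric tail with ratio $b < 1$), so in particular $\EE[\xi^2] < \infty$.

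Next I would establish the sampling statement. By \cref{dsep-bijection2}, a uniform $d$-separable permutation of size $n$ corresponds to a uniform swap tree with $n$ leaves, so it suffices to show the three-step procedure produces a uniform such tree. The point is that the distribution of a $\xi$-Galton--Watson tree $T$ conditioned to have $n$ leaves is uniform over all plane trees with $n$ leaves in which every internal vertex has at least $2$ children. Indeed, for such a tree with internal vertices of out-degrees $r_1, \dots, r_m$ (each $r_i \ge 2$) and $n$ leaves, the Galton--Watson weight is $p_0^n \prod_{i=1}^m p_{r_i} = p_0^n \prod_{i=1}^m a b^{r_i - 1} = p_0^n a^m b^{\sum (r_i-1)}$. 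In any plane tree with $m$ internal vertices and $n$ leaves, the number of edges is $n + m - 1$, and $\sum_{i=1}^m r_i$ equals the number of edges, so $\sum_{i=1}^m (r_i - 1) = (n+m-1) - m = n-1$, a quantity depending only on $n$. Thus the weight is $p_0^n a^m b^{n-1}$, which still depends on $m$; however, once we condition on having exactly $n$ leaves, $m$ is also determined in distribution only through the tree shape — wait, more carefully: the weight $p_0^n a^m b^{n-1}$ does depend on $m$, so conditioning on $n$ leaves alone does not make it uniform unless $m$ is fixed. Here is where I must be careful: in fact $m$ is \emph{not} fixed by $n$, so the correct statement is that the conditional law is proportional to $a^m$ over trees with $n$ leaves. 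This is the main obstacle, and I would resolve it by observing that the factor $a^m$ is exactly cancelled by step (3): labeling each of the $m$ internal (non-root) vertices... actually the root is also internal, so there are $m$ internal vertices total but step (2) labels the root with $2^d$ choices and step (3) labels the remaining internal vertices — if the root is internal there are $m-1$ of those — with $a$ choices each, giving $a^{m-1}$ labelings, not $a^m$.

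Let me restate the resolution cleanly. For a swap tree with $m$ internal vertices (including the root) and $n$ leaves, the number of ways to choose its labels in steps (2)--(3) is $2^d \cdot a^{m-1}$, and the probability the Galton--Watson tree equals the underlying skeleton (conditioned on $n$ leaves) is proportional to $a^m b^{n-1} p_0^n \propto a^m$. Hence the probability the full procedure outputs a given swap tree is proportional to $a^m \cdot \frac{1}{2^d} \cdot \frac{1}{a^{m-1}} = \frac{a}{2^d}$, which is constant — so the output is uniform over swap trees with $n$ leaves. The one gap in this sketch is the off-by-one in the exponent of $a$: I would double-check, using the edge-count identity $\sum_{i}(r_i - 1) = n - 1$ with the sum over all $m$ internal vertices, whether the Galton--Watson weight gives $a^m$ or $a^{m-1}$ or $a^{m+1}$; the correct bookkeeping of which vertices are ``internal non-root'' versus ``all internal'' is exactly what makes the cancellation work, and getting this right is the crux of the proof. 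Once the exponents match, the uniformity is immediate and the proposition follows. Finally, I would note the tree must have at least one internal vertex when $n \ge 2$ (the root), and the degenerate case $n = 1$ is handled separately since then $S(\sigma)$ is a single unlabeled vertex.
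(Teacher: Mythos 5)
Your proof is correct and takes essentially the same approach as the paper: verify criticality via $(1-b)^2 = a/(a+1)$, aperiodicity and finite variance by inspection, and then uniformity by noting that the Galton--Watson weight of a skeleton with $m$ internal vertices and $n$ leaves is $p_0^n a^m b^{n-1}$ (using $\sum_i(r_i-1) = n-1$), which, when multiplied by the $\frac{1}{2^d}\cdot\frac{1}{a^{m-1}}$ labeling factor, is constant. The ``off-by-one in the exponent of $a$'' you flag is not actually a gap: the paper's proof writes $\frac{1}{a^m}$ because it reads step (3) as assigning a (discarded) swap label to the root as well, whereas you read it as labeling only non-root internal vertices with $\frac{1}{a^{m-1}}$ — both readings produce a final probability independent of $m$ and the tree shape, so the uniformity conclusion goes through either way, and your resolution (cancellation $a^m / a^{m-1} = a$) is correct.
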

\begin{proof}
We first verify the properties of the offspring distribution $\xi$. Since $\PP(\xi = 2)$ and $\PP(\xi = 3)$ are nonzero, the distribution is aperiodic. Also, $\PP(\xi = 0)$ is indeed positive because 
\[
    \sum_{r=2}^{\infty} ab^{r-1} = \frac{ab}{1 - b} = \sqrt{a(a+1)} - a,
\]
so this is a valid probability distribution. We have
\[
    \EE[\xi] = \sum_{r=2}^{\infty} rab^{r-1} = a \frac{b(2-b)}{(1-b)^2},
\]
and by the definition of $b$ this simplifies to $a \cdot \frac{1}{a} = 1$, so the distribution is critical. Finally, we have
\[
    \EE[\xi^2] = \sum_{r=2}^{\infty} r^2ab^{r-1} = \frac{ab(b^2 - 3b + 4)}{(1-b)^3},
\]
which is again finite because $b < 1$ and thus $\text{Var}(\xi) = \EE[\xi^2] - \EE[\xi]^2$ is finite.

Next, note that the sampling process we describe always satisfies the three conditions of a swap tree from \cref{dsep-bijection}, and so it remains to show that for any swap tree $T$ of size $n$, the probability of sampling $T$ with this process is the same. Indeed, if $T$ has $m$ internal vertices (including the root) $v_1, \cdots, v_m$ with $d_1, \cdots, d_m$ descendants respectively (note that $d_i \ge 2$ for all $i$), then the probability that it is sampled is
\[
    \left(\PP(\xi = 0)^n \cdot \prod_{i=1}^m ab^{d_i - 1}\right) \cdot \frac{1}{2^d} \cdot \frac{1}{a^m} 
\]
where the term in parentheses comes from the product of the probabilities at each vertex in the tree to have the specified number of offspring, $\frac{1}{2^d}$ is the probability of any particular root label, and $\frac{1}{a}$ is the probability of any particular swap sequence at each of the $m$ internal vertices. But this expression simplifies to $\frac{1}{2^d} \PP(\xi = 0)^n b^{d_1 + \cdots + d_m - m}$, and $d_1 + \cdots + d_m$ counts each non-root vertex of the tree once (meaning it is equal to $m + n - 1$). Thus the probability of sampling $T$ is only a function of $d$ and $n$, meaning this process does indeed yield a uniform swap tree, as desired.
\end{proof}

This procedure allows us to make use of limit results for Galton-Watson trees, and we are now almost ready for the proof of our main result. First, we establish a concrete description of the limiting pattern frequencies for the Brownian separable $d$-permuton $\mu^B_{p_1, \cdots, p_{d-1}}$:

\begin{proposition}\label{prop:pattern-sep-perm}
Let $\mu^B_{p_1, \cdots, p_{d-1}}$ be the Brownian separable $d$-permuton  of parameters $(p_1, \cdots, p_{d-1}) \in [0,1]^{d-1}$ from \cref{brownian-separable-d-permuton}. Then, $P_{\mu^B_{p_1, \cdots, p_{d-1}}}[k]$ from \cref{defnsamplefrompermuton} has the same distribution as the $d$-permutation $\rho_k$ sampled in the following way:
\begin{itemize}
\item Sample a uniform rooted binary plane tree with $k$ leaves.
\item Label each internal vertex independently with a collection of signs $s=(s_1, \cdots, s_{d-1})$, where $s_i$ is $+1$ with probability $p_i$ and $-1$ with probability $1 - p_i$. This yields a tree $T$ satisfying the first two conditions of \cref{dsep-bijection}.
\item Set $\rho_k = S^{-1}(T)$ (see \cref{signtreeinverse} for why this is well-defined).
\end{itemize}
\end{proposition}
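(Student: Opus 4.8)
The plan is to reduce the statement to the discrete setting using \cref{tfaemain} and the bijection of \cref{dsep-bijection2}, and then invoke standard scaling limit results for conditioned Galton--Watson trees. First I would recall that, by \cref{tfaemain} (together with \cref{mainresult4}, which will be proved using exactly this proposition, so one has to be careful to avoid circularity — the cleaner route is to prove this proposition \emph{directly} from the definition of $\mu^B_{p_1,\dots,p_{d-1}}$ in \cref{brownian-separable-d-permuton}), it suffices to compute $\PP(P_{\mu^B_{p_1,\dots,p_{d-1}}}[k] = \tau)$ for each fixed $\tau \in \mc{S}_{d,k}$ and match it with $\PP(\rho_k = \tau)$. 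Sampling $P_{\mu^B_{p_1,\dots,p_{d-1}}}[k]$ means taking $k$ iid ordered points $v_1 < \cdots < v_k$ in $[0,1]$, looking at the Brownian excursion $e$, and for each pair $i < j$ reading off the sign $s(\ell_{ij})$ attached to the (a.s.\ unique) location $\ell_{ij}$ of the minimum of $e$ on $[v_i, v_j]$, in each of the $d-1$ coordinates. The key combinatorial fact is that the minima $\ell_{ij}$ realize at most $k-1$ distinct values, and the tree structure they induce on $\{v_1,\dots,v_k\}$ — namely, which minimum lies between which points — is exactly a uniform rooted binary plane tree with $k$ leaves; this is the classical statement underlying the construction of the Brownian separable permuton in \cite{maazoun2020brownian}, and I would cite it rather than reprove it. Moreover, conditionally on this tree, the sign labels $s(\ell)$ at its $k-1$ internal vertices are iid with the stated Bernoulli($p_i$) laws in each coordinate, since they come from iid labels on the countably many local minima of $e$.

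Given this, the relative order of $v_i$ and $v_j$ in coordinate $m$ under the order $<_e^{(m)}$ is determined precisely by $s_m$ at their closest common ancestor in this tree (with $v_i$ staying to the left iff that sign is $+1$), by the same argument as in the proof of \cref{swaptreepattern}. Comparing with how $S^{-1}(T)$ is computed from a signs-labeled binary tree $T$ — reorder children at each internal vertex according to the sign, then read leaves left to right — we see that $P_{\mu^B_{p_1,\dots,p_{d-1}}}[k]$ has exactly the law of $S^{-1}(T)$ where $T$ is a uniform binary plane tree with $k$ leaves with iid Bernoulli-signed internal vertices. This is precisely the description of $\rho_k$ in the proposition statement, so the two distributions agree. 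I would spell out the ``closest common ancestor determines the relative order'' step carefully, as it is the conceptual crux, reusing the language established around \cref{swaptreepattern} and \cref{signtreeinverse} (noting that $S^{-1}$ applied to a binary tree is always well-defined even though such a tree need not be a genuine sign tree — adjacent internal vertices may carry equal sign sequences).

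The main obstacle is ensuring the sign labels at the $k-1$ relevant minima $\ell_{ij}$ are genuinely iid with the correct marginals \emph{conditionally on the tree shape}, i.e.\ that no spurious correlation is introduced by conditioning on which local minima happen to be the between-point minima. This follows because the tree shape is a measurable function of $e$ alone (and of the $v_i$'s), while the sign sequence $(s(\ell))_\ell$ is, by construction in \cref{brownian-separable-d-permuton}, an independent family of labels; conditioning on $e$ and the $v_i$'s selects $k-1$ of these labels, which remain iid and correctly distributed. A second, more minor point is the measure-zero handling: one must note $v_i \neq v_j$ a.s., the between-point minimum is a.s.\ unique, and the $v_i$'s a.s.\ avoid the null set $N$ on which the order $<_e^{(m)}$ is undefined. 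With these in place the proof is short; essentially all the work has been front-loaded into \cref{swaptreepattern}, \cref{dsep-bijection2}, and the cited properties of the Brownian excursion's minima.
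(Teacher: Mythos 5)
Your proposal is correct and follows essentially the same route as the paper's proof: sample $k$ iid ordered points, form the induced binary tree from the between-point minima of the excursion (citing the standard result that this tree is a uniform binary plane tree with $k$ leaves — the paper cites Le Gall and \cite{separablelimit} rather than \cite{maazoun2020brownian}, but that is immaterial), observe that the sign labels at the $k-1$ relevant minima are iid Bernoulli conditionally on the tree shape, and conclude via the closest-common-ancestor description that this matches $S^{-1}(T)$. Your extra care about conditional independence of the signs given the tree and about the measure-zero events is sound but not substantively different from what the paper leaves implicit.
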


\begin{example}
\cref{prop:pattern-sep-perm} tells us that $P_{\mu^B_{p_1, p_2}}[2]$ is distributed as the $3$-permutation $\rho_2$ of size $2$ described in the proposition statement. Note that the only rooted binary plane tree with $2$ leaves is \begin{tikzpicture}[scale=0.3]
\draw[fill=black] (0, 0) circle(0.2cm);
\draw[fill=black] (-0.5, 1) circle(0.2cm);
\draw[fill=black] (0.5, 1) circle(0.2cm);
\draw (-0.5, 1) -- (0, 0) -- (0.5, 1);
\end{tikzpicture}, so the probabilities of $\rho_2$ being equal to each permutation of size $2$ depend only on the probabilities of different labels appearing at the root. Specifically, $\PP(\rho_2 = ((1, 2), (1, 2)))$, $\PP(\rho_2 = ((1, 2), (2, 1)))$, $\PP(\rho_2 = ((2, 1), (1, 2)))$, and $\PP(\rho_2 = ((2, 1), (2, 1)))$ are exactly the probabilities that the root is labeled $(+, +), (+, -), (-, +)$, and $(-, -)$, respectively, which are $p_1p_2, p_1(1-p_2), (1-p_1)p_2$, and $(1-p_1)(1-p_2)$.

For a more complicated example, take $k = 3$. The size-$3$ pattern $((1,2,3), (1,3,2))$ only occurs in the above procedure if the sample binary tree with $3$ leaves is of shape \begin{tikzpicture}[scale=0.3]
\draw[fill=black] (0, 0) circle(0.2cm);
\draw[fill=black] (-0.5, 1) circle(0.2cm);
\draw[fill=black] (0.5, 1) circle(0.2cm);
\draw[fill=black] (1, 2) circle(0.2cm);
\draw[fill=black] (0, 2) circle(0.2cm);
\draw (-0.5, 1) -- (0, 0) -- (0.5, 1);
\draw (1, 2) -- (0.5, 1) -- (0, 2);
\end{tikzpicture}, the root is labeled $(+, +)$, and the other internal vertex is labeled $(+, -)$. Thus the probability that $\rho_3 = ((1,2,3), (1,3,2))$ is $\frac{1}{2} p_1^2 p_2(1-p_2)$. On the other hand, the size-$3$ pattern $((1, 2, 3), (1, 2, 3))$ can occur for either the binary tree \begin{tikzpicture}[scale=0.3]
\draw[fill=black] (0, 0) circle(0.2cm);
\draw[fill=black] (-0.5, 1) circle(0.2cm);
\draw[fill=black] (0.5, 1) circle(0.2cm);
\draw[fill=black] (1, 2) circle(0.2cm);
\draw[fill=black] (0, 2) circle(0.2cm);
\draw (-0.5, 1) -- (0, 0) -- (0.5, 1);
\draw (1, 2) -- (0.5, 1) -- (0, 2);
\end{tikzpicture} or the binary tree \begin{tikzpicture}[scale=0.3]
\draw[fill=black] (0, 0) circle(0.2cm);
\draw[fill=black] (-0.5, 1) circle(0.2cm);
\draw[fill=black] (0.5, 1) circle(0.2cm);
\draw[fill=black] (-1, 2) circle(0.2cm);
\draw[fill=black] (0, 2) circle(0.2cm);
\draw (-0.5, 1) -- (0, 0) -- (0.5, 1);
\draw (0, 2) -- (-0.5, 1) -- (-1, 2);
\end{tikzpicture}
as long as both internal vertices are labeled $(+, +)$, so the probability that $\rho_3 = ((1, 2, 3), (1, 2, 3))$ is $\frac{1}{2}p_1^2p_2^2 + \frac{1}{2}p_1^2p_2^2 = p_1^2p_2^2$.
\end{example}

\begin{proof}[Proof of \cref{prop:pattern-sep-perm}]
Recall that the sources of randomness for the Brownian separable $d$-permuton are the one-dimensional Brownian excursion $e(t)$, as well as the variables $s(\ell) \in \{\pm 1\}^d$ at each local minimum $\ell$ of $e$.

Recall also from \cref{defnsamplefrompermuton} that given the Brownian separable $d$-permuton $\mu:=\mu^B_{p_1, \cdots, p_{d-1}}$, the permutation $P_{\mu}[k]$ is obtained as follows: conditioning on $\mu$,  sample $k$ independent points $\vec{x}_1, \cdots, \vec{x}_k$ in $[0, 1]^d$ with distribution $\mu$, and let $P_\mu[k]=\left(P_\mu[k]^{(1)},\dots,P_\mu[k]^{(d-1)}\right)$ be the unique $d$-dimensional permutation of size $k$ in the same relative order as the points $\vec{x}_i$.

By {\cref{brownian-separable-d-permuton-eqn} in} the definition of the Brownian separable $d$-permuton, the previous permutation can be equivalently obtained as follows: sample $k$ independent uniform points $(v_i)_{i\in [k]}$ on $[0, 1]$, ordered so that $v_1 < \cdots < v_k$. Then, for all $j \in [d-1]$, $P_\mu[k]^{(j)}$ is the permutation induced by the order of the points $(v_i)_{i\leq k}$ with respect to the order $<_e^{(j)}$. Equivalently, for $1 \le a<b\le k$,
\begin{eqnarray*}
	P_\mu[k]^{(j)}(a)<P_\mu[k]^{(j)}(b) \quad \text{if and only if} \quad s_{j}(v_a,v_b)=+1,
\end{eqnarray*}
where $\ell_{v_a,v_b}\in[v_a,v_b]$ is a.s.\ the unique location of the local minimum $\min_{[v_a,v_b]}e$.

We now define a random planar tree $\Tree\left(e, (v_1, \cdots, v_k)\right)$ with $k$ leaves corresponding to $v_1, \dots, v_k$ and internal vertices the local minima on intervals $[v_i, v_j]$ via the following recursive construction. If $k = 1$, then the tree is just a single leaf. Otherwise, $e$ almost surely achieves its unique minimum on $[v_1, v_k]$ at some point $m \in [v_p, v_{p+1}]$ for some $p$. We then define $\Tree\left(e, (v_1, \cdots, v_k)\right)$ to have a root with children $\Tree\left(e, (v_1, \cdots, v_p)\right)$ and $\Tree\left(e, (v_{p+1}, \cdots, v_k)\right)$ in that order.

As proved in \cite[Section 2.6]{legall} and rephrased in \cite[Lemma A.3]{separablelimit}, the distribution of $\widetilde{T}=\Tree\left(e, (v_1, \cdots, v_k)\right)$ is then uniform on the set of binary plane trees with $k$ leaves. Each internal vertex $v$ of $\widetilde{T}$ corresponds to a local minimum $m_{v}$ of $e$, and we then label each such $v$ with its sign sequence $s(m_{v})$. Then $S^{-1}(\widetilde{T})$ is clearly equal to $P_{\mu^B_{p_1, \cdots, p_{d-1}}}[k]$ thanks to the discussion in the previous paragraph and the description of $S^{-1}$ before \cref{dsep-bijection}. Since $\widetilde{T}$ has the same distribution of the tree $T$ in the proposition statement, we can conclude the proof.
\end{proof}

We may now prove the main result of this section.

\begin{proof}[Proof of \cref{mainresult4}]
Thanks to \cref{tfaemain}, it is enough to prove that, for each $k$, a uniform size-$k$ pattern of the uniform $d$-separable permutation $\sigma_n$ converges in distribution to $\rho_k$ as defined in \cref{prop:pattern-sep-perm} with all $p_i = \frac{1}{2}$.

Recall that by \cref{dsep-bijection2}, we may sample $\sigma_n$ by sampling a uniform swap tree of size $n$ and then applying the inverse map $\text{Sw}^{-1}$. Moreover, \cref{separablegaltonwatson} tells us that a uniform swap tree can be obtained by conditioning a Galton-Watson tree (with offspring distribution as in the proposition statement) to have $n$ leaves, then labeling the root with a uniform sign sequence and independently labeling all other internal vertices with a uniform swap sequence. Call this tree $T_n$.

It is well-known that if we choose a uniform subset $I$ of $k$ leaves of $T_n$ (as is needed to sample a uniform pattern of $\sigma_n$ as in \cref{swaptreepattern}) and consider the subtree $T_{n, I}$ induced by those leaves, i.e. the tree formed by the leaves in $I$ and their closest common ancestors, then with high probability (i.e.\ with probability tending to $1$ as $n \to \infty$) $T_{n, I}$ is a uniform binary plane tree with $k$ leaves and an extra vertex (corresponding to the root), and additionally all vertices in $T_n$ corresponding to the vertices in $T_{n,I}$ are at distance at least $n^{1/4}$. (See \cite[Section 3.3]{aldouscrt2} for a statement about heights of uniformly chosen vertices and \cite[Section 4]{Borga_2020} for the more refined result.) Thus, from here on, we may condition on this high probability event.

\medskip

The key point now is that in our sampling procedure for $T_n$, given the shape of the tree, we picked an independent uniform \emph{swap} sequence for each internal vertex. In contrast, in the (limiting) sampling procedure in \cref{prop:pattern-sep-perm} for sampling patterns, we instead pick an independent uniform \emph{sign} sequence. However, since we condition on all relevant distances being at least $n^{1/4}$, we claim the signs of our internal vertices of $T_{n, I}$ do become asymptotically uniform and independent. This claim is a simple consequence of the fact that in the procedure described in \cref{topologicalordersampling} to determine the signs of the internal vertices of $T_n$ corresponding to the internal vertices of $T_{n, I}$ from the swap sequences in $T_n$, the only relevant quantities are the parities of the number of $1$s in the swap sequences between such internal vertices of $T_n$. Such parities are asymptotically uniform and independent because the distances between those vertices are at least $n^{1/4}$.

We conclude that $T_{n, I}$ is with high probability a uniform binary plane tree with $k$ leaves with an extra vertex (corresponding to the root), and for each of those finitely many possible tree shapes, as $n \to \infty$ the internal sign sequences are asymptotically iid uniform sign sequences. Thus our random pattern is indeed equal in distribution to that of \cref{prop:pattern-sep-perm} with all $p_i = \frac{1}{2}$, concluding the proof.
\end{proof}

We conclude by showing that the Brownian separable $d$-permuton $\mu^B_{p_1, \cdots, p_{d-1}}$ is not a ``degenerate limit,'' in contrast to the Schnyder wood permuton (recall \cref{marginaldetermines}):

\begin{proposition}\label{prop:non-triviality}
Let $p_1, \cdots, p_{d-1} \in (0, 1)$, and let $S \subsetneq [d]$ be any proper subset of the coordinates which includes $1$. Then the marginal of $\mu^B_{p_1, \cdots, p_{d-1}}$ on the coordinate set $S$ does not determine the full law of the permuton.
\end{proposition}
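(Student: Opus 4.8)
The plan is to exhibit, for the given proper coordinate set $S$ (with $1 \in S$), two different values of the parameter vector that induce the same marginal on $S$ but genuinely different permutons. Since $S \subsetneq [d]$ and $1 \in S$, there is some coordinate $j_0 \in \{2, \dots, d\}$ not in $S$; write $i_0 = j_0 - 1 \in \{1, \dots, d-1\}$ for the corresponding parameter index, so $p_{i_0}$ does not appear among the parameters $p_i$ with $i+1 \in S$. By \cref{prop:pattern-sep-perm}, the pattern frequencies of $\mu^B_{p_1, \dots, p_{d-1}}$ — and hence, by \cref{tfaemain} and \cref{randunique}, the full law of the permuton — are determined by sampling a uniform binary plane tree and labeling each internal vertex with an independent sign sequence whose $i$-th entry is $+1$ with probability $p_i$. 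The key structural observation is that the marginal of $\mu^B_{p_1,\dots,p_{d-1}}$ on the coordinate set $S$ is itself a Brownian separable $(|S|)$-permuton: projecting onto the coordinates in $S$ amounts to keeping only the signs $s_i(\ell)$ for $i + 1 \in S$ (together with the first coordinate, which is always the identity axis), so this marginal equals $\mu^B_{(p_i : i+1 \in S, i \ge 1)}$, which does not depend on $p_{i_0}$ at all. This is the content of \cref{brownian-separable-d-permuton-eqn}: the relations $<_e^{(i)}$ for the surviving coordinates are unaffected by the value of $p_{i_0}$.

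With that observation, the argument is short. First I would formally verify the marginal claim: unwinding \cref{brownian-separable-d-permuton-eqn}, for any measurable $B \subseteq [0,1]^S$ we have $\mu^B_{p_1,\dots,p_{d-1}}(\pi_S^{-1}(B)) = \mathrm{Leb}(\{t : (t, (\psi_e^{(i-1)}(t))_{i \in S, i \ge 2}) \in B\})$, and the signs $s_{i}$ entering the definition of $\psi_e^{(i)}$ for $i + 1 \in S$ are independent of $s_{i_0}$; hence this marginal is exactly $\mu^B$ with the reduced parameter list, independent of $p_{i_0}$. So any two choices of $p_{i_0} \in (0,1)$, say $p_{i_0} = p$ and $p_{i_0} = p'$ with $p \ne p'$ and all other $p_i$ fixed, yield permutons with identical $S$-marginal. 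Second, I would show these two permutons are genuinely distinct in distribution. It suffices to find one pattern $\tau$ whose expected frequency differs. Take $k = 2$: the only binary tree with two leaves is the single-edge tree, so by \cref{prop:pattern-sep-perm} (see the first displayed example after it), $\PP(P_\mu[2] = \tau)$ for the four patterns $\tau \in \mc{S}_{d,2}$ restricted to distinguishing the $i_0$-th coordinate is governed directly by the root label: the probability that the $i_0$-th sign of the root is $+1$ is $p_{i_0}$. Concretely, summing over the $2^{d-1}$ size-$2$ patterns whose $i_0$-th coordinate reads $(1,2)$ gives total probability $p_{i_0}$, which differs between $p$ and $p'$. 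Hence $\EE[\freq(\tau,\mu^B_{\dots,p,\dots})] \ne \EE[\freq(\tau,\mu^B_{\dots,p',\dots})]$ for some $\tau \in \mc{S}_{d,2}$, so by \cref{tfaemain} (or directly by \cref{randunique}) the two permutons are not equal in distribution. This shows the $S$-marginal does not determine the law.

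I would also handle the trivial edge cases cleanly: if $S$ fails to contain $1$ the statement as phrased does not apply, and since we assume $1 \in S$ and $S \ne [d]$, the index $i_0$ always exists, so no degenerate sub-case arises. One small point worth stating explicitly is why equality of the $S$-marginal as a random measure (not merely in distribution) holds here: in fact both permutons can be built on the same probability space from the same Brownian excursion $e$ and the same family of signs for coordinates $i \ne i_0$, resampling only the $i_0$-th coordinate signs; then the $S$-marginals coincide pointwise as random measures, which is stronger than needed.

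The main obstacle — though it is more of a bookkeeping nuisance than a genuine difficulty — is making the identification ``$S$-marginal of $\mu^B$ is again a Brownian separable permuton'' fully rigorous at the level of the defining pushforward in \cref{brownian-separable-d-permuton-eqn}, i.e. checking that projecting the measure $(\mathrm{Id}, \psi_e^{(1)}, \dots, \psi_e^{(d-1)})_\ast \mathrm{Leb}$ onto coordinates $S$ literally returns $(\mathrm{Id}, (\psi_e^{(i-1)})_{i \in S, i \ge 2})_\ast \mathrm{Leb}$, and that the latter's law does not see $p_{i_0}$. This is immediate from the product structure of the sign labels and the fact that $\psi_e^{(i)}$ depends only on $e$ and the $i$-th coordinate signs, but it should be spelled out. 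Everything else — the pattern computation for $k=2$ and the appeal to \cref{tfaemain}/\cref{randunique} — is routine.
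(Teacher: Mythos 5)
Your observation that the $S$-marginal equals $\mu^B_{(p_i : i+1 \in S)}$, hence is insensitive to $p_{i_0}$, is correct and matches the first step of the paper's proof. The gap is in what you do with it. You vary $p_{i_0}$: the two permutons you couple have the same $S$-marginal but \emph{different distributions}, and you distinguish them via a size-$2$ pattern frequency. This proves that the law of the $S$-marginal does not pin down the law of the full permuton \emph{as $p_{i_0}$ ranges over $(0,1)$}. But the proposition, read alongside \cref{marginaldetermines} (which asserts a deterministic measurable $F$ with $\mu_S = F(\mu_S^g)$) and the remark after it that this is the continuum analog of ``a random $d$-permutation is not specified by a strict subset of its coordinates,'' is about a single fixed parameter vector: there is no measurable $F$ with $\mu^B_{p_1,\ldots,p_{d-1}} = F$ applied a.s.\ to its own $S$-marginal. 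Your argument does not rule this out; it is entirely consistent that for each value of $p_{i_0}$ there is a different reconstruction function $F_{p_{i_0}}$ that works for that value.

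To actually prove the statement, the coupling you sketch in your penultimate paragraph is the right mechanism, but you must resample the $i_0$-th coordinate signs with the \emph{same} parameter $p_{i_0}$, so that both $\mu$ and $\overline\mu$ have law $\mu^B_{p_1,\ldots,p_{d-1}}$ and share their $S$-marginal pointwise. But then your pattern-frequency comparison yields nothing (the laws agree, so all expected frequencies agree), and you need a substitute argument that $\mu \ne \overline\mu$ with positive probability. The paper supplies this: fix a local minimum $\ell$ of the excursion with $\ell_x \notin \{0, \tfrac12, 1\}$; with probability $p_{i_0}(1-p_{i_0}) > 0$ the signs $s_{i_0}(\ell)$ and $\overline{s}_{i_0}(\ell)$ disagree, and on that event the $(1, i_0+1)$-marginals of $\mu$ and $\overline\mu$ have disjoint supports on one of the slabs $[0,\ell_x] \times [0,1]$ or $[\ell_x, 1] \times [0,1]$, while both (being $2$-permutons) must assign positive mass there; hence $\mu \ne \overline\mu$. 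That last step is the essential content your proposal is missing.
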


This result can be thought of as the continuum analog of the fact that a random $d$-dimensional permutation $\sigma = (\sigma^{(1)}, \cdots, \sigma^{(d-1)})$ is  not specified by a strict subset of the $\sigma^{(i)}$s.

\begin{proof}
For each such subset $S$, we construct two different copies $\mu, \overline{\mu}$ of $\mu^B_{p_1, \cdots, p_{d-1}}$ whose marginals on the coordinate set $S$ are exactly identical. Let $\mu, \overline{\mu}$  be two Brownian separable $d$-permutons constructed as in \cref{brownian-separable-d-permuton} using the same Brownian excursion $e$ and with coupled sign sequences $s(\ell)$ and $\overline{s}(\ell)$ obtained as follows: let $s_{i-1}(\ell) = \overline{s}_{i-1}(\ell)$ if $i \in S \setminus \{1\}$, and otherwise sample $s_{i-1}(\ell)$ and $\overline{s}_{i-1}(\ell)$ independently from each other (each $+1$ with probability $p_{i-1}$ and $-1$ with probability $1 - p_{i-1}$).

Recalling from \cref{brownian-separable-d-permuton-eqn} that the marginal of the Brownian separable permuton on the coordinate set $S$ depends only on the random functions $\psi_e^{(i - 1)}(t)$ for $i \in S \setminus \{1\}$, which in turn only depend on $e$ and on the signs $(s_{i-1}(\ell))$, the marginals of $\mu$ and $\overline{\mu}$ on $S$ are identical. However, the full permutons $\mu$ and $\overline{\mu}$ are different with positive probability. Indeed, consider any local minimum $\ell$ whose $x$-coordinate is not in $\{0, \frac{1}{2}, 1\}$ and any $i \not\in S$. The probability that $s_{i-1}(\ell) = 1$ and $\overline{s}_{i-1}(\ell) = -1$ is $p_{i-1}(1 - p_{i-1}) > 0$. And on this event, the two permutons are different, because the $(1, i)$ marginal of $\mu$ is supported on $[0, \ell_x]^2 \cup [\ell_x, 1]^2$, while the $(1, i)$ marginal of $\overline{\mu}$ is supported on $[0, \ell_x] \times [1 - \ell_x, 1] \cup [\ell_x, 1] \times [0, 1 - \ell_x]$; these supports are disjoint on either $[0, \ell_x] \times [0, 1]$ or $[\ell_x, 1] \times [0, 1]$, but the $(1, i)$ marginal is a $2$-permuton and hence must have positive measure on both of those sets. Thus $\mu$ and $\overline{\mu}$ are indeed different probability measures on $[0, 1]^d$, as desired.
\end{proof}

\appendix

\section{Proofs of the combinatorial results for Schnyder woods}\label{schnydercombinatoricssectionproofs}

This appendix is dedicated to proving \cref{schnyderwoodbijectperm} and \cref{mainresult1}. The latter proof involves some additional combinatorial constructions, in which we define preliminary versions (denoted by $pZ^g_M$ and $pZ^r_M$) of the green and red coalescent-walk processes $Z^g_M$ and $Z^r_M$. 

Recall that each vertex of a Schnyder wood is assigned a blue, green, and red label between $1$ and $n$ inclusive. Unless otherwise specified, vertices referred to by number are always done so by their blue label.

\medskip

We first give the missing proof of \cref{schnyderwoodbijectperm}.

\begin{proof}[Proof of \cref{schnyderwoodbijectperm}] {It suffices to prove that if $\sigma = \sigma_M$ for some Schnyder wood $M$, then $\sigma$ determines the shapes of the green and blue trees of $M$.} Indeed, \cite[Section 3]{bonichonpaths}\footnote{To match notation with our paper, a Schnyder wood triangulation denoted $(T_0, T_1, T_2)$ in \cite{bonichonpaths} has green spanning tree $T_0$, blue spanning tree $T_1$, and red spanning tree $T_2$.} describes an algorithm that bijectively maps each Schnyder wood triangulation $M$ of size $n$ to a \vocab{star realizer} (that is, a Schnyder wood triangulation whose red spanning tree is a star centered at the red root) together with a sequence of \vocab{prefix flips} (which are local moves that turn one Schnyder wood triangulation into another one of the same size). In particular, these prefix flips do not change the shape of the green tree. Thus, once we determine the shape of the green tree, we determine uniquely the star realizer $M'$ of our Schnyder wood triangulation by \cite[Proposition 6]{bonichonpaths}. Furthermore, given the shape of the blue tree of $M$, we may determine the sequence of prefix flips via \cite[Proposition 10]{bonichonpaths} by computing the differences in children counts between the vertices in the green trees of $M$ and $M'$ of the same green label. Then we may apply the prefix flips to $M'$ in the order specified by the algorithm, uniquely determining $M$ in terms of $\sigma$. 

\medskip

{Proceeding with the proof, it suffices to show that $\sigma^g = \sigma^{(1)}$ determines the shape of the green spanning tree. Once we do this, because $\sigma(i)^{(1)}$ and $\sigma(i)^{(2)}$ are the green and red labels of the vertex in $M$ with blue label $i$ (by \cref{schnyderpermutationdefn}), we may cyclically permute the roles of the three colors by defining a $3$-permutation $\tau$ where $\tau(i)^{(1)}$ and $\tau(i)^{(2)}$ are the blue and green labels of the vertex in $M$ with red label $i$ (which can be obtained from $\sigma$). Repeating the argument with the cyclically permuted colors then shows that $\tau^{(1)}$ determines the shape of the blue spanning tree, which will complete the proof.} 

For this, we now state and prove two key facts. 

\medskip

\noindent\textbf{Fact 1}: If $v_1$ and $v_2$ are children of the same vertex in the green tree of a Schnyder wood triangulation with $v_1$ coming before $v_2$ in the green clockwise traversal, then the (blue) label of $v_1$ is always larger than the label of $v_2$. 

\medskip

(For example, the green children of vertex $1$ in \cref{fig:sampleschnyder} {have labels} $5, 3, 2$ in that clockwise order.) Indeed, consider the region $\mc{R}_{v_1}$ bounded by (1) the edge between the blue and green root, (2) the green path from $v_1$ to the green root, and (3) the blue path from $v_1$ to the blue root. Then $v_2$ must be either inside or on the boundary of $\mc{R}_{v_1}$, and all of these vertices are visited before $v_1$ in the blue tree's traversal, proving the claim. 

\medskip

\noindent\textbf{Fact 2}: If $w$ is a child of $v$ in the green tree, then the (blue) label of $w$ is larger than the label of $v$. 

\medskip

Indeed, this means $v$ is on the green path from $w$ to the green root, meaning in particular that it is on the boundary of $\mc{R}_w$, so $v$ will be visited before $w$ in the blue tree's traversal.

\medskip

Therefore {(recall now the interpretation of $(\sigma^g)^{-1}$ from \cref{inverseofschnyder})}, {since we know $\sigma$, we know the blue} labels {$(\sigma^g)^{-1}(1), \cdots, (\sigma^g)^{-1}(n)$} of the vertices {reached by} the green tree's clockwise traversal, in order. The shape of the {green} tree {can then be fully specified} by describing the {green} parents of these $n$ vertices{, and we can do so inductively as follows.}

The vertex with label ${(\sigma^g)^{-1}}(1)$ is visited first, {so its (green) parent must be the green root.} Now if we have already determined the parents of the first $i$ vertices (with labels ${(\sigma^g)^{-1}}(1), \cdots, {(\sigma^g)^{-1}}(i)$), then the parent of the vertex labeled ${(\sigma^g)^{-1}}(i+1)$ must be one of the vertices on the green path connecting the green root to the vertex labeled ${(\sigma^g)^{-1}}(i)$. But the labels along this path form an increasing sequence of integers, say $a_1 < \cdots < a_k = {(\sigma^g)^{-1}}(i)$, and now we consider different cases according to the value of ${(\sigma^g)^{-1}}(i+1)$ (which is some integer not equal to any of $a_1, \cdots, a_k$):
\begin{itemize}
    \item If ${(\sigma^g)^{-1}}(i+1) < a_1$, then by Fact $2$ the parent of our new vertex can only be the green root. 
    \item If ${(\sigma^g)^{-1}}(i+1) > a_k$, then by Fact $1$ the parent of our new vertex can only be the vertex labeled ${(\sigma^g)^{-1}}(i)$.
    \item Finally, if $a_j < {(\sigma^g)^{-1}}(i+1) < a_{j+1}$ for some $1 \le j \le k-1$, then by Facts $1$ and $2$ the parent can only be the vertex labeled $a_j$.
\end{itemize}
Thus the permutation $\sigma$ provides a systematic method for determining the shape of the green tree, {and we have already shown that applying this method twice is enough to determine the full Schnyder wood $M$, so the proof is complete.}
\end{proof}

We now turn to the proof of \cref{mainresult1}. We begin with the following definition.

\begin{definition}\label{prerandomwalk}
Let $M$ be a Schnyder wood triangulation of size $n$. The \vocab{pre-random walk} $pW_M = (pX_M, pY_M)$ associated to $M$ is the two-dimensional random walk which replaces \texttt{g}s, \texttt{b}s, and \texttt{r}s in the Schnyder wood string $s_M$ with increments of $(0, 1)$, $(1, -1)$, and $(-1, 0)$, respectively.
\end{definition}

Much like $Z^g_M$ and $Z^r_M$ are driven by $W_M'$ and $W_M$, our preliminary coalescent-walk processes will be driven by $pW_M'$ and $pW_M$, where $pW_M'$ is the time-reversal of $pW_M$. (Recall that $W_M$ takes each \texttt{rr}$\cdots$\texttt{rrg} segment and associates to it an increment of $(-k, 1)$, which is exactly the total increment taken by the pre-random walk over this segment.) We now describe the rules for these two new coalescent-walk processes:

\begin{figure}[ht]
\centerline{\includegraphics[width=\textwidth]{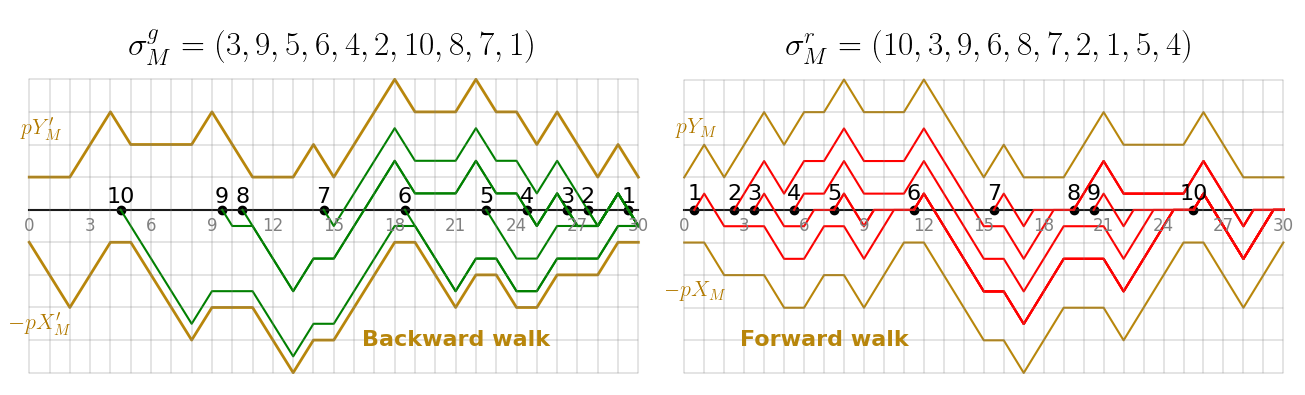}}
\caption{The pre-green and pre-red coalescent-walk processes associated to the Schnyder wood triangulation in \cref{fig:sampleschnyder}. Both are defined on the interval $[0, 30]$. Again, the black numbers (above the $x$-axis) are the labels of the paths started at those points, while the gray numbers are evenly spaced $x$-coordinates.}\label{fig:sampleschnyder3}
\end{figure}

\begin{definition}\label{pregreen}
Let $W = (X_n, Y_n)$ be a random walk on an interval $I = [a, b]$ whose increments all lie in the set $\{(0, -1), (-1, 1), (1, 0)\}$. The \vocab{pre-green coalescent-walk process associated to $W$}, denoted $\pWC^g(W)$, is the process $Z$ defined as follows. $Z$ is a collection of sample paths (like any other coalescent-walk process), but its starting points are the midpoints of the starting and ending times corresponding to the $(0, -1)$ increments -- that is, 
\[
    J = \left\{j+1/2 \in I \setminus \{b\}: Y_{j+1} - Y_j = -1\right\}.
\]
Furthermore, each sample path $Z^{(j)}$ is defined at all subsequent integer and half-integer points in $I$. We have $Z^{(j)}_j = 0$ for all $j \in J$, and for any $s \ge j$ such that $s \in I$ (resp.\ $s + \frac{1}{2} \in I$), the increment $Z_{s+\frac{1}{2}}^{(j)} - Z_s^{(j)}$ is determined as follows. Suppose the increment of $W$ on $[s, s+1]$ (resp.\ $[s-1/2, s+1/2]$) is $(x, y)$. If $Z_s^{(j)} \ge 0$, then $Z_{s+\frac{1}{2}}^{(j)} - Z_s^{(j)} = \frac{y}{2}$. Otherwise, $Z_{s+\frac{1}{2}}^{(j)} - Z_s^{(j)} = -\frac{x}{2}$.
\end{definition}

The left-hand side of \cref{fig:sampleschnyder3} shows the pre-green coalescent-walk process $\pWC^g(pW_M')$. In words, each sample path of the process copies the (half-)increments of $pY_M'$ while at or above the $x$-axis, and it follows the (half)-increments of $-pX_M'$ otherwise, with no additional rules.

\begin{definition}\label{prered}
Let $W = (X_n, Y_n)$ be a random walk on an interval $I = [a, b]$ whose increments all lie in the set $\{(0, 1), (1, -1), (-1, 0)\}$. The \vocab{pre-red walk process associated to $W$}, denoted $\pWC^r(W)$, is the process $Z$ defined identically to \cref{pregreen} but with two changes:
\begin{itemize}
\item The starting points are the midpoints of the $(0, 1)$ increments, rather than the $(0, -1)$ increments.
\item If a path $Z^{(j)}_s$ is at height $0$ on an interval $[c, d]$ on which the increments of $W$ are all $(-1, 0)$, then not only do we have $Z^{(j)}_t = 0$ for all $t \in [c, d]$, but we also have $Z^{(j)}_{d + \frac{1}{2}} = 0$ (if we are still in the interval -- that is, if $d + \frac{1}{2} \le b$).
\end{itemize}
\end{definition}

The right-hand side of \cref{fig:sampleschnyder3} shows the pre-red coalescent-walk process $\pWC^r(pW_M)$. In words, this means that a path on the $x$-axis continues horizontally for an extra half-unit beyond the region of $(-1, 0)$ steps of $W$. For example, the walk started at the point labeled $5$ intersects the starting point labeled $6$, despite the increment of $W$ indicating that it would otherwise have an increment of $+\frac{1}{2}$.

Our next result is the main ingredient for the proof of \cref{mainresult1}, stating that Schnyder wood permutations also agree with the permutations from the pre-green and pre-red processes:

\begin{proposition}\label{mainresult2}
For any Schnyder wood triangulation $M$ of size $n$, let $pZ_M^g = \pWC^g(pW_M')$ and $pZ^r_M = \pWC^r(pW_M)$ be its pre-green and pre-red coalescent-walk processes. Then $pZ_M^g$ and $pZ_M^r$ are indeed coalescent-walk processes in the sense of \cref{coalescentdefn}, so \cref{totalorder,coalescentpermutation} may be applied to them. Label the starting points of $pZ_M^g$ in descending order $n, n-1, \cdots, 1$ from left to right, and label the starting points of $pZ_M^r$ in ascending order $1, 2, \cdots, n$. Then the Schnyder wood permutation $\sigma_M$ associated with $M$ satisfies 
\[
    \sigma_M = (\sigma^{\text{up}}(pZ^g_M), \sigma^{\text{down}}(pZ^r_M)).
\]
Additionally, the shapes of the red and green trees of $M$ may be read off from the processes just like in \cref{mainresult1}, but with $Z^g_M$ and $Z^r_M$ replaced with $pZ^g_M$ and $pZ^r_M$, respectively.
\end{proposition}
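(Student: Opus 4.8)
The plan is to establish \cref{mainresult2} directly from the combinatorial structure of Schnyder wood triangulations, and then in the following step deduce \cref{mainresult1} from it by comparing the pre-coalescent-walk processes with the (honest) coalescent-walk processes. We proceed as follows.

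First, I would verify that $pZ^g_M$ and $pZ^r_M$ really are coalescent-walk processes in the sense of \cref{coalescentdefn}. Each sample path starts at $0$ at its starting point by construction, so the only nontrivial point is the non-crossing/coalescing property. This follows from a local case analysis of the increment rules in \cref{pregreen} and \cref{prered}: if two paths $Z^{(j_1)}, Z^{(j_2)}$ are at positions $a \le b$ at some (half-integer) time $s$, then in the next half-step they either both copy the ``$y/2$'' rule (if both $\ge 0$), both copy the ``$-x/2$'' rule (if both $<0$), or split cases; the only way the order could reverse is if $a < 0 \le b$ and the two rules push them past each other, but since $x \ge 0$ for every increment of the driving walk (the increments lie in $\{(0,\pm1), (\mp1, 0), (\pm1,\mp1)\}$), the path at $a<0$ moves up by $-x/2 \ge 0$ while the path at $b \ge 0$ moves by $y/2$, and a short check (using that both paths are integer- or half-integer-valued) shows they can at worst coincide, never cross. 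The extra ``stay at zero for an extra half-unit'' rule in \cref{prered} only ever \emph{merges} paths, so it preserves the property. Once $Z^{(j_1)}_s = Z^{(j_2)}_s$, the identical rules keep them equal forever.

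The heart of the argument is the identity $\sigma_M = (\sigma^{\text{up}}(pZ^g_M), \sigma^{\text{down}}(pZ^r_M))$ together with the tree-structure statement. I would prove the stronger tree statement first and deduce the permutation identity from it (since, as explained in the discussion following \cref{coalescentpermutationexample}, $\sigma^{\text{up}}$ and $\sigma^{\text{down}}$ are exactly the clockwise/counterclockwise traversals of the forest of trees formed by the coalesced paths, and the Schnyder wood permutation is defined by the clockwise traversals of the green and red trees). The key geometric input is the description of the Schnyder wood string $s_M$ in \cref{schnyderwoodstring} as a loop around the blue tree, visited in blue-label order: walking along this loop, each green (resp.\ red) edge is crossed for the second time exactly when the loop passes the ``far side'' of that edge, and the relative heights of the pre-red (resp.\ pre-green) walk paths at that moment record precisely whether the currently-visited vertex lies below or above a given green (resp.\ red) edge in the planar embedding. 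Concretely, I would show by induction along the loop that the sample path labeled $i$ in $pZ^r_M$, at the moment the loop reaches the starting point labeled $j$, is at height $0$ if and only if there is a red directed edge $i \to j$, and that the path first returns to height $0$ at exactly the starting point corresponding to $i$'s red parent; the clockwise-children ordering matches the top-to-bottom order of paths in the picture because of the planarity of the embedding and Schnyder's rule (incoming red edges appear consecutively in the rotation at each vertex). The pre-green case is the mirror image, using the time-reversal $pW_M'$ and the fact (\cref{greenreddualstart}, or rather its pre-version, which I would state and prove analogously) that starting points of $\pWC^r(pW_M)$ at time $j$ correspond to starting points of $\pWC^g(pW_M')$ at time (length)$-j$. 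The special rules about moving a starting point from one end of the interval to the other (and the extra-half-unit rule in \cref{prered}) are exactly what is needed to account for the cyclic shift of the leading \texttt{g} in the construction above \cref{randomwalkisuniformschnyder1}; I would track this bookkeeping carefully.

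Finally, to pass from \cref{mainresult2} to \cref{mainresult1}, I would exhibit an order-preserving correspondence between the pre-coalescent-walk process $pZ^r_M$ (driven by $pW_M$ with increments in $\{(0,1),(1,-1),(-1,0)\}$) and the coalescent-walk process $Z^r_M = \WC^r(W_M)$ (driven by $W_M$, which collapses each \texttt{rr}$\cdots$\texttt{rrg} block into a single $(-k,1)$ step). The point is that both processes are built from the same underlying Schnyder wood string; $W_M$ is obtained from $pW_M$ by contracting each block of consecutive $(-1,0)$ steps followed by a $(0,1)$ step into one step, and the rules in \cref{cwrdefn} are precisely the ``contracted'' versions of those in \cref{prered} (the case $Z^{(j)}_s < 0$ with $Z^{(j)}_s + k \le 0$ corresponds to copying all the $-\tfrac12$ increments, the ``otherwise'' case corresponds to a path that would cross $0$ being stopped at $0$ — which is exactly the extra-half-unit rule integrated over the block). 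Thus the starting points of $Z^r_M$ are the images of those of $pZ^r_M$ under this time-contraction, and the induced orders $\le^{\text{down}}$ agree, so $\sigma^{\text{down}}(Z^r_M) = \sigma^{\text{down}}(pZ^r_M) = \sigma_M^r$; the first-hitting/tree statements transfer verbatim. The green case is identical after time-reversal. I expect the main obstacle to be the careful verification of the geometric claim in the previous paragraph — namely that heights of walk paths faithfully encode the ``above/below an edge'' relation in the planar embedding and that the ordering of children is correctly read off — since this requires a genuine (if routine) argument about how the loop in \cref{schnyderwoodstring} interacts with Schnyder's rule; by contrast, the reduction from \cref{mainresult2} to \cref{mainresult1} and the non-crossing checks are mechanical.
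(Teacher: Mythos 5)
Your proposal follows essentially the same architecture as the paper's proof of \cref{mainresult2}: a local non-crossing check, a geometric correspondence between sample-path coalescences and colored edges established by tracking the loop from \cref{schnyderwoodstring}, and an ordering argument for children. Two points of your sketch, however, are wrong or missing and would need repair before the argument closes.

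First, in your non-crossing verification you assert that ``$x \ge 0$ for every increment of the driving walk.'' This is false for both processes: the increment set for $\pWC^g$ contains $(-1,1)$ and that for $\pWC^r$ contains $(-1,0)$, so $x$ can equal $-1$. The conclusion you want is still true, but for a different reason. Over one half-step the upper path (at height $b\ge 0$) changes by $y/2$ and the lower path (at height $a<0$) by $-x/2$, so they could only cross if $b-a < -(x+y)/2$. Since $x+y \ge -1$ for every allowable increment in either process, $-(x+y)/2 \le 1/2$, while $b-a \ge 1/2$ because both paths are half-integer valued with $b\ge 0 > a$. Hence the paths can at worst coincide. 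This is the fix; your ``$x\ge 0$'' shortcut does not hold.

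Second, and more importantly, your ``induction along the loop'' presupposes that you know which endpoint of a colored edge the loop reaches first, i.e.\ which end of the edge counts as its ``far side.'' That requires a nontrivial monotonicity statement that you do not flag: every green edge points from a larger to a smaller blue label, every red edge from a smaller to a larger one, and there are no green or red edges between blue-tree ancestry-comparable vertices. The paper isolates this as \cref{ordering} and proves it by a case analysis that chases forced colored paths into enclosed regions and derives contradictions with Schnyder's rule. Without this lemma, your claim that ``each green (resp.\ red) edge is crossed for the second time exactly when the loop passes the far side'' is not well-posed, and the subsequent bookkeeping of when a pre-red or pre-green path crosses zero would not go through. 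Once \cref{ordering} is in place, your induction step --- that the path labeled $i$ first returns to zero precisely at the starting point of $i$'s green/red parent, and that siblings are visited in the planarity-compatible order --- is exactly the content of the paper's \cref{lemmareg,lemmaroot,lemmaorder}, and your passage to $\sigma^{\text{up}}$ and $\sigma^{\text{down}}$ via the forest-traversal interpretation is then correct.
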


We will prove \cref{mainresult2} first -- working with the pre-green and pre-red processes is more convenient for the combinatorics than the green and red processes, because each increment of the random walk now corresponds to a specific edge traversal of the Schnyder wood triangulation. Our goal will be reached through a sequence of lemmas.

Recall how the edges of a Schnyder wood triangulation are oriented and how its vertices are enumerated from \cref{fig:sampleschnyder}. Also recall that we continue to refer to vertices of a Schnyder wood by their blue label.

\begin{lemma}\label{ordering}
Let $M$ be an arbitrary Schnyder wood triangulation of size $n$, and label the green and red roots $0$ and $n+1$ respectively. Then green edges in $M$ are always directed from larger to smaller numbered vertices, and red edges are always directed from smaller to larger numbered vertices.

Additionally, if two vertices $v_1, v_2$ in a Schnyder wood triangulation are such that one is a descendant of the other in the blue tree, then there are no green or red edges between $v_1$ and $v_2$.
\end{lemma}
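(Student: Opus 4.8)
The plan is to prove both statements directly from the structure of Schnyder woods, using the traversal orders that define the vertex labeling together with Schnyder's rule.

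First I would handle the orientation statements. Recall from the discussion after the definition of Schnyder woods that each color class forms a spanning tree directed toward its root, so a green edge points from a vertex $w$ to its green parent $v$. The claim is that then the blue label of $v$ is smaller than that of $w$; but this is precisely ``Fact 2'' already established in the proof of \cref{schnyderwoodbijectperm}: since $v$ lies on the green path from $w$ to the green root, $v$ lies on the boundary of the region $\mc{R}_w$ bounded by the root edge, the green path from $w$ to the green root, and the blue path from $w$ to the blue root, and every vertex on or inside $\mc{R}_w$ is visited before $w$ in the blue tree's clockwise traversal. Hence $v$ gets the smaller blue label, so green edges go from larger to smaller. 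For red edges, I would invoke the cyclic symmetry of the three colors used in the proof of \cref{schnyderwoodbijectperm}: applying the same argument with the colors cyclically permuted (green $\to$ red, blue $\to$ green, red $\to$ blue) gives a statement about the \emph{green} labels of endpoints of red edges, which is not directly what we want. Instead, the cleanest route is to argue symmetrically: a red edge points from a vertex $w$ to its red parent $v$, and the analogous region argument (now with the region bounded by the root edge, the red path from $w$ to the red root, and the blue path from $w$ to the blue root) shows $v$ is visited before $w$ in the blue traversal — wait, this would again say $v$ has the smaller blue label. So I need to be careful: the asymmetry between green and red must come from Schnyder's rule, specifically the clockwise cyclic order ``incoming blue, outgoing red, incoming green, outgoing blue, incoming red, outgoing green'' around each internal vertex, which distinguishes the green and red directions relative to the blue tree's clockwise traversal. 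I would spell out that for red parents, the blue clockwise traversal reaches $w$ \emph{before} its red parent's subtree region is fully explored in the relevant sense, so $v$ gets the \emph{larger} label. The right formalization is to track the order in which the blue DFS visits vertices relative to the green-path and red-path boundaries of the respective regions, using Schnyder's rule to pin down on which side of the blue path each colored edge departs.

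Next, for the second statement, suppose $v_1$ is a blue-ancestor of $v_2$ (the case of $v_2$ a blue-ancestor of $v_1$ is symmetric), and suppose toward a contradiction that there is a green edge between them. By the tree structure it would be directed from the descendant to an ancestor in the green tree; combined with the orientation result just proved, the green edge goes from the larger-labeled to the smaller-labeled vertex, so it would be $v_2 \to v_1$ (since blue-ancestors have smaller blue labels by the DFS ordering — actually this needs the fact that in a DFS the parent is visited before the child). Then $v_1$ would be the green parent of $v_2$, meaning $v_1$ lies on the boundary of $\mc{R}_{v_2}$. But the blue path from $v_2$ to the blue root passes through $v_1$ (as $v_1$ is a blue-ancestor), and this blue path is one of the three boundary pieces of $\mc{R}_{v_2}$. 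I would then derive a contradiction from Schnyder's rule: having both the blue edge into the blue subtree and the green edge $v_2 \to v_1$ incident to $v_1$ in a configuration forced by $v_1$ being simultaneously on the blue path and the green path from $v_2$ would violate the local cyclic edge pattern, or force $v_1, v_2$ to coincide. The same argument applies to a hypothetical red edge.

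The main obstacle I expect is pinning down the red-edge orientation direction with full rigor: the green case is immediate from the already-proven Fact 2, but the red case requires genuinely using Schnyder's rule to see why the blue clockwise traversal treats outgoing red edges oppositely to outgoing green edges. I would address this by carefully drawing the local picture at a vertex from Schnyder's rule and matching it with the convention that the blue DFS traverses children in clockwise order, making explicit that an outgoing red edge leaves ``on the other side'' of the incoming blue direction compared to an outgoing green edge. Everything else — the region/containment arguments and the contradiction for the second part — should be routine once the orientation lemma is in hand, reusing the region $\mc{R}_\bullet$ machinery from the proof of \cref{schnyderwoodbijectperm} almost verbatim.
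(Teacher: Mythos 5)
The green orientation is fine: you correctly observe that ``Fact 2'' already established in the proof of \cref{schnyderwoodbijectperm} (which does not itself rely on \cref{ordering}, so no circularity) gives exactly that green edges point from larger to smaller blue label, via the region $\mc{R}_w$ argument. That is a legitimate shortcut the paper does not take -- the paper instead gives a self-contained proof by contradiction with a two-case analysis on the common blue ancestor.

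However, the red orientation, which you yourself flag as ``the main obstacle,'' is left genuinely unresolved, and this is where most of the content of the lemma lives. Your diagnosis of the difficulty is slightly off: the analogous region argument does not ``again say $v$ has the smaller blue label.'' The region $\mc{R}_r(w)$ (bounded by the blue--red root edge, the red path from $w$, and the blue path from $w$) is the \emph{complement} of $\mc{R}_g(w)$ on the other side of the blue path, and it contains vertices visited \emph{after} $w$ by the clockwise blue DFS, not before. If you could establish that claim, the region argument would give the correct red orientation directly. But establishing it is precisely the Schnyder's-rule work you defer with ``I would address this by carefully drawing the local picture'' -- you need to use the clockwise cyclic order \emph{in-blue, out-red, in-green, out-blue, in-red, out-green} at each vertex on the blue path to determine on which side the not-yet-visited subtrees lie. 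The paper does not attempt this route at all; it instead rules out a red edge from $s$ to $t$ with $s>t$ by a direct planarity/Schnyder's-rule contradiction (cases 1 and 2 of the red argument, Figures~\ref{fig:schnyderlemma3}--\ref{fig:schnyderlemma4}).

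The second statement also is not actually proved. After correctly reducing to the case where a descendant $v_2$ has its green (or red) parent equal to a blue ancestor $v_1$, you write that this ``would violate the local cyclic edge pattern, or force $v_1, v_2$ to coincide.'' That is an assertion, not an argument. The paper's proof of this part is a genuinely nontrivial planarity argument (case~2 in each of the green and red cases): one shows that the region enclosed by the blue path from $v_2$ to $v_1$ and the candidate green (resp.\ red) edge would force a monochromatic outgoing edge to point into the region with no legal exit, creating either a cycle in a spanning tree or a Schnyder's-rule violation. In both directions of the hypothetical edge this requires a short but concrete check, and it is not an immediate consequence of the orientation statement plus ``Schnyder's rule.''

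In short: the approach is reasonable and the green half is a valid shortcut, but the proposal as written proves about one quarter of the lemma. Both the red orientation and the no-edge-between-blue-relatives claim need the kind of explicit Schnyder's-rule-plus-planarity case analysis that the paper carries out and that you postpone.
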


\begin{proof}
There are no outgoing edges from the green or red root, and any incoming edges to those roots automatically satisfy the statement. Thus we only need to consider internal vertices for this lemma.

First, we do the green case. Suppose for the sake of contradiction that a green edge points from $s$ to $t$, where $1 \le s < t \le n$. Let $r$ be the common blue tree ancestor of $s$ and $t$ (that is, the first point of intersection on the blue paths from $s$ and $t$ to the blue root), and draw the region $\mc{R}$ of the Schnyder wood triangulation encircled by the green edge $s \to t$ and the blue paths from $s$ and $t$ to $r$. As shown in \cref{fig:schnyderlemma1} and \cref{fig:schnyderlemma2}, we must consider two possibilities.
\begin{itemize}
\item In case $1$, $r$ is neither $s$ nor $t$. This means that $t$ is on a later branch than $s$ in the blue-tree traversal (that is, $t$ shows up on a branch that is further clockwise). Edges may not cross each other in a Schnyder wood triangulation, and vertex $r$ is connected to the blue root by a sequence of further blue edges. Thus, the green edge from $s$ to $t$ must go clockwise around $r$ rather than counterclockwise, as shown in \cref{fig:schnyderlemma1}.

\begin{figure}[ht]
\begin{center}
\begin{tikzpicture}
\draw[blue, thick, ->] (2, 4) -- (1.5, 3);
\draw[blue, thick] (1.5, 3) -- (1, 2);
\draw[blue, thick, ->] (1, 2) -- (0.5, 1);
\draw[blue, thick] (0.5, 1) -- (0, 0);
\draw[blue, thick, ->] (-2, 4) -- (-1.75, 3.5);
\draw[blue, thick] (-1.75, 3.5) -- (-1.5, 3);
\draw[blue, thick, ->] (-1.5, 3) -- (-1, 2);
\draw[blue, thick] (-1, 2) -- (-0.5, 1);
\draw[blue, thick, ->] (-0.5, 1) -- (-0.25, 0.5);
\draw[blue, thick] (-0.25, 0.5) -- (0, 0);
\draw[blue, thick, ->] (0, 3) -- (0, 2.5);
\draw[blue, thick] (0, 2.5) -- (0, 2);
\draw[blue, thick, ->] (1, 3) -- (1, 2.5);
\draw[blue, thick] (1, 2.5) -- (1, 2);
\draw[blue, thick, ->] (0, 2) -- (0, 1);
\draw[blue, thick] (0, 1) -- (0, 0);
\draw[blue, thick, ->] (-1, 3) -- (-0.5, 2.5);
\draw[blue, thick] (-0.5, 2.5) -- (0, 2);
\draw[blue, fill=blue] (0, 0) circle(0.1cm);
\draw[blue, fill=blue] (1, 2) circle(0.1cm);
\draw[blue, fill=blue] (2, 4) circle(0.1cm);
\draw[blue, fill=blue] (-2, 4) circle(0.1cm);
\draw[blue, fill=blue] (-0.5, 1) circle(0.1cm);
\draw[blue, fill=blue] (-1.5, 3) circle(0.1cm);
\draw[blue, fill=blue] (1, 3) circle(0.1cm);
\draw[green!50!black, thick, ->] (-2, 4) -- (0, 4);
\draw[green!50!black, thick] (0, 4) -- (2, 4);
\draw[blue, fill=blue] (0, 2) circle(0.1cm);
\draw[blue, fill=blue] (0, 3) circle(0.1cm);
\draw[blue, fill=blue] (-1, 3) circle(0.1cm);
\node at (2.2, 4.2) {$t$};
\node at (-2.2, 4.2) {$s$};
\node at (0, -0.3) {$r$};

\begin{scope}[shift={(7,0)}]
\draw[fill=gray!40!white] (-0.5, 1) -- (-2, 4) -- (2, 4) -- (0, 3) -- (-1, 3) -- (-0.5, 1);
\draw[blue, thick, ->] (2, 4) -- (1.5, 3);
\draw[blue, thick] (1.5, 3) -- (1, 2);
\draw[blue, thick, ->] (1, 2) -- (0.5, 1);
\draw[blue, thick] (0.5, 1) -- (0, 0);
\draw[blue, thick, ->] (-2, 4) -- (-1.75, 3.5);
\draw[blue, thick] (-1.75, 3.5) -- (-1.5, 3);
\draw[blue, thick, ->] (-1.5, 3) -- (-1, 2);
\draw[blue, thick] (-1, 2) -- (-0.5, 1);
\draw[blue, thick, ->] (-0.5, 1) -- (-0.25, 0.5);
\draw[blue, thick] (-0.25, 0.5) -- (0, 0);
\draw[blue, thick, ->] (0, 3) -- (0, 2.5);
\draw[blue, thick] (0, 2.5) -- (0, 2);
\draw[blue, thick, ->] (1, 3) -- (1, 2.5);
\draw[blue, thick] (1, 2.5) -- (1, 2);
\draw[blue, thick, ->] (0, 2) -- (0, 1);
\draw[blue, thick] (0, 1) -- (0, 0);
\draw[blue, thick, ->] (-1, 3) -- (-0.5, 2.5);
\draw[blue, thick] (-0.5, 2.5) -- (0, 2);
\draw[blue, fill=blue] (0, 0) circle(0.1cm);
\draw[blue, fill=blue] (1, 2) circle(0.1cm);
\draw[blue, fill=blue] (2, 4) circle(0.1cm);
\draw[blue, fill=blue] (-2, 4) circle(0.1cm);
\draw[blue, fill=blue] (-0.5, 1) circle(0.1cm);
\draw[blue, fill=blue] (-1.5, 3) circle(0.1cm);
\draw[blue, fill=blue] (1, 3) circle(0.1cm);
\draw[green!50!black, thick, ->] (-2, 4) -- (0, 4);
\draw[green!50!black, thick, ->] (2, 4) -- (1, 3.5);
\draw[green!50!black, thick] (1, 3.5) -- (0, 3);
\draw[green!50!black, thick, ->] (0, 3) -- (-0.5, 3);
\draw[green!50!black, thick] (-0.5, 3) -- (-1, 3);
\draw[green!50!black, thick, ->] (-1, 3) -- (-0.75, 2);
\draw[green!50!black, thick] (-0.75, 2) -- (-0.5, 1);
\draw[green!50!black, thick] (0, 4) -- (2, 4);
\draw[blue, fill=blue] (0, 2) circle(0.1cm);
\draw[blue, fill=blue] (0, 3) circle(0.1cm);
\draw[blue, fill=blue] (-1, 3) circle(0.1cm);
\node at (2.2, 4.2) {$t$};
\node at (-2.2, 4.2) {$s$};
\node at (0, -0.3) {$r$};
\node at (-0.7, 0.8) {$t_1$};
\node at (-0.5, 3.5) {\Large$\mc{R}$};
\end{scope}

\end{tikzpicture}
\end{center}
\caption{A sample illustration of case 1 for the green edges. The region $\mc{R}$ is shaded in gray on the right.}\label{fig:schnyderlemma1}
\end{figure}
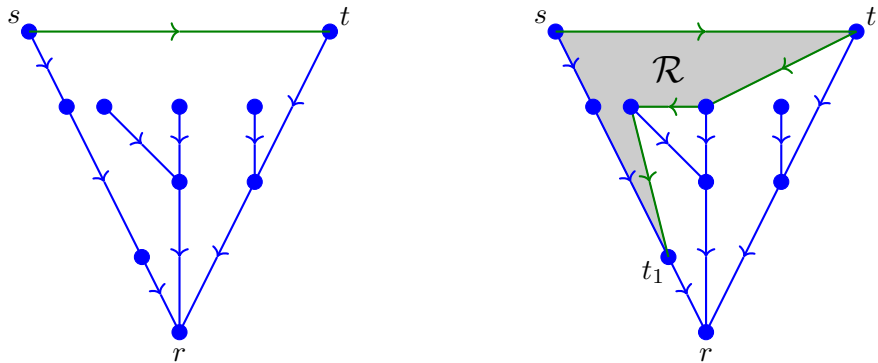

By Schnyder's rule at vertex $t$, the outgoing green edge from $t$ must point inside the enclosed region $\mc{R}$. If we continue following the outgoing green edge path starting from $t$, we must eventually terminate at the green root (because all green paths do), so this path must intersect some vertex on the boundary of the region. (One possibility is shown in the right image of \cref{fig:schnyderlemma1}.) Vertices $s$ and $t$ have already been visited (so the green path cannot hit them again because that would create a cycle), and exiting on the right boundary violates Schnyder's rule, so the green path must exit at some vertex $t_1$ strictly between $s$ and $r$.

Now by Schnyder's rule at vertex $t_1$, the red outgoing edge from $t_1$ must point inside the smaller shaded region (labeled as $\mc{R}$ in our figure) formed by the green and blue paths from $s$ to $t_1$. But then the red path originating from $t_1$ must exit this shaded region at some other vertex, and doing so will always violate Schnyder's rule at the point of exit. Thus we get a contradiction, and the green edge cannot point from $s$ to $t$ in this configuration. 

\item In case $2$, one of $s$ and $t$ is their common ancestor -- since $s < t$, it must be that $s$ is the ancestor of $t$. The green edge from $s$ to $t$ is then either to the right or to the left of the blue path between the two vertices:

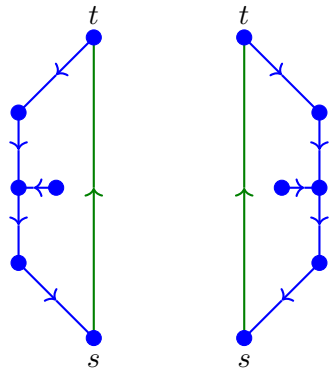
\begin{figure}[ht]
\begin{center}
\begin{tikzpicture}
\draw[blue, thick, ->] (0, 4) -- (-0.5, 3.5);
\draw[blue, thick] (-0.5, 3.5) -- (-1, 3);
\draw[blue, thick, ->] (-1, 3) -- (-1, 2.5);
\draw[blue, thick] (-1, 2.5) -- (-1, 2);
\draw[blue, thick, ->] (-1, 2) -- (-1, 1.5);
\draw[blue, thick] (-1, 1.5) -- (-1, 1);
\draw[blue, thick, ->] (-1, 1) -- (-0.5, 0.5);
\draw[blue, thick] (-0.5, 0.5) -- (0, 0);
\draw[blue, thick, ->] (-0.5, 2) -- (-0.8, 2);
\draw[blue, thick] (-0.8, 2) -- (-1, 2);
\draw[green!50!black, thick, ->] (0, 0) -- (0, 2);
\draw[green!50!black, thick] (0, 2) -- (0, 4);
\draw[blue, fill=blue] (0, 0) circle(0.1cm);
\draw[blue, fill=blue] (0, 4) circle(0.1cm);
\draw[blue, fill=blue] (-1, 3) circle(0.1cm);
\draw[blue, fill=blue] (-1, 2) circle(0.1cm);
\draw[blue, fill=blue] (-1, 1) circle(0.1cm);
\draw[blue, fill=blue] (-0.5, 2) circle(0.1cm);
\node at (0, 4.3) {$t$};
\node at (0, -0.3) {$s$};

\draw[blue, thick, ->] (2, 4) -- (2.5, 3.5);
\draw[blue, thick] (2.5, 3.5) -- (3, 3);
\draw[blue, thick, ->] (3, 3) -- (3, 2.5);
\draw[blue, thick] (3, 2.5) -- (3, 2);
\draw[blue, thick, ->] (3, 2) -- (3, 1.5);
\draw[blue, thick] (3, 1.5) -- (3, 1);
\draw[blue, thick, ->] (3, 1) -- (2.5, 0.5);
\draw[blue, thick] (2.5, 0.5) -- (2, 0);
\draw[blue, thick, ->] (2.5, 2) -- (2.8, 2);
\draw[blue, thick] (2.8, 2) -- (3, 2);
\draw[green!50!black, thick, ->] (2, 0) -- (2, 2);
\draw[green!50!black, thick] (2, 2) -- (2, 4);
\draw[blue, fill=blue] (2, 0) circle(0.1cm);
\draw[blue, fill=blue] (2, 4) circle(0.1cm);
\draw[blue, fill=blue] (3, 3) circle(0.1cm);
\draw[blue, fill=blue] (3, 2) circle(0.1cm);
\draw[blue, fill=blue] (3, 1) circle(0.1cm);
\draw[blue, fill=blue] (2.5, 2) circle(0.1cm);
\node at (2, 4.3) {$t$};
\node at (2, -0.3) {$s$};
\end{tikzpicture}
\end{center}
\caption{A sample illustration of case 2 for the green edges.}\label{fig:schnyderlemma2}
\end{figure}

However, the left diagram cannot occur -- applying Schnyder's rule at vertex $s$, the blue outgoing edge from $s$ must point inside the region (bounded by the blue and green paths between $s$ and $t$), and the continuation of that blue path cannot exit at any vertex on the boundary because that would create a blue cycle. And the right diagram also fails because applying Schnyder's rule at vertex $t$, the green outgoing edge from $t$ must point into the region, and anywhere it intersects the boundary will again break Schnyder's rule. Thus again we have a contradiction, meaning we cannot have an edge from $s$ to $t$ of this form either. 

Furthermore, it is not possible to have the green edge in \cref{fig:schnyderlemma2} point from $t$ to $s$ either. Again, this is because we would need to break Schnyder's rule for any configuration. In the left case, the red outgoing edge from $s$ would need to point into the region and cannot exit in any valid way, and in the right case, the blue outgoing edge from $s$ would need to point into the region and thus form a loop when it exits. Therefore we cannot have a green edge between any vertex and any of its blue ancestors.
\end{itemize}

Next, we do the red case. Suppose for the sake of contradiction that there is a red edge from $s$ to $t$ with $n \ge s > t \ge 1$ -- again we consider two possibilities. 

\begin{itemize}
\item In case $1$, again the common ancestor of $s$ and $t$ is some other vertex $r$. Then Schnyder's rule says that the outgoing red edge from $t$ must point inside this region, and then the only place where the subsequent red path may exit the region is some vertex $v_1$ on the right boundary strictly between $r$ and $s$:

\begin{figure}[ht]
\begin{center}
\begin{tikzpicture}
\draw[fill=gray!40!white] (1, 2) -- (2, 4) -- (-2, 4) -- (0, 3) -- (1, 2);
\draw[blue, thick, ->] (2, 4) -- (1.5, 3);
\draw[blue, thick] (1.5, 3) -- (1, 2);
\draw[blue, thick, ->] (1, 2) -- (0.5, 1);
\draw[blue, thick] (0.5, 1) -- (0, 0);
\draw[blue, thick, ->] (-2, 4) -- (-1.75, 3.5);
\draw[blue, thick] (-1.75, 3.5) -- (-1.5, 3);
\draw[blue, thick, ->] (-1.5, 3) -- (-1, 2);
\draw[blue, thick] (-1, 2) -- (-0.5, 1);
\draw[blue, thick, ->] (-0.5, 1) -- (-0.25, 0.5);
\draw[blue, thick] (-0.25, 0.5) -- (0, 0);
\draw[blue, thick, ->] (0, 3) -- (0, 2.5);
\draw[blue, thick] (0, 2.5) -- (0, 2);
\draw[blue, thick, ->] (1, 3) -- (1, 2.5);
\draw[blue, thick] (1, 2.5) -- (1, 2);
\draw[blue, thick, ->] (0, 2) -- (0, 1);
\draw[blue, thick] (0, 1) -- (0, 0);
\draw[blue, thick, ->] (-1, 3) -- (-0.5, 2.5);
\draw[blue, thick] (-0.5, 2.5) -- (0, 2);
\draw[blue, fill=blue] (0, 0) circle(0.1cm);
\draw[blue, fill=blue] (1, 2) circle(0.1cm);
\draw[blue, fill=blue] (2, 4) circle(0.1cm);
\draw[blue, fill=blue] (-2, 4) circle(0.1cm);
\draw[blue, fill=blue] (-0.5, 1) circle(0.1cm);
\draw[blue, fill=blue] (-1.5, 3) circle(0.1cm);
\draw[blue, fill=blue] (1, 3) circle(0.1cm);
\draw[red, thick, ->] (2, 4) -- (0, 4);
\draw[red, thick] (0, 4) -- (-2, 4);
\draw[red, thick, ->] (-2, 4) -- (-1, 3.5);
\draw[red, thick] (-1, 3.5) -- (0, 3);
\draw[red, thick, ->] (0, 3) -- (0.5, 2.5);
\draw[red, thick] (0.5, 2.5) -- (1, 2);
\draw[blue, fill=blue] (0, 2) circle(0.1cm);
\draw[blue, fill=blue] (0, 3) circle(0.1cm);
\draw[blue, fill=blue] (-1, 3) circle(0.1cm);
\node at (2.2, 4.2) {$s$};
\node at (-2.2, 4.2) {$t$};
\node at (0, -0.3) {$r$};
\node at (1.4, 1.8) {$v_1$};
\end{tikzpicture}
\end{center}
\caption{A sample illustration of case 1 for the red edges.}\label{fig:schnyderlemma3}
\end{figure}
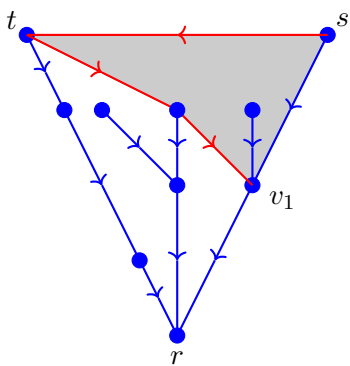

Now applying Schnyder's rule at vertex $v_1$, the green outgoing edge from $v_1$ must point within the gray shaded region, but there is no legal place for it to intersect the boundary, so this configuration is impossible. 

\item Similarly, in case $2$ we must have $t$ the ancestor of $s$ in the blue tree, meaning that we have one of the two diagrams in \cref{fig:schnyderlemma4}.

\begin{figure}[ht]
\begin{center}
\begin{tikzpicture}
\draw[blue, thick, ->] (0, 4) -- (-0.5, 3.5);
\draw[blue, thick] (-0.5, 3.5) -- (-1, 3);
\draw[blue, thick, ->] (-1, 3) -- (-1, 2.5);
\draw[blue, thick] (-1, 2.5) -- (-1, 2);
\draw[blue, thick, ->] (-1, 2) -- (-1, 1.5);
\draw[blue, thick] (-1, 1.5) -- (-1, 1);
\draw[blue, thick, ->] (-1, 1) -- (-0.5, 0.5);
\draw[blue, thick] (-0.5, 0.5) -- (0, 0);
\draw[blue, thick, ->] (-0.5, 2) -- (-0.8, 2);
\draw[blue, thick] (-0.8, 2) -- (-1, 2);
\draw[red, thick, ->] (0, 4) -- (0, 2);
\draw[red, thick] (0, 2) -- (0, 0);
\draw[blue, fill=blue] (0, 0) circle(0.1cm);
\draw[blue, fill=blue] (0, 4) circle(0.1cm);
\draw[blue, fill=blue] (-1, 3) circle(0.1cm);
\draw[blue, fill=blue] (-1, 2) circle(0.1cm);
\draw[blue, fill=blue] (-1, 1) circle(0.1cm);
\draw[blue, fill=blue] (-0.5, 2) circle(0.1cm);
\node at (0, 4.3) {$s$};
\node at (0, -0.3) {$t$};

\draw[blue, thick, ->] (2, 4) -- (2.5, 3.5);
\draw[blue, thick] (2.5, 3.5) -- (3, 3);
\draw[blue, thick, ->] (3, 3) -- (3, 2.5);
\draw[blue, thick] (3, 2.5) -- (3, 2);
\draw[blue, thick, ->] (3, 2) -- (3, 1.5);
\draw[blue, thick] (3, 1.5) -- (3, 1);
\draw[blue, thick, ->] (3, 1) -- (2.5, 0.5);
\draw[blue, thick] (2.5, 0.5) -- (2, 0);
\draw[blue, thick, ->] (2.5, 2) -- (2.8, 2);
\draw[blue, thick] (2.8, 2) -- (3, 2);
\draw[red, thick, ->] (2, 4) -- (2, 2);
\draw[red, thick] (2, 2) -- (2, 0);
\draw[blue, fill=blue] (2, 0) circle(0.1cm);
\draw[blue, fill=blue] (2, 4) circle(0.1cm);
\draw[blue, fill=blue] (3, 3) circle(0.1cm);
\draw[blue, fill=blue] (3, 2) circle(0.1cm);
\draw[blue, fill=blue] (3, 1) circle(0.1cm);
\draw[blue, fill=blue] (2.5, 2) circle(0.1cm);
\node at (2, 4.3) {$s$};
\node at (2, -0.3) {$t$};
\end{tikzpicture}
\end{center}
\caption{A sample illustration of case 2 for the red edges.}\label{fig:schnyderlemma4}
\end{figure}
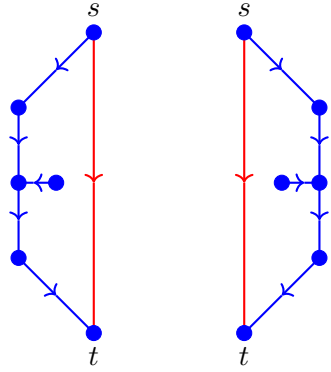

In the left diagram, Schnyder's rule at $t$ forces the blue outgoing edge to point inside the bounded region, and in the right diagram, Schnyder's rule at $s$ forces the green outgoing edge to point inside. Both of these cannot be valid (the former because we form a cycle and the latter because we violate Schnyder's rule), so the edge from $s$ to $t$ again cannot exist.

And even if the red edge were directed from $t$ to $s$ instead, the left case would force the red outgoing edge at $s$ to point inside the region, while the right case would force the blue outgoing edge at $t$ to point inside, which would violate Schnyder's rule and form a cycle in the blue tree, respectively.
\end{itemize}

Putting everything together, we indeed see that vertex numbers decrease as we follow green edges and increase as we follow red edges, and that no red or green edges exist between blue descendants in any cases. This completes the proof. 
\end{proof}

We may now begin proving that edges in a Schnyder wood triangulation correspond to intersections of walks in the two pre-coalescent-walk processes, and we begin by considering edges between internal vertices.

\begin{remark}
In the next three lemmas, we will correspond steps of the driving walk $pW_M$ and its reversal $pW_M'$ with crossings of colored edges by the Schnyder wood loop (recall the gold loop from \cref{schnyderwoodstring} and \cref{fig:sampleschnyderloop}). Thus, on unit integer intervals in which $pW_M$ increments by $(0, 1)$, $(1, -1)$, or $(-1, 0)$, we say that the walk takes a \texttt{g}, \texttt{b}, or \texttt{r} step respectively. Similarly, we correspondingly also use that notation when $pW_M'$ increments by $(0, -1)$, $(-1, 1)$, or $(1, 0)$, respectively.

Additionally, we assume from here on that we label the starting points as in \cref{mainresult2} (so in descending order for $pZ^g_M$ and ascending order for $pZ^r_M$). As suggested by the proposition statement, these labels should be thought of as corresponding to the matching blue labels in the Schnyder wood $M$.
\end{remark}

\begin{lemma}\label{lemmareg}
Let $M$ be a Schnyder wood triangulation of size $n$, and let $1 \le s, t \le n$. If there is a green (resp.\ red) edge from $s$ to $t$ in $M$, then in $pZ^g_M$ (resp.\ $pZ^r_M$), the sample path started at the point labeled $s$ will intersect the starting point labeled $t$, and it will do so before intersecting any other starting point.
\end{lemma}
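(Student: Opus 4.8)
The plan is to establish the red case in detail; the green case follows by the mirror-image argument, since a green edge $s\to t$ has $s>t$ by \cref{ordering} (rather than $s<t$), the labels of $pZ^g_M$ run in descending order from left to right (rather than ascending), and $pZ^g_M$ is the pre-green process built on the \emph{time-reversed} walk $pW_M'$. By the first part of \cref{ordering}, a red edge $s\to t$ forces $s<t$, so under the ascending labeling of \cref{mainresult2} the starting point labeled $s$ of $pZ^r_M$ lies strictly to the left of the one labeled $t$, and the statement is meaningful.

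The first step is to pin down which vertex of $M$ corresponds to the starting point labeled $i$ of $pZ^r_M$. Here I would use the dictionary between steps of $pW_M$ and edges of $M$: each unit step records a single crossing or visit of the Schnyder loop of \cref{schnyderwoodstring} with one edge of $M$, a \texttt{g}-step being a second crossing of a (green) edge. Since each of the $n$ internal vertices has a unique outgoing green edge and the loop crosses each green edge exactly twice, the \texttt{g}-steps are in bijection with internal vertices via their outgoing green edges. Using that the loop traverses the boundary of the blue tree clockwise and visits the vertices in increasing blue-label order, and that a green child has strictly larger blue label than its green parent (Fact~2 in the proof of \cref{schnyderwoodbijectperm}), I would show that the second crossings of the outgoing green edges also occur in increasing blue-label order. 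Hence the $i$-th \texttt{g}-step, which by \cref{prered} is the position of the starting point labeled $i$ (up to the harmless relocation of a boundary starting point), is exactly vertex $i$; the same local analysis identifies a maximal block of \texttt{r}-steps followed by a \texttt{g}-step with the loop sweeping past the incoming red edges at that vertex, which is what motivates the extra horizontal half-step in \cref{prered}.

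Next, fix a red edge $s\to t$, and recall that the sample path labeled $s$ in $pZ^r_M$ copies the increments of $pY_M$ while at height $\ge 0$ and of $-pX_M$ while at negative height. I would look at the stretch of the loop between the second crossing of $s$'s outgoing green edge (the start time of path $s$) and the second crossing of $t$'s outgoing green edge (the position of starting point $t$), together with the region $\mc{R}\subseteq M$ it encloses --- bounded by the red edge $s\to t$ and two arcs of the blue-tree boundary. Applying Schnyder's rule around $\mc{R}$, exactly as in the planar case analysis of \cref{ordering}, and using the second part of \cref{ordering} (no green or red edge joins a vertex to a blue-ancestor of it), one can enumerate the colours of all edges the loop crosses inside this stretch. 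From this enumeration I would verify the two assertions of the lemma: the induced increments bring path $s$ back to height $0$ exactly at the \texttt{g}-step of $t$ (the special rule of \cref{prered} being precisely what lands it on $0$ at that moment rather than one half-step later), and at no position $x_u$ with $s<u<t$ does path $s$ touch $0$ while a \texttt{g}-step creates the starting point $u$. These say exactly that path $s$ passes through the starting point labeled $t$ and through no other starting point first.

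The crux of the argument is this last enumeration: matching the local increment rules of $\pWC^r$ (and, in the green case, of $\pWC^g$) against the precise cyclic order in which the loop crosses the green, blue and red edges around $\mc{R}$, and ruling out every way path $s$ might return to height $0$ at an intermediate starting point. This is a somewhat delicate planar case analysis in the spirit of the proof of \cref{ordering}, and it carries essentially all the length of the proof.
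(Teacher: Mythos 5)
Your outline does follow the paper's route: identify the starting point labeled $i$ of $pZ^r_M$ with vertex $i$ via the second crossings of outgoing green edges, then analyze the stretch of the Schnyder loop between $s$'s and $t$'s \texttt{g}-steps together with a region $\mc{R}$ bounded by the relevant colored edge and arcs of the blue tree boundary, and use Schnyder's rule inside that region to enumerate the colors of the crossed edges and match them against the local update rules of $\pWC^r$ and $\pWC^g$.

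However, you explicitly defer the crux---the enumeration---and that is where the paper spends essentially all of its effort. The paper's argument introduces the ``purple vertices'' (the interior vertices of the relevant region), uses \cref{ordering} to fix the order in which the loop crosses the outgoing colored edges at each such vertex (e.g., the outgoing red edge from a purple vertex $x$ is first-crossed before its outgoing blue edge, because the red edge points to a vertex $y>x$ and the loop sweeps around $y$ first), and converts the enumeration into partial-sum inequalities that force the relevant stretches of the sample path to be Dyck excursions confined to one side of the axis between the critical crossings. None of this is carried out in your proposal, and it really is the content of the lemma: without it, the correspondence between the colored edges of $M$ and the increments of the coalescent-walk path is just a plausibility statement. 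Your proposal also appears to use a single region $\mc{R}$, whereas the paper's red case needs two distinct planar regions: the blue-descendant subtree of $s$ to analyze the positive excursion between steps (a) and (c), and the region bounded by the red edge $s\to t$ and blue paths from $s$ and $t$ to their common blue ancestor for steps (d) through (f).

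A separate caution: the green case is not an exact mirror image of the red case. For $\pWC^g$ a path can only hit a starting point from above (by sitting at height $+0.5$ at the midpoint of a \texttt{g}-step), while for $\pWC^r$ the relevant mechanism is sitting on the $x$-axis through the entire \texttt{r}-block preceding the next \texttt{g}-step---this is exactly what the extra half-step rule in \cref{prered}, which has no analogue in \cref{pregreen}, is engineered to produce. A purely formal reversal/relabeling of the red argument would not notice this asymmetry, and the paper accordingly proves the two cases separately after noting that they ``differ in some important details.''
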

\begin{proof}
We will write out the proofs for the (pre-)green and (pre-)red processes separately. Because the definitions of the processes are similar, the proofs will also look similar but differ in some important details.

\medskip

By the rules governing the pre-green process (\cref{pregreen}), a sample path may only intersect another starting point from above (that is, by taking a \texttt{g} step when the walk is at height $0.5$). Since a new path is always started in the middle of a \texttt{g} step, such a step will always result in an intersection. Thus, the sample path started at label $s$ intersects label $t$ if and only if it takes the following sequence of steps:
\begin{itemize}
    \item[(a)] the initial \texttt{g} step, during which the sample path labeled $s$ is created and moves to height $-0.5$,
    \item[(b)] a possibly empty string of steps with an equal number of \texttt{r}s and \texttt{b}s, during which the path stays negative and ends up at $-0.5$ (meaning the \texttt{r}s and \texttt{b}s form a Dyck path; the \texttt{g}s don't affect the height of the path),
    \item[(c)] an intermediate \texttt{b} step, in which the path moves to $+0.5$,
    \item[(d)] a possibly empty string of steps with an equal number of \texttt{b}s and \texttt{g}s, during which the path stays positive and ends up at $+0.5$,
    \item[(e)] a final \texttt{g} step, in which the path intersects the sample path labeled $t$.
\end{itemize}

Recall that the pre-green process $pZ^g_M$ is driven by the reversed pre-random walk $pW_M'$, which can be interpreted combinatorially in the following way. Since the increments of $pW_M$ come from traversing a loop around the blue tree and marking down the second visits to green, blue, and red edges (recall \cref{schnyderwoodstring}), the increments of $pW_M'$ can be interpreted as traversing this loop in reverse and marking down the first visits to green, blue, and red edges instead.

This process will visit the vertices in descending numerical order (because the blue vertex labels come from the order in which the forward loop visits them), and \cref{ordering} shows that each green edge points from a larger to a smaller numbered vertex (in particular, $s > t$). Thus, this already allows us to correspond certain steps in the sequence above to particular parts of the loop. Specifically, step (a) above corresponds to the part of the loop crossing the outgoing green edge from vertex $s$, and step (e) corresponds to crossing the outgoing green edge from vertex $t$ (in words, the reversed loop visits each green edge for the first time at the source, because its blue label is larger than the target). Our goal is to show that the remaining sequence of steps ((b), (c), and (d)) does in fact occur if there is a green edge from $s$ to $t$.

First, we claim that step (c) corresponds to the first visit of the incoming blue edge from vertex $t$ (that is, having the loop run parallel to it on one side). Indeed, let $r$ be the closest common parent of $s$ and $t$ in the blue tree (recall this is the first common vertex on the paths from $s$ and from $t$ to the blue root). By the last claim of \cref{ordering}, $r$ will not be either $s$ or $t$, and thus these vertices are organized as in the left diagram of \cref{fig:schnyderlemma5}. The sample path of $pZ^g_M$ labeled $s$ begins when the loop makes the crossing marked ``step (a),'' and we wish to show that it does not cross the $x$-axis until the traversal marked ``step (c).''

\begin{figure}[ht]
\begin{center}
\begin{tikzpicture}
\draw[yellow!50!black, thick, dashed, ->] (-1.35, 3.23) -- (-1.65, 3.83);
\node[yellow!50!black] at (-0.7, 3.5) {step (c)};
\draw[yellow!50!black, thick, dashed, ->] (2, 4)+(60:0.4cm) arc[start angle=60, end angle=215, radius=0.4cm];
\node[yellow!50!black] at (0.9, 4.3) {step (a)};
\draw[blue, thick, ->] (2, 4) -- (1.5, 3);
\draw[blue, thick] (1.5, 3) -- (1, 2);
\draw[blue, thick, ->] (1, 2) -- (0.5, 1);
\draw[blue, thick] (0.5, 1) -- (0, 0);
\draw[blue, thick, ->] (-2, 4) -- (-1.75, 3.5);
\draw[blue, thick] (-1.75, 3.5) -- (-1.5, 3);
\draw[blue, thick, ->] (-1.5, 3) -- (-1, 2);
\draw[blue, thick] (-1, 2) -- (-0.5, 1);
\draw[blue, thick, ->] (-0.5, 1) -- (-0.25, 0.5);
\draw[blue, thick] (-0.25, 0.5) -- (0, 0);
\draw[blue, thick, ->] (0, 3) -- (0, 2.5);
\draw[blue, thick] (0, 2.5) -- (0, 2);
\draw[blue, thick, ->] (1, 3) -- (1, 2.5);
\draw[blue, thick] (1, 2.5) -- (1, 2);
\draw[blue, thick, ->] (0, 2) -- (0, 1);
\draw[blue, thick] (0, 1) -- (0, 0);
\draw[blue, thick, ->] (-1, 3) -- (-0.5, 2.5);
\draw[blue, thick] (-0.5, 2.5) -- (0, 2);
\draw[blue, fill=blue] (0, 0) circle(0.1cm);
\draw[blue, fill=blue] (1, 2) circle(0.1cm);
\draw[blue, fill=blue] (2, 4) circle(0.1cm);
\draw[blue, fill=blue] (-2, 4) circle(0.1cm);
\draw[purple!70!blue, fill=purple!70!blue] (-0.5, 1) circle(0.1cm);
\draw[purple!70!blue, fill=purple!70!blue] (-1.5, 3) circle(0.1cm);
\draw[purple!70!blue, fill=purple!70!blue] (1, 3) circle(0.1cm);
\draw[green!50!black, thick, ->] (2, 4) -- (0, 4);
\draw[green!50!black, thick] (0, 4) -- (-2, 4);
\draw[purple!70!blue, fill=purple!70!blue] (0, 2) circle(0.1cm);
\draw[purple!70!blue, fill=purple!70!blue] (0, 3) circle(0.1cm);
\draw[purple!70!blue, fill=purple!70!blue] (-1, 3) circle(0.1cm);
\node at (2.2, 4.2) {$s$};
\node at (-2.2, 4.2) {$t$};
\node at (0, -0.4) {$r$};
\begin{scope}[shift={(7, 1)}]
\draw[yellow!50!black, thick, dashed, ->] (0.65, -0.77) -- (0.35, -0.17);
\node[yellow!50!black] at (1.3, -0.5) {step (c)};
\draw[yellow!50!black, thick, dashed, ->] (0, 0)+(135:0.4cm) arc[start angle=135, end angle=210, radius=0.4cm];
\node[yellow!50!black] at (-1.2, -0.2) {step (e)};
\draw[green!50!black, thick, ->] (0, 0) -- (-1.5, 0.75);
\draw[green!50!black, thick, ->] (3, 0) -- (1.5, 0);
\draw[green!50!black, thick] (1.5, 0) -- (0, 0);
\draw[red, thick, ->] (0, 0) -- (1.5, 0.75);
\draw[blue, thick, ->] (0.5, 1) -- (0.25, 0.5);
\draw[blue, thick] (0.25, 0.5) -- (0, 0);
\draw[blue, thick, ->] (0, 2) -- (0.25, 1.5);
\draw[blue, thick] (0.25, 1.5) -- (0.5, 1);
\draw[blue, thick, ->] (1, 2) -- (0.75, 1.5);
\draw[blue, thick] (0.75, 1.5) -- (0.5, 1);
\draw[blue, thick, ->] (-0.5, 1) -- (-0.25, 0.5);
\draw[blue, thick] (-0.25, 0.5) -- (0, 0);
\draw[blue, fill=blue] (0, 0) circle(0.1cm);
\draw[blue, thick, ->] (0, 0) -- (0.5, -1);
\node at (-0.2, -0.2) {$t$};
\draw[blue, fill=blue] (3, 0) circle(0.1cm);
\draw[purple!70!blue, fill=purple!70!blue] (0.5, 1) circle(0.1cm);
\draw[purple!70!blue, fill=purple!70!blue] (-0.5, 1) circle(0.1cm);
\draw[purple!70!blue, fill=purple!70!blue] (0, 2) circle(0.1cm);
\draw[purple!70!blue, fill=purple!70!blue] (1, 2) circle(0.1cm);
\node at (3.2, -0.2) {$s$};
\end{scope}
\end{tikzpicture}
\end{center}
\caption{Three key edge traversals of the Schnyder wood loop, along with the steps they correspond to for a particular sample path of $pZ^g_M$ (the one labeled $s$).}\label{fig:schnyderlemma5}
\end{figure}
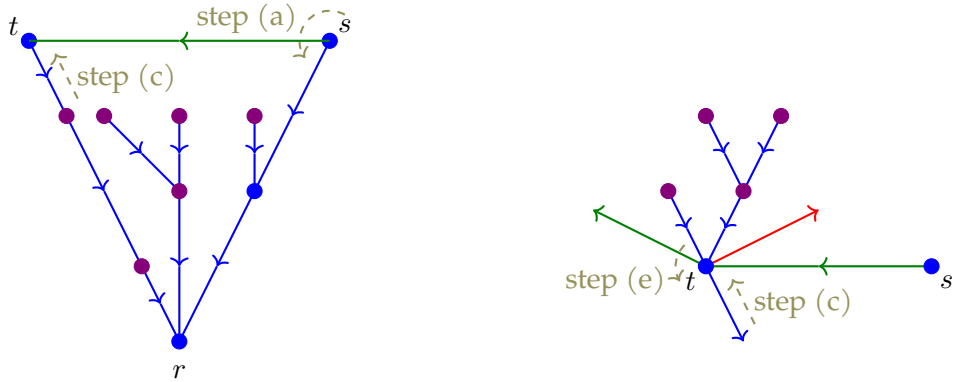

Each vertex in the left-hand side of \cref{fig:schnyderlemma5} (except $r$ if it is the blue root) has exactly one outgoing red edge. Consider the region bounded by the green edge from $s$ to $t$ and the blue paths from $s$ and $t$ to $r$. Schnyder's rule now specifies that the red edges from $t$ and from the vertices on the right boundary (between $s$ and $r$ inclusive) will not be contained within this region, while the red edges from all other vertices (marked in purple) will be. Therefore, the only sources of red edges that will be crossed between steps (a) and (c) are those originating from purple vertices. Furthermore, by \cref{ordering}, each outgoing red edge from a vertex labeled $x$ is connected to some vertex labeled $y > x$. Since the traversal (reversed loop) makes its way around all edges surrounding vertex $y$ before all edges surrounding vertex $x$, this means (for each purple vertex $x$) the outgoing red edge at $x$ is hit before the outgoing blue edge at $x$. Thus while we are within this enclosed region, the number of traversed red edges after step (a) is always at least the number of traversed blue edges. (By Schnyder's rule, any red edges pointing to $s$ lie between the green outgoing and blue outgoing edges, so they are indeed traveled after step (a).)

The only blue edges traversed before step (c) are the blue edges originating from purple vertices. Thus, when the final purple vertex in this region has been traversed, the walk $pW_M$ has taken an equal number of \texttt{r} and \texttt{b} steps, during which our sample path has an equal number of $-1$ and $+1$ increments but always stays below height $-0.5$ (along with some green steps which do not affect the height). After that, the next non-green edge crossed is the one outgoing from $t$, labeled ``step (c)'' (if there are no purple vertices on the left boundary, then this is just the edge from $t$ to $r$), and this will increment the sample path's height from $-0.5$ to $+0.5$. This means that the status of the walk during steps (a), (b), and (c) is verified.

Next, we need to analyze the portion of the path in which the height is positive -- for this, see the right-hand side of \cref{fig:schnyderlemma5}. By Schnyder's rule and the rules of our loop traversal, before we cross the outgoing green edge from $t$, we first traverse the blue subtree that descends from $t$ (whose vertices are marked in purple), and because our walk height is positive, it is now affected by green and blue edges. But in this portion of the loop, each blue and green edge that we traverse for the first time comes from such a purple vertex. Again by \cref{ordering}, any purple vertex $x$ has an outgoing green edge to some vertex $y < x$, meaning that the first time we visit that edge is when we are circling around vertex $x$. But we will do so only after we traverse the outgoing blue edge from $x$, meaning the number of traversed blue edges in this part is always at least the number of traversed green ones. Thus this part of the path corresponds to step (d), a Dyck path of \texttt{b} and \texttt{g} steps (plus some \texttt{r} steps which keep the height constant); when we complete this step, the path is once again at height $0.5$. Finally, crossing the outgoing green edge from $t$ yields step (e), and thus the sample path labeled $s$ will next be at height $0$ exactly at the midpoint of that \texttt{g} step, which is exactly when the sample path labeled $t$ is started. This is exactly what we wish to prove.

\medskip

The proof for the pre-red process $pZ^r_M$ goes similarly, except we now take the Schnyder wood loop in the forward order and return to marking down second visits to edges. By the rules governing the red process, the only way a path hits another starting point is if the former is on the $x$-axis when the latter begins. Furthermore, by Schnyder's rule, a \texttt{b} step cannot follow a \texttt{r} step. This means that once a path arrives on the $x$-axis, it does not leave until hitting the next vertex or having the path end. Thus, the sample path labeled $s$ will hit vertex $t$ if it takes the following sequence of steps:

\begin{itemize}
    \item[(a)] the initial \texttt{g} step, during which the path labeled $s$ is created and moves to position $0.5$,
    \item[(b)] a possibly empty string of steps with an equal number of \texttt{g}s and \texttt{b}s, during which the path stays positive and ends up at $0.5$,
    \item[(c)] an intermediate \texttt{b} step, in which the path moves to $-0.5$,
    \item[(d)] a possibly empty string of steps with an equal number of \texttt{b}s and \texttt{r}s, during which the path stays negative and ends up at $-0.5$,
    \item[(e)] at least one (but possibly more) \texttt{r} steps,
    \item[(f)] a final \texttt{g} step, during which the path intersects the sample path labeled $t$.
\end{itemize}

Again by \cref{ordering}, we know that $s < t$, that step (a) occurs when we cross the outgoing green edge from $s$ (because the forward loop visits each green edge for the second time at the source), and that step (f) occurs when we cross the outgoing green edge from $t$. We wish to check that steps (b) through (e) also occur in the order specified above. First, we claim that step (c) occurs when the Schnyder wood loop crosses the outgoing blue edge from $s$, as shown in the left-hand side of \cref{fig:schnyderlemma6}. 

\begin{figure}[ht]
\begin{center}
\begin{tikzpicture}
\draw[yellow!50!black, thick, dashed, <-] (0.6, -0.8) -- (0.3, -0.2);
\node[yellow!50!black] at (1.2, -0.5) {step (c)};
\draw[yellow!50!black, thick, dashed, <-] (0, 0)+(135:0.4cm) arc[start angle=135, end angle=210, radius=0.4cm];
\node[yellow!50!black] at (-1.1, -0.2) {step (a)};
\draw[green!50!black, thick, ->] (0, 0) -- (-1.5, 0.5);
\draw[red, thick, ->] (0, 0) -- (1.5, 0);
\draw[red, thick] (1.5, 0) -- (3, 0);
\draw[blue, thick, ->] (0.5, 1) -- (0.25, 0.5);
\draw[blue, thick] (0.25, 0.5) -- (0, 0);
\draw[blue, thick, ->] (0, 2) -- (0.25, 1.5);
\draw[blue, thick] (0.25, 1.5) -- (0.5, 1);
\draw[blue, thick, ->] (1, 2) -- (0.75, 1.5);
\draw[blue, thick] (0.75, 1.5) -- (0.5, 1);
\draw[blue, thick, ->] (-0.5, 1) -- (-0.25, 0.5);
\draw[blue, thick] (-0.25, 0.5) -- (0, 0);
\draw[blue, fill=blue] (0, 0) circle(0.1cm);
\draw[blue, thick, ->] (0, 0) -- (0.5, -1);
\node at (-0.2, -0.2) {$s$};
\draw[blue, fill=blue] (3, 0) circle(0.1cm);
\draw[purple!70!blue, fill=purple!70!blue] (0.5, 1) circle(0.1cm);
\draw[purple!70!blue, fill=purple!70!blue] (-0.5, 1) circle(0.1cm);
\draw[purple!70!blue, fill=purple!70!blue] (0, 2) circle(0.1cm);
\draw[purple!70!blue, fill=purple!70!blue] (1, 2) circle(0.1cm);
\node at (3.2, -0.2) {$t$};

\begin{scope}[shift={(7, -1)}]
\draw[yellow!50!black, thick, dashed, <-] (-1.4, 3.2) -- (-1.7, 3.8);
\node[yellow!50!black] at (-0.8, 3.5) {step (c)};
\draw[yellow!50!black, thick, dashed, ->] (2, 4)+(210:0.4cm) arc[start angle=210, end angle=135, radius=0.4cm];
\node[yellow!50!black] at (1, 4.3) {step (e)};
\draw[yellow!50!black, thick, dashed, ->] (2, 4)+(45:0.4cm) arc[start angle=45, end angle=-30, radius=0.4cm];
\node[yellow!50!black] at (3, 4.3) {step (f)};
\draw[green!50!black, thick, ->] (2, 4) -- (3.5, 4);
\draw[red, thick, ->] (2, 5) -- (2, 4.5);
\draw[red, thick] (2, 4.5) -- (2, 4);
\draw[blue, thick, ->] (2, 4) -- (1.5, 3);
\draw[blue, thick] (1.5, 3) -- (1, 2);
\draw[blue, thick, ->] (1, 2) -- (0.5, 1);
\draw[blue, thick] (0.5, 1) -- (0, 0);
\draw[blue, thick, ->] (-2, 4) -- (-1.75, 3.5);
\draw[blue, thick] (-1.75, 3.5) -- (-1.5, 3);
\draw[blue, thick, ->] (-1.5, 3) -- (-1, 2);
\draw[blue, thick] (-1, 2) -- (-0.5, 1);
\draw[blue, thick, ->] (-0.5, 1) -- (-0.25, 0.5);
\draw[blue, thick] (-0.25, 0.5) -- (0, 0);
\draw[blue, thick, ->] (0, 3) -- (0, 2.5);
\draw[blue, thick] (0, 2.5) -- (0, 2);
\draw[blue, thick, ->] (1, 3) -- (1, 2.5);
\draw[blue, thick] (1, 2.5) -- (1, 2);
\draw[blue, thick, ->] (0, 2) -- (0, 1);
\draw[blue, thick] (0, 1) -- (0, 0);
\draw[blue, thick, ->] (-1, 3) -- (-0.5, 2.5);
\draw[blue, thick] (-0.5, 2.5) -- (0, 2);
\draw[blue, fill=blue] (0, 0) circle(0.1cm);
\draw[blue, fill=blue] (1, 2) circle(0.1cm);
\draw[blue, fill=blue] (2, 4) circle(0.1cm);
\draw[blue, fill=blue] (-2, 4) circle(0.1cm);
\draw[purple!70!blue, fill=purple!70!blue] (-0.5, 1) circle(0.1cm);
\draw[purple!70!blue, fill=purple!70!blue] (-1.5, 3) circle(0.1cm);
\draw[purple!70!blue, fill=purple!70!blue] (1, 3) circle(0.1cm);
\draw[red, thick, ->] (-2, 4) -- (0, 4);
\draw[red, thick] (0, 4) -- (2, 4);
\draw[purple!70!blue, fill=purple!70!blue] (0, 2) circle(0.1cm);
\draw[purple!70!blue, fill=purple!70!blue] (0, 3) circle(0.1cm);
\draw[purple!70!blue, fill=purple!70!blue] (-1, 3) circle(0.1cm);
\node at (2.2, 3.8) {$t$};
\node at (-2.2, 4.2) {$s$};
\node at (0, -0.4) {$r$};
\end{scope}
\end{tikzpicture}
\end{center}
\caption{Key steps in the forward traversal of the Schnyder wood loop and the steps they correspond to for a particular sample path of $pZ^r_M$. The diagrams here are similar to those of \cref{fig:schnyderlemma5} but with the loop in the opposite direction.}\label{fig:schnyderlemma6}
\end{figure}
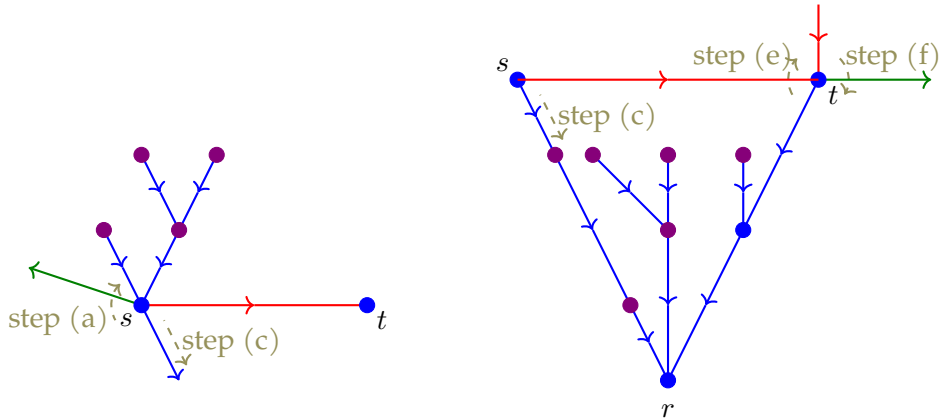

Indeed, for each purple vertex pictured, its outgoing green edge will be crossed (for the second time, while circling around the vertex) before its outgoing blue edge is visited for the second time. Additionally, the green and blue edges originating from purple vertices are the only blue and green edges second-crossed before the edge marked ``step (c).'' So step (b) does indeed conclude after traversing the entire descendant tree of $t$, yielding a Dyck path of \texttt{g}s and \texttt{b}s. Immediately after those steps, the Schnyder wood loop performs step (c), the second edge visit of the outgoing blue edge from $s$; this makes the height of the sample path go from height $0.5$ to height $-0.5$.

Next, to study steps (d), (e), and (f), consider the right-hand side of \cref{fig:schnyderlemma6}. We want to show that what immediately follows step (c) is a Dyck path of blue and red steps. Again, the purple vertices (defined in the same way as in the $pZ^g_M$ case) are the only ones creating red and blue edges traversed within the triangle, and each outgoing blue edge at a vertex is traversed (for the second time) before its corresponding red edge. Thus step (d) -- the edges traversed before crossing the red edge from $s$ to $t$ -- form a Dyck path of \texttt{b} and \texttt{r} steps with some extra green steps as well. After crossing step (e), the red edge from $s$ to $t$, Schnyder's rule implies that we can only encounter other incoming red edges before the loop performs step (f), crossing the outgoing green edge from $t$. So the sample path labeled $s$ will indeed intersect the $x$-axis and stay there until the path labeled $t$ begins, again proving the desired claim.
\end{proof}

The next lemma performs a similar analysis in the case where edges are not just between internal vertices. With this next result, we will have proved that all green and red edges of $M$ correspond appropriately to intersections of sample paths with other starting points.

\begin{lemma}\label{lemmaroot}
Let $M$ be a Schnyder wood  triangulation of size $n$, and let $1 \le s \le n$. If the outgoing green (resp.\ red) edge from vertex $s$ is directed at the green (resp.\ red) root, then in $pZ^g_M$ (resp.\ $pZ^r_M$), the sample path labeled $s$ does not intersect any other starting points.
\end{lemma}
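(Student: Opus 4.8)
The plan is to mimic the proof of \cref{lemmareg} essentially verbatim, tracking the single sample path labeled $s$ through the reversed (resp.\ forward) traversal of the Schnyder wood loop and using the same dictionary there between increments of $pW_M'$ and $pW_M$ and crossings of colored edges (a \texttt{g}-, \texttt{b}-, or \texttt{r}-step according to the edge crossed). The only structural difference is that the outgoing green (resp.\ red) edge of $s$ now terminates at the green (resp.\ red) root rather than at an internal vertex $t$, so there is no ``target vertex,'' and I want to show that this makes the ``return trip'' that would produce an intersection impossible.

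For the pre-green process $pZ_M^g=\pWC^g(pW_M')$, first recall from \cref{pregreen} that the path labeled $s$ can pass through another starting point only if it sits at height $+\tfrac12$ at the start of a \texttt{g}-step (so that it equals $0$ at the midpoint of that \texttt{g}-step, which is exactly where the new starting point lies). Step (a) is the crossing of $s$'s outgoing green edge, after which the path is at height $-\tfrac12$; the key step is then to show that, from step (a) to the end of the interval, the path labeled $s$ stays $\le 0$ and is never at height $+\tfrac12$ at the start of a \texttt{g}-step. Since the path moves in $\pm\tfrac12$ increments, it can only reach $+\tfrac12$ by first returning to $0$ and then taking a \texttt{b}-step; and, exactly as in the analysis culminating in ``step (c)'' of \cref{lemmareg}, such a first return-then-ascent is the signature of crossing an incoming blue edge at the target vertex $t$ (or the blue edge from $t$ toward the common ancestor $r$). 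I would make this precise by working in the region $\mc{R}_s$ bounded by the green edge $s\to(\text{green root})$, the blue path from $s$ to the blue root, and the outer boundary: using \cref{ordering} (green edges run from larger to smaller labels, so the reversed loop crosses the outgoing red edge of each relevant ``purple'' vertex before its outgoing blue edge) together with Schnyder's rule at the vertices met by the loop after step (a), one checks the net count of \texttt{b}-steps minus \texttt{r}-steps among the weakly-negative portion of the path never reaches $1$. Since no internal $t$ exists, the configuration forcing such a return simply never arises, so the path labeled $s$ meets no other starting point.

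The pre-red case is handled in the same way, now with the forward loop, the rules of \cref{prered}, and the fact that red edges run from smaller to larger labels; pictures like \cref{fig:schnyderlemma5,fig:schnyderlemma6} apply with the target vertex replaced by the corresponding root. One extra point of bookkeeping is the half-step horizontal-continuation rule in \cref{prered}: a path resting on the $x$-axis during a block of \texttt{r}-steps is extended one more half-step at height $0$, and I must check this cannot create an intersection either — a starting point there would be the midpoint of a \texttt{g}-step, but a \texttt{g}-step cannot immediately follow an \texttt{r}-step by Schnyder's rule. The hard part, exactly as in \cref{lemmareg}, will be the delicate region-and-loop analysis near height $0$: one has to identify precisely which edges of $M$ the relevant loop crosses after step (a) and verify that every such crossing keeps the path away from the critical height, including a clean treatment of what happens when the reversed (resp.\ forward) loop eventually closes off around the green (resp.\ red) root. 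Everything else (that $pZ_M^g$, $pZ_M^r$ are genuine coalescent-walk processes, that the labelings are as prescribed) is already in place from \cref{mainresult2} and the preceding lemmas.
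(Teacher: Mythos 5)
Your overall strategy of tracking the single path labeled $s$ through the loop traversal, working in the region cut out by the green (resp.\ red) edge to the root, the blue path from $s$, and the outer boundary, and invoking \cref{ordering} at each ``purple'' vertex, is exactly the paper's argument. The green case as you sketch it is fine: showing that the count of \texttt{b}-steps never exceeds the count of \texttt{r}-steps after step~(a) keeps the path pinned at height $\le -\tfrac12$, and once the loop closes off around the green root the only remaining crossings are green edges (which leave a negative path constant), so no intersection is possible.

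However, your treatment of the continuation rule in the red case contains a genuine error. You claim ``a \texttt{g}-step cannot immediately follow an \texttt{r}-step by Schnyder's rule,'' but \cref{schnyderwoodstring} says the opposite: \texttt{r} steps are never followed by \texttt{b} steps, which forces every maximal block of \texttt{r}'s to be followed by a \texttt{g} (or by the end of the string). This is not a side issue you can dismiss: the \texttt{rr}$\cdots$\texttt{rg} pattern and the half-step continuation rule of \cref{prered} are precisely the mechanism by which a path in $pZ^r_M$ intersects a later starting point — see steps~(e) and~(f) of the red case in the proof of \cref{lemmareg}, where the path first rests on the $x$-axis through a block of \texttt{r}'s and only then hits the new starting point at the midpoint of the following \texttt{g}. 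What you actually need to show is that this scenario never begins: after step~(c) the path stays strictly below the $x$-axis (via the ``purple vertex: blue second-visit before red second-visit'' count inside the shaded region bounded by the red edge $s\to$ root, the blue path from $s$ to the blue root, and the blue–red boundary edge), and it stays negative all the way until the loop crosses the red edge from $s$ into the red root. From that point on, the remaining crossings are all red edges directed into the red root, so even though the path then rises to the $x$-axis through \texttt{r}-steps and the continuation rule may be triggered, there are no further \texttt{g}-steps and hence no starting points left to hit. That last observation — not a nonexistent prohibition on \texttt{g} following \texttt{r} — is what closes the argument.
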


\begin{proof}
First we do the green case -- suppose there is an edge from $s$ to the green root. We wish to prove that (here recall that we traverse the Schnyder wood loop in reverse and keep track of first visits) after crossing the green outward edge from $s$, we always encounter at least as many red edges as blue edges, which will prove that this sample path always stays below the $x$-axis throughout the entire interval on which it is defined. We do this by considering the region traced out by that green edge, as well as the blue path from $s$ to the blue root and the edge between the blue and green root. This is shown in \cref{fig:schnyderlemma7}.
\begin{figure}[ht]
\begin{center}
\begin{tikzpicture}
\draw[white, fill = gray!20!white] (0, 0) -- (0, 4) -- (3, 3) -- (3, 2) -- (1, 1);
\draw[green!50!black, thick, ->] (3, 3) -- (1.5, 3.5);
\draw[green!50!black, thick] (1.5, 3.5) -- (0, 4);
\draw[blue, fill=blue] (3, 3) circle(0.1cm);
\draw[blue, thick, ->] (3, 3) -- (3, 2.5);
\draw[blue, thick] (3, 2.5) -- (3, 2);
\draw[blue, thick, ->] (3, 2) -- (2, 1.5);
\draw[blue, thick] (2, 1.5) -- (1, 1);
\draw[blue, thick, ->] (2, 2) -- (1.5, 1.5);
\draw[blue, thick] (1.5, 1.5) -- (1, 1);
\draw[blue, thick, ->] (1, 1.5) -- (0.5, 0.75);
\draw[blue, thick] (0.5, 0.75) -- (0, 0);
\draw[blue, thick, ->] (1, 2.5) -- (1, 2);
\draw[blue, thick] (1, 2) -- (1, 1.5);
\draw[blue, thick, ->] (1, 1) -- (0.5, 0.5);
\draw[blue, thick] (0.5, 0.5) -- (0, 0);
\draw[blue, fill=blue] (3, 2) circle(0.1cm);
\draw[blue, fill=blue] (1, 1) circle(0.1cm);
\draw[purple!70!blue, fill=purple!70!blue] (1, 1.5) circle(0.1cm);
\draw[purple!70!blue, fill=purple!70!blue] (1, 2.5) circle(0.1cm);
\draw[purple!70!blue, fill=purple!70!blue] (2, 2) circle(0.1cm);
\draw[thick] (0, 0) -- (0, 4);
\node at (3.2, 3.2) {$s$};
\node at (-0.5, -0.35) {blue root};
\node at (-0.5, 4.35) {green root};
\draw[green!50!black, fill=green!50!black] (-0.15, 4.15) |- (0.15, 3.85) |- (-0.15, 4.15);
\draw[blue, fill=blue] (-0.15, 0.15) |- (0.15, -0.15) |- (-0.15, 0.15);
\end{tikzpicture}
\end{center}
\caption{A sample configuration containing a green edge directed to the green root.}\label{fig:schnyderlemma7}
\end{figure}
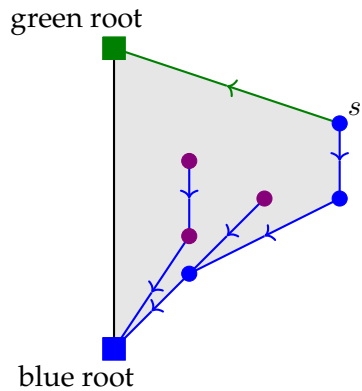

Each blue edge on the blue path from $s$ to the root has already been traversed before we cross the outgoing green edge from $s$, so the only remaining blue edges that are traversed originate from the purple vertices inside the region. But any purple vertex $x$ also has an outgoing red edge to a vertex $y > x$ (again by \cref{ordering}) in this region, so the loop will hit the red edge from $x$ (while traveling around $y$) before it hits the blue edge from $x$. Thus the number of red edges traversed is always at least the number of blue edges while we are within this region. Once the path crosses this green edge a second time, the remaining edges traced out are all green edges around the green root, so our path labeled $s$ does indeed always stay below the $x$-axis.

\medskip

The argument for the red case is similar (now traversing the Schnyder wood loop in the forward direction) but with one small subtlety. Consider the diagram in \cref{fig:schnyderlemma8} (letting $t$ denote the parent of $s$ in the blue tree). Recall from the proof of \cref{lemmareg} that when the Schnyder wood loop traverses the blue edge from $s$ to $t$ on the right side (labeled ``step (c)''), the sample path labeled $s$ crosses below the $x$-axis to a height of $-0.5$ (and does not intersect a starting point when it does so). We wish to prove that the path then subsequently stays below the $x$-axis until after the last green edge is traversed, so there are no more starting points to intersect.

\begin{figure}[ht]
\begin{center}
\begin{tikzpicture}
\draw[white, fill = gray!20!white] (0, 0) -- (1, 1) -- (1, 3) -- (4, 0) -- (0, 0);
\draw[red, thick, ->] (1, 3) -- (2.5, 1.5);
\draw[red, thick] (2.5, 1.5) -- (4, 0);
\draw[green!50!black, thick, ->] (1, 3) -- (0, 3);
\draw[blue, thick, ->] (1, 4) -- (1, 3.5);
\draw[blue, thick] (1, 3.5) -- (1, 3);
\draw[blue, fill=blue] (1, 3) circle(0.1cm);
\draw[blue, thick, ->] (1, 3) -- (1, 2.5);
\draw[blue, thick] (1, 2.5) -- (1, 2);
\draw[blue, thick, ->] (1, 2) -- (1, 1.5);
\draw[blue, thick] (1, 1.5) -- (1, 1);
\draw[blue, thick, ->] (2, 1) -- (1.5, 1);
\draw[blue, thick] (1.5, 1) -- (1, 1);
\draw[blue, thick, ->] (1, 1) -- (0.5, 0.5);
\draw[blue, thick] (0.5, 0.5) -- (0, 0);
\draw[blue, thick, ->] (1.5, 0.5) -- (0.75, 0.25);
\draw[blue, thick] (0.75, 0.25) -- (0, 0);
\draw[purple!70!blue, fill=purple!70!blue] (1, 2) circle(0.1cm);
\draw[purple!70!blue, fill=purple!70!blue] (2, 1) circle(0.1cm);
\draw[purple!70!blue, fill=purple!70!blue] (1, 1) circle(0.1cm);
\draw[purple!70!blue, fill=purple!70!blue] (1.5, 0.5) circle(0.1cm);
\draw[purple!70!blue, fill=purple!70!blue] (2, 1) circle(0.1cm);
\draw[yellow!50!black, dashed, thick, ->] (1.15, 2.9) -- (1.15, 2.1);
\node[yellow!50!black] at (1.9, 2.9) {step (c)};
\draw[thick] (0, 0) -- (4, 0);
\node at (0.7, 3.2) {$s$};
\node at (0.7, 1.8) {$t$};
\draw[yellow!50!black, thick, dashed, ->] (4, 0)+(165:0.4cm) arc[start angle=165, end angle=90, radius=0.4cm];
\node[yellow!50!black] at (4.1, 0.7) {step (e)};
\node at (-0.5, -0.3) {blue root};
\node at (4.5, -0.3) {red root};
\draw[red, fill=red] (3.85, 0.15) |- (4.15, -0.15) |- (3.85, 0.15);
\draw[blue, fill=blue] (-0.15, 0.15) |- (0.15, -0.15) |- (-0.15, 0.15);
\end{tikzpicture}
\end{center}
\caption{A sample configuration containing a red edge directed to the red root.}\label{fig:schnyderlemma8}
\end{figure}

To do this, we must prove that at any point after step (c), the number of blue edges visited within this shaded region is always greater than the number of red edges. Indeed, all red edges we visit before traversing the red edge from $s$ to the root originate from some purple vertex $x$ in \cref{fig:schnyderlemma8} (that is, a vertex in the enclosed region other than $s$ or the blue or red root), but again every such red edge's second visit is preceded by the outgoing blue edge's second-visit from $x$. Thus right before step (e), the sample path is still below the $x$-axis. And from step (e) onward, the only edges left to cross are the red edges directed into the root, none of which correspond to starting a new sample path. Thus again no intersection with another starting point occurs, completing the proof.
\end{proof}

The final lemma required is to show that the ordering of children agrees between the pre-coalescent-walk processes and the trees in the Schnyder wood:

\begin{lemma}\label{lemmaorder}
Let $M$ be a Schnyder wood triangulation, and let $v$ be any vertex of $M$. Then the order of incoming green and red edges into $v$ in the Schnyder wood triangulation agrees with the order of the sample paths in the pre-green and pre-red processes. Specifically, if two children $a$ and $b$ of a vertex $v$ are such that the branch of $a$ is visited before the branch of $b$ by the clockwise exploration of the green (resp.\ red) tree of $M$, then in $pZ^g_M$ (resp.\ $pZ^r_M$), the height of the path labeled $a$ is smaller (resp.\ larger) than that of the path labeled $b$ immediately before they coalesce or reach the end of the interval.
\end{lemma}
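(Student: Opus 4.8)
The plan is to build on the identifications already in hand. By Lemmas~\ref{lemmareg} and~\ref{lemmaroot}, once the starting points of $pZ^g_M$ (resp.\ $pZ^r_M$) are labelled as in \cref{mainresult2}, the coalescent-walk forest underlying $pZ^g_M$ (resp.\ $pZ^r_M$) has the same vertices and the same parent function as the green (resp.\ red) spanning tree of $M$; \cref{lemmaorder} is precisely the statement that the planar child-order recorded by the heights of the paths (via \cref{totalorder}) agrees with the clockwise child-order in the tree. So I fix a vertex $v$, two children $a,b$ of $v$ with the branch of $a$ explored before the branch of $b$, and analyse the two sample paths labelled $a$ and $b$. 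If $v$ is the green (resp.\ red) root, \cref{lemmaroot} already does the job: both paths stay strictly below (resp.\ above) the $x$-axis on the whole interval and meet no other starting point, so at the birth time of the later-born of the two paths, that path sits at height $0$ while the other has strictly negative (resp.\ positive) height, and non-crossing of the two paths propagates the corresponding inequality until the end of the interval.

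Now let $v$ be internal, and first treat the green case. By Fact~1 in the proof of \cref{schnyderwoodbijectperm}, ``$a$ explored before $b$'' is equivalent to $a$ having the larger blue label, and both blue labels exceed that of $v$ since green edges run from larger to smaller labels (\cref{ordering}); in particular the path $a$ is born before the path $b$. Reusing the bookkeeping from the green part of the proof of \cref{lemmareg}: in the reversed Schnyder-wood loop driving $pZ^g_M$, the path $a$ is born at the first (reversed) crossing of the outgoing green edge of $a$ and stays strictly below the $x$-axis until the first (reversed) crossing of the outgoing blue edge of $v$ (its ``step~(c)''). In forward-loop time that blue edge is crossed for the second time exactly when the loop leaves the blue subtree of $v$; since $b$ is not blue-comparable with $v$ (no green edge joins blue-comparable vertices, by \cref{ordering}) and has larger blue label than $v$, the loop leaves that subtree strictly before it first reaches $b$, hence strictly before it crosses the outgoing green edge of $b$ for the second time, which is when the path $b$ is born. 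Translating back to process time, $b$ is born strictly before the ``step~(c)'' of $a$, so at the birth of $b$ the path $a$ is still strictly negative while $b$ is at height $0$; since the two paths do not cross, $a$ remains strictly below $b$ until they coalesce (which, by \cref{lemmareg}, occurs no later than when both reach the common starting point $v$). This is the green case of the lemma; note that it also gives $a\le^{\text{up}}b$ in the order of \cref{totalorder}, which is what enters the computation of $\sigma^{\text{up}}(pZ^g_M)$.

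The red case is structurally identical --- now $pZ^r_M$ is driven by the forward loop, a path is born at the second forward crossing of the outgoing green edge of its starting vertex and, by the red part of the proof of \cref{lemmareg}, stays strictly above the $x$-axis until the second forward crossing of its \emph{own} outgoing blue edge, i.e.\ until the loop leaves that vertex's blue subtree --- but it requires one genuinely new input, which I expect to be the main obstacle. Unlike for the green tree, the red children of $v$ need not be explored in blue-label order (already false in the running example of \cref{fig:sampleschnyder}), so Fact~1 has no literal analogue. The plan is to prove, by the same region-and-Schnyder's-rule method used repeatedly in the proof of \cref{lemmareg} (examining the region bounded by the red edges $a\to v$ and $b\to v$ together with the relevant blue paths, and discarding the configurations that violate Schnyder's rule or create a monochromatic cycle), the dichotomy: if the red branch of $a$ is explored before that of $b$, then either $b$ lies in the blue subtree of $a$, or $a$ and $b$ are blue-incomparable and $a$ has the larger blue label. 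In the first case the path $b$ is born while the forward loop is traversing the blue subtree of $a$, hence before the ``step~(c)'' of $a$, so at $b$'s birth the path $a$ is still strictly positive; in the second case the path $a$ is born only after the loop has left the blue subtree of $b$, and --- using that the red children of $v$ have smaller blue label than $v$ (\cref{ordering}), so the loop reaches $a$ strictly before the path $b$ hits $v$ --- at that moment the path $b$ has already descended to the non-positive part of its excursion near $v$. Either way, combined with non-crossing, the path $a$ stays strictly above the path $b$ until they coalesce at $v$, which is the red case of the lemma (and yields $a\le^{\text{down}}b$). Everything else --- the ``a path stays on one side of the $x$-axis between two prescribed crossings'' facts, and the dictionary between forward/reversed loop order and process time --- is a direct reuse of the analysis already carried out for \cref{lemmareg} and \cref{lemmaroot}.
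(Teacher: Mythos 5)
Your overall plan matches the paper's: use the step-(a)--(f) bookkeeping from \cref{lemmareg} (and \cref{lemmaroot}) to place the birth time of one path inside a known sign-definite phase of the other, and then let non-crossing propagate the inequality. Your green case is correct and is essentially the paper's argument recast as a timing statement (the paper instead counts \texttt{r} vs.\ \texttt{b} steps directly; both are fine). Your red-case dichotomy --- either $b$ is a blue descendant of $a$ with $a<b$, or $a,b$ are blue-incomparable with $a>b$ --- is exactly the right combinatorial input; the paper obtains it by taking $s=b$ in \cref{lemmareg}/\cref{lemmaroot}, observing that $a$ lies in the enclosed region $\mathcal{R}_b$, and splitting on whether $a$ lies on the blue path from $b$ to $r$ (the fact that $a$ cannot be a blue \emph{descendant} of $b$ comes from Schnyder's rule at $b$, which places $b$'s incoming blue edges outside $\mathcal{R}_b$). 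Since you identify this dichotomy as the ``main obstacle'' and leave it unproved, it is a real gap in your write-up, though a narrow one: the verification is the same region-plus-Schnyder's-rule argument you already reuse from \cref{lemmareg}.

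There is also a concrete error in your shortcut for the root case. You claim that when $v$ is the red root, \cref{lemmaroot} lets you say that both paths ``stay strictly above the $x$-axis on the whole interval.'' That is not what \cref{lemmaroot} proves: its red-case proof explicitly has the sample path cross from height $+0.5$ to $-0.5$ at step (c) and remain non-positive afterwards (it only asserts that no \emph{starting point} is met). So ``the later-born path is at $0$ while the other is strictly positive'' is false in general, and your root-case argument for $pZ^r_M$ does not go through as stated. The fix is what the paper does: do not treat the root separately at all --- the casework on where $a$ sits in $\mathcal{R}_b$ (and hence whether $a$'s green step falls in phase (b) or phase (d) of $b$'s excursion) applies verbatim whether $v$ is internal (\cref{fig:schnyderlemma6}) or the red root (\cref{fig:schnyderlemma8}). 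Your green-root shortcut, by contrast, is fine, since \cref{lemmaroot}'s green-case proof really does keep the path strictly below the $x$-axis throughout.
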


Mirroring the examples listed below \cref{mainresult1}, vertices $9, 4, 6$ are the children of vertex $10$ in the red tree of the example Schnyder wood triangulation in \cref{fig:sampleschnyder}, and this is also the order from top to bottom in which the paths coalescing into label $10$ appear in $pZ^r_M$ on the right-hand side of \cref{fig:sampleschnyder3}. Meanwhile, vertices $10, 6$, and $1$ are the children of the green root in the green tree of the example Schnyder wood triangulation in \cref{fig:sampleschnyder}, and this is the order from bottom to top in which those correspondingly labeled paths appear in $pZ^g_M$ on the left-hand side of \cref{fig:sampleschnyder3}. Indeed, the path labeled by 10 coalesces into the path labeled by 6 from below, and the path labeled by 6 reaches the end below the (single-point) path labeled by 1.

\begin{proof}
We again do the green and red cases separately.

For the green tree, suppose $a$ and $b$ both have outgoing green edges pointing to the same vertex $v$, and suppose that (without loss of generality) $a$ is traversed first in the clockwise exploration of $M$'s green tree. Put another way, when sweeping clockwise from $v$'s outgoing red edge to $v$'s outgoing blue edge (or if $v$ is the root, from the green-red edge to the green-blue edge), $a$ shows up first. Returning to the proofs of \cref{lemmareg} and \cref{lemmaroot} (taking $s$ to be $a$), this means vertex $b$ must be one of the other blue or purple vertices shown in either the left-hand side of \cref{fig:schnyderlemma5} or in \cref{fig:schnyderlemma7}. Furthermore, all vertices within this region are numbered smaller than $a$, so we must have $a > b$. (This fact is true for the green case only -- it's only true because vertices in the blue tree are always visited before their descendants in the depth-first traversal. Thus, this step of the proof will be different for the red case.)

Thus, when the sample path labeled $a$ reaches the step corresponding to the path labeled $b$, it has visited at least as many red edges as blue edges (again, because the blue outgoing edge from a vertex is always visited after the red outgoing edge from that same vertex) and thus is at a height of $-0.5$ or lower when the path labeled $b$ starts. This means the branch of the tree for $a$ is below the branch for $b$, meaning that the path labeled $a$ will show up first in a bottom-to-top traversal of the green coalescent walks. 

\medskip

Now for the red tree, suppose both $a$ and $b$ have outgoing red edges pointing to $v$, and suppose $a$ is traversed first in a clockwise exploration of the triangulation's red tree. Put another way, when sweeping clockwise from $v$'s outgoing blue edge to $v$'s outgoing green edge (or if $v$ is the root, from the red-blue edge to the red-green edge), $a$ shows up first. Again considering the proofs of \cref{lemmareg} and \cref{lemmaroot} but now taking $s$ to be $b$, we see that $a$ will be within the enclosed region in the right-hand side of \cref{fig:schnyderlemma6} or in \cref{fig:schnyderlemma8}. However, we now have two cases to consider:

\begin{itemize}
    \item If $a$ is on the path from $b$ to the blue root, then $a < b$ and $b$ is in the descendant tree of $a$. Therefore, the sample path labeled $a$ crosses the outgoing green edge from $b$ during step (b), and it has visited more green edges than blue edges up to that point (the blue edge originating from $x$ is only visited if the green edge originating from $x$ has already been visited). Thus, the path started at $a$ will be at a positive height when the path started at $b$ begins, meaning that $a$ is traversed first in a top-to-bottom traversal of the red process.
    \item If $a$ is not on this path, then $a > b$. When the path labeled $b$ crosses the outgoing blue edge from $b$, it moves from height $0.5$ to height $-0.5$. In this case, the green outgoing edge from $a$ is inside the enclosed region. When the path at $b$ crosses it, it has not yet crossed the outgoing red edge from $b$ to $v$, and for every red edge it does second-cross (from some other purple vertex in the diagram), it crosses the corresponding blue one first. (In other words, we cross during step (d).) Therefore the sample path labeled $b$ will be at a negative height when the path labeled $a$ begins, meaning that again $a$ is traversed first in a top-to-bottom traversal.
\end{itemize}
In either case, the orderings are consistent with the Schnyder wood triangulation, as desired.
\end{proof}

We now prove the main result about our pre-coalescent-walk processes:

\begin{proof}[Proof of \cref{mainresult2}]
Fix any such Schnyder wood triangulation $M$, and let $Z$ denote either $pZ^g_M$ or $pZ^r_M$. To prove that we have a valid total ordering on the starting points in the sense of \cref{totalorder}, it suffices\footnote{Indeed (as has already been pointed out), the same proof strategy as in \cite[Proposition 2.9]{borga2022scaling} shows that both $\le^{\text{up}}$ and $\le^{\text{down}}$ in fact yield total orders on the starting points.} to prove that $Z$ is a coalescent-walk process in the sense of \cref{coalescentdefn} (with the obvious adaptation to half-integer times). That is, we need to check that for any starting points $j_1, j_2$ and any $s \ge j_1, j_2$, if $Z_s^{(j_1)} \ge Z_s^{(j_2)}$, then $Z_{s'}^{(j_1)} \ge Z_{s'}^{(j_2)}$ for all half-integers $s' \ge s$. We do this by induction, looking at the half-integer increments in the definitions of the pre-green and pre-red processes (and thus we only need to study $s' = s + \frac{1}{2}$). Since the evolution of the sample paths at the next time-step depends only on the walk $W_M$ and the current value of the path $Z_s^{(j)}$, once we have $Z_s^{(j_1)} = Z_s^{(j_2)}$, we will have $Z_{s'}^{(j_1)} = Z_{s'}^{(j_2)}$ for all future time. Furthermore, in both cases, the sample paths are always half-integer-valued at all points at which they are defined, and in any half-integer increment the value of $Z_{s'}^{(j_1)} - Z_{s'}^{(j_2)}$ changes by at most $\frac{1}{2}$. Indeed, the only possible increments for $(-X, Y)$ are $(0, 1)$, $(1, 1)$, and $(1, 0)$, so for the pre-green process (recall the update rule in \cref{pregreen}) either the two paths have the same increment, or the upper one changes by $\frac{y}{2}$ and the lower by $-\frac{x}{2}$. And even the additional rule in \cref{prered} will preserve the ordering, since over any such increment all other paths not on the $x$-axis will only move by at most $\frac{1}{2}$. Thus if $Z_{s'}^{(j_1)} - Z_{s'}^{(j_2)}$ begins nonnegative, it will be nonnegative for all time, and this claim is proved.

The remainder of the proof follows from our lemmas. Indeed, recall from the remarks after \cref{coalescentpermutationexample} that $\sigma^{\text{up}}(pZ^g_M)$ can be interpreted as traversing the forest of trees, ordered from bottom to top, and recording the index at which each of the starting points is visited. \cref{lemmareg} and \cref{lemmaroot} imply that the descendants in this description are identical to the descendants in the green tree of $M$. Finally, \cref{lemmaorder} shows that the descendants are in fact ordered from bottom to top in clockwise order, so this permutation in fact also records the indices for the corresponding vertices in the green tree clockwise traversal, meaning it is $\sigma_M^g$. An identical argument but going top to bottom applies for the red tree, so $\sigma^{\text{down}}(pZ^r_M)$ is exactly $\sigma_M^r$. This completes the proof.
\end{proof}

From here, concluding the proof of \cref{mainresult1} involves showing that the coalescent-walk processes $Z^g_M$ and $Z^r_M$ share similarities with the processes $pZ^g_M$ and $pZ^r_M$, in the sense that the ordering of the starting points under \cref{totalorder} remains the same. 

\begin{proof}[Proof of \cref{mainresult1}]
We first show that we have a valid coalescent-walk process; let $Z$ denote either $Z^g_M$ or $Z^r_M$ depending on the case being considered. Much like in the proof above, by induction we only need to verify that for any two starting points $j_1, j_2$ (now integers), if $Z_s^{(j_1)} \ge Z_s^{(j_2)}$, then $Z_{s+1}^{(j_1)} \ge Z_{s+1}^{(j_2)}$. For the green process, this can be verified via the following casework:
\begin{itemize}
    \item If $Z_s^{(j_1)} = Z_s^{(j_2)}$ or if $Z_s^{(j_1)}, Z_s^{(j_2)}$ are both positive or both negative, then the increments $Z_{s+1}^{(j_1)} - Z_s^{(j_1)}$ and $Z_{s+1}^{(j_2)} - Z_s^{(j_2)}$ are equal and thus the inequality still holds.
    \item If $Z_s^{(j_1)} > 0$ and $Z_s^{(j_2)} < 0$, then regardless of the value of $W_{s+1} - W_s$ we always have $Z_{s+1}^{(j_1)} \ge 0$ and $Z_s^{(j_2)} \le 0$, so again the inequality still holds.
    \item If $Z_s^{(j_1)} > 0$ and $Z_s^{(j_2)} = 0$, then either we have a $(-1, 1)$ step and thus both sample paths increment by $1$, or we have a $(k, -1)$ step and thus $Z_{s+1}^{(j_1)} \ge 0$ and $Z_s^{(j_2)} < 0$. 
    \item Finally, if $Z_s^{(j_1)} = 0$ and $Z_s^{(j_2)} < 0$, then either we have a $(-1, 1)$ step and both paths increment by $1$, or we have a $(k, -1)$ step and thus $Z_{s+1}^{(j_1)} = -k-1$ and $Z_{s+1}^{(j_2)} = -k+Z_s^{(j_2)} \le -k-1$ (since all sample paths are integer-valued at integer times).
\end{itemize}
The casework for the red process is similar:
\begin{itemize}
    \item If $Z_s^{(j_1)} = Z_s^{(j_2)}$ or if $Z_s^{(j_1)}, Z_s^{(j_2)}$ are both nonnegative, then the increments $Z_{s+1}^{(j_1)} - Z_s^{(j_1)}$ and $Z_{s+1}^{(j_2)} - Z_s^{(j_2)}$ are equal.
    \item If $Z_s^{(j_1)}, Z_s^{(j_2)}$ are both negative, then either we have a $(1, -1)$ step and thus both paths increment by $-1$, or we have a $(k, -1)$ step and both paths increment by $k$ up to a maximum of height $0$ -- this preserves the inequality.
    \item If $Z_s^{(j_1)} > 0$ and $Z_s^{(j_2)} < 0$, then regardless of the value of $W_{s+1} - W_s$ we always have $Z_{s+1}^{(j_1)} \ge 0$ and $Z_s^{(j_2)} \le 0$.
    \item If $Z_s^{(j_1)} = 0$ and $Z_s^{(j_2)} < 0$, then either we have a $(1, -1)$ step and both paths increment by $-1$ or a $(k, -1)$ step and thus $Z_{s+1}^{(j_1)} = 1$ and $Z_{s+1}^{(j_2)} \le 0$.
\end{itemize}

Thus in both cases the total orders on our starting points are valid, and it remains to check that they agree with the ones formed from the pre-green and pre-red processes. In each case, we will do so by showing that the sample paths of the corresponding starting points are closely related. We recommend consulting \cref{fig:sampleschnyder2} and \cref{fig:sampleschnyder3} side-by-side for the subsequent arguments.

First, we consider the red process. By definition, the driving walk $W_M$ for $Z^r_M$ is obtained by moving the first character of $s_M$ to the end, then grouping together each \texttt{rr}$\cdots$\texttt{rrg} segment into a single $(-k, 1)$ step. Since the driving walk $pW_M$ for $pZ^r_M$ just uses the characters of $s_M$ with no modifications, we will verify that the sample paths of $Z^r_M$ essentially correspond to shifting the sample paths of $pZ^r_M$ down by $0.5$ units and grouping increments together. For this, we associate to each $(-k, 1)$ step its corresponding set of characters \texttt{rr}$\cdots$\texttt{rrg} in $s_M$, and we associate to each $(1, -1)$ step its corresponding \texttt{b}. 

The sample path of $pZ^r_M$ labeled $i$ (for any $1 \le i \le n$) is initialized in the middle of the $i$--th (leftmost) \texttt{g} step and immediately has an increment of $+0.5$ during the latter half of that step. Meanwhile, the sample path of $Z^r_M$ labeled $1$ is initialized at starting time $0$ (because $W_M$ always ends with a $(-k, 1)$ increment and thus the special case of \cref{cwrdefn} always holds), while any other path labeled $i$ is initialized at the right endpoint of the $(i-1)$--th $(-k, 1)$ increment of $W_M$. In either case, under the correspondence in the paragraph above, this starting time corresponds to being initialized at the right endpoint of the $i$--th \texttt{g} step of $pZ^r_M$ (because the first \texttt{g} step was moved to the end while constructing $W_M$, but the starting point does not move along with it). Thus, we can begin tracking how the two paths labeled $i$ evolve and check that they are driven similarly by the corresponding increments of $pW_M$ and $W_M$.
\begin{itemize}
\item When initialized, the path in $Z^r_M$ is at height $0$; at the corresponding time, the path in $pZ^r_M$ is at height $0.5$ because of the latter half of the increment for that \texttt{g} step. Now, track the two paths while they stay nonnegative. Each $(1, -1)$ step in $W_M$ corresponds to a $(1, -1)$ step in $pW_M$ (both causing the path to decrement by $1$ over the next unit interval), and for any $(-k, 1)$ step in $W_M$, the path in $Z^r_M$ moves up by $1$, while the path in $pZ^r_M$ does not change height for $k$ steps and then moves up by $1$. Therefore neither path will touch the $x$-axis again unless the other one also does at a corresponding $(1, -1)$ step, and before that occurs, the height of the path in $pZ^r_M$ will always be $0.5$ larger than the one in $Z^r_M$ at any corresponding time.
\item If the path in $pZ^r_M$ crosses the $x$-axis for the first time, this must correspond to a $(1, -1)$ step in the driving walk $pW_M$. This step will cause the height of the path in $pZ^r_M$ to go from $0.5$ to $-0.5$ and the one in $Z^r_M$ to go from $0$ to $-1$. Now by similar logic as before, now tracking the negative excursions, the two paths will follow the same increments until a $(-k, 1)$ step occurs in $W_M$ when the height of the path in $Z^r_M$ is within the interval $[-k, -1]$, causing it to increment to $0$. This occurs if the height of the path in $pZ^r_M$ before the corresponding \texttt{rr}$\cdots$\texttt{rrg} segment was within the interval $[0.5-k, -0.5]$; for any such height, this sequence of moves in $pZ^r_M$ brings the path to the $x$-axis and finally moves it to $0.5$ in the second half of the \texttt{g} step. So we are back to the case above, still with the path in $pZ^r_M$ at a height $0.5$ larger than that of $Z^r_M$ at corresponding times.
\item Finally, we check that the path labeled $i_1$ intersects the path labeled $i_2$ in $pZ^r_M$ if and only if the path labeled $i_1$ intersects the path labeled $i_2$ in $Z^r_M$. Indeed, this occurs in $pZ^r_M$ if the path labeled $i_1$ is on the $x$-axis in the \emph{middle} of the $i_2$--th \texttt{g} step, but correspondingly in $Z^r_M$ the path labeled $i_1$ is at height $0$ at the \emph{end} of the corresponding $(-k, 1)$ step, which is where the path labeled $i_2$ is initialized. So the intersections will again agree.
\end{itemize}

Therefore, if we have any two starting points labeled $i_1 < i_2$ (say at locations $j_1 < j_2$) in $Z^r_M$ and wish to know the relative order of $\sigma_M^r(i_1)$ and $\sigma_M^r(i_2)$, we must check whether $(Z^r_M)_{j_2}^{(j_1)} > 0$ (in other words, whether $j_1 \le^{\text{down}} j_2$). But we have just verified that paths of $pZ^r_M$ maintain the same intersections and relative height orderings as those of $Z^r_M$ (since they always differ by a height of $0.5$ at corresponding times). Thus this occurs if and only if (say the paths labeled $i_1 < i_2$ in $pZ^r_M$ start at locations $k_1 < k_2$) $k_1 \le^{\text{down}} k_2$ in $pZ^r_M$. So the order on the labels, and thus the red Schnyder wood permutation, is indeed consistent between the two processes. (Moving the initial \texttt{g} to the end in the construction of $W_M$ does not affect this argument, since we have already accounted for the adjustment via our rules for starting point labels, and all paths of $Z^r_M$ start before the final $(-k, 1)$ increment.)

\medskip 

Finally, we check the green process. We now have to consider our driving walk $W_M$ in reverse, and our labels now count from $n$ to $1$ instead of $1$ to $n$ from left to right. Again, we track how the two paths labeled $i$ evolve relative to each other (with $s_M$ now considered in reverse).

The sample path of $pZ^g_M$ labeled $i$ is always initialized in the middle of the $i$--th rightmost \texttt{g} step and immediately has an increment of $-0.5$ during the latter half of that step. The reversed steps are grouped as \texttt{grr}$\cdots$\texttt{rr} segments, and so (if there are $k$ \texttt{r}s) the path will be at height $-k - 0.5$ at the end of this segment. Meanwhile, the sample path of $Z^g_M$ labeled $1$ is initialized at starting time $2n$ (since again the special case of \cref{cwrdefn2} always holds), while any other path labeled $i$ starts off at the left endpoint of the $(i-1)$--th rightmost $(k, -1)$ increment of $W_M'$ and then immediately has an increment of $-k - 1$. Thus at the end of this grouped segment, the sample path in $pZ^g_M$ will be $0.5$ units higher than the corresponding path in $Z^g_M$ (except for the starting point labeled $1$, since there are no steps for it to take in $Z^g_M$).

The argument now proceeds very similarly to the one for the red process -- we will again show that the two paths evolve similarly. During the initial negative excursion, the increments of each step in $Z^g_M$ are identical to those in the corresponding step(s) of $pZ^g_M$, and we will only have a crossing if during a $(-1, 1)$ step, the path in $Z^g_M$ goes from height $-1$ to $0$ and the path in $pZ^g_M$ goes from height $-0.5$ to $0.5$. The increments are then again identical while both paths stay nonnegative until the path in $Z^g_M$ potentially crosses the $x$-axis again. If this occurs, it must correspond to a \texttt{g} step in $pZ^g_M$ and a $(-k, 1)$ step in $Z^g_M$, such that the height of the path in $pZ^g_M$ goes from $0.5$ to $-0.5$ and the one in $Z^g_M$ goes from $0$ to $-1$. But indeed a new path is created in the \emph{middle} of the step in $pZ^g_M$ and at the \emph{beginning} of the corresponding step in $Z^g_M$, so the intersections with starting points again match up. From here, we are back in a negative excursion and (by the logic of the previous paragraph) still have corresponding paths offset by $0.5$ at corresponding times.

So just like before, if we have any two starting points labeled $i_1 < i_2$ (say at locations $j_1 > j_2$) in $Z^g_M$ and wish to know the relative order of $\sigma_M^g(i_1)$ and $\sigma_M^g(i_2)$, we must check whether $(Z^g_M)_{j_1}^{(j_2)} < 0$ (in other words, whether $j_2 \le^{\text{up}} j_1$). We have just verified that paths of $pZ^g_M$ maintain the same intersections and relative height orderings as those of $Z^g_M$, so this occurs if and only if (say the walks labeled $i_1 < i_2$ in $pZ^g_M$ start at locations $k_1 > k_2$) $k_2 \le^{\text{up}} k_1$ in $pZ^g_M$. (Similarly, moving the final \texttt{g} to the beginning in the construction of the reversed walk $W_M'$ does not affect this argument, since we have already accounted for the adjustment via our rules for starting point labels. However, this time we check separately that the relations with $i_1 = 1$ also agree, since the sample paths of label $1$ do not reach any ``corresponding times'' before the end of the interval. The logic for intersecting starting point $1$ works identically as for any other point. And checking relative orderings, we have $k_2 \le^{\text{up}} k_1$ in $pZ^g_M$ if and only if at the left endpoint of the last \texttt{g} step, the sample path with label $i_2$ is at a height below $0.5$. But under the correspondence, this occurs if and only if the sample path with label $i_2$ in $Z^g_M$ is at a height below $0$ at the end of the interval, which is exactly the rule for $j_2 \le^{\text{up}} j_1$.) So the order on the labels, and thus the green Schnyder wood permutation, is indeed also consistent between the two processes, completing the proof.
\end{proof}

\bibliographystyle{hmralphaabbrv}
\bibliography{biblio}

\newcommand{\etalchar}[1]{$^{#1}$}
\begin{thebibliography}{HKM{\etalchar{+}}13}

\bibitem[Ald91a]{aldouscrt2}
D.~Aldous.
\newblock {\em The Continuum random tree II: an overview}, page 23–70.
\newblock London Mathematical Society Lecture Note Series. Cambridge University
  Press, 1991.

\bibitem[Ald91b]{aldouscrt}
D.~Aldous.
\newblock {The Continuum Random Tree. I}.
\newblock {\em The Annals of Probability}, 19(1):1 -- 28, 1991.

\bibitem[AM10]{asinowski2008separable}
A.~Asinowski and T.~Mansour.
\newblock Separable d-permutations and guillotine partitions.
\newblock {\em Annals of Combinatorics}, 14(1):17--43, March 2010.

\bibitem[AN81]{popstacks}
D.~Avis and M.~Newborn.
\newblock On pop-stacks in series.
\newblock {\em Utilitas Mathematica}, 19:129--140, 1981.

\bibitem[BBF{\etalchar{+}}18]{separablelimit}
F.~Bassino, M.~Bouvel, V.~F{\'e}ray, L.~Gerin, and A.~Pierrot.
\newblock {The Brownian limit of separable permutations}.
\newblock {\em The Annals of Probability}, 46(4):2134 -- 2189, 2018.

\bibitem[BBF{\etalchar{+}}20]{Bassino_2020}
F.~Bassino, M.~Bouvel, V.~F{\'{e}}ray, L.~Gerin, M.~Maazoun, and A.~Pierrot.
\newblock Universal limits of substitution-closed permutation classes.
\newblock {\em Journal of the European Mathematical Society},
  22(11):3565--3639, August 2020.

\bibitem[BBF{\etalchar{+}}22]{bassino2022scaling}
F.~Bassino, M.~Bouvel, V.~F{\'e}ray, L.~Gerin, M.~Maazoun, and A.~Pierrot.
\newblock Scaling limits of permutation classes with a finite specification: a
  dichotomy.
\newblock {\em Advances in Mathematics}, 405:108513, 2022.

\bibitem[BBFS20]{Borga_2020}
J.~Borga, M.~Bouvel, V.~F{\'{e}}ray, and B.~Stufler.
\newblock A decorated tree approach to random permutations in
  substitution-closed classes.
\newblock {\em Electronic Journal of Probability}, 25:1--52, 2020.

\bibitem[BDS21]{almostsquareperms}
J.~Borga, E.~Duchi, and E.~Slivken.
\newblock Almost square permutations are typically square.
\newblock {\em Annales de l’Institut Henri Poincaré, Probabilités et
  Statistiques}, 57(4), November 2021.

\bibitem[BG24]{borgagwynne2024}
J.~Borga and E.~Gwynne.
\newblock Reconstructing {SLE}-decorated {L}iouville quantum gravity surfaces
  from random permutons.
\newblock {\em arXiv preprint:2409.15494}, 2024.

\bibitem[BGS22]{borga2022permutons}
J.~Borga, E.~Gwynne, and X.~Sun.
\newblock Permutons, meanders, and {SLE}-decorated {L}iouville quantum gravity.
\newblock {\em arXiv preprint:2207.02319}, 2022.

\bibitem[BHSY23]{borga2023baxter}
J.~Borga, N.~Holden, X.~Sun, and P.~Yu.
\newblock Baxter permuton and {L}iouville quantum gravity.
\newblock {\em Probability Theory and Related Fields}, 186(3):1225--1273, 2023.

\bibitem[Bil99]{billing}
P.~Billingsley.
\newblock {\em Convergence of probability measures}.
\newblock Wiley Series in Probability and Statistics: Probability and
  Statistics. John Wiley \& Sons Inc., New York, second edition, 1999.
\newblock A Wiley-Interscience Publication.

\bibitem[BM22a]{bonichon2022baxter}
N.~Bonichon and P.-J. Morel.
\newblock Baxter $d$-permutations and other pattern avoiding classes.
\newblock {\em Journal of Integer Sequences}, 25(8), 2022.

\bibitem[BM22b]{borga2022scaling}
J.~Borga and M.~Maazoun.
\newblock {Scaling and local limits of Baxter permutations and bipolar
  orientations through coalescent-walk processes}.
\newblock {\em The Annals of Probability}, 50(4):1359 -- 1417, 2022.

\bibitem[Bol76]{bolthausen}
E.~Bolthausen.
\newblock {On a Functional Central Limit Theorem for Random Walks Conditioned
  to Stay Positive}.
\newblock {\em The Annals of Probability}, 4(3):480 -- 485, 1976.

\bibitem[Bon05]{bonichonpaths}
N.~Bonichon.
\newblock A bijection between realizers of maximal plane graphs and pairs of
  non-crossing {D}yck paths.
\newblock {\em Discrete Mathematics}, 298(1):104--114, 2005.
\newblock Formal Power Series and Algebraic Combinatorics 2002 (FPSAC'02).

\bibitem[Bor22]{borga2021permuton}
J.~Borga.
\newblock {The permuton limit of strong-Baxter and semi-Baxter permutations is
  the skew Brownian permuton}.
\newblock {\em Electronic Journal of Probability}, 27:1--53, 2022.

\bibitem[Bor23]{borga2023skew}
J.~Borga.
\newblock The skew {B}rownian permuton: {A} new universality class for random
  constrained permutations.
\newblock {\em Proceedings of the London Mathematical Society}, 126, 2023.

\bibitem[BS20]{squareperms}
J.~Borga and E.~Slivken.
\newblock Square permutations are typically rectangular.
\newblock {\em The Annals of Applied Probability}, 30(5), October 2020.

\bibitem[Don97]{Doney_97}
R.~Doney.
\newblock One-sided local large deviation and renewal theorems in the case of
  infinite mean.
\newblock {\em Probability Theory and Related Fields}, 107(11):451--465, April
  1997.

\bibitem[Don12]{doneyuniform}
R.~A. Doney.
\newblock Local behaviour of first passage probabilities.
\newblock {\em Probability Theory and Related Fields}, 152:559--588, 2012.

\bibitem[DW20]{DURAJ20203920}
J.~Duraj and V.~Wachtel.
\newblock Invariance principles for random walks in cones.
\newblock {\em Stochastic Processes and their Applications}, 130(7):3920--3942,
  2020.

\bibitem[ES24]{elboimsly}
D.~Elboim and A.~Sly.
\newblock Infinite cycles in the interchange process in five dimensions.
\newblock {\em arXiv preprint:2211.17023}, 2024.

\bibitem[FN08]{bmmodulus}
M.~Fischer and G.~Nappo.
\newblock On the moments of the modulus of continuity of itô processes.
\newblock {\em Stochastic Analysis and Applications - STOCHASTIC ANAL APPL},
  28, 08 2008.

\bibitem[FZ08]{felsner2008schnyder}
S.~Felsner and F.~Zickfeld.
\newblock Schnyder woods and orthogonal surfaces.
\newblock {\em Discrete \& Computational Geometry}, 40:103--126, 2008.

\bibitem[GHHS23]{latinons}
F.~Garbe, R.~Hancock, J.~Hladk\'y, and M.~Sharifzadeh.
\newblock Limits of latin squares.
\newblock {\em Discrete Analysis}, July 2023.

\bibitem[HH24]{hammondhelmuth}
A.~Hammond and T.~Helmuth.
\newblock Directed spatial permutations on asymmetric tori.
\newblock {\em arXiv preprint:2306.03064}, 2024.

\bibitem[HKM{\etalchar{+}}13]{hoppen2012limits}
C.~Hoppen, Y.~Kohayakawa, C.~G. Moreira, B.~Ráth, and R.~{Menezes Sampaio}.
\newblock Limits of permutation sequences.
\newblock {\em Journal of Combinatorial Theory, Series B}, 103(1):93--113,
  2013.

\bibitem[Ken76]{excursion}
D.~P. Kennedy.
\newblock The {D}istribution of the {M}aximum {B}rownian {E}xcursion.
\newblock {\em Journal of Applied Probability}, 13(2):371--376, 1976.

\bibitem[Lej06]{skewbrownianmotion}
A.~Lejay.
\newblock {On the constructions of the skew Brownian motion}.
\newblock {\em Probability Surveys}, 3(none):413 -- 466, 2006.

\bibitem[LG05]{legall}
J.-F. Le~Gall.
\newblock {Random trees and applications}.
\newblock {\em Probability Surveys}, 2(none):245 -- 311, 2005.

\bibitem[LSW24]{li2022schnyder}
Y.~Li, X.~Sun, and S.~S. Watson.
\newblock {Schnyder woods, {SLE}(16), and {L}iouville quantum gravity}.
\newblock {\em Trans. Am. Math. Soc.}, 377(04):2439--2493, April 2024,
  1705.03573.

\bibitem[Maa20]{maazoun2020brownian}
M.~Maazoun.
\newblock On the {B}rownian separable permuton.
\newblock {\em Combinatorics, Probability and Computing}, 29(2):241--266, 2020.

\bibitem[MS19]{millersheffield2018}
J.~Miller and S.~Sheffield.
\newblock {Liouville quantum gravity spheres as matings of finite-diameter
  trees}.
\newblock {\em Annales de l'Institut Henri Poincaré, Probabilités et
  Statistiques}, 55(3):1712 -- 1750, 2019.

\bibitem[MT96]{MATHESON1996565}
L.~R. Matheson and R.~E. Tarjan.
\newblock Dominating sets in planar graphs.
\newblock {\em European Journal of Combinatorics}, 17(6):565--568, 1996.

\bibitem[NP19]{ngo2019limit}
H.-L. Ngo and M.~Peign\'e.
\newblock Limit theorem for perturbed random walks.
\newblock {\em Theory of Stochastic Processes}, 24(2):61--78, 2019.

\bibitem[NSI04]{NAGAMOCHI2004223}
H.~Nagamochi, T.~Suzuki, and T.~Ishii.
\newblock A simple recognition of maximal planar graphs.
\newblock {\em Information Processing Letters}, 89(5):223--226, 2004.

\bibitem[Sch89]{schnyderoriginal}
W.~Schnyder.
\newblock Planar graphs and poset dimension.
\newblock {\em Order}, 5(4):323 -- 343, 1989.

\bibitem[Shi85]{shimura}
M.~Shimura.
\newblock {Excursions in a cone for two-dimensional Brownian motion}.
\newblock {\em Journal of Mathematics of Kyoto University}, 25(3):433 -- 443,
  1985.

\bibitem[Sta09]{Starr_2009}
S.~Starr.
\newblock Thermodynamic limit for the {M}allows model on {$S_n$}.
\newblock {\em Journal of Mathematical Physics}, 50(9), July 2009.

\bibitem[Wal78]{walshskew}
J.~B. Walsh.
\newblock A diffusion with a discontinuous local time.
\newblock In {\em Temps locaux}, number 52-53 in Ast\'erisque, pages 37--45.
  Soci\'et\'e math\'ematique de France, 1978.

\bibitem[YCCG03]{floorplans}
B.~Yao, H.~Chen, C.-K. Cheng, and R.~Graham.
\newblock Floorplan representations: Complexity and connections.
\newblock {\em ACM Trans. Des. Autom. Electron. Syst.}, 8(1):55–80, January
  2003.

\end{thebibliography}

\end{document}